\numberwithin{equation}{section}
\tikzset{>=angle 90}
\theoremstyle{plain}
\newtheorem{theorem}{Theorem}[section]
\newtheorem{thm}[theorem]{Theorem}
\newtheorem{lem}[theorem]{Lemma}
\newtheorem{cor}[theorem]{Corollary}
\newtheorem{prop}[theorem]{Proposition}
\newtheorem{propdef}[theorem]{Proposition and Definition}
\newtheorem{defprop}[theorem]{Definition and Proposition}
\theoremstyle{definition}
\newtheorem{notation}[theorem]{Notation}
\newtheorem{defi}[theorem]{Definition}
\newtheorem{construction}[theorem]{Construction}
\theoremstyle{remark}
\newtheorem{remark}[theorem]{Remark}
\newtheorem{rem}[theorem]{Remark}
\newtheorem{example}[theorem]{Example}
\newtheorem{remdef}[theorem]{Remark and Definition}
\newtheorem{defrem}[theorem]{Definition and Remark}
\newcommand{\PPerf}{\textup{Perf}}
\DeclareMathOperator{\MMod}{Mod}
\DeclareMathOperator{\QQCoh}{QCoh}
\newcommand{\cn}{\textup{cn}}
\DeclareMathOperator{\Equiv}{Equiv}
\DeclareMathOperator{\cofib}{cofib}
\newcommand{\cl}{\textup{cl}}
\DeclareMathOperator{\fib}{fib}
\DeclareMathOperator{\Poly}{Poly}
\newcommand{\dSt}{\textup{dSt}}
\newcommand{\Shv}{\textup{Shv}}
\newcommand{\pPerf}{\textup{perf}}
\DeclareMathOperator{\codom}{codom}
\DeclareMathOperator{\dSch}{dSch}
\DeclareMathOperator{\Ch}{Ch}
\DeclareMathOperator{\Ani}{\textup{Ani}}
\newcommand{\AniAlg}[1]{\textup{AR}_{#1}}
\DeclareMathOperator{\length}{length}
\newcommand{\triv}{\textup{triv}}
\newcommand{\HDG}{\textup{\textbf{HDG}}}
\newcommand{\conj}{\textup{\textbf{conj}}}
\DeclareSymbolFontAlphabet{\mathbb}{AMSb}
\DeclareSymbolFontAlphabet{\mathbbl}{bbold}
\def\lowsim{\vbox to 0pt{\vss\hbox{$\scriptstyle\sim$}\vskip-2pt}}
\setlist[enumerate,1]{label={\rm(\arabic*)}, ref={\rm\arabic*}}
\newcommand{\supth}[1]{\ensuremath{#1^{\mathrm{th}}}}
\title{Derived $\boldsymbol{F}$-zips}
\author{Can Yaylali}
\address{Technische Universit\"at Darmstadt, Schlossgartenstra\ss{}e 7, 
64289 Darmstadt, Germany}
\email{yaylali@mathematik.tu-darmstadt.de}
\begin{document}


\maketitle

\begin{prelims}

\DisplayAbstractInEnglish

\bigskip

\DisplayKeyWords

\medskip

\DisplayMSCclass

\end{prelims}


\newpage

\setcounter{tocdepth}{1}

\tableofcontents


\section{Introduction}

The notion of $F$-zips was introduced by Moonen and Wedhorn in \cite{moon-wed}. Before starting with the positive characteristic case discussed in \cite{moon-wed}, let us first look at the characteristic zero case, after \cite{ZipIntro}.

Let $X\rightarrow \Spec(\CC)$ be a proper smooth morphism of schemes. By general GAGA principles, we can associate a compact complex manifold $X^{\textup{an}}$  to $X$ in a ``universal'' way (we do not make this explicit here). Important for us is that the algebraic de Rham cohomology $H^{n}_{\dR}(X/\CC)$ of $X$ is isomorphic to the complex de Rham cohomology $H^{n}_{\dR}(X^{\textup{an}})$ of $X^{\textup{an}}$. Complex de Rham cohomology computes the singular cohomology of $X^{\textup{an}}$ with complex coefficients, \textit{i.e.}
$H^{n}_{\dR}(X^{\textup{an}})\cong H^{n}_{\textup{sing}}(X^{\textup{an}},\CC)\cong H^{n}_{\textup{sing}}(X^{\textup{an}},\ZZ)\otimes_{\ZZ}\CC$. Thus, complex de Rham cohomology comes equipped with an integral structure. The $\CC$-vector space given by $H^{1}_{\dR}$ with its integral structure characterizes, for example, abelian varieties over $\CC$ (this is called the global Torelli property of abelian varieties).

We also have a descending filtration $C^{\bullet}$ on this complex vector space $H^{n}_{\dR}(X^{\textup{an}})$ which is induced by the Hodge spectral sequence. It is known that this spectral sequence degenerates. Therefore, successive quotients are computed by the Hodge cohomologies $C^{i}/C^{i+1}=H^{n-i}(X^{\textup{an}},\Omega^{i}_{X^{\textup{an}}})$. The real structure on the singular cohomology together with the complex conjugation on $\CC$ induce an $\RR$-linear endomorphism on $H^{n}_{\textup{sing}}(X^{\textup{an}},\CC)$. The images of the $C^{i}$ under this map are again complex vector spaces and induce an ascending filtration on $H^{n}_{\dR}(X^{\textup{an}})$ by $D_{i}\coloneqq \overline{C^{n-i}}$. One can show that $D_{i-1}\oplus C^{i}= H^{n}_{\dR}(X^{\textup{an}})$ for all $i\in\ZZ$. These data, together with the integral structure obtained via the comparison with the singular cohomology, endow $H^{n}_{\dR}(X/\CC)$ with an \textit{integral Hodge structure}. The study of integral Hodge structures and their moduli can be found in \cite{Bertin} and leads to the notion of \textit{Griffiths' period domains}. The study of these data enables us to analyze the moduli of geometric objects via linear-algebra data.

These results can be extended to arbitrary smooth proper families in characteristic zero. In characteristic $p>0$, however, we do not have a complex conjugation. But still, we have an analogous structure on the de Rham cohomology.

Let us fix an $\FF_{p}$-algebra $A$ and a smooth proper $A$-scheme $X$. Contrary to the characteristic zero case, we have a second spectral sequence on the de Rham cohomology, the \textit{conjugate spectral sequence}. This spectral sequence endows the de Rham cohomology $H^{n}_{\dR}(X/A)$ with a second filtration $D_{\bullet}$. We also have an analogue of the Poincar\'e lemma, the Cartier isomorphism. The Cartier isomorphism links the graded pieces of the Hodge filtration $C^{\bullet}$ with $D_{\bullet}$. If we assume that the Hodge cohomologies are finite projective and the Hodge--de Rham spectral sequence is degenerate, then the successive quotients are \textit{isomorphic up to Frobenius twist}; \textit{i.e.}\ we have $C^{i}/C^{i+1}\otimes_{A,\Frob} A\cong D_{i}/D_{i-1}$, where $\Frob\colon A\rightarrow A$ denotes the Frobenius endomorphism $a\mapsto a^{p}$. Putting all of these data together, we have a finite projective $A$-module $H^{n}_{\dR}(X/A)$ equipped with two filtrations $C^{\bullet}$ and $D_{\bullet}$ and isomorphisms $\varphi_{i}\colon C^{i}/C^{i+1}\otimes_{A,\Frob} A\cong D_{i}/D_{i-1}$. In good cases, generalizing this by replacing the de Rham cohomology with an arbitrary finite projective module, we get the following definition.

An\textit{ $F$-zip over a scheme $S$} of characteristic $p>0$ is a tuple $(M,C^\bullet,D_\bullet,\varphi_\bullet)$, where $M$ is a finite locally free $\Ocal_S$-module, $C^\bullet$ is a descending filtration on $M$, $D_\bullet$ is an ascending filtration on $M$ and $\varphi_\bullet\colon(\gr^\bullet_C M )^{(1)}\xrightarrow{\lowsim}\gr_D^\bullet M$ are isomorphisms, where the index $(1)$ denotes the Frobenius twist. (For an $\Ocal_{S}$-module $\Fcal$, we set $\Fcal^{(1)}\coloneqq \Fcal\otimes_{\Ocal_{S},\Frob_{S}}\Ocal_{S}$.)
The associated stack of $F$-zips is then a rather combinatorical object. Associating an $F$-zip to a geometric object, such as an abelian scheme, one can analyze its moduli by analyzing the moduli of $F$-zips.

With this method, Moonen--Wedhorn extended results of Ekedahl and Oort on the stratification of families of abelian schemes in positive characteristic. They defined new stratifications on families of proper smooth morphisms satisfying certain conditions, which generalized the known results for abelian varieties in positive characteristic (see \cite{moon-wed}). By attaching extra structure to $F$-zips, we can also generalize the theory of $F$-zips to the theory of so-called \textit{$G$-zips}, defined in \cite{PWZ}. Goldring and Koskivirta used the theory of $G$-zips to construct group-theoretical Hasse invariants on Ekedahl--Oort stratum closures of a general Hodge-type Shimura variety (see \cite{GK}). They applied this in different cases to the Langlands program and proved, for example, a conjecture of Oort.

One major way to associate an $F$-zip to a geometric object is through its de Rham cohomology. Namely, the Hodge and conjugate spectral sequences induce two filtrations on the $\supth{k}$ de Rham cohomology. If the Hodge--de Rham spectral sequence degenerates and the Hodge cohomologies are finite projective (and thus the conjugate spectral sequence also degenerates\footnote{See \cite[Proposition~(2.3.2)]{Katz}.}), the graded pieces are isomorphic up to Frobenius twist, via the Cartier isomorphism. The hypothesis on the Hodge--de Rham spectral sequence restricts us to a certain class of geometric objects, which for example include abelian schemes, K3-surfaces, smooth proper curves and smooth complete intersections in the projective space. But since, for example, the Hodge--de Rham spectral sequence does not degenerate for supersingular Enriques surfaces in characteristic $2$ (see \cite[Theorem 2]{Enriques2}),
we cannot use the theory of $F$-zips to analyze their moduli stack.

One possible solution to this problem is going to the derived world. The idea is straightforward. If we replace the $\supth{k}$ de Rham cohomology with its hypercohomology, we get a perfect complex with two filtrations, and the Cartier isomorphism still applies to the graded pieces.\footnote{See Section~\ref{sec.filtration} for the notions of filtrations in the derived category and of graded pieces.} But taking the derived category naively leads us to problems since we cannot glue in the ordinary derived category. To apply geometric methods, we want descent on the derived category. This problem is solved by introducing the language of $\infty$-categories. So, in particular, the idea of this article is to use homotopy-theoretic methods to analyze derived versions of $F$-zips.

A homotopy-theoretic version of algebraic geometry was developed in \cite{TV2}. In the reference, To\"en--Vezzosi worked in the model-categorical setting. They used simplicial commutative rings as a replacement for commutative rings and presheaves of spaces as a replacement for presheaves of sets (or groupoids). They defined model structures on those (actually in a more general setting, see \cite{TV1}) and used Grothendieck topologies (in their setting) to define derived versions of stacks, schemes and affine schemes as fibrant objects in the corresponding model category. They analyzed certain properties such as geometricity and smoothness, as well as the cotangent complex. In this context, a derived stack is $n$-geometric if it has an $(n-1)$-geometric atlas by a coproduct of derived affine schemes. This notion allows us to define notions like smooth, flat and \'etale by using the atlas and defining it on the level of animated rings. The notion of higher geometricity comes into play if one wants to work with stacks that take values in higher groupoids. In \cite{TVaq}, To\"en and Vaqui\'e gave an important example of a geometric stack, namely the derived stack of perfect complexes. In \cite{AG}, Antieau and Gepner recalled this fact in detail in the setting of spectral algebraic geometry. This shows that one can glue perfect complexes and can cover the stack of perfect complexes 
by affine derived schemes (in a suitable sense), and therefore the notion of derived $F$-zips indicated above should also behave in a similar fashion.

The translation to the world of $\infty$-categories is rather straightforward using Lurie's works \cite{HTT,HA,SAG}. Since To\"en--Vezzosi defined their version of derived algebraic geometry using fibrant objects in model categories, we get analogous notions if we look at the $\infty$-categories associated to the corresponding model categories. Nevertheless, we will recall the definitions and theorems needed in this article from \cite{TV2} and \cite{TVaq} without using much of the model structure and will prove the results purely in the world of $\infty$-categories. This shows that the definitions and results obtained this way  rely not on the chosen model structure but on the underlying $\infty$-category.

We will not do this in detail and will be very brief. A detailed discussion about derived algebraic geometry with proofs and references can be found in the author's notes \cite{NotesDAG}.

\subsubsection*{Derived algebraic geometry}
The first section of this paper focuses on reformulating some results of \cite{TV2,TVaq,SAG} in the language of animated rings. The $\infty$-category of \textit{animated rings} $\AniAlg{\ZZ}$ is given by freely adjoining sifted colimits to polynomial algebras. Looking at overcategories for any animated ring $A$, we can define the $\infty$-category of \textit{animated $A$-algebras} $\AniAlg{A}\coloneqq (\AniAlg{\ZZ})_{A/}$. The benefit of this definition is that many questions about functors from $\AniAlg{\ZZ}$ to $\infty$-categories with sifted colimits can be reduced to polynomial algebras. Animated rings should be thought of as connective spectral commutative rings (\textit{i.e.}\ $E_\infty$-rings) with extra structure. In particular, after forgetting this extra structure, we can also define modules over animated rings (as modules over the underlying $E_{\infty}$-ring). One important example of such a module is the cotangent complex. This module arises naturally if we want to define an analogue of the module of differentials as the module that represents the space of derivations.

The underlying $E_{\infty}$-ring of an animated ring is a commutative algebra object in spectra. Thus, we can define homotopy groups of animated rings and automatically see (with the theory developed in \cite{HA}) that we can associate to every animated ring $A$ an $\NN_{0}$-graded ring $\pi_{*}A$. Using this, we reduce definitions, for example,  smoothness of a morphism $A\rightarrow B\in\AniAlg{\ZZ}$ to smoothness of the ordinary rings $\pi_0A\rightarrow \pi_0B$ together with compatibility of the graded ring structure, \textit{i.e.}\ $\pi_*A\otimes_{\pi_0A} \pi_0B\cong \pi_*B$. Analogously to the classical case, we have that for a smooth morphism of animated rings, its cotangent complex (the module representing the space of derivations) is finite projective. We can upgrade this to an ``if and only if'' if we assume that on $\pi_{0}$ the ring homomorphism is finitely presented. This does not hold in the classical world; \textit{i.e.}\ a ring homomorphism with a finite projective module of differentials may not be smooth; see \textit{e.g.}\ non-smooth regular closed immersions.

Defining \textit{derived stacks} is rather straightforward now. Let $\SS$ denote the $\infty$-category of spaces (also called $\infty$-groupoids). We define derived stacks to be presheaves (of spaces) on $\AniAlg{\ZZ}$ which satisfy \'etale descent. One important class of examples consist of \textit{affine derived schemes}, which we define as representable presheaves on $\AniAlg{\ZZ}$. We can also define relative versions, where we replace $\ZZ$ with an animated ring. We will see that they naturally satisfy fpqc descent. For affine derived schemes, it is easy to define properties by using their underlying animated rings. To do the same for derived stacks, we will need the notion of \textit{$n$-geometric} morphisms. This notion is defined inductively, where we say that a morphism $f\colon F\rightarrow G$ of derived stacks is $(-1)$-geometric if the base change with an affine derived scheme is representable by an affine derived schemes. A $(-1)$-geometric morphism is smooth if it is so after base change to any affine. The morphism $f$ is $n$-geometric if for any affine derived scheme $\Spec(A)$ with morphism $\Spec(A)\rightarrow G$, the base change $F\times_G \Spec(A)$ has a smooth $(n-1)$-geometric effective epimorphism $\coprod \Spec(T_i)\twoheadrightarrow F\times_G \Spec(A) $, where an $n$-geometric morphism is smooth if after affine base change, the induced maps of the atlas to the base are $(-1)$-geometric and smooth. For a good class\footnote{By ``good class'' we mean stable under base change, composition and equivalences, as well as smooth local on the source and target.} of properties \textbf{P} of affine derived schemes, \textit{e.g.} smooth, flat,\dots,\footnote{Note that the property \'etale is not smooth local on the source. We have to be careful if we want to define \'etale morphisms of $n$-geometric stacks.} we can now say that a morphism of derived stacks has property $\pbf\in\Pbf$ if it is $n$-geometric for some $n$ and after base change with an affine derived scheme, the atlas over the affine base has property $\pbf$. As in the affine case, we can relate deformation theory of derived stacks to geometric properties.

We will finish the discussion about derived algebraic geometry with an example of a geometric stack, the derived stack of perfect complexes. This will become important later on, when we want to look at families of perfect complexes together with extra structure, \textit{i.e.}\ derived $F$-zips.

\begin{theorem}[\textit{cf.} Theorem~\ref{main thm}]
	The derived stack
	\begin{align*}
	\PPerf\colon \AniAlg{R} &\longrightarrow \SS\\
	A&\longmapsto \left(\MMod_A^{\textup{perf}}\right)^{\simeq}
	\end{align*}
	is locally geometric and locally of finite presentation.
\end{theorem}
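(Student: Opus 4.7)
My plan is to follow the strategy of To\"en--Vaqui\'e, adapted to the $\infty$-categorical framework and to animated rings developed earlier in the paper. First, I would verify that $\PPerf$ is indeed a derived stack, i.e.\ satisfies \'etale (in fact fpqc) descent. This follows from fpqc descent for modules over connective $E_\infty$-rings together with the fact that perfectness is an fpqc-local property, a statement recorded in Lurie's \textit{SAG}. Given descent, the remaining content of the theorem is that $\PPerf$ locally admits, after restricting to a suitable open substack, a smooth atlas by an affine derived scheme and that this atlas is locally of finite presentation.

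The second step is to stratify $\PPerf$ by Tor-amplitude. For integers $a \leq b$, let $\PPerf^{[a,b]} \subset \PPerf$ denote the substack of perfect modules of Tor-amplitude in $[a,b]$. Since Tor-amplitude is stable under base change and open in the sense that if $E$ has amplitude $[a,b]$ at a point it does so in a Zariski neighbourhood, each $\PPerf^{[a,b]}$ is an open substack, and $\PPerf = \bigcup_{a \leq b} \PPerf^{[a,b]}$. Local geometricity and local finite presentation are preserved under such open unions, so it suffices to prove that each $\PPerf^{[a,b]}$ is geometric and locally of finite presentation.

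The heart of the proof is an induction on the amplitude $n = b - a$, where shifting reduces matters to $a = 0$. For $n = 0$, the stack $\PPerf^{[0,0]}$ is the moduli of finite locally free modules, which decomposes as $\coprod_{r \geq 0} B\mathrm{GL}_r$; each factor is $1$-geometric, smooth and of finite presentation. For the inductive step from $\PPerf^{[0,n-1]}$ to $\PPerf^{[0,n]}$, I would introduce the stack $\mathcal{E}$ classifying fibre sequences $K \to E \to L$ with $K \in \PPerf^{[0,n-1]}$ and $L \in \PPerf^{[n,n]}$. The forgetful map $\mathcal{E} \to \PPerf^{[0,n-1]} \times \PPerf^{[n,n]}$ has fibre over $(K,L)$ given by $\mathrm{Map}(L, K[1])$, which over an affine $\Spec(A)$ is a connective $A$-module viewed as an affine derived scheme. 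Hence $\mathcal{E}$ is geometric and locally of finite presentation by the inductive hypothesis. The middle-term map $\mathcal{E} \to \PPerf^{[0,n]}$, $(K \to E \to L) \mapsto E$, is essentially surjective because the Postnikov decomposition exhibits every such $E$ as such a fibre sequence, and its fibres are themselves geometric; this lets us transfer geometricity to $\PPerf^{[0,n]}$.

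The main obstacle I anticipate lies in this inductive step, specifically in verifying that $\mathcal{E} \to \PPerf^{[0,n]}$ is a smooth effective epimorphism. The essential-surjectivity side is the classical fact that a perfect complex of given amplitude can be realised as a fibre sequence of its (stupid) truncations; the smoothness side requires a tangent-complex computation showing that the induced map on cotangent complexes is surjective with perfect fibre. Both rely on the identification of the tangent complex of $\PPerf$ at $E$ with the shifted endomorphism complex $\mathrm{End}(E)[1]$, which is perfect because $E$ is; this is the technical heart of the argument and ultimately reduces the whole induction to the base case $\PPerf^{[0,0]} = \coprod_{r \geq 0} B\mathrm{GL}_r$ together with fpqc descent.
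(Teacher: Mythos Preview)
Your proposal is correct and follows essentially the same To\"en--Vaqui\'e/Antieau--Gepner strategy that the paper invokes: stratify by Tor-amplitude, identify $\PPerf^{[a,a]}$ with $\coprod_r B\mathrm{GL}_r$, and induct via fibre sequences peeling off a finite projective piece.

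Two small points are worth flagging. First, the phrase ``Postnikov decomposition'' is misleading: the canonical truncation $\tau_{\leq n-1}E$ of a perfect complex is generally \emph{not} perfect, and $\pi_nE$ need not be projective. What you actually need (and correctly name later) is the stupid truncation of a finite-projective representative; this is recorded in the paper as Lemma~\ref{general props of Tor}(7), though there the projective piece sits at the \emph{bottom} of the amplitude range rather than the top as in your $\mathcal{E}$. Your version is the dual statement and works equally well. Second, your smoothness argument via the cotangent complex of $\PPerf$ risks circularity: Proposition~\ref{cotangent implies smooth} (smoothness $\Leftrightarrow$ cotangent complex perfect of bounded Tor-amplitude) presupposes that the morphism is already geometric, which is precisely what the induction is establishing. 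The paper sidesteps this by appealing directly to Lemma~\ref{spec sym geometric}, which shows the relevant mapping stacks $F^A_P$ are geometric and smooth by an explicit inductive construction, and then feeds this into Proposition~\ref{diag + proj geometric} to transfer geometricity along the effective epimorphism. Your outline can be made rigorous along the same lines, but the dependency order matters.
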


\subsubsection*{Derived $\boldsymbol{F}$-zips}
The second part of this article focuses on derived $F$-zips. To be more specific, we first define derived $F$-zips and then analyze the geometry of their moduli spaces.
The definition of derived $F$-zips is influenced by the natural structure that arises on the de Rham hypercohomology for some proper smooth scheme morphism $X\rightarrow\Spec(A)$. We see, for example, that the Hodge filtration is just a functor $\ZZ^{\op}\rightarrow \Dcal(A)^{\perf}$ that is bounded. This is what we call a \textit{descending filtration}. With this notion, we define a \textit{derived $F$-zip over $A\in \AniAlg{\FF_p}$}, for some positive prime $p$, to be a tuple $(C^\bullet,D_\bullet,\phi,\varphi_\bullet)$, where $C^\bullet$ is a bounded descending filtration, $D_\bullet$ is a bounded ascending filtration of $A$-modules, $\phi\colon \colim_{\ZZ^{\op}}C^{\bullet}\xrightarrow{\lowsim}\colim_{\ZZ}D_{\bullet}$ is an equivalence and  $\varphi_\bullet\colon (\gr^\bullet C)^{(1)}\xrightarrow{\lowsim} \gr^\bullet D$ are equivalences.  As we vary $A$, this construction induces a derived stack (even a hypercomplete fpqc sheaf). One of our main results is that this stack is locally geometric.

\begin{theorem}[\textit{cf.} Theorem~\ref{fzip local geometric plus}]
	The derived stack
		\begin{align*}
			\FZip\colon  \AniAlg{\FF_p}&\longrightarrow \SS,\\
			A&\longmapsto \infty\textup{-groupoid of derived $F$-zips over }A
		\end{align*}
		is locally geometric.
\end{theorem}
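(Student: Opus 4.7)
The plan is to realize $\FZip$ as a finite limit of auxiliary stacks that are built out of $\PPerf$, and then to conclude by the stability of local geometricity under such operations, using that $\PPerf$ is locally geometric by the previous theorem.

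First I would introduce auxiliary stacks $\mathrm{Fil}^{\le}$ and $\mathrm{Fil}^{\ge}$ of bounded descending and ascending filtered perfect complexes over $\AniAlg{\FF_{p}}$, sending $A$ to the $\infty$-groupoid of bounded functors $\ZZ^{\op}\to \PPerf(A)$ and $\ZZ\to \PPerf(A)$, respectively. For a fixed interval $[a,b]\subset \ZZ$, the substack $\mathrm{Fil}^{\le}_{[a,b]}$ is the functor stack $\mathrm{Fun}([a,b]^{\op},\PPerf)$. Since $[a,b]^{\op}$ is a finite $\infty$-category, this can be assembled iteratively as a finite limit of copies of the arrow stack $\mathrm{Fun}(\Delta^{1},\PPerf)$ over $\PPerf\times\PPerf$; and the arrow stack is locally geometric because the fibers of $(\mathrm{source},\mathrm{target})\colon \mathrm{Fun}(\Delta^{1},\PPerf)\to \PPerf\times\PPerf$ are mapping spaces of perfect complexes, which are corepresented by affine derived schemes. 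Taking the filtered colimit over intervals (extending filtrations by zero outside $[a,b]$) then yields local geometricity of the unbounded stacks $\mathrm{Fil}^{\le}$ and $\mathrm{Fil}^{\ge}$.

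Next I would construct the stack of bounded $\ZZ$-graded perfect complexes $\mathrm{GrPerf}$ analogously as a filtered colimit of $\prod_{[a,b]}\PPerf$, which is again locally geometric. There are natural forgetful morphisms of derived stacks
\[
(\colim,\gr^{\bullet})\colon \mathrm{Fil}^{\le}\longrightarrow \PPerf\times\mathrm{GrPerf},\qquad (\colim,\gr_{\bullet})\colon \mathrm{Fil}^{\ge}\longrightarrow \PPerf\times\mathrm{GrPerf},
\]
where $\gr^{i}C\coloneqq \cofib(C^{i+1}\to C^{i})$ and dually for $D_{\bullet}$. The Frobenius twist induces an endomorphism $(-)^{(1)}\colon \mathrm{GrPerf}\to \mathrm{GrPerf}$ over $\Spec(\FF_{p})$ by levelwise base change along $\Frob$. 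The key identification I would then establish is
\[
\FZip\simeq \mathrm{Fil}^{\le}\times_{\PPerf\times\mathrm{GrPerf}}\mathrm{Fil}^{\ge},
\]
where $\mathrm{Fil}^{\le}$ maps via $(\colim,(-)^{(1)}\circ\gr^{\bullet})$ and $\mathrm{Fil}^{\ge}$ maps via $(\colim,\gr_{\bullet})$: a point on the right-hand side over $A$ is exactly a pair of bounded filtrations together with an equivalence $\colim C^{\bullet}\xrightarrow{\lowsim}\colim D_{\bullet}$ and levelwise equivalences $(\gr^{i}C)^{(1)}\xrightarrow{\lowsim}\gr_{i}D$, matching the definition of a derived $F$-zip. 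Since local geometricity is preserved under pullbacks, the conclusion follows.

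The main obstacle I expect is the careful verification that $\mathrm{Fil}^{\le}_{[a,b]}$ is locally geometric — equivalently, that $\mathrm{Fun}(I,\PPerf)$ is locally geometric for finite $I$. This amounts to a Weil-restriction-type argument along $I\to \mathrm{pt}$, using the atlas on $\PPerf$ to produce an atlas on the functor stack while tracking the increase in geometricity level at each step of the inductive construction. Once this is in hand, stability of local geometricity under filtered colimits and finite limits handles the rest, and the Frobenius twist — being a morphism of stacks over $\Spec(\FF_{p})$ — presents no additional difficulty.
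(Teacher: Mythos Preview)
Your overall architecture is the same as the paper's, but there is a genuine gap in the last step: you invoke ``local geometricity is preserved under pullbacks'' without justification, and this is not a formal fact. The problem is that the diagonal of a locally geometric stack need not be $n$-geometric for any fixed $n$. Concretely, for $\PPerf$ the loop space at a pair $(M,N)$ is the stack of equivalences $M\simeq N$, whose geometricity level grows with the Tor-amplitude of $M\otimes_A N^{\vee}$; so morphisms from a geometric stack into $\PPerf\times\mathrm{GrPerf}$ are not geometric in general, and the fiber product $\mathrm{Fil}^{\le}\times_{\PPerf\times\mathrm{GrPerf}}\mathrm{Fil}^{\ge}$ cannot be concluded to be locally geometric from your decomposition alone. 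The same issue already bites one step earlier: your $\mathrm{Fil}^{\le}_{[a,b]}=\Fun([a,b]^{\op},\PPerf)$ bounds only the support of the filtration, not the Tor-amplitude of the entries, so it is itself only locally geometric, and your iterated pullback argument for it runs into the same diagonal problem.

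The fix, and what the paper does, is to bound both at once: fix a finite support $S\subset\ZZ$ and a Tor-amplitude window $[a,b]$ for the graded pieces, and define $\FZip^{[a,b],S}$ accordingly. Then every building block lives over the honestly $n$-geometric stack $\PPerf^{[a,b]}$ (with $n=b-a+1$), the relevant morphism and equivalence stacks are $n$-geometric by Lemma~\ref{spec sym geometric} and Lemma~\ref{equiv zariski open}, and your fiber-product assembly goes through at this bounded level to give $\FZip^{[a,b],S}$ geometric. Only after that does one pass to the filtered colimit: the paper shows the inclusions $\FZip^{\le\tau}\hookrightarrow\FZip$ are quasi-compact open and exhaust $\FZip$, yielding local geometricity. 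In short, swap the order of operations---take fiber products at the bounded (geometric) level, then the filtered colimit---and your outline becomes the paper's proof.
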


The idea of the proof is straightforward. We first look at the derived substack $\FZip^{[a,b],S}$, for a finite subset $S\subseteq \ZZ$ and $a\leq b\in\ZZ$, classifying those derived $F$-zips $(C^{\bullet},D_{\bullet},\phi,\varphi_{\bullet})$ where we fix the Tor-amplitude $[a,b]$ of all filtered pieces and $\gr^{i}C\simeq 0$ for $i\not\in S$. In this way we only have to look at the stacks classifying two chains of morphisms of modules with fixed Tor-amplitude, that have a connecting equivalence at the last entry, such that the graded pieces are equivalent after Frobenius twist. Since perfect modules with fixed Tor-amplitude, morphisms of those and equivalences of those are geometric, we conclude the geometricity of $\FZip^{[a,b],S}$.

We can also look at derived substacks $\FZip^{\leq \tau}$, for a function $\tau\colon \ZZ\times\ZZ\rightarrow \NN_{0}$ with finite support, classifying those derived $F$-zips $\Fline\coloneqq (C^{\bullet},D_{\bullet},\phi,\varphi_{\bullet})$ where the fiberwise dimensions of the $\pi_{i}(\gr^{j}C)$ are at most $\tau(i,j)$. If we have equality and the $\pi_{i}(\gr^{j}C)$ are finite projective, we call $\Fline$ homotopy finite projective of type $\tau$. By the upper semi-continuity of the dimension of fiberwise cohomology of perfect complexes, we see that $\FZip^{\leq\tau}$ is an open substack of $\FZip$ and that it is also geometric, as it is in fact open in some $\FZip^{[a,b],S}$. Writing $\FZip$ as the filtered colimit of the $\FZip^{\leq \tau}$, we deduce the theorem.

Since derived $F$-zips satisfy descent, we can glue this definition to any derived scheme $S$. There is also an \textit{ad hoc} definition in the derived scheme case, but we can show that both definitions agree.

The definition is constructed in such a way  that every proper smooth morphism $f\colon X\rightarrow S$ induces a derived $F$-zip $\underline{Rf_{*}\Omega^{\bullet}_{X/S}}$ over $S$.

\subsubsection*{Modification of filtrations and examples of derived $\boldsymbol{F}$-zips}
For the term \textit{filtration} above, we do not enforce something like a monomorphism condition on the filtration. Even though it seems natural, it actually leads to another definition of derived $F$-zips, which we call \textit{strong derived $F$-zips}. The difference between these two becomes apparent if we look at the corresponding spectral sequences.

\begin{thm}[\textit{cf.} Theorem~\ref{de Rham strong degen}]
	Let $f\colon X\rightarrow S$ be a smooth proper morphism of schemes. Let us consider the Hodge--de Rham spectral sequence
	$$
	E_1^{p,q} = R^qf_*\Omega_{X/S}^p\Longrightarrow R^{p+q}f_*\Omega_{X/S}^\bullet.
	$$
	 The derived $F$-zip $\underline{Rf_*\Omega^\bullet_{X/S}}$ is strong if and only if the Hodge--de Rham spectral sequence degenerates and $R^if_*\Omega_{X/S}^j$ is finite locally free for all $i,j\in\ZZ$.
\end{thm}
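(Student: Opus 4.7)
The plan is to realize the Hodge filtration on $Rf_*\Omega^\bullet_{X/S}$ as the image under $Rf_*$ of the stupid truncation of the relative de Rham complex, so that $C^i = Rf_*\sigma^{\geq i}\Omega^\bullet_{X/S}$, with fiber sequences
\[
C^{i+1}\longrightarrow C^i\longrightarrow \gr^i C\simeq Rf_*\Omega^i_{X/S}[-i],
\]
and to observe that the associated spectral sequence of this bounded filtered perfect complex is precisely the Hodge--de Rham spectral sequence $E_1^{p,q}=R^qf_*\Omega^p_{X/S}\Rightarrow R^{p+q}f_*\Omega^\bullet_{X/S}$. The first step is to unfold the strong condition in this setting: a bounded descending filtration of perfect complexes is strong exactly when each transition morphism $C^{i+1}\to C^i$ is a monomorphism on every cohomology sheaf, equivalently when every cofiber long exact sequence breaks into short exact sequences
\[
0\longrightarrow H^n(C^{i+1})\longrightarrow H^n(C^i)\longrightarrow H^n(\gr^i C)\longrightarrow 0
\]
with finite locally free terms.

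For the ``$\Leftarrow$'' direction, degeneration of the Hodge--de Rham spectral sequence at $E_1$ kills all differentials, which translates into the vanishing of every connecting morphism in the long exact sequences above. Combined with the assumed finite local freeness of $R^jf_*\Omega^i_{X/S}=H^{n-i}(\gr^i C)$ appearing as the graded terms, this is exactly the strongness condition on $C^\bullet$.

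For the ``$\Rightarrow$'' direction, strongness gives vanishing of all connecting morphisms in every long exact sequence, which directly forces all differentials on every page of the spectral sequence to vanish, hence degeneration at $E_1$. To deduce finite local freeness, I would argue fiberwise: base change along any $s\in S$ preserves the strong filtration, so the Hodge--de Rham spectral sequence of the fiber $X_s/k(s)$ also degenerates at $E_1$, yielding the dimension identity
\[
\dim_{k(s)}H^n_{\dR}(X_s/k(s)) = \sum_{i+j=n}\dim_{k(s)}R^jf_{s,*}\Omega^i_{X_s/k(s)}.
\]
Strongness over $S$, combined with perfectness of $Rf_*\Omega^\bullet_{X/S}$ and the upper semi-continuity of fiberwise cohomology dimensions for a flat proper family, then forces local constancy of the individual ranks $s\mapsto \dim_{k(s)} R^jf_*\Omega^i_{X/S}\otimes k(s)$. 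Grauert's cohomology-and-base-change theorem upgrades this to finite local freeness of each $R^jf_*\Omega^i_{X/S}$.

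The main obstacle will be the ``$\Rightarrow$'' direction, specifically the extraction of finite local freeness. While degeneration falls out of strongness almost immediately at the level of spectral sequences, the finite local freeness requires matching the derived strong condition on a global filtration with classical cohomology-and-base-change data: one needs that strongness genuinely pins down fiberwise Betti numbers and not merely their sums, which uses the refined structure of the derived filtration together with a careful comparison between derived base change of the filtered complex and the classical fiber de Rham complex. Precisely identifying the abstract spectral sequence of the filtered perfect complex with the classical Hodge--de Rham spectral sequence, including the higher differentials, is another technical point to nail down.
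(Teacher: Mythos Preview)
Your plan for the ``$\Leftarrow$'' direction and for deducing degeneration from strongness in the ``$\Rightarrow$'' direction matches the paper in spirit. One caveat: ``degeneration kills all differentials, hence all connecting maps vanish'' is not literally true, since $d_1$ is only the \emph{composite} of the boundary map with a projection and the higher $d_r$ are even less direct. The paper handles this via an induction down the filtration (Proposition~\ref{lem.Hodge.degen}), showing that if $C^k\to M$ is a monomorphism then so is $C^{k+1}\to C^k$, using the explicit description of $E_r^{k,q}$ as an image and the surjectivity of the maps $\phi_r$ coming from degeneracy. This is fixable in your framework, but it is not as immediate as your sketch suggests.

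The genuine gap is exactly where you flag it: deducing finite local freeness of $R^jf_*\Omega^i_{X/S}$. Your key step ``base change along any $s\in S$ preserves the strong filtration'' is false in general. Monomorphisms of perfect complexes (in the sense that each $\pi_n$ is injective) are \emph{not} stable under derived base change unless the cofibers already have locally free cohomology, which is what you are trying to prove. A toy example: over $A=\FF_p[t]$ the map $A\xrightarrow{\cdot t}A$ is a monomorphism, but after $-\otimes^L_A\FF_p$ it becomes the zero map $\FF_p\to\FF_p$. Consequently you cannot conclude degeneration of the fibral Hodge--de Rham spectral sequence, the dimension identity on fibers is unavailable, and the Grauert argument never starts. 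You have correctly located the obstacle but the proposed bridge does not hold weight.

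The paper takes a completely different route for this half. It first spreads out to reduce to a Noetherian base, using that the strongness locus is open (Lemma~\ref{lem.Hodge.red}). Then, in Proposition~\ref{lem.hodge.noeth}, it passes to a complete Noetherian local ring by faithfully flat base change and descends to the Artinian quotients $A/\mfr^n$ via the theorem on formal functions. Over an Artinian local ring, degeneration follows from a length count using the short exact sequences that strongness provides, and finite freeness of the $H^i(X,\Omega^j)$ is obtained by induction on $\length_A A$ through the Deligne--Illusie trick: the absolute Frobenius of $A$ factors through $A/\mfr^{N'}$ with $N'<\length_A A$, so the Frobenius-twisted Hodge groups are already known to be free by induction, and a comparison of lengths via the Cartier isomorphism forces equality in~\eqref{eq.hodge.lem.1}. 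This characteristic-$p$ input is the substitute for the unavailable base-change-to-a-point argument.
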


So, we do not expect the theory of strong derived $F$-zips to give us any new information if we want to consider geometric objects that do not induce classical $F$-zips. But, we can show that the derived stack of strong derived $F$-zips is open in the derived stack of derived $F$-zips. Further,  looking at very specific types of strong derived $F$-zips, we can even make a connection to classical $F$-zips.

This connection can be generalized by looking at the full sub-$\infty$-category of derived $F$-zips with degenerate spectral sequences\footnote{For any animated ring $A$, by \cite[Proposition 1.2.2.14]{HA}, a functor $F\in\Fun(\ZZ,\MMod_{A})$ with $F(n)\simeq 0$ for $n\ll 0$ induces  a spectral sequence of the form $E_{1}^{p,q}=\pi_{p+q}(\gr^{p}F)\Rightarrow \pi_{p+q}(\colim_{\ZZ} F)$. A derived $F$-zip over $A$ comes equipped with two filtrations and thus induces two such spectral sequences, which we call the spectral sequences attached to the derived $F$-zip.} such that the graded pieces attached to the filtrations have finite projective homotopy groups of type $\tau$, denoted by $\Xcal^\tau$. It is not hard to see that $\Xcal^\tau$ is equivalent to the product of classical $F$-zips of type corresponding to the components of $\tau$ (see Section~\ref{classical theory} for more details).

We note that we formulate all the results more generally for derived $F$-zips over arbitrary derived schemes of positive characteristic.

The above connection to classical $F$-zips also shows that in the case of K3-surfaces or proper smooth curves and abelian schemes, the theory of strong derived $F$-zips gives no new information. In the K3-surfaces and proper smooth curve cases, we can be more specific. Every derived $F$-zip of K3-type or proper smooth curve type is induced by a classical $F$-zip. This is because in both cases, there is only one cohomology group with a non-trivial filtration. This does not hold for abelian schemes (since they have a more complicated type), but as remarked earlier, the derived $F$-zip associated to an abelian scheme $X/A$ is completely determined by the classical $F$-zips associated to $H^{1}_{\dR}(X/A)$ (since the Hodge--de Rham spectral sequence of abelian schemes is degenerate, the Hodge cohomologies are finite projective, and we have $H^{n}_{\dR}(X/A)\cong\wedge^{n}H^{1}_{\dR}(X/A)$). Also, one can look at the moduli stack of Enriques surfaces in characteristic $2$. In this case, the Hodge--de Rham spectral sequence does not degenerate in general. Hence, we cannot directly use the theory of $F$-zips by associating to an Enriques surface its de Rham cohomology but have to use derived $F$-zips for this approach. Using the upper semi-continuity of cohomology, we can see with the theory of derived $F$-zips that the substacks classifying Enriques surfaces of type $\ZZ/2$ or $\mu_2$ are open in the moduli of Enriques surfaces and the substack classifying Enriques surfaces of type $\alpha_2$ is closed. These results give a new proof of the results of Liedtke \cite{Lied}, who does not use the derived theory. In the future, we want to use the theory of derived $F$-zips to analyze discrete invariants of morphisms with non-degenerate Hodge--de Rham spectral sequence. Further, this approach should make it easier to understand the deformation theory of such morphisms as it is naturally part of derived algebraic geometry.

\subsubsection*{Derived $\boldsymbol{F}$-zips with cup product}
Let $f\colon X\rightarrow S$ be a proper smooth morphism of schemes in positive characteristic with geometrically connected fibers of fixed dimension $n$. Further, assume the Hodge--de Rham spectral sequence associated to $f$ degenerates and the Hodge cohomologies are finite locally free. As in the classical case, there is extra structure on the de Rham hypercohomology coming from the cup product, namely a perfect pairing. For classical $F$-zips, this induces a $G$-zip structure on the $F$-zip associated to $H^{n}_{\dR}(X/S)$, for certain reductive groups over a field of characteristic $p>0$. One could try to define a derived $G$-zip, for a reductive group $G$ over a field $k$ of characteristic $p>0$, in such a way such that the cup product induces a derived $\widetilde{G}$-zip structure on the de Rham hypercohomology for some reductive group $\widetilde{G}$ over $k$. As explained in Section~\ref{derived G-zip}, the most obvious ways to generalize the theory of $G$-zips to derived $G$-zips are not quite right. The problem here is that we do not have a ``good'' way of defining derived group schemes. For example, we would like to have that the derived analogue of $\GL_n$-torsors is given by perfect complexes of \textit{Euler characteristic} $\pm n$. But, as far as we know, there is no such analogue.

Alternatively, we show that the symmetric monoidal category of classical $F$-zips over a scheme $S$ in characteristic $p>0$ is equivalent to the symmetric monoidal category of vector bundles over a certain algebraic stack $\Xfr_{S}$. The stack $\Xfr_{S}$ is given by pinching the projective line at $0$ and $\infty$ via the Frobenius morphism and then taking the quotient by the induced $\Gm_{,\FF_{p}}$-action. The category of $G$-zips over $S$ is then equivalent to the stack of $G$-torsors on $\Xfr_{S}$. To apply this construction to derived $F$-zips, we will show that perfect complexes over $\Xfr_{S}$ are precisely derived $F$-zips.
\begin{thm}[\textit{cf.}~Corollary~\ref{cor.fzip.perf}]
	Let $R$ be an $\FF_{p}$-algebra and $S$ an $R$-scheme. Then we have $$\FZip_{R}(S)\simeq \PPerf(\Xfr_{S}).$$
\end{thm}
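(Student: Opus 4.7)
The plan is to compute $\PPerf(\Xfr_{S})$ by unwinding the construction of $\Xfr_{S}$ via descent, and then to match the resulting data term-by-term with the definition of a derived $F$-zip.

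First, I would exhibit $\Xfr_{S}$ as a pushout. Since the $\Gm$-action on $\mathbb{P}^{1}_{S}$ fixes the pinching locus $\{0,\infty\}$, after taking the quotient one obtains a pushout square
\[
\begin{tikzcd}
B\Gm_{S}\sqcup B\Gm_{S}\arrow[r,"{0\sqcup\infty}"]\arrow[d,"{\Frob\sqcup\mathrm{id}}"'] & {[\mathbb{P}^{1}_{S}/\Gm]}\arrow[d]\\
B\Gm_{S}\arrow[r] & \Xfr_{S},
\end{tikzcd}
\]
whose left vertical leg encodes the identification of the fibers at $0$ and $\infty$ through the Frobenius endomorphism of $B\Gm_{S}$. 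By descent for perfect complexes along this pinching, I would obtain
\[
\PPerf(\Xfr_{S})\simeq \PPerf([\mathbb{P}^{1}_{S}/\Gm])\times_{\PPerf(B\Gm_{S})\times \PPerf(B\Gm_{S})}\PPerf(B\Gm_{S}),
\]
where the map on the left is restriction to $\{0\}\sqcup\{\infty\}$ post-composed with Frobenius pullback on the first factor.

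Next, I would apply Zariski descent along $\mathbb{P}^{1}_{S}=\mathbb{A}^{1}_{S}\cup_{\Gm_{S}}\mathbb{A}^{1}_{S}$ to get
\[
\PPerf([\mathbb{P}^{1}_{S}/\Gm])\simeq \PPerf([\mathbb{A}^{1}_{S}/\Gm])\times_{\PPerf([\Gm_{S}/\Gm])}\PPerf([\mathbb{A}^{1}_{S}/\Gm]).
\]
The two affine patches carry $\Gm$-actions of opposite weights, and the standard Rees / filtered-circle dictionary identifies $\PPerf([\mathbb{A}^{1}_{S}/\Gm])$ with bounded descending, respectively ascending, filtered perfect complexes on $S$, with restriction to $[\Gm_{S}/\Gm]\simeq S$ given by the colimit and restriction to $B\Gm_{S}$ given by the associated graded. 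Assembling both descent steps, a perfect complex on $\Xfr_{S}$ is equivalent to the data of a bounded descending filtration $C^{\bullet}$ and a bounded ascending filtration $D_{\bullet}$ of perfect complexes on $S$, an equivalence $\phi\colon \colim_{\ZZ^{\op}}C^{\bullet}\simeq \colim_{\ZZ}D_{\bullet}$ coming from the compatibility on the overlap $[\Gm_{S}/\Gm]\simeq S$, and an equivalence $\varphi_{\bullet}\colon (\gr^{\bullet}C)^{(1)}\simeq \gr^{\bullet}D$ on graded pieces. The Frobenius twist appears precisely because Frobenius pullback of a graded perfect complex on $B\Gm_{S}$ is the twist $(-)^{(1)}$. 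This is exactly the data of a derived $F$-zip over $S$, and passing to underlying $\infty$-groupoids, together with naturality in $S$, yields the asserted equivalence $\FZip_{R}(S)\simeq \PPerf(\Xfr_{S})$.

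The main obstacle will be the first descent step: the pinching pushout has closed-substack vertical legs rather than a smooth cover, so smooth descent for $\PPerf$ does not apply directly. I would address this either by producing an explicit affine Rees-type chart of $\Xfr_{S}$ whose $\Gm$-equivariant perfect modules realize the claimed pullback presentation, or by invoking a general pushout result for perfect complexes on geometric stacks along closed immersions, in the spirit of the treatment of $\PPerf$ recalled from Antieau--Gepner in the introduction. A secondary, essentially formal verification is that perfectness on $\Xfr_{S}$ is equivalent to the perfectness conditions built into the definition of derived $F$-zip, which follows from the base-change identifications on the two open charts and on the graded pieces.
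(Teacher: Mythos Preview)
Your proposal follows essentially the same route as the paper: both decompose $\Xfr_{S}$ via the pinching pushout and then via the $\mathbb{A}^{1}$-cover of $\mathbb{P}^{1}$, and both identify the resulting data with the components of a derived $F$-zip. Two points deserve comment.

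First, on your ``main obstacle'': the paper does not need a special pushout theorem for closed immersions or an explicit chart. It simply uses that $\QQCoh_{\perf}$ is the right Kan extension of a hypercomplete fpqc sheaf (Remark~\ref{descent qcoh}), so by Proposition~\ref{right kan of sheaf} it sends to a limit any diagram whose colimit becomes an equivalence after sheafification. The Ferrand pushout defining $X_{S}$ is such a diagram, and the Barr resolution handles the $\Gm$-quotient. So the obstacle you flag is resolved by a general sheaf-theoretic observation rather than anything bespoke.

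Second, there is a genuine gap in your identification of $\PPerf([\mathbb{A}^{1}_{S}/\Gm])$ with \emph{bounded} filtered perfect complexes. Perfectness on $[\mathbb{A}^{1}_{S}/\Gm]$ only gives filtrations with perfect graded pieces (this is \cite[Proposition~2.45]{GW}, as cited in Theorem~\ref{thm.B1}); it does not force boundedness. The paper obtains boundedness separately in Lemma~\ref{lem.B2}: the point is that perfectness of the restriction to $B\Gm_{S}$ means $\bigoplus_{i}\gr^{i}F$ is perfect, and on a quasi-compact base this forces almost all $\gr^{i}F$ to vanish. Without this step you also cannot pass from a single equivalence $\bigoplus_{i}(\gr^{i}C)^{(1)}\simeq\bigoplus_{i}\gr^{i}D$ of graded objects to the degreewise equivalences $\varphi_{i}$ required by the definition of a derived $F$-zip, since that passage uses that the direct sum is finite (Corollary~\ref{cor.fzip.perf} makes this explicit). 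Your ``secondary, essentially formal verification'' is thus where the actual content lies.
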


But, we lack a definition of derived groups and torsors attaching extra structure to perfect complexes. So again, we did not follow this approach further.

For completeness, we naively put the cup product structure into the definition of derived $F$-zips leading to the definition of \textit{$\dR$-zips}. This again is a sheaf, and we can explicitly analyze the projection to derived $F$-zips.

\begin{prop}[\textit{cf.}~Proposition~\ref{dR-zips}]
	The induced morphism via forgetting the pairing
	$$
	p\colon\dRZip\longrightarrow\FZip
	$$
	is smooth and locally of finite presentation; in particular, $\dRZip$ is locally geometric and locally of finite presentation.
\end{prop}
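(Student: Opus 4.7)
The plan is to analyze the forgetful morphism $p$ by computing its fibers. Let $\Fline$ be a derived $F$-zip over $A$, classified by a morphism $\Spec(A)\to\FZip$; the base change $\dRZip\times_{\FZip}\Spec(A)$ parametrizes the cup-product pairing data on $\Fline$. Unwinding the (naive) definition of a $\dR$-zip, this amounts to the $\infty$-groupoid of non-degenerate perfect pairings $b\colon \Fline\otimes\Fline\to \mathbf{1}[n]$ in the symmetric monoidal $\infty$-category $\FZip(A)$, where $\mathbf{1}$ is the monoidal unit and $[n]$ encodes the shift coming from the relative dimension.

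I would first show that the space of all such morphisms, dropping the non-degeneracy condition, is representable by a derived affine scheme $\mathbb{V}_{\Fline}$ over $\Spec(A)$ that is smooth and locally of finite presentation. The $\infty$-category $\FZip(A)$ arises as a finite homotopy limit of functor $\infty$-categories $\mathrm{Fun}(\ZZ,\MMod_A^{\textup{perf}})$ and $\mathrm{Fun}(\ZZ^{\mathrm{op}},\MMod_A^{\textup{perf}})$, pasted via the identification $\phi$ of colimits and the Frobenius-twisted identification $\varphi_\bullet$ of graded pieces. Consequently, the mapping space $\mathrm{Map}_{\FZip(A)}(\Fline\otimes\Fline,\mathbf{1}[n])$ is a finite limit of mapping spaces between perfect $A$-modules, each of which is representable by a linear derived stack $\mathbb{V}(-)$ over $\Spec(A)$. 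Such linear stacks attached to perfect complexes of the relevant Tor-amplitude are smooth and locally of finite presentation, and both properties are preserved under finite limits.

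Next I would verify that non-degeneracy cuts out an open substack. A pairing $b$ is non-degenerate precisely when the adjoint morphism $\Fline\to\underline{\mathrm{Hom}}_{\FZip(A)}(\Fline,\mathbf{1}[n])$ is an equivalence in $\FZip(A)$, which can be checked on the underlying perfect $A$-module and on each graded piece. Since the equivalence locus of morphisms of perfect $A$-modules is open, the fiber of $p$ is an open substack of $\mathbb{V}_{\Fline}$, and hence is smooth and locally of finite presentation over $\Spec(A)$. Because smoothness and local finite presentation are checked after affine base change, this establishes the same properties for $p$. Combined with Theorem~\ref{fzip local geometric plus}, which gives that $\FZip$ is locally geometric and locally of finite presentation, and the fact that these properties are stable under smooth representable morphisms, we conclude the analogous assertions for $\dRZip$.

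The main obstacle I anticipate is the rigorous verification of smoothness for the linear stacks composing $\mathbb{V}_{\Fline}$. Concretely, one must argue that the mapping module between $\Fline\otimes\Fline$ and $\mathbf{1}[n]$ (after unwinding the filtration and gluing data) has the Tor-amplitude needed so that its associated linear stack is smooth rather than merely flat or locally of finite presentation. The shift by $[n]$, the boundedness of the filtrations, and the perfectness of the filtered pieces should together yield the required degree bounds; if a pointwise linear-stack argument turns out to be too delicate, an alternative route is a direct computation of the relative cotangent complex $\mathbb{L}_{\dRZip/\FZip}$, identifying it with (a shift of) this mapping module and verifying that it is finite projective in degree zero.
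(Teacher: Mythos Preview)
Your proposal rests on a misreading of the definition of a $\dR$-zip. In the paper, the extra datum attached to a derived $F$-zip $\Fline=(C^\bullet,D_\bullet,\phi,\varphi_\bullet)$ is a perfect pairing
\[
\psi\colon M\otimes_A M\longrightarrow A[a-b]
\]
on the \emph{underlying perfect module} $M\coloneqq\colim_{\ZZ^{\op}}C^\bullet$ alone (see the pullback defining $\dRZip^{[a,b],S}_\infty(A)$, which is taken over $\MMod_A^{\perf}\times\MMod_A^{\perf}$ via $(\colim[b-a],(\colim)^\vee)$). It is \emph{not} a morphism $\Fline\otimes\Fline\to\underline{\onebb}[n]$ in $\FZip_\infty(A)$. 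Consequently, the fiber of $p^{[a,b],S}$ over a point $\Spec(A)\to\FZip^{[a,b],S}$ classified by $\Fline$ is simply the space of equivalences $\Equiv(M,M^\vee[-n])$ with $n=b-a$, not a limit of mapping spaces assembled from the filtered and graded pieces. The paper then observes that $M\otimes_A M^\vee[-n]$ has Tor-amplitude in $[-2n,0]$, so Lemma~\ref{spec sym geometric} gives smoothness and $2n$-geometricity of the fiber, and Lemma~\ref{equiv zariski open} shows the equivalence locus is open in the corresponding linear stack. This is the entire argument.

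Even if one adopted your stronger reading (pairing in $\FZip$), the step ``smoothness is preserved under finite limits'' is false: a finite limit of smooth morphisms over $\Spec(A)$ is in general only quasi-smooth, with cotangent complex of Tor-amplitude in $[-1,0]$ rather than concentrated in degree $0$. Your own caveat at the end anticipates exactly this problem. So the route you sketch would not establish smoothness without a separate cotangent-complex computation, whereas the paper avoids this entirely because the fiber is already a single $\Equiv$-stack governed by one perfect module of the correct Tor-amplitude.
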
 
Depending on the Tor-amplitude of the $\dR$-zips, we can specify the properties of the above forgetful functor.
 
The derived world has another benefit. Usually, we can extend results for smooth objects to objects that are only \textit{lci} (in fact to any animated algebra via left Kan extension). In the case of the de Rham hypercohomology, we know that its lci analogue is given by the derived de Rham complex (here lci is needed to ensure perfectness of the cotangent complex). This seems like a good generalization of the de Rham hypercohomology since it comes equipped with two filtrations with Frobenius-equivalent graded pieces. But, one can show that these filtrations are not bounded in any way (see Section~\ref{de rham lci} for more details). So we would need a notion of derived $F$-zips with unbounded filtrations. But, then the obvious problem becomes the geometricity since we would have to cover an infinite amount of information with the atlas, which is not clear at all~-- geometricity is \textit{a priori} not preserved under arbitrary limits (and may not even be for cofiltered limits).

\subsection*{Structure of this paper} 

We start with a quick summary of derived algebraic geometry (see Section~\ref{sec:DAG}), \textit{i.e.}\ the theory of  \'etale sheaves on animated rings with values in spaces. Mainly, we introduce the notions of derived stacks, geometricity of morphisms and derived schemes. We also look at quasi-coherent modules over derived stacks. We end this section with a quick look at the derived stack of perfect complexes.

Next, we talk about filtrations on the derived category and introduce derived $F$-zips (see Section~\ref{sec:F-Zip}). We show that the presheaf which assigns to an animated ring the $\infty$-category of derived $F$-zips is in fact a sheaf, so a derived stack, and is even locally geometric. After the geometricity, we discuss some important substacks and try to generalize the notion of derived $F$-zips to derived schemes. We look at certain substacks that come naturally by looking at derived $F$-zips of certain type. Also, we look at the substack classifying those filtrations that are termwise monomorphisms. In particular, we show that  under some assumptions, this condition is equivalent to the degeneracy of the Hodge--de Rham spectral sequence. Finally, analogously to classical $F$-zips, we relate derived $F$-zips to perfect complexes over the (Frobenius) pinched projective line.

We finish the study of derived $F$-zips by trying to connect classical $F$-zips with derived $F$-zips (see Section~\ref{classical theory}). We show that in the case of a degenerating Hodge--de Rham spectral sequence, there is no new information coming from derived $F$-zips. Lastly, we apply our theory to the moduli of Enriques surfaces (see Section~\ref{sec:Enriques}).

We finish this paper by elaborating on the problems that appeared while trying to generalize the theory of derived $F$-zips to the case of proper lci morphisms and trying to define derived $G$-zips (see Section~\ref{sec:general}). For completeness, we also naively equip derived $F$-zips with extra structure.

In the appendix, we discuss the connection between classical $G$-zips and $G$-torsors on the (Frobenius) pinched projective line modulo $\Gm_{,\FF_{p}}$-action $\Xfr$.

\subsection*{Assumptions}
All rings are commutative with unit.

We work with the Zermelo--Frenkel axioms of set theory with the axiom of choice, and we assume the existence of inaccessible regular cardinals.

Throughout this paper, we fix some uncountable inaccessible regular cardinal $\kappa$ and the collection $U(\kappa)$ of all sets having cardinality less than $\kappa$, which is a Grothendieck universe (and as a Grothendieck universe is uniquely determined by $\kappa$) and hence satisfies the usual axioms of set theory (see \cite{universe}). When we talk about small, we mean $\Ucal(\kappa)$-small. In the following, we will use some theorems which assume smallness of the respective ($\infty$-)categories. When needed, without further mentioning it, we assume that the corresponding ($\infty$-)categories are contained in $\Ucal(\kappa)$.

If we work with families of objects that are indexed by some object, we will assume, if not further mentioned, that the indexing object is a $\Ucal(\kappa)$-small set.

\subsection*{Notation}
We work in the setting of $(\infty,1)$-categories (see~\cite{HTT}). By abuse of notation, for any $1$-category $C$, we will always denote its nerve again by $C$, unless otherwise specified.

A \textit{subcategory} $\Ccal'$ of an $\infty$-category $\Ccal$ is a simplicial subset $\Ccal'\subseteq\Ccal$ such that the inclusion is an inner fibration. In particular, any subcategory of an $\infty$-category is itself an $\infty$-category, and we will not mention this fact.
\begin{itemize}
\item $\Delta$ denotes the simplex category (see \cite[000A]{kerodon}), 
  \textit{i.e.}\ the category of finite non-empty linearly ordered sets, and $\Delta_{+}$ denotes the category of (possibly empty) finite linearly ordered sets. We denote by $\Delta_{s}$ those finite non-empty
  linearly ordered sets whose morphisms are strictly increasing functions and by $\Delta_{s,+}$ those (possibly empty) finite linearly ordered sets whose morphisms are strictly increasing functions.
	\item By an $\infty$-category, we always mean an $(\infty,1)$-category.
	\item $\SS$ denotes the $\infty$-category of small spaces (also called $\infty$-groupoids or anima).
	\item $\ICat$ denotes the $\infty$-category of small $\infty$-categories.
	\item $\Sp$ denotes the $\infty$-category of spectra.
	\item For an $E_{\infty}$-ring $A$, we denote the $\infty$-category of $A$-modules in spectra, \textit{i.e.}\ $\MMod_{A}(\Sp)$ in the notation of \cite{HA}, by $\MMod_{A}$.
	\item For any ordered set $(S,\leq)$, we denote its corresponding $\infty$-category again by $S$, where the corresponding $\infty$-category of an ordered set is given by the nerve of $(S,\leq)$ seen as a $1$-category (the objects are given by the elements of $S$ and $\Hom_S(a,b)$ equals $\ast$ if and only if $a\leq b$ and is otherwise empty). 
	\item For any set $S$, the $\infty$-category $S^{\disc}$ is the nerve of the set $S$ seen as a discrete $1$-category (the objects are given by the elements of $S$, and $\Hom_S(a,a)$ equals $\ast$ for any $a\in S$ and is otherwise empty).
	\item For any morphism $f\colon X\rightarrow Y$  in an $\infty$-category $\Ccal$ with finite limits, if it exists, we denote the functor from $\Delta_{+}$ to $\Ccal$ that is given by the \v{C}ech nerve of $f$ (see \cite[Section 6.1.2]{HTT}) by $\Cv(Y/X)_{\bullet}$.
	\item Let $\Ccal$ be an $\infty$-category with final object $\ast$. For morphisms $f\colon \ast\rightarrow X$ and $g\colon \ast\rightarrow X$,  if it exists, we denote the homotopy pullback $\ast\times_{f,X,g}\ast$ by $\Omega_{f,g}X$. If $\Ccal$ has an initial object $0$, then we denote the pullback $0\times_{X}0$ by $\Omega X$.
	\item Let $f\colon X\rightarrow Y$ be a morphism in $\SS$, and let $y\in Y$. We write $\fib_{y}(X\rightarrow Y)$ or $\fib_{y}(f)$ for the pullback $X\times_{Y}\ast$, where $\ast$ is the final object in $\SS$ (up to homotopy) and the morphism $\ast\rightarrow Y$ is induced by the element $y$, which by abuse of notation we also denote by $y$.
	\item For a morphism $f\colon M\rightarrow N$ in $\MMod_{A}$, where $A$ is some $E_{\infty}$-ring, we define $\fib(f)=\fib(M\rightarrow N)$ (resp.\ $\cofib(f)=\cofib(M\rightarrow N)$) as the pullback (resp.\ pushout) of $f$ with the essentially unique zero morphism $0\rightarrow N$ (resp.\ $M\rightarrow 0$). 
	\item  When we say that a square diagram in an $\infty$-category $\Ccal$ of the form 
	$$
	\begin{tikzcd}
		W\arrow[r,""]\arrow[d,""]& X\arrow[d,""]\\
		Y\arrow[r,""] &Z
	\end{tikzcd}
	$$
	 is commutative, we always mean that we can find a morphism $\Delta^{1}\times\Delta^{1}\rightarrow \Ccal$ of $\infty$-categories that extends the above diagram.
\end{itemize}

\subsection*{Acknowledgements}
This article is part of my PhD thesis \cite{thesis} under the supervision of Torsten Wedhorn (another extract, with more detailed notes in derived algebraic geometry, is \cite{NotesDAG}). I want to thank him for his constant help and for all his patience while listening to my questions during our countless discussions. He suggested this topic and encouraged me to learn a lot about higher algebra and derived algebraic geometry. I would also like to thank Anton G\"uthge, Manuel Hoff and Zhouhang Mao for their feedback concerning mistakes in the earlier versions, Benjamin Antieu, Adeel Khan and Jonathan Weinberger for their help concerning derived algebraic geometry and higher topos theory,
Christian Liedtke for discussions concerning Enriques surfaces and Timo Richarz for helpful discussions. Lastly, I want to thank R{\i}zacan \c{C}ilo\u{g}lu, Catrin Mair, Simone Steilberg and Thibaud van den Hove.

\section{Overview of derived algebraic geometry}
\label{sec:DAG}
The first thing that one might ask is, \textit{why derived algebraic geometry?} As explained at the beginning of Section~\ref{sec:F-Zip}, we want to define the analogue of $F$-zips, where we work with perfect complexes instead of vector bundles. One downside to the derived category is that we cannot glue morphisms in it.  This problem is resolved if we keep track of all the higher homotopies, \textit{i.e.}\ pass to the derived $\infty$-category. So, it is very natural to work with $\infty$-categories and sheaves in $\infty$-groupoids instead of groupoids.

But we can go a bit further. Instead of functors from $\textup{(Ring)}$ to $\SS$, we can work with functors from animated rings (the $\infty$-category associated to simplicial commutative rings) to $\SS$. One of the benefits of working with animated rings is that deformation theory comes very naturally. To be more specific, the cotangent complex is the complex representing derivations. This allows us to link properties of morphisms like \textit{smooth}, \textit{\'etale} and \textit{locally of finite presentation} to properties of the cotangent complex. Hence, we can use geometric properties of sheaves on animated rings with values in $\infty$-groupoids to analyze its deformation theory.

Still the question remains, \textit{what does geometry mean in this context?} This section is dedicated to this question, and we want to summarize important aspects of derived algebraic geometry following the works of To\"en--Vezzosi \cite{TV2}, Antieu--Gepner \cite{AG}, To\"en--Vaqui\'e \cite{TV2} and Lurie \cite{DAG}. We will only briefly showcase the theory of derived algebraic geometry needed for this paper. A detailed discussion, with proofs and references, can be found in the author's notes \cite{NotesDAG}. 

\subsection{Derived commutative algebra}
\label{sec:derived commutative algebra}
In the following, $R$ will be a ring.

In this subsection, we want to give a quick summary about animated rings and look at the deformation theory of animated rings.  A detailed overview with proofs and references can be found in \cite[Section~3.1]{NotesDAG}.

\subsubsection{Animated rings}
\label{sec:simplicial commutative algebras}

By $\Poly_R$ we denote the category of polynomial $R$-algebras in finitely many variables. Then the category of $R$-algebras is naturally equivalent to the category of functors from $\Poly_R^{\op}$ to $\Sets$ which preserve finite products. Applying this construction to the $\infty$-categorical case, we obtain $\AniAlg{R}$, the $\infty$-category of animated $R$-algebras.\footnote{The name ``animated $R$-algebras'' is due to Cesnavicius--Scholze \cite{CS}, who give a construction for any cocomplete $\infty$-category, called \textit{animation}. The animation of the (nerve of the) category of $R$-algebras leads to an equivalent $\infty$-category.}

\begin{defi}
	We define the $\infty$-category of \textit{animated $R$-algebras} as
	$$
		\AniAlg{R} \coloneqq \Fun_{\pi}(\Poly_R^{\op},\SS),
	$$
	        where the subscript $\pi$ denotes the full subcategory of $\Fun(\Poly_R^{\op},\SS)$ of functors that preserve finite products.
                
	If $R=\ZZ$, we say \textit{animated ring} instead of animated $R$-algebra. If $A$ is an animated ring, we also denote the \textit{$\infty$-category of animated $A$-algebras} by $\AniAlg{A}\coloneqq (\AniAlg{\ZZ})_{A/}$.
\end{defi}

As shown in \cite{SAG}, the definition of animated rings yields a functor $\theta\colon\AniAlg{R}\rightarrow\Einftycn_{R}$. Here $\Einftycn_{R}$ denotes the $\infty$-category of connective $E_{\infty}$-R-algebras, \textit{i.e.}\ connective commutative $R$-algebra objects in spectra (see \cite{HA} for more details). This functor can be seen as a ``forgetful functor'' in the sense that it forgets the strictness of the associativity in $\AniAlg{R}$. In particular, any animated $R$-algebra has an underlying connective spectrum. In this way, we are able to define homotopy groups of animated rings.

\begin{defi}
	Let $A$ be an animated ring, and let $i\in \ZZ$. Then we define the \textit{$\supth{i}$ homotopy group} as $\pi_{i}A\coloneqq \pi_{i}\theta(A)$.
\end{defi}

\begin{rem}
	We want to note that all of the above can be upgraded to the case where $R$ is an animated ring, via passage to undercategories.
\end{rem}

We conclude this subsection by starting with a bit of geometry, namely the localization of animated rings.

\begin{propdef}
\label{localization}
	For any element $f\in\pi_{0}A$,\footnote{Recall that by the construction of the fundamental group, as explained above, $\pi_{0}A$ is a commutative ring.} there is an animated ring $A[f^{-1}]$ with the property that for all $B\in\AniAlg{A}$, the simplicial set $\Hom_{\AniAlg{A}}(A[f^{-1}],B)$ is non-empty if and only if the image of $f$ under $\pi_0(A)\rightarrow\pi_0(B)$ is invertible.\footnote{In fact, we can even localize at any subset $F\in \pi_{0}A$ (see \cite{NotesDAG}).} This localization is also compatible with taking homotopy groups; \textit{i.e.}\ $\pi_{i}(A[f^{-1}])\simeq (\pi_{i}A)_{f}$. 
\end{propdef}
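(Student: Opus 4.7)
The plan is to build $A[f^{-1}]$ as a pushout in $\AniAlg{A}$, verify its universal property, and then compute its homotopy groups by flat base change.

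First, I would lift $f\in\pi_{0}A$ to a morphism $\ZZ[x]\to A$ in $\AniAlg{\ZZ}$; this is possible since $\ZZ[x]\in\Poly_{\ZZ}$ is free, so $\Hom_{\AniAlg{\ZZ}}(\ZZ[x],A)\simeq\Omega^{\infty}\theta(A)$, and the component containing $f\in\pi_{0}\theta(A)$ supplies the lift. Next, regard the classical ring-theoretic localization $\ZZ[x]\hookrightarrow\ZZ[x,x^{-1}]$ as a morphism in $\AniAlg{\ZZ}$ via the fully faithful embedding of ordinary rings, and define
$$A[f^{-1}]\;\coloneqq\; A\otimes_{\ZZ[x]}\ZZ[x,x^{-1}],$$
the pushout of $A\leftarrow\ZZ[x]\hookrightarrow\ZZ[x,x^{-1}]$ in $\AniAlg{\ZZ}$.

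For the universal property, the pushout description gives, for any $B\in\AniAlg{A}$, an equivalence
$$\Hom_{\AniAlg{A}}(A[f^{-1}],B)\;\simeq\;\Hom_{\AniAlg{\ZZ[x]}}(\ZZ[x,x^{-1}],B),$$
where $B$ is viewed as a $\ZZ[x]$-algebra via $x\mapsto f$. Flatness of $\ZZ[x]\to\ZZ[x,x^{-1}]$ causes its derived self-tensor to collapse, so this morphism is an epimorphism in $\AniAlg{\ZZ}$ and the above mapping space is either empty or contractible. Its non-emptiness I would reduce to the classical localization statement: since $\ZZ[x,x^{-1}]$ is discrete and $\RRing$ embeds fully faithfully into $\AniAlg{\ZZ}$, a factorization through $\ZZ[x,x^{-1}]$ exists precisely when the image of $x$, i.e.\ of $f$, is invertible in $\pi_{0}B$.

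For the compatibility with homotopy groups, I would invoke flat base change: because $\ZZ[x]\to\ZZ[x,x^{-1}]$ is flat, the underived and derived tensor products agree, yielding
$$\pi_{i}(A[f^{-1}])\;\simeq\;\pi_{i}(A)\otimes_{\ZZ[x]}\ZZ[x,x^{-1}]\;\simeq\;(\pi_{i}A)_{f},$$
where the second equivalence uses that $x$ acts on the $\pi_{0}A$-module $\pi_{i}A$ through multiplication by $f$.

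The main obstacle is the non-emptiness criterion in the universal property: concretely, that a morphism $A\to B$ of animated rings extends to $A[f^{-1}]$ if and only if $f$ becomes invertible in $\pi_{0}B$. I would either transport this along the forgetful functor $\theta\colon\AniAlg{\ZZ}\to\Einftycn_{\ZZ}$, where the analogous statement for connective $E_{\infty}$-rings is standard (\emph{cf.}~\cite{HA,SAG}), or argue directly that, since $\ZZ[x,x^{-1}]$ is $0$-truncated, the question reduces to the defining universal property of the classical localization of ordinary rings.
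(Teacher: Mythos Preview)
Your argument is correct, but it differs from the paper's. The paper follows \cite[Proposition~1.2.9.1]{TV2}: one performs a Bousfield localization of $\AniAlg{A}$ at the class of morphisms obtained by applying $\Sym_{A}$ to multiplication by $f$, and then defines $A[f^{-1}]$ as the image of $A$ under the resulting localization functor; the universal property and the computation of homotopy groups are read off from the general machinery of reflective localizations. Your route is instead a direct pushout construction $A[f^{-1}]\coloneqq A\otimes_{\ZZ[x]}\ZZ[x,x^{-1}]$, which is more hands-on: the universal property is immediate from the pushout, the ``empty or contractible'' dichotomy comes from $\ZZ[x]\to\ZZ[x,x^{-1}]$ being an epimorphism, and $\pi_{i}(A[f^{-1}])\simeq(\pi_{i}A)_{f}$ drops out of flat base change (equivalently, the degeneration of the Tor spectral sequence). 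The categorical-localization approach in the paper has the advantage that it extends verbatim to localizing at an arbitrary subset of $\pi_{0}A$, while your pushout picture is more elementary and makes the compatibility with $\pi_{*}$ transparent. One small point worth tightening in your write-up: the non-emptiness criterion is most cleanly justified not by ``$\ZZ[x,x^{-1}]$ is discrete'' alone but by the \'etaleness of $\ZZ[x]\to\ZZ[x,x^{-1}]$ together with Proposition~\ref{lift etale}, which forces $B\otimes_{\ZZ[x]}\ZZ[x,x^{-1}]\simeq B$ whenever $f$ is invertible in $\pi_{0}B$.
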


\begin{proof}
	The proof follows that of \cite[Proposition 1.2.9.1]{TV2}. The idea is to localize $\AniAlg{A}$ at the multiplication map by $f$ (and apply $\Sym_{A}$) and define $A[f^{-1}]$ as the image under the localization functor. For further details and the general case, where we localize at a subset of $\pi_{0}A$, see \cite[Proposition 3.13]{NotesDAG}.
\end{proof}

The localization map of animated rings should be an open immersion. But, we still need to define properties of morphisms between animated rings.

\begin{defi}
	Let $f\colon A\rightarrow B$ be a morphism between animated rings. Then $f$ is called
	\begin{enumerate}
		\item \textit{locally of finite presentation} if $B$ is a compact animated $A$-algebra through $f$,
		\item	\textit{flat} if $\pi_{0}f$ is flat and the natural map $\pi_{i}A\otimes_{\pi_{0}A}\pi_{0}B\rightarrow \pi_{i}B$ is an isomorphism for all $i\in\ZZ$, and
		\item \textit{smooth} (resp.\ \textit{\'etale}) if $f$ is flat and $\pi_{0}f$ is smooth (resp.\ \'etale).
	\end{enumerate}
\end{defi}

Now using the definition of the localization, it is not hard to see that the natural map $A\rightarrow A[f^{-1}]$ is a monomorphism and is locally of finite presentation. One can even show that it is flat. So, in particular, $A\rightarrow A[f^{-1}]$ is an \textit{open immersion},\footnote{An \textit{open immersion} of animated rings is per definition a flat, finitely presented monomorphism.} as expected.

\subsubsection{Modules over animated rings}
Let us quickly recall the notion of an $A$-module for an animated ring $A$ and list some facts that we will need later on.

The forgetful functor $\theta$ from animated rings to $E_{\infty}$-rings allows us to see any animated ring as a ring object in spectra. In particular, we can look at modules in spectra over the underlying animated ring. 

\begin{defi}
	Let $A$ be an animated ring. Then we define the \textit{$\infty$-category of $A$-modules} as $\MMod_{A}\coloneqq \MMod_{\theta(A)}(\Sp)$.
\end{defi}

Note that we could also look at the animation of modules, but as explained in \cite{CS}, this only gives the $\infty$-category of \textit{connective} modules. In particular, for a discrete animated ring $A$, we have $\MMod_{A}\simeq \Dcal(A)$, whereas the animation of $A$-modules only recovers $\Dcal(A)^{\leq 0}$. 

Further, the functor $\theta$ has a left adjoint, which induces a left adjoint to the forgetful functor $\AniAlg{R}\rightarrow \MMod^{\cn}_{R}$.

\begin{defi}
	The left adjoint to the forgetful functor $\AniAlg{R}\rightarrow \MMod^{\cn}_{R}$ is denoted by $\Sym_{R}$.
\end{defi}

The equivalence of the derived $\infty$-category with spectral modules gives us an idea how to define perfect modules and the Tor-amplitude. 

\begin{defi}
  Let $A$ be an animated ring. An $A$-module $M$ is called \textit{perfect} if $M$ is compact in $\MMod_{A}$.
  
	The Tor-amplitude of a perfect $A$-module $M$ is defined as the Tor-amplitude of $M\otimes_{A}\pi_{0}A\in\Dcal(\pi_{0}A)$.
\end{defi}

As modules over animated rings are defined as modules over the underlying ring spectrum, we can use the results of \cite{HA} to define and understand notions like \textit{projective} and \textit{flat} modules. We also have the Tor-spectral sequence relating the Tor-groups to the homotopy groups of the tensor product of modules. We do not want to go into detail and refer to \cite{HA} or \cite[Section 2.2]{NotesDAG}.

But, we want to end this section with a quick proposition found in \cite{AG} relating the notion of perfectness and Tor-amplitude, as in the classical case.

\begin{lem}
	\label{general props of Tor}
	Let $A$ be an animated $R$-algebra. Let $P$ and $Q$ be $A$-modules.
	\begin{enumerate}
		\item If\, $P$ is perfect, then $P$ has finite Tor-amplitude.
		\item If\, $B$ is an $A$-algebra and $P$ has Tor-amplitude in $[a,b]$, then the $B$-module $P\otimes_A B$ has Tor-amplitude in $[a,b]$.
		\item If\, $P$ has Tor-amplitude in $[a,b]$ and $Q$ has Tor-amplitude in $[c,d]$, then $P\otimes_A Q$ has Tor-amplitude in $[a+c,b+d]$.
		\item If\, $P,Q$ have Tor-amplitude in $[a,b]$, then for any morphism $f\colon P\rightarrow Q$, the fiber of $f$ has Tor-amplitude in $[a-1,b]$ and the cofiber of $f$ has Tor-amplitude in $[a,b+1]$.
		\item If\, $P$ is a perfect $A$-module with Tor-amplitude in $[0,b]$, with $0\leq b$, then $P$ is connective and $\pi_0P\simeq \pi_0(P\otimes_A \pi_0A)$.
		\item The $A$-module $P$ is perfect and has Tor-amplitude in $[a,a]$ if and only if\, $P$ is equivalent to $M[a]$ for some finite projective $A$-module.
		\item If\, $P$ is perfect and has Tor-amplitude in $[a,b]$, then there exists a morphism
		$$
		M[a]\longrightarrow P
		$$
		such that $M$ is a finite projective $A$-module and the cofiber is perfect with Tor-amplitude in $[a+1,b]$.
	\end{enumerate}
\end{lem}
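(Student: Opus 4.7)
My strategy is to prove the items roughly in the order (2), (3), (4), (1), (5), (7), (6), reducing most statements to classical Tor-amplitude facts over the ordinary ring $\pi_{0}A$, so that (5) is the only genuinely homotopical input. For (2) and (3) I would use the base-change identities $(P \otimes_{A} B) \otimes_{B} \pi_{0}B \simeq (P \otimes_{A} \pi_{0}A) \otimes_{\pi_{0}A} \pi_{0}B$ and $(P \otimes_{A} Q) \otimes_{A} \pi_{0}A \simeq (P \otimes_{A} \pi_{0}A) \otimes_{\pi_{0}A} (Q \otimes_{A} \pi_{0}A)$, combined with the corresponding elementary facts for complexes over a commutative ring. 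Item (4) follows from the exactness of $-\otimes_{A}\pi_{0}A$ applied to the cofiber sequence $\fib(f) \to P \to Q$ (and the identity $\cofib(f) \simeq \fib(f)[1]$), reading the bounds off the long exact sequence of homotopy. Given (4), item (1) is immediate: $A$ has Tor-amplitude $[0,0]$, and the class of $A$-modules of finite Tor-amplitude is stable under shifts, retracts and cofibers, hence contains the smallest such subcategory, namely the perfect modules.

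The main obstacle is item (5). I would first observe that every perfect $A$-module is bounded below in homotopy, since it is built from the connective module $A$ by finitely many shifts, (co)fibers and retracts, all of which preserve boundedness below. Supposing $P \neq 0$, let $a \in \ZZ$ be minimal with $\pi_{a}P \neq 0$, and invoke the Tor spectral sequence
\[
E_{2}^{s,t} = \mathrm{Tor}^{\pi_{*}A}_{s}(\pi_{*}P, \pi_{0}A)_{t} \Longrightarrow \pi_{s+t}(P \otimes_{A} \pi_{0}A),
\]
viewed in graded $\pi_{*}A$-modules with $\pi_{0}A$ placed in internal degree $0$. Since $\pi_{*}P$ lives in internal degrees $\geq a$ and $\pi_{0}A$ admits a $\pi_{*}A$-free resolution whose terms are generated in internal degrees $\geq 0$, one has $E_{2}^{s,t}=0$ for $t<a$, so in total degree $a$ only $E_{2}^{0,a} = \pi_{a}P$ contributes, as an edge term admitting no nontrivial differentials. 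Hence $\pi_{a}(P \otimes_{A} \pi_{0}A) \simeq \pi_{a}P$. If $P$ has Tor-amplitude $[0,b]$, this forces $a \geq 0$ and, applied once more at total degree $0$, gives $\pi_{0}P \simeq \pi_{0}(P \otimes_{A} \pi_{0}A)$. I expect the delicate part here is verifying the internal-degree vanishing of Tor, ruling out differentials at the edge term, and handling the convergence of the spectral sequence carefully.

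For (7), I would shift to the case $a = 0$. By (5) the module $P$ is then connective and $\pi_{0}P \simeq \pi_{0}(P \otimes_{A} \pi_{0}A)$ is finitely generated over $\pi_{0}A$ (since $P$ is perfect). Choosing a surjection $(\pi_{0}A)^{n} \twoheadrightarrow \pi_{0}P$ and lifting it to a morphism $A^{n} \to P$ (which exists because $P$ is connective), the module $M \coloneqq A^{n}$ is finite projective, and the cofiber $C$ is perfect with $\pi_{0}(C \otimes_{A} \pi_{0}A) = 0$ by construction; combined with the cofiber bound from (4) this yields Tor-amplitude $[1,b]$ for $C$. Unshifting proves (7). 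For (6), the direction ``$P \simeq M[a]$ implies Tor-amplitude $[a,a]$'' is clear; conversely, (7) supplies $M[a] \to P$ whose cofiber $C$ would have Tor-amplitude in the empty interval $[a+1, a]$, i.e.\ $C \otimes_{A} \pi_{0}A \simeq 0$. The bottom-cell argument from (5) then forces $C \simeq 0$, giving $P \simeq M[a]$ with $M$ finite projective.
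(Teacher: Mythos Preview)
The paper's own proof is a one-line citation: since $\MMod_A$ is defined via the underlying $E_\infty$-ring, the statement reduces verbatim to \cite[Proposition~2.13]{AG}. Your proposal is therefore not so much a different route as a reconstruction of the argument hidden behind that citation, and your outline for (1)--(5) is essentially the standard one and is fine.

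There is, however, a small but genuine wrinkle in your treatment of (7) and its interaction with (6). From (4) applied to $A^{n}\to P$ (both viewed as having Tor-amplitude in $[0,b]$) you only obtain Tor-amplitude $[0,b+1]$ for the cofiber $C$; the fact $\pi_{0}(C\otimes_{A}\pi_{0}A)=0$ improves the lower bound to $1$, but it does nothing for the upper bound. To get $b$ rather than $b+1$ you must use that $A^{n}$ has Tor-amplitude $[0,0]$ and run the long exact sequence directly (or equivalently write the cone over $\pi_{0}A$ as an explicit complex of finite projectives in degrees $[0,b]$ with vanishing $H_{0}$, hence quasi-isomorphic to one in degrees $[1,b]$). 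More seriously, in the boundary case $a=b$ your choice $M=A^{n}$ gives $C\otimes_{A}\pi_{0}A$ of Tor-amplitude $[1,1]$, not in the empty interval $[1,0]$, so your deduction of (6) from (7) becomes circular. The clean fix is to prove (6) independently: lift the finite projective $\pi_{0}A$-module $P\otimes_{A}\pi_{0}A$ to a finite projective $A$-module $M$, lift the identification on $\pi_{0}$ to a map $M\to P$ (possible since $P$ is connective by (5)), and observe that its cofiber has $C\otimes_{A}\pi_{0}A\simeq 0$, whence $C\simeq 0$ by your bottom-cell argument. With (6) in hand, (7) for $a=b$ is immediate (take $M[a]\xrightarrow{\id}P$), and for $a<b$ your free-module argument goes through once the upper bound is handled as above.
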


\begin{proof}
	Since modules over animated rings are defined as modules over their underlying $E_\infty$-ring spectrum, this is \cite[Proposition 2.13]{AG}.
\end{proof}

\subsubsection{Deformation theory of animated ring}

As mentioned at the beginning of this section, one benefit of working with animated rings is the naturally arising deformation theory. Let us be more precise.

For any animated $R$-algebra $A$ and any connective $A$-module $M$, we can define the \textit{square zero extension of $A$ by M}, denoted by $A\oplus M$.\footnote{This is induced by the left Kan extension of the classical square zero extension; \textit{i.e.}\ for a discrete ring $A$ and an $A$-module $M$, we have that $A\oplus M$ is the direct sum with multiplication given by $(a,m)(a',m')\coloneqq (aa',am'+a'm)$.} This allows us to define derivations.

\begin{defi}
	An  \textit{$R$-linear derivation of $A$ into $M$} is a morphism $A\rightarrow A\oplus M$ over $A$. The space of derivations is denoted by $\Der_{R}(A,M)\coloneqq \Hom_{(\AniAlg{R})_{/A}}(A,A\oplus M)$.
\end{defi}

Classically, meaning if we do not work with animated rings, the space of derivations is represented by the K\"ahler differentials $\Omega^{1}_{A/R}$. In the case of animated rings, $\Der_{R}(A,M)$ is represented by a connective $A$-module $L_{A/R}$ (which is unique up to homotopy), the \textit{cotangent complex of $A$ over $R$}.

Since this construction is a natural generalization of K\"ahler differentials, it gives a more natural approach to deformation theory. In particular, if $f\colon A\rightarrow B$ is a morphism of animated rings such that $\pi_{0}f$ is finitely presented, the properties \textit{finitely presented}, \textit{smooth} and \textit{\'etale} of $f$ correspond to the properties \textit{perfect}, \textit{finite projective} and \textit{trivial} of $L_{B/A}$, respectively.

\begin{prop}
\label{prop-cotangent char}
	Let $f\colon A\rightarrow B$ be a morphism of animated rings. Assume $\pi_{0}f$ is finitely presented. Then
	\begin{enumerate}
		\item $f$ is locally of finite presentation if and only if\, $L_{B/A}$ is perfect; 
		\item $f$ is smooth if and only if\, $L_{B/A}$ is finite projective; 
		\item $f$ is \'etale if and only if\, $L_{B/A}\simeq 0$.
	\end{enumerate}
\end{prop}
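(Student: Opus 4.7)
The plan is to combine the classical characterizations at the $\pi_{0}$-level (in terms of Kähler differentials) with an obstruction/deformation argument comparing $L_{B/A}$ to $L_{\pi_{0}B/\pi_{0}A}$ via the transitivity and base-change fibre sequences of the cotangent complex, together with Nakayama-type reasoning to pass from information modulo $\pi_{0}B$ to information over $B$.

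First I would handle the ``only if'' directions of (2) and (3). By definition, $f$ smooth (resp.\ \'etale) means $f$ is flat and $\pi_{0}f$ is smooth (resp.\ \'etale). Flatness of $f$ makes the transitivity fibre sequence for $A\to B\to \pi_{0}B$, combined with the base-change identification $L_{B/A}\otimes_{B}\pi_{0}B \simeq L_{\pi_{0}B/\pi_{0}A}$ (modulo the contribution of $L_{\pi_{0}A/A}$, which is killed after one applies the appropriate transitivity step), reduce the statement to the classical fact that $\Omega^{1}_{\pi_{0}B/\pi_{0}A}$ is finite projective (resp.\ vanishes) in the smooth (resp.\ \'etale) case. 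The upgrade from ``finite projective after base change to $\pi_{0}B$'' to ``finite projective'' for $L_{B/A}$ itself follows because $L_{B/A}$ is connective and $\pi_{0}B$-Nakayama applies.

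For (1) ``only if'', I would reduce to model cases. For a polynomial algebra $A\to A[x_{1},\dots,x_{n}]$, $L_{B/A}$ is the free $B$-module on the $dx_{i}$, hence perfect. Since any locally finitely presented morphism of animated rings is (smooth-locally, or after retract) a pushout of such a free extension along an \'etale map, and since the cotangent complex is compatible with pushouts, with \'etale base change ($L_{B'/A'}\simeq L_{B/A}\otimes_{B}B'$) and with retracts, perfectness of $L_{B/A}$ follows.

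The ``if'' directions require obstruction theory: given that $\pi_{0}f$ is finitely presented and $L_{B/A}$ is perfect, one constructs $B$ as a filtered colimit of finite cell $A$-algebras, controlling the cells at each Postnikov stage by the perfectness of $L_{B/A}$; this yields compactness of $B$ in $\AniAlg{A}$, proving (1). Granted (1), if $L_{B/A}$ is in addition finite projective, the same deformation argument shows that the natural comparison $\pi_{*}A\otimes_{\pi_{0}A}\pi_{0}B\to \pi_{*}B$ is an equivalence (all obstructions lie in groups computed by the connectivity of $L_{B/A}$ above degree $0$, which now vanish), giving flatness; combined with classical smoothness of $\pi_{0}f$, which follows from $L_{\pi_{0}B/\pi_{0}A}$ being finite projective via the base change $L_{B/A}\otimes_{B}\pi_{0}B$, this yields (2). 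The case $L_{B/A}\simeq 0$ is then a special case giving (3), since the transitivity sequence forces $L_{\pi_{0}B/\pi_{0}A}\simeq 0$, hence $\pi_{0}f$ is \'etale classically, and the obstruction argument gives flatness.

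The main obstacle is the ``if'' part of (1): going from perfectness of $L_{B/A}$ to compactness of $B$ in $\AniAlg{A}$ requires an inductive cellular construction of $B$ over $A$, with the perfectness bounding the number of cells needed at each Postnikov stage. Once (1) is established, (2) and (3) follow cleanly by combining the classical case on $\pi_{0}$ with the deformation theory encoded by $L_{B/A}$ via \cite[Section~3.1]{NotesDAG}.
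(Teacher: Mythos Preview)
Your overall strategy is correct and is essentially the approach the paper has in mind (it simply cites \cite{TV2} and \cite[Proposition~2.56]{NotesDAG}): reduce to the classical $\pi_{0}$-statement via base change and transitivity, and handle the ``if'' direction of (1) by a Postnikov/cell-attachment obstruction argument controlled by the perfectness of $L_{B/A}$. Two points deserve tightening.

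First, your justification of (1) ``only if'' is misstated. A locally finitely presented morphism is \emph{not} in general ``a pushout of a free extension along an \'etale map''; that would force $L_{B/A}$ to be finite free. The correct structural input is that a compact object of $\AniAlg{A}$ is a retract of a \emph{finite cell} $A$-algebra, i.e.\ one obtained from $A$ by finitely many pushouts attaching free generators in various homotopical degrees. Perfectness of $L$ then follows by induction on the number of cells via the transitivity fibre sequence, and is preserved under retracts. Once you replace the \'etale description by the cell description, your argument goes through.

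Second, in (2) ``only if'' your Nakayama step implicitly uses that $L_{B/A}$ is already perfect (so that Tor-amplitude in $[0,0]$ after base change to $\pi_{0}B$ forces the same over $B$). This is fine, but you should invoke (1) ``only if'' first: smooth $\Rightarrow$ locally of finite presentation $\Rightarrow$ $L_{B/A}$ perfect. With that in hand, the base-change identification $L_{B/A}\otimes_{B}\pi_{0}B\simeq L_{\pi_{0}B/\pi_{0}A}$ (valid because flatness of $f$ gives $B\otimes_{A}\pi_{0}A\simeq\pi_{0}B$) together with Lemma~\ref{general props of Tor}(6) yields finite projectivity cleanly.
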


\begin{proof}
	 The proof is the same as in the model-categorical case, presented in \cite{TV2}. A detailed proof of the second statement can be found in \cite[proof of Proposition 2.56]{NotesDAG}.	
\end{proof}

Further, we want to mention that one can show that the natural truncation morphisms $A_{\leq n}\rightarrow A_{\leq n-1}$ are square zero extensions (in a suitable sense). In particular, \'etale and thus also Zariski coverings of animated rings only depend on the underlying covering on the commutative ring $\pi_{0}A$, and any such covering on $\pi_{0}A$ gives rise to a covering on $A$. 

\begin{prop}
	\label{lift etale}
	Let $A$ be an animated $R$-algebra. Then the base change under the natural morphism $A\rightarrow \pi_0A$ induces an equivalence of $\infty$-categories between \'etale $A$-algebras and \'etale $\pi_0A$-algebras.
\end{prop}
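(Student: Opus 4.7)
The plan is to reduce to the case of a single square-zero extension via the Postnikov tower of $A$, and then to invoke the deformation-theoretic characterization of étale morphisms in terms of the cotangent complex (Proposition~\ref{prop-cotangent char}).

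First I would use the fact recalled just before the statement: the Postnikov tower $A \simeq \lim_{n} A_{\leq n}$ exhibits $A$ as a cofiltered limit along maps $A_{\leq n} \to A_{\leq n-1}$, each of which is a square-zero extension by a shift of $\pi_n(A)$. Since étale morphisms are locally of finite presentation (by Proposition~\ref{prop-cotangent char}) and hence compact in $\AniAlg{A}$, the functor $\Hom_{\AniAlg{A}}(B,-)$ for $B$ étale over $A$ sends this limit of animated rings to a limit of spaces. This reduces both essential surjectivity and full faithfulness along $A \to \pi_0 A$ to the corresponding statements for each square-zero extension $A_{\leq n} \to A_{\leq n-1}$; and since $A_{\leq 0} = \pi_0 A$, iterating handles the whole tower.

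So the core of the argument becomes: for any square-zero extension $\widetilde{A} \to A$ of animated rings with fiber a connective $A$-module $M$ (placed in some positive degree), base change induces an equivalence between étale $\widetilde{A}$-algebras and étale $A$-algebras. For essential surjectivity, given $B$ étale over $A$, deformations of $B$ over the square-zero extension are classified by the space of $A$-linear derivations of $B$ into a twist of $M$, hence by maps out of $L_{B/A}$; but by Proposition~\ref{prop-cotangent char} we have $L_{B/A} \simeq 0$, so the space of lifts is contractible and a lift $\widetilde{B}$ exists uniquely. Flatness of $\widetilde{B}$ over $\widetilde{A}$ follows from flatness of $B$ over $A$ together with the square-zero extension structure, and étaleness on $\pi_0$ is automatic since $\pi_0\widetilde{A} \to \pi_0 A$ is a surjection (an isomorphism for $n\geq 1$). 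For full faithfulness, given two étale $A$-algebras $B, C$ with unique lifts $\widetilde{B}, \widetilde{C}$, the same vanishing $L_{B/A} \simeq 0$ shows that $\Hom_{\widetilde{A}}(\widetilde{B}, \widetilde{C}) \to \Hom_A(B, C)$ is an equivalence, since the fiber of this map is controlled by derivations out of $L_{\widetilde{B}/\widetilde{A}}$, which vanishes.

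The main obstacle I expect is the step that passes from finite stages of the Postnikov tower to the full limit $A = \lim A_{\leq n}$: one must check that $\widetilde{B}$-lifts produced compatibly at each stage assemble into a genuine étale $A$-algebra. This is where compactness of étale morphisms, together with the fact that the class of étale morphisms is closed under suitable limits and descends through the Postnikov system, does the work. A secondary technical point to be careful about is justifying that one can build the lift $\widetilde{B}$ as an animated ring (not merely as a connective $E_\infty$-algebra); this uses that the forgetful functor $\theta$ from animated rings to connective $E_\infty$-rings preserves the relevant square-zero extensions, so that the deformation-theoretic calculation carried out on the spectral side lifts back to $\AniAlg{\widetilde{A}}$.
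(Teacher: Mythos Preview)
The paper does not give its own proof here: it simply cites \cite[Proposition~5.2.3]{CS}. Your proposal, by contrast, sketches the actual argument behind that citation, and the sketch is essentially correct and standard. The reduction via the Postnikov tower to square-zero extensions is exactly the mechanism the paper alludes to in the paragraph preceding the proposition, and the vanishing of $L_{B/A}$ for \'etale $B$ (Proposition~\ref{prop-cotangent char}) is indeed what makes both existence and uniqueness of lifts along each square-zero extension automatic.

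One small comment on the limit step you flag as the main obstacle: rather than appealing to compactness of \'etale algebras in $\AniAlg{A}$ (which is true but slightly indirect here), it is cleaner to argue directly that the inverse limit $\widetilde{B} \coloneqq \lim_n B_n$ of the compatible lifts is \'etale over $A$. Since each $B_n$ is flat over $A_{\leq n}$, one has $\pi_i B_n \cong \pi_i A_{\leq n} \otimes_{\pi_0 A} B_0$, and these identifications are compatible along the tower; taking the limit recovers $\pi_i \widetilde{B} \cong \pi_i A \otimes_{\pi_0 A} B_0$, which is exactly flatness of $\widetilde{B}$ over $A$ together with $\pi_0\widetilde{B} = B_0$. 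Your secondary concern about working in animated rings rather than $E_\infty$-rings is also well placed, though in practice the argument goes through verbatim since square-zero extensions and the cotangent complex are defined natively in $\AniAlg{R}$ (cf.\ the discussion around Definition~\ref{defi.cotangent.global}).
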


\begin{proof}
See \cite[Proposition 5.2.3]{CS}.
\end{proof}

\subsection{Derived algebraic geometry}
\label{sec:der.alg.geo}
In this section, we want to give a quick overview of derived algebraic geometry. It is important to note that we are following the definitions of \cite[Sections~1 and~2.2]{TV2} as there are different approaches to derived algebraic geometry. For example, there are different results depending on the points of the stacks. The theory differs if we replace animated rings with DG-rings or (connective) $E_{\infty}$-rings. We are interested in animated rings since we want to work in positive characteristic and that setting seems to be the one most suited for this application. The main reason is the Frobenius, which naturally exists for animated rings in positive characteristic. For $E_{\infty}$-rings, there is no clear notion of a Frobenius. This is because we would need to find a Frobenius map that is homotopy coherent on all levels, and since animated rings have ``more'' structure, the ``on the nose'' definition on simplicial commutative rings gives us a Frobenius morphism on animated rings. The difference between animated rings and DG-rings only becomes visible in positive characteristic. Via the Dold--Kan
correspondence, one gets a functor from animated rings to DG-rings, but one can show that the image of this functor takes values in DG-rings that naturally have a PD-structure.

It is important to note that To\"en--Vezzosi develop the theory of derived algebraic geometry for animated rings in terms of model categories. But, using the forgetful map between animated rings and connective $E_{\infty}$-rings, we can use the results of Lurie in \cite{SAG} to understand the theory purely in $\infty$-categorical terms, without much effort (as the difficult part was done by Lurie).

For this section, we will follow \cite[Section~2]{TV2}, \cite{AG} and the lecture notes of Adeel Khan \cite{Khan}. We will not prove any of the assertions in this section and refer to the notes of the author \cite[Section 4.2]{NotesDAG} for details.

\subsubsection{Affine derived schemes}

\label{sec:affine derived schemes}
In the following, $R$ will be a ring and $A$ an animated $R$-algebra.

Let us define the \'etale and fpqc topology.

\begin{propdef}
	Let $B$ an animated $A$-algebra. 
	\begin{enumerate}
	\item There exists a Grothendieck topology on $\AniAlg{A}^{\op}$,
          called the \textup{fpqc topology}, which can be described as follows:  A sieve $($see \cite[Definition 6.2.2.1]{HTT}$)$ $\Ccal\subseteq (\AniAlg{A}^{\op})_{/B}\simeq\AniAlg{B}^{\op}$ is a covering sieve if and only if it contains a finite family $(B\rightarrow B_{i})_{i\in I}$ for which the induced map $B\rightarrow \prod_{i\in I}B_{i}$ is faithfully flat.
		\item There exists a Grothendieck topology on the full subcategory $(\AniAlg{A}^{\et})^{\op}$ of \'etale $A$-algebras, called the \textup{\'etale topology}, which can be described as follows:  A sieve $\Ccal\subseteq (\AniAlg{A}^{\et})^{\op}_{/B}\simeq(\AniAlg{B}^{\et})^{\op}$ is a covering sieve if and only if it contains a finite family $(B\rightarrow B_{i})_{i\in I}$ for which the induced map $A\rightarrow \prod_{i\in I}B_{i}$ is faithfully flat $($and \'etale, which is automatic$)$.
	\end{enumerate}
\end{propdef}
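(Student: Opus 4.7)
The plan is to verify the three Grothendieck topology axioms for the families of sieves described: (T0) the maximal sieve is covering, (T1) covering sieves are stable under pullback, and (T2) covering sieves are transitive. In both the fpqc and \'etale cases, a sieve on $B$ is declared covering precisely when it contains some finite family $\{B \to B_i\}_{i \in I}$ with $B \to \prod_{i \in I} B_i$ faithfully flat (and, in the \'etale case, with each $B \to B_i$ \'etale).

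Axiom (T0) is immediate, as the identity $B \to B$ is both faithfully flat and \'etale. For (T1), given such a generating family and a morphism $B \to B'$ in $\AniAlg{A}$, I would show that $\{B' \to B_i \otimes_B B'\}_{i \in I}$ is a covering family on $B'$. The key reduction uses the fact that when $B \to B_i$ is flat, the Tor spectral sequence collapses to give $\pi_n(B_i \otimes_B B') \simeq \pi_n B_i \otimes_{\pi_0 B} \pi_0 B'$. This identification reduces the verification of flatness, faithful flatness, and \'etaleness of $B' \to \prod_i (B_i \otimes_B B')$ to the corresponding classical statements on ordinary rings, combined with the defining compatibility of higher homotopy groups with flat base change. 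Axiom (T2) reduces analogously to the fact that the composition of two faithfully flat (resp.\ \'etale) morphisms of animated rings is again faithfully flat (resp.\ \'etale), which in turn follows from the classical statement on $\pi_0$ together with the flatness condition on higher homotopy.

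The main difficulty here is not a deep theorem but careful bookkeeping: one must check that the definitions of flat, faithfully flat, and \'etale for morphisms of animated rings interact correctly with composition and pushouts in $\AniAlg{A}$. In the \'etale case, Proposition~\ref{lift etale} affords a particularly clean shortcut, since it identifies the \'etale site of $A$ with the classical \'etale site of $\pi_0 A$, so the entire topology can be transported from the classical case at no further cost. For the fpqc topology no such wholesale reduction is available, but each axiom can still be verified by arguing on each homotopy group separately, using the flatness hypothesis to collapse the relevant Tor terms; the finiteness of the generating family keeps the finite-product condition on $\prod_i B_i$ well-behaved throughout.
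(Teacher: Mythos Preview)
Your approach is sound but differs from the paper's. The paper does not verify the Grothendieck topology axioms directly; instead it invokes the forgetful functor $\theta\colon \AniAlg{A}\to \CAlg^{\cn}_{A}$ to connective $E_\infty$-rings and transfers the fpqc and \'etale topologies already constructed by Lurie in \cite{SAG}. Since flatness and \'etaleness of animated ring maps are \emph{defined} through $\theta$, the covering sieves match up automatically, and nothing further needs to be checked. Your direct verification of (T0)--(T2) is more elementary and self-contained, at the cost of redoing work that is absorbed into the citation; the paper's route is shorter but depends on the heavy machinery of \cite{SAG}. For the \'etale topology your shortcut via Proposition~\ref{lift etale} is essentially the same in spirit as the transfer argument.

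One small slip: when $B\to B_i$ is flat, the Tor spectral sequence gives $\pi_n(B_i\otimes_B B')\simeq \pi_0 B_i\otimes_{\pi_0 B}\pi_n B'$, not $\pi_n B_i\otimes_{\pi_0 B}\pi_0 B'$ as you wrote. The corrected formula is exactly what is needed to verify that $B'\to B_i\otimes_B B'$ satisfies the flatness condition $\pi_n(B_i\otimes_B B')\simeq \pi_n B'\otimes_{\pi_0 B'}\pi_0(B_i\otimes_B B')$, so the argument goes through once the indices are placed correctly.
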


\begin{proof}
	This follows using the results from \cite{SAG} and the forgetful functor between animated rings and $E_{\infty}$-rings. For further details, see \cite[Proposition 4.1]{NotesDAG}.
\end{proof}

We use the functorial view of schemes to define derived affine schemes. As expected, derived affine schemes are sheaves for the \'etale topology.

\begin{defrem}
  An \textit{affine derived scheme over $A$} is a functor from $\AniAlg{A}$ to spaces (\textit{i.e.}\ a presheaf on $\AniAlg{A}^{\op}$) which is equivalent to $\Spec(B)\coloneqq \Hom_{\AniAlg{A}}(B,-)$ for some $B\in\AniAlg{A}$.
  
  The presheaf $\Spec(B)$ is in fact a sheaf for the fpqc topology, as we can see using that the fpqc topology on $E_{\infty}$-algebras is subcanonical (see \cite[Theorem~D.6.3.5]{SAG}).
\end{defrem}

We can immediately define properties of affine derived schemes using their underlying animated ring morphisms.

\begin{defi}
	Let $\Pbf$ be one of the following properties of a morphism of animated rings: \textit{flat, smooth, \'etale, locally of finite presentation}. We say that a morphisms of affine derived schemes $\Spec(B)\rightarrow \Spec(C)$ has property $\Pbf$ if the underlying homomorphism $C\rightarrow B$ has $\Pbf$.
\end{defi}

\begin{rem}
	Let us note that the above properties of morphisms of affine derived schemes are stable under equivalences, compositions and pullbacks and are \'etale local on the source and target. 
\end{rem}

\begin{defi}
  For a discrete ring $A$, we define
  $$
  \Spec(A)_{\cl}\coloneqq \Hom_{\Ring}(A,-)\colon \Ring\longrightarrow \Sets
  $$
  to be its underlying classical scheme. We will abuse notation and denote the underlying locally ringed space of $\Spec(A)_{\cl}$ the same.
\end{defi}

\begin{remark}
The notation $(-)_{\cl}$ is introduced since even for a discrete ring $A$, the corresponding derived stack $\Spec(A)$ is a sheaf with values in spaces. Thus, for a (possibly non-discrete) animated ring $B$, the space $\Hom_{\Ani}(A,B)$ need not to be discrete; \textit{e.g.} we have that $\Hom_{\Ani}(\ZZ[X],B)\simeq \Omega^\infty B$. But, for example if we restrict ourselves to discrete rings $C$, we have $\Hom_{\Ani}(A,C)\simeq \Hom_{\Ring}(\pi_0A,C)$ by adjunction, even when $A$ is not discrete.
\end{remark}

\subsubsection{Geometric stacks}\label{sec:geometric-stacks}

In this section, we mostly follow \cite{AG} and \cite{TV2}. 

\begin{defi}
	Let $A$ be an animated ring. A \textit{derived stack over $A$} is a sheaf of spaces on $(\AniAlg{A}^{\et})^{\op}$.
	We denote the $\infty$-category of derived stacks over $A$ by $\dSt_A$. 
	If $A=\ZZ$, we simply say derived stack and denote the $\infty$-category by $\dSt$.
\end{defi}

Next we define what a ``geometric'' derived stack is. This terminology is due to Simpson, who himself tried to understand sheaves of rings taking values in $n$-groupoids. For this, he defined inductively the term ``n-(algebraic) geometric'', which describes \'etale sheaves that have an atlas that is less truncated than before and  again has an atlas by sheaves that are less truncated; \textit{i.e.}\ an $n$-geometric stack is a sheaf with a smooth atlas given by $(n-1)$-geometric sheaves. So in the end, one can understand $n$-geometric sheaves by understanding inductively sheaves that are less truncated (or more affine).

\begin{defi}[\textit{cf.} \protect{\cite[Definition 1.3.3.1]{TV2}}]
	We will define a geometric morphism inductively.
	\begin{enumerate}
	\item A derived stack is \textit{$(-1)$-geometric} or \textit{affine} if it is equivalent to an affine derived scheme.
          
	  A morphism of derived stacks $X\rightarrow Y$ is \textit{$(-1)$-geometric} or \textit{affine} if for all affine schemes $\Spec(A)$ and all $\Spec(A)\rightarrow Y$, the base change $X\times_{Y}\Spec(A)$ is affine.
          
	  A $(-1)$-geometric morphism of derived stacks $X\rightarrow Y$ is \textit{smooth} if for all affine derived schemes $\Spec(A)$ and all morphisms $\Spec(A)\rightarrow Y$, the base change morphism $\Spec(B)\simeq X\times_Y\Spec(A)\rightarrow \Spec(A)$ corresponds to a smooth morphism of animated rings.
        \end{enumerate}
        
		Now let $n\geq 0$.
        \begin{enumerate}[resume]
	      \item An \textit{$n$-atlas} of a derived stack $X$ is a family $(\Spec(A_i)\rightarrow X)_{i\in I}$ of morphisms of derived stacks such that
		\begin{enumerate}
			\item each $\Spec(A_i)\rightarrow X$ is $(n-1)$-geometric and smooth, and
			\item the induced morphism $\coprod \Spec(A_i)\rightarrow X$ is an effective epimorphism.
		\end{enumerate}	
		A derived stack is called \textit{$n$-geometric}, if 
		\begin{enumerate}
			\item it has an $n$-atlas, and
			\item the diagonal $X\xrightarrow{\Delta}X\times X$ is $(n-1)$-geometric.
		\end{enumerate}
	      \item A morphism $X\rightarrow Y$ of derived stacks is called \textit{$n$-geometric} if for all affine derived schemes $\Spec(A)$ and all morphisms $\Spec(A)\rightarrow Y$, the base change $X\times_Y \Spec(A)$ is $n$-geometric.
                
		An $n$-geometric morphism $X\rightarrow Y$ of derived stacks is called \textit{smooth} if for all affine derived schemes $\Spec(A)$ and all morphisms $\Spec(A)\rightarrow Y$, the base change $X\times_Y \Spec(A)$ has an $n$-atlas given by a family of affine derived schemes $(\Spec(A_i))_{i\in I}$ such that the induced morphisms $A\rightarrow A_i$ are smooth.
	\end{enumerate}
	We call a morphism of derived stacks \textit{geometric} if it is $n$-geometric for some $n\geq -1$.
\end{defi}

\begin{defi}
	\label{def P-geometric}
	Let $\Pbf$ be a property of affine derived schemes that is stable under equivalences, pullbacks and compositions and is smooth local on the source and target. Then we say that a morphism of derived stacks $X\rightarrow Y$ has $\Pbf$ if it is geometric and for an affine $(n-1)$-atlas $(U_i)_{i\in I}$ of the pullback along an affine derived scheme $\Spec(B)$, the corresponding morphisms $U_i\rightarrow \Spec(B)$ of affine schemes have $\Pbf$.
\end{defi}

\begin{lem}
	The properties ``locally of finite presentation'', ``flat'' and ``smooth'' of morphisms of affine derived schemes satisfy the conditions of Definition~\ref{def P-geometric}.
\end{lem}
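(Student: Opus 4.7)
The plan is to verify the five required conditions---stability under equivalences, composition, pullback, and smooth locality on source and on target---uniformly for the three properties by reducing, in each case, to the corresponding assertion for morphisms of animated rings, where the property is controlled either by the homotopy groups or by the cotangent complex (Proposition~\ref{prop-cotangent char} and Lemma~\ref{general props of Tor}).

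Stability under equivalences is immediate: being ``compact in $\AniAlg{A}$,'' the $\pi_*$-flatness criterion, and its smooth upgrade all depend only on the homotopy class of a morphism. For composition, I would chain the respective characterizations. Flatness composes by identifying the iterated tensor products $\pi_*A\otimes_{\pi_0 A}\pi_0 C\simeq \pi_*B\otimes_{\pi_0 B}\pi_0 C\simeq \pi_*C$, together with the classical composition of $\pi_0$-flatness; smoothness then reduces to the flat case plus composition of smoothness on $\pi_0$; and locally of finite presentation follows from Proposition~\ref{prop-cotangent char}(1) together with the transitivity fiber sequence
$$
L_{B/A}\otimes_B C\longrightarrow L_{C/A}\longrightarrow L_{C/B}
$$
and Lemma~\ref{general props of Tor}(4). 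Pullback stability is again reduced to the animated-ring level by the base-change formula $L_{B\otimes_A A'/A'}\simeq L_{B/A}\otimes_A A'$ combined with Lemma~\ref{general props of Tor}(2), together with the obvious compatibility of $\pi_*$ with flat base change.

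The substantive content is smooth locality. For smooth locality on the target, given a faithfully flat smooth morphism $A\to A'$ and some $A\to B$ such that $A'\to B\otimes_A A'$ has the property, I would argue as follows: flatness descends along faithfully flat maps via fpqc descent of flat modules, obtained by passage through the forgetful functor to $\Einftycn_R$ and \cite[Theorem~D.6.3.5]{SAG}; smoothness then follows by combining flat descent with classical descent of smoothness on $\pi_0$; and perfectness of $L_{B/A}$ descends fpqc by the same descent principle for perfect modules, giving the finitely presented case after adding the classical descent of finite presentation on $\pi_0$. Smooth locality on the source is controlled by the transitivity fiber sequence above, applied to a smooth cover $B\to B'$, noting that $L_{B'/B}$ is then finite projective by Proposition~\ref{prop-cotangent char}(2); this lets me transfer perfectness or projectivity of $L_{B'/A}$ to $L_{B/A}$ after using faithful flatness of $B\to B'$ to reflect module-theoretic properties. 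Flatness locality on the source follows directly from the $\pi_*$-criterion and classical faithfully-flat descent on $\pi_0$.

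The main obstacle I anticipate is smooth locality on the source for ``locally of finite presentation,'' where one must simultaneously descend perfectness of the cotangent complex along the smooth cover and control the $\pi_0$-level assertion that $\pi_0 B$ is finitely presented over $\pi_0 A$. The former is settled by fpqc descent of perfect modules combined with the transitivity sequence and the projectivity of $L_{B'/B}$, and the latter by the classical smooth descent of finite presentation; the delicate point is simply their compatible combination. All of these verifications are carried out in detail in \cite[Section~4.2]{NotesDAG}, to which I would refer the reader for the routine arguments.
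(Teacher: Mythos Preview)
Your proposal is correct and follows essentially the same approach as the paper: the paper's proof consists of the single sentence ``This follows from the characterizations in Proposition~\ref{prop-cotangent char} and the definitions'' together with a reference to \cite[Lemma~4.14]{NotesDAG}, and your argument is precisely an elaboration of that verification via the cotangent complex characterizations and the transitivity fiber sequence. The only difference is that you supply the details where the paper defers them entirely to the external notes.
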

\begin{proof}
	This follows from the characterizations in Proposition~\ref{prop-cotangent char} and the definitions. See \cite[Lem\-ma~4.14]{NotesDAG} for further details.
\end{proof}

\begin{rem}
  We want to note that the property \textit{\'etale} is not smooth local on the base since if it were smooth local in our context, then it would be smooth local in the classical theory of schemes, which it is not. This, in particular, means that one has to be careful when defining \'etale morphisms of derived stacks. A discussion and definition can be found in \cite[Definition 4.11]{NotesDAG} since this was not treated carefully in \cite{TV2}, which  led to some errors in the definition given there. 
  
	The basic idea is to define \'etale morphisms similarly to how it is done for algebraic stacks, where we only consider morphisms that are ``DM'' (again, we refer to \cite{NotesDAG}).
\end{rem}

We next define open and closed immersions of derived stacks. These seem natural, except that for closed immersions, we do not impose any monomorphism condition. This is due to the fact that monomorphisms have a vanishing cotangent complex. Since many interesting closed immersions (such as regular immersions) have non-vanishing cotangent complex, we omit this property in the definition.

\begin{defi}
	A morphism of derived stacks $X\rightarrow Y$ is
	\begin{enumerate}
		\item an \textit{open immersion} if it is  flat and locally of finite presentation and is a monomorphism, where flat is in the sense of Definition~\ref{def P-geometric} and monomorphism means $(-1)$-truncated in the sense of \cite{HTT}; \textit{i.e.}\ the homotopy fibers of $X\rightarrow Y$ are either empty or contractible;  
		\item	a \textit{closed immersion} if it is affine and for any $\Spec(B)\rightarrow Y$, the corresponding morphism $X\times_Y\Spec(B)\simeq \Spec(C)\rightarrow \Spec(B)$ induces a surjection $\pi_0B\rightarrow\pi_0C$ of rings.
	\end{enumerate}
\end{defi}

Let us give an important example of an open immersion of derived stacks.

\begin{lem}
	Let $A$ be an animated $R$-algebra, and let $f\in \pi_0A$ be an element. Then the inclusion $j\colon \Spec(A[f^{-1}])\hookrightarrow \Spec(A)$ is an open immersion.
\end{lem}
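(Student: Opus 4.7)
The plan is to verify directly the two conditions in the definition of an open immersion of derived stacks for $j$: that $j$ is flat and locally of finite presentation, and that $j$ is a monomorphism. Note that $j$ is $(-1)$-geometric, since both source and target are affine, so flatness and local finite presentation in the sense of Definition~\ref{def P-geometric} amount to the corresponding properties of the underlying morphism $A\to A[f^{-1}]$ of animated rings.

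First, I would handle the flatness and local finite presentation. Both facts were already asserted in the paragraph following Proposition and Definition~\ref{localization}, so strictly speaking this step only requires citing that discussion. For completeness one can re-derive flatness from the compatibility $\pi_{i}(A[f^{-1}])\simeq (\pi_{i}A)_{f}$ listed in Proposition and Definition~\ref{localization}: $\pi_{0}A\to \pi_{0}(A[f^{-1}])=(\pi_{0}A)_{f}$ is a classical localization, hence flat, and the isomorphism $\pi_{*}A\otimes_{\pi_{0}A}\pi_{0}(A[f^{-1}])\cong \pi_{*}(A[f^{-1}])$ is exactly the flatness condition on animated rings. Local finite presentation follows from the construction of $A[f^{-1}]$ as an $\infty$-categorical localization obtained by inverting a single element, which is compact in the slice category.

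The substantive step is the monomorphism condition. By definition, I have to show that for every animated ring $B$ with a map $\Spec(B)\to \Spec(A)$, the homotopy fiber of $j$ over this point, namely $\Map_{\AniAlg{A}}(A[f^{-1}],B)$, is either empty or contractible. The defining universal property in Proposition and Definition~\ref{localization} gives exactly the ``empty or non-empty'' dichotomy, according to whether the image of $f$ in $\pi_{0}B$ is invertible. To upgrade non-emptiness to contractibility, the cleanest approach is to show that $A\to A[f^{-1}]$ is an epimorphism in $\AniAlg{\ZZ}$, equivalently that the multiplication map
\[
A[f^{-1}]\otimes_{A}A[f^{-1}]\longrightarrow A[f^{-1}]
\]
is an equivalence: once this holds, any two maps $A[f^{-1}]\rightrightarrows B$ over $A$ agree canonically, forcing the mapping space to be contractible when inhabited.

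The main obstacle is therefore establishing that the multiplication map above is an equivalence in $\AniAlg{A}$. I would check this on homotopy groups. Since $A\to A[f^{-1}]$ is flat (just proved), the Tor spectral sequence for $A[f^{-1}]\otimes_{A}A[f^{-1}]$ collapses to give
\[
\pi_{*}\bigl(A[f^{-1}]\otimes_{A}A[f^{-1}]\bigr)\cong (\pi_{*}A)_{f}\otimes_{\pi_{*}A}(\pi_{*}A)_{f}\cong (\pi_{*}A)_{f}\cong \pi_{*}(A[f^{-1}]),
\]
where the middle identification is the classical idempotency of localization of graded rings, and the outer identifications use Proposition and Definition~\ref{localization}. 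Comparing with $\pi_{*}(A[f^{-1}])$ shows that the multiplication map is an equivalence, so $A[f^{-1}]$ is $A$-epi, which completes the verification that $j$ is a monomorphism and therefore an open immersion.
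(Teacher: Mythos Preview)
Your proposal is correct and follows the same overall strategy as the paper: verify that $A\to A[f^{-1}]$ is flat, locally of finite presentation, and a monomorphism. The paper itself does not give a detailed proof of this lemma; it only remarks (in the paragraph after the definition of properties of morphisms of animated rings) that ``using the definition of the localization, it is not hard to see that the natural map $A\rightarrow A[f^{-1}]$ is a monomorphism and is locally of finite presentation,'' and that flatness can also be shown.

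The one genuine difference is in how the monomorphism condition is justified. You prove it by establishing the idempotency $A[f^{-1}]\otimes_{A}A[f^{-1}]\simeq A[f^{-1}]$ via flatness and the Tor spectral sequence, using only the weak universal property stated in Proposition and Definition~\ref{localization} (non-emptiness of the mapping space). The paper's intended argument, by contrast, appeals to the construction of $A[f^{-1}]$ as an $\infty$-categorical localization of $\AniAlg{A}$ (sketched in the proof of Proposition and Definition~\ref{localization} and detailed in \cite{NotesDAG}), which directly yields that $\Hom_{\AniAlg{A}}(A[f^{-1}],B)$ is contractible whenever it is non-empty. Your route is more self-contained relative to what is actually stated in the paper, at the cost of invoking the Tor spectral sequence; the paper's route is shorter but relies on construction details that are only referenced, not spelled out.
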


\begin{defi}
  A derived stack $X$ is \textit{locally geometric} if we can write $X$ as the filtered colimit of geometric derived stacks $X_i$ with open immersions $X_i\hookrightarrow X$.
  
	We say that a locally geometric stack $X\simeq \colim_{i\in I} X_{i}$ is \textit{locally of finite presentation} if each $X_{i}$ is locally of finite presentation. 
\end{defi}

\begin{defprop}
	For a morphism of derived stacks $f\colon X\rightarrow Y$, we define $\Im(f)$ as an epi-mono factorisation $X\twoheadrightarrow \Im(f)\hookrightarrow Y$ of $f$ $($here ``epi'' means ``effective epimorphism''\,$)$. This factorisation is unique up to homotopy.
\end{defprop}
\begin{proof}
  The existence of such a factorisation follows from \cite[Example~5.2.8.16]{HTT}.
  The uniqueness up to homotopy follows from \cite[Proposition 5.2.8.17]{HTT}.
\end{proof}

\begin{rem}
	\label{lift along affine} 
	Let $A$ be an animated ring.
	Using the above, there is a way to lift open immersions $\widetilde{U}\hookrightarrow \Spec(\pi_{0}A)$ to open immersions $U\rightarrow \Spec(A)$. The idea is to cover $\widetilde{U}$ by standard affines and lift these using Proposition~\ref{localization} (for more details, see \cite[Remark 4.42]{NotesDAG}). 
\end{rem}

We can also define derived schemes using open immersions.

\begin{defi}
	Let $X$ be a derived stack. Then $X$ is a \textit{derived scheme} if it admits a cover $(\Spec(A_i)\hookrightarrow X)_{i\in I}$ such that each $\Spec(A_i)\hookrightarrow X$ is an open immersion (in particular, $X$ is $1$-geometric\footnote{It is not hard to see that any geometric monomorphism is $0$-geometric (a more detailed discussion can be found in \cite{NotesDAG}).}).
\end{defi}

\begin{remdef}[Truncation]
	\label{truncation}
	There is an adjunction between $\Shv_{\et}(\Alg{R})$, \'etale sheaves on $\Alg{R}$ with values in $\SS$, and $\Shv_{\et}(\AniAlg{R})$, \'etale sheaves on $\AniAlg{R}$ with values in $\SS$,
	$$
		\begin{tikzcd}[column sep=1.44em] 
			 \iota\colon \Shv_{\et}(\Alg{R})\arrow[r,"", shift left =0.7]&\arrow[l,"", shift left =0.7]\Shv_{\et}(\AniAlg{R})\colon t_{0}\rlap{.}
		\end{tikzcd}
	        $$
	Let us state some facts about $\iota$ and $t_{0}$.
	\begin{enumerate}
		\item	The functor $t_0$ has a right and a left adjoint. 
		\item	The functor $t_0$ preserves geometricity (here geometricity of sheaves in $\Alg{R}$ is defined as for derived stacks; see \cite[Section 2.1.1]{TV2} for further information) and the properties flat, smooth and \'etale along geometric morphisms. 
		\item	The functor $\iota$ preserves geometricity and homotopy pullbacks of $n$-geometric stacks along flat morphisms and sends flat (resp.\ smooth, \'etale) morphisms of $n$-geometric stacks to flat (resp.\ smooth, \'etale) morphisms of $n$-geometric stacks.
	\end{enumerate}
	A proof for these statements is given in \cite[Proposition 2.2.4.4]{TV2}. A more detailed discussion can be found in \cite{NotesDAG}.
\end{remdef}

We list some properties of geometric morphisms of derived stacks. All of these statements can also be found in the $E_{\infty}$-case in \cite{AG}, and the proofs are similar. Proofs in this case can be found in \cite{NotesDAG}.

\begin{lem}
	\label{geometric base change}
	Let $X\rightarrow Z$ and $Y\rightarrow Z$ be morphisms of derived stacks. If\, $X\rightarrow Z$ is $n$-geometric, then so is $X\times_Z Y\rightarrow Y$.
\end{lem}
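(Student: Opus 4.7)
The plan is to check the definition of $n$-geometricity for a morphism directly against the base-changed morphism $X\times_Z Y \to Y$. By definition, I must verify that for every affine derived scheme $\Spec(A)$ and every map $\Spec(A)\rightarrow Y$, the base change $(X\times_Z Y)\times_Y \Spec(A)$ is $n$-geometric as a derived stack.

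The key observation is pullback pasting, which holds in the presentable $\infty$-category $\dSt$: given any test morphism $\Spec(A)\rightarrow Y$, composing with $Y\rightarrow Z$ yields a morphism $\Spec(A)\rightarrow Z$, and the standard identification of iterated fiber products gives
\[
(X\times_Z Y)\times_Y \Spec(A) \;\simeq\; X\times_Z \bigl(Y\times_Y \Spec(A)\bigr) \;\simeq\; X\times_Z \Spec(A),
\]
where on the right we use the composed structure map $\Spec(A)\rightarrow Y\rightarrow Z$. By the hypothesis that $X\rightarrow Z$ is $n$-geometric, the right-hand side is an $n$-geometric stack. Since this holds for every affine test $\Spec(A)\rightarrow Y$, the definition of $n$-geometricity of a morphism is verified for $X\times_Z Y\rightarrow Y$.

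No induction on $n$ is required: the argument is uniform in $n\geq -1$, including the base case $n=-1$, where $n$-geometric of a stack means affine and $n$-geometric of a morphism means affine, and the same pullback identification does the job. The only step that requires any attention is the pullback pasting identification, which is purely formal in the presentable $\infty$-category of derived stacks; this will be the only genuine ingredient of the proof, and it is not really an obstacle.
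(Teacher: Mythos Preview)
Your argument is correct and is exactly the standard one: the paper does not spell out a proof but defers to \cite{AG} and \cite{NotesDAG}, where the same pullback-pasting verification against the definition is carried out. Nothing more is needed.
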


\begin{lem}
	\label{geometric local}
	A morphism of derived stacks $X\rightarrow Y$ is $n$-geometric if and only if the base change under $\Spec(A)\rightarrow Y$ for any $A\in \AniAlg{R}$ is $n$-geometric.
\end{lem}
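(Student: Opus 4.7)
The plan is to observe that this lemma is essentially a tautology once one unwinds the definition of an $n$-geometric morphism, combined with the already-established base change statement in Lemma~\ref{geometric base change}. I would do the two directions separately but very briefly.

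For the ``only if'' direction, assume $f\colon X\to Y$ is $n$-geometric, and fix a morphism $\Spec(A)\to Y$. The base change $X\times_Y \Spec(A)\to \Spec(A)$ is then, by Lemma~\ref{geometric base change} applied to $f$ with test $\Spec(A)\to Y$, itself $n$-geometric. Nothing else is needed here.

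For the ``if'' direction, assume that for every affine $\Spec(A)$ together with a morphism $\Spec(A)\to Y$, the induced morphism $X\times_Y\Spec(A)\to \Spec(A)$ is $n$-geometric. Unwinding the definition of $n$-geometric morphism applied to $X\times_Y\Spec(A)\to \Spec(A)$, this means that for every $\Spec(B)\to\Spec(A)$, the further base change
$$
\bigl(X\times_Y\Spec(A)\bigr)\times_{\Spec(A)}\Spec(B)\simeq X\times_Y \Spec(B)
$$
is an $n$-geometric stack. Specializing to the identity $\Spec(A)\to\Spec(A)$ shows that $X\times_Y\Spec(A)$ is itself an $n$-geometric stack for every $\Spec(A)\to Y$, which is exactly the definition of $X\to Y$ being an $n$-geometric morphism.

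The only place where one could imagine an obstacle is checking that ``$Z\to\Spec(A)$ is $n$-geometric as a morphism'' coincides with ``$Z$ is $n$-geometric as a stack,'' but this is immediate by taking the identity $\Spec(A)\to\Spec(A)$ in one direction and by the base change stability of $n$-geometric stacks (atlases and diagonals pull back) in the other. Consequently the lemma requires no induction on $n$ and no new input beyond the definitions and Lemma~\ref{geometric base change}.
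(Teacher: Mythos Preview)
Your proof is correct. The paper does not actually prove this lemma in-text; it is one of several properties listed after the sentence ``All of these statements can also be found in the $E_{\infty}$-case in \cite{AG}, and the proofs are similar. Proofs in this case can be found in \cite{NotesDAG}.'' Your argument is exactly the kind of unwinding-of-definitions one finds in those references, so there is nothing substantive to compare.
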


\begin{lem}
	\label{geometric comp}
	Let $f\colon X\rightarrow Y$ and $g\colon Y\rightarrow Z$ be morphisms of derived stacks. If\, $f$ and $g$ are $n$-geometric, then so is $g\circ f$.
\end{lem}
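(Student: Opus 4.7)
The natural approach is induction on $n$, with Lemma~\ref{geometric base change} as the main workhorse. For the base case $n=-1$, given an affine $\Spec(A)\to Z$, the $(-1)$-geometricity of $g$ yields $Y\times_Z\Spec(A)\simeq\Spec(A')$, and then the $(-1)$-geometricity of $f$ applied to the induced morphism $\Spec(A')\to Y$ gives that $X\times_Z\Spec(A)\simeq X\times_Y\Spec(A')$ is affine, as required.

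For the inductive step, fix $\Spec(A)\to Z$ and set $Y'\coloneqq Y\times_Z\Spec(A)$, which is $n$-geometric by hypothesis on $g$. Writing $W\coloneqq X\times_Z\Spec(A)\simeq X\times_Y Y'$, Lemma~\ref{geometric base change} makes the morphism $W\to Y'$ $n$-geometric, and for any affine $\Spec(B)\to Y'$ the pullback $W\times_{Y'}\Spec(B)\simeq X\times_Y\Spec(B)$ is $n$-geometric by the $n$-geometricity of $f$. It remains to exhibit an $n$-atlas of $W$ and show that $\Delta_W\colon W\to W\times W$ is $(n-1)$-geometric. For the atlas, I would pick an $n$-atlas $(\Spec(B_i)\to Y')_{i\in I}$ of $Y'$ and, for each $i$, an $n$-atlas $(\Spec(C_{ij})\to X\times_Y\Spec(B_i))_{j\in J_i}$; the pullback $X\times_Y\Spec(B_i)\to W$ of $\Spec(B_i)\to Y'$ is smooth and $(n-1)$-geometric, so the composites $\Spec(C_{ij})\to W$ are smooth and $(n-1)$-geometric by the inductive case of the present lemma (together with its analogue for smoothness), and their coproduct is an effective epimorphism by stability of effective epimorphisms under composition and base change. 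For the diagonal, I would factor
\[
\Delta_W\colon W\xrightarrow{\Delta_{W/Y'}} W\times_{Y'}W\longrightarrow W\times W,
\]
where the first arrow is the relative diagonal of the $n$-geometric morphism $W\to Y'$~-- which is $(n-1)$-geometric by the standard fact that diagonals of $n$-geometric morphisms drop one level of geometricity~-- and the second is the base change of $\Delta_{Y'}$ (itself $(n-1)$-geometric since $Y'$ is $n$-geometric) along $W\times W\to Y'\times Y'$, hence $(n-1)$-geometric by Lemma~\ref{geometric base change}. The inductive hypothesis then yields that $\Delta_W$ is $(n-1)$-geometric.

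The main obstacle is the interlocking nature of the induction: the atlas step needs composition of smooth $(n-1)$-geometric morphisms, and the diagonal step needs both the diagonal-drops-one-level property for $n$-geometric morphisms and composition at level $n-1$. In practice one proves all these statements jointly by induction on $n$, exactly as in \cite{AG} for the spectral setting; the animated setting transfers with essentially no change via the forgetful functor to connective $E_\infty$-algebras.
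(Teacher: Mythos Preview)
Your proposal is correct and is precisely the standard joint induction on $n$ carried out in \cite{AG} (and spelled out in the animated setting in \cite{NotesDAG}), which is exactly what the paper defers to rather than giving its own proof. In particular, your handling of the base case, the atlas via composition at level $n-1$, and the factorization of the diagonal through $W\times_{Y'}W$ match the argument in those references.
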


\begin{prop}
	\label{diag geometric}
	Let $f\colon X\rightarrow Y$ be a morphism of derived stacks. Assume $X$ is $n$-geometric and the diagonal $Y\rightarrow Y\times Y$ is $n$-geometric. Then $f$ is $n$-geometric.
\end{prop}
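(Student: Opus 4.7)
The plan is to factor $f$ through its graph and transfer the hypothesis on $\Delta_Y$ via base change. Write $f = \mathrm{pr}_Y\circ \Gamma_f$, where $\Gamma_f=(\mathrm{id}_X,f)\colon X\to X\times Y$ is the graph; a routine check of the universal property shows that $\Gamma_f$ fits into the cartesian square
\begin{equation*}
\begin{tikzcd}
X \arrow[r,"f"] \arrow[d,"\Gamma_f"'] & Y \arrow[d,"\Delta_Y"] \\
X \times Y \arrow[r,"f\times \mathrm{id}_Y"'] & Y\times Y.
\end{tikzcd}
\end{equation*}
Since $\Delta_Y$ is $n$-geometric by hypothesis, Lemma~\ref{geometric base change} implies that $\Gamma_f$ is $n$-geometric.

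The next step is to show that the projection $\mathrm{pr}_Y\colon X\times Y\to Y$ is $n$-geometric. By Lemma~\ref{geometric local}, it suffices to verify this after base change along any affine $\Spec(A)\to Y$, i.e.\ to verify that $(X\times Y)\times_Y \Spec(A)\simeq X\times \Spec(A)$ is $n$-geometric. This follows by induction on $n$ from the $n$-geometricity of $X$: an $n$-atlas of $X$ base-changes along $\mathrm{pr}_X\colon X\times\Spec(A)\to X$ to a smooth effective epimorphism by affines, and the diagonal of $X\times\Spec(A)$ is, up to reordering of factors, the product $\Delta_X\times\Delta_{\Spec(A)}$, which is $(n-1)$-geometric because it is a composition of base changes of the $(n-1)$-geometric morphism $\Delta_X$ and the affine (hence $(-1)$-geometric) morphism $\Delta_{\Spec(A)}$, using Lemmas~\ref{geometric base change} and~\ref{geometric comp}. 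Once $\mathrm{pr}_Y$ is known to be $n$-geometric, the conclusion follows by applying Lemma~\ref{geometric comp} to the composition $f=\mathrm{pr}_Y\circ \Gamma_f$.

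The geometric heart of the argument is the diagonal trick encoded in the cartesian square above; it is precisely what the hypothesis on $\Delta_Y$ is designed to exploit, and everything else is formal manipulation of base changes and compositions of $n$-geometric morphisms. I expect the main technical obstacle to be the inductive step showing that products of $n$-geometric stacks with affines remain $n$-geometric: the argument is standard but requires careful tracking of how atlases and diagonals behave under products, and is the kind of bookkeeping that is easy to get subtly wrong in the $\infty$-categorical setting.
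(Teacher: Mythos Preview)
Your proof is correct and follows the standard graph-factorization argument: pull back $\Delta_Y$ to get $\Gamma_f$, show the projection $\mathrm{pr}_Y$ is $n$-geometric by checking its affine fibers $X\times\Spec(A)$ are $n$-geometric, and compose. This is exactly the approach taken in the references \cite{AG} and \cite{NotesDAG} that the paper cites in lieu of an in-text proof; the only cosmetic point is that your ``induction on $n$'' for the product $X\times\Spec(A)$ is not really needed beyond the base case, as the atlas and diagonal conditions transfer directly via Lemmas~\ref{geometric base change} and~\ref{geometric comp} once one knows that $(-1)$-geometric implies $(n-1)$-geometric.
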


\begin{cor}
	Let $X$ and $Y$ be $n$-geometric stacks. Then any morphism $X\rightarrow Y$ is $n$-geometric.
\end{cor}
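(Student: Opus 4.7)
The plan is to derive this as an immediate consequence of Proposition~\ref{diag geometric}. Given a morphism $f\colon X\to Y$ with both $X$ and $Y$ being $n$-geometric, the hypotheses of Proposition~\ref{diag geometric} ask for $X$ to be $n$-geometric (which is given) and for the diagonal $\Delta_{Y}\colon Y\to Y\times Y$ to be $n$-geometric. By definition of $n$-geometricity of $Y$, its diagonal is already $(n-1)$-geometric, so I need only argue that $(n-1)$-geometric morphisms are automatically $n$-geometric, and then invoke the proposition.

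To handle the implication ``$(n-1)$-geometric $\Rightarrow$ $n$-geometric'' for morphisms, I would proceed inductively. After base change along any $\Spec(A)\to Y\times Y$, one obtains an $(n-1)$-geometric stack $W$, so it suffices to show that $(n-1)$-geometric stacks are $n$-geometric. An $(n-1)$-geometric stack has an $(n-1)$-atlas consisting of $(n-2)$-geometric smooth affine maps, and these are in particular $(n-1)$-geometric and smooth, so the same atlas works as an $n$-atlas. Moreover, the diagonal of an $(n-1)$-geometric stack is $(n-2)$-geometric, hence $(n-1)$-geometric by the inductive hypothesis. Thus $W$ is $n$-geometric, as desired. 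The base case $n=0$ is handled similarly, using that $(-1)$-geometric (i.e.\ affine) stacks and morphisms satisfy both conditions of the $0$-geometric definition trivially (the identity provides a $(-1)$-atlas, and the diagonal of an affine scheme is affine, hence $(-1)$-geometric).

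With this stability at hand, the diagonal $\Delta_{Y}$ is $n$-geometric, and Proposition~\ref{diag geometric} directly yields that $f\colon X\to Y$ is $n$-geometric.

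I expect no real obstacle here: the entire argument is a bookkeeping exercise about the recursive definition of $n$-geometricity combined with the already-established Proposition~\ref{diag geometric}. The only mildly subtle point is the lemma that $(n-1)$-geometricity implies $n$-geometricity, but this is a standard and purely formal induction on the definitions.
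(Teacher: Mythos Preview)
Your proposal is correct and matches the paper's approach exactly: the paper's proof reads ``This follows immediately from the definitions and Proposition~\ref{diag geometric},'' and you have simply unpacked what ``the definitions'' contribute, namely that the diagonal of an $n$-geometric stack is $(n-1)$-geometric and hence $n$-geometric.
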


\begin{proof}
	This follows immediately from the definitions and Proposition~\ref{diag geometric}
\end{proof}

\begin{prop}
	\label{diag + proj geometric}
	Let $X\rightarrow Y$ be an effective epimorphism of derived stacks, and suppose that $X$ and $X\times_YX$ are $n$-geometric. Further, assume that the projections $X\times_Y X\rightarrow X$ are $n$-geometric and smooth. Then $Y$ is an $(n+1)$-geometric stack. If in addition $X$ is quasi-compact and $X\rightarrow Y$ is a quasi-compact morphism, then $Y$ is quasi-compact. Finally, if\, $X$ is locally of finite presentation, then so is $Y$.
\end{prop}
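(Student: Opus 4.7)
The plan is to verify the two defining conditions for $(n+1)$-geometricity of $Y$: existence of an $(n+1)$-atlas by affines, and $n$-geometricity of the diagonal $\Delta_Y\colon Y\rightarrow Y\times Y$. The construction of the atlas is immediate once we know that $f\colon X\rightarrow Y$ is itself $n$-geometric and smooth: take the $n$-atlas $\coprod \Spec(A_i)\rightarrow X$ of the $n$-geometric stack $X$ and compose with $f$. By Lemma~\ref{geometric comp} and the fact that smoothness is preserved under composition of geometric morphisms, each $\Spec(A_i)\rightarrow Y$ is then $n$-geometric and smooth, and the combined map is an effective epimorphism since effective epimorphisms are stable under composition.

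The heart of the proof is therefore to establish simultaneously that $\Delta_Y$ is $n$-geometric and that $f\colon X\rightarrow Y$ is $n$-geometric and smooth. I would first treat the diagonal: given $(y_1,y_2)\colon \Spec(B)\rightarrow Y\times Y$ and $P\coloneqq \Spec(B)\times_{Y\times Y} Y$, the key identification $X\times_Y X\simeq (X\times X)\times_{Y\times Y} Y$ produces, after base change along $\Spec(B)\rightarrow Y\times Y$, the pullback diagram
\[
\begin{tikzcd}
(X\times_Y X)\times_{Y\times Y}\Spec(B) \arrow[r]\arrow[d] & P \arrow[d] \\
X\times X \arrow[r] & Y\times Y.
\end{tikzcd}
\]
Since $X\times X\rightarrow Y\times Y$ is an effective epimorphism (as a product of effective epimorphisms), so is its base change $(X\times X)\times_{Y\times Y}\Spec(B)\rightarrow \Spec(B)$. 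Choosing an $n$-atlas of the $n$-geometric stack $X\times_Y X$ and using that its projections to $X$ are $n$-geometric and smooth, one refines this effective epimorphism into a smooth effective epimorphism by affines covering $P$ whose components are $(n-1)$-geometric and smooth over $P$, witnessing $P$ as $n$-geometric.

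Given that $\Delta_Y$ is $n$-geometric, to show $f\colon X\rightarrow Y$ is $n$-geometric and smooth fix $\Spec(B)\rightarrow Y$ and set $W\coloneqq X\times_Y\Spec(B)$. Pulling back the atlas of $X$ gives a smooth effective epimorphism $\coprod(\Spec(A_i)\times_Y\Spec(B))\rightarrow W$, with each map $(n-1)$-geometric and smooth. Each fiber product $\Spec(A_i)\times_Y\Spec(B)\simeq (\Spec(A_i)\times\Spec(B))\times_{Y\times Y} Y$ is a pullback of $\Delta_Y$ and is therefore $n$-geometric by the previous step; by further refining with their affine atlases one obtains an $n$-atlas of $W$ by affines smooth over $\Spec(B)$. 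The quasi-compactness and finite-presentation assertions then follow by choosing finite atlases and using that the constructed atlas of $Y$ is by finitely presented affines.

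The main obstacle is the careful bookkeeping in the diagonal step: one must invoke the identification $X\times_Y X\simeq (X\times X)\times_{Y\times Y} Y$, descend the $(n-1)$-geometric smooth atlas of $X\times_Y X$ through two compatible effective epimorphisms, and verify that all pullbacks involved remain $n$-geometric. Once this geometric descent for $P$ is established, the rest of the argument runs formally by base change (Lemma~\ref{geometric base change}) and composition (Lemma~\ref{geometric comp}).
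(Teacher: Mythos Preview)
The paper does not prove this proposition; it is among the results in Section~\ref{sec:geometric-stacks} for which the text says ``All of these statements can also be found in the $E_{\infty}$-case in \cite{AG}, and the proofs are similar. Proofs in this case can be found in \cite{NotesDAG}.'' Your outline---show $\Delta_Y$ is $n$-geometric, deduce via Proposition~\ref{diag geometric} that $f$ is $n$-geometric, show $f$ is smooth, then compose an $n$-atlas of $X$ with $f$---is indeed the standard strategy used in those references.

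However, your execution has a real gap in the smoothness step. You claim that refining the $n$-geometric stacks $\Spec(A_i)\times_Y\Spec(B)$ by their affine atlases yields affines ``smooth over $\Spec(B)$'', but an $n$-atlas of $\Spec(A_i)\times_Y\Spec(B)$ only gives affines smooth over $\Spec(A_i)\times_Y\Spec(B)$; to get smoothness over $\Spec(B)$ you would need $\Spec(A_i)\times_Y\Spec(B)\to\Spec(B)$ itself to be smooth, and that map is essentially a base change of $f$---the very morphism whose smoothness you are trying to establish. Notice that the hypothesis that the projections $X\times_Y X\to X$ are smooth is never invoked in this paragraph. The standard fix exploits the effective epimorphism directly: \'etale-locally on $\Spec(B)$ the map $\Spec(B)\to Y$ lifts through $X$, and then $X\times_Y\Spec(B)$ becomes a base change of the smooth projection $X\times_Y X\to X$. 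The same \'etale-local lifting, combined with Proposition~\ref{diag geometric} applied to the $n$-geometric morphism $X\times_Y X\to X\times X$, also repairs your diagonal argument, where the displayed square is miswritten (the bottom row should read $(X\times X)\times_{Y\times Y}\Spec(B)\to\Spec(B)$) and the passage from ``effective epimorphism onto $P$'' to ``$(n-1)$-geometric smooth affine atlas of $P$'' is not justified as written.
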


\subsection{Quasi-coherent modules over derived stacks}

In this section, we will briefly look at quasi-coherent modules over derived stacks. Again, we will not give detailed proofs of any assertion and refer to \cite[Section 4.3]{NotesDAG} for the details.

The definition of quasi-coherent modules is straightforward. For any derived stack, we want to glue the $\infty$-categories of modules along an atlas. In general, this is achieved by right Kan extension.

\begin{defi}
  Let $X$ be a presheaf on $\AniAlg{R}^{\op}$. We define the \textit{$\infty$-category of quasi-coherent modules over $X$} to be
        $$
		\QQCoh(X)\coloneqq \lim_{\Spec(A)\rightarrow X}\MMod_A.
	$$
	        An element $\Fcal\in\QQCoh(X)$ is called a \textit{quasi-coherent module over $X$} or \textit{$\Ocal_{X}$-module}. For any affine derived scheme $\Spec(A)$ and any morphism $f\colon\Spec(A)\rightarrow X$, we denote the image of a quasi-coherent module $\Fcal$ under the projection $\QQCoh(X)\rightarrow\MMod_{A}$ by $f^{*}\Fcal$.
                
	We define the $\infty$-category of \textit{perfect quasi-coherent modules over $X$} to be
	$$
		\QQCoh_{\perf}(X)\coloneqq \lim_{\Spec(A)\rightarrow X}\MMod^{\perf}_A.
	$$
	We say that a perfect quasi-coherent module $\Fcal$ over $X$ has \textit{Tor-amplitude in $[a,b]$} if for every derived affine scheme $\Spec(A)$ and any morphism $f\colon \Spec(A)\rightarrow X$, the $A$-module $f^{*}\Fcal$ has Tor-amplitude in $[a,b]$. 
\end{defi}

\begin{rem}
\label{qcoh stable}
	As limits of stable $\infty$-categories with finite limit-preserving transition maps are stable, we know that for any $X\in\Pcal(\AniAlg{R}^{\op})$, the $\infty$-category $\QQCoh(X)$ is stable.
\end{rem}

Let us quickly note that the right Kan extension of sheaves is again a sheaf (in a suitable sense). In particular, any $\Fcal\in\QQCoh(X)$, for a derived scheme $X$, can be glued from modules on an affine open cover.

\begin{prop}
	\label{right kan of sheaf}
	Let $\Ccal$ be a presentable $\infty$-category, and let $F\colon \AniAlg{R}\rightarrow \Ccal$ be a $($hypercomplete$)$ sheaf with respect to the Grothendieck topology $\tau\in\lbrace \textup{fpqc, \'etale}\rbrace$ on $\AniAlg{R}$. Let $RF$ denote the right Kan extension of $F$ along the Yoneda embedding $\AniAlg{R}\hookrightarrow\Pcal(\AniAlg{R}^{\op})^{\op}$. Further, let us denote the corresponding $\infty$-topos of $($hypercomplete$)$ $\tau$-sheaves on $\AniAlg{R}$ by $\Shv_{\tau}$.  Then for any diagram $p\colon K\rightarrow \Pcal(\AniAlg{R}^{\op})$, where $K$ is a simplicial set, and morphism $\colim_{K} X_{k}\rightarrow Y$ that becomes an equivalence in $\Shv_{\tau}$ after sheafification,\footnote{We can describe $\Shv_{\tau}$ as a localization of $\Pcal(\AniAlg{R}^{\op})$ (as seen in the proof), so we get a functor $L\colon \Pcal(\AniAlg{R}^{\op})\rightarrow \Shv_{\tau}$ left adjoint to the inclusion, which we call \textit{sheafification}.} we have that the natural map $RF(Y)\rightarrow \lim_{K} RF(X_{k})$ is an equivalence.
\end{prop}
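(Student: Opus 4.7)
The plan is to factor $RF$ through the sheafification functor and then use that sheafification preserves colimits. More precisely, I will exhibit a limit-preserving functor $\widetilde{F}\colon \Shv_{\tau}^{\op}\to \Ccal$ with $RF\simeq \widetilde{F}\circ L^{\op}$, where $L^{\op}\colon \Pcal(\AniAlg{R}^{\op})^{\op}\to \Shv_{\tau}^{\op}$ is the opposite of the sheafification. Once this factorization is available, the claim is purely formal: the hypothesis that $\colim_K X_k\to Y$ becomes an equivalence in $\Shv_\tau$ after sheafification means $\colim_K L(X_k)\xrightarrow{\sim} L(Y)$, since $L$ is a left adjoint and therefore preserves colimits; dualizing, $L^{\op}(Y)\simeq \lim_K L^{\op}(X_k)$ in $\Shv_\tau^{\op}$; finally, applying the limit-preserving $\widetilde{F}$ and substituting the factorization yields $RF(Y)\simeq \lim_K RF(X_k)$.

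To construct $\widetilde{F}$, first observe that $\tau$ is subcanonical (this is the same input already invoked in the paper when noting that $\Spec(B)$ is an fpqc sheaf), so the Yoneda embedding $j\colon \AniAlg{R}\hookrightarrow \Pcal(\AniAlg{R}^{\op})^{\op}$ factors through $\Shv_{\tau}^{\op}$. By the universal property of $\Pcal(\AniAlg{R}^{\op})^{\op}$ as the free limit-completion of $\AniAlg{R}$, the functor $RF$ is the essentially unique limit-preserving extension of $F$ along $j$. Dually, $\Shv_\tau$ is the accessible localization of $\Pcal(\AniAlg{R}^{\op})$ at a strongly saturated class $W$ (the $\tau$-local equivalences, respectively the hyperlocal equivalences in the hypercomplete case), so limit-preserving functors $\Shv_\tau^{\op}\to \Ccal$ correspond precisely to limit-preserving functors $\Pcal(\AniAlg{R}^{\op})^{\op}\to \Ccal$ that invert $W$. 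Hence $\widetilde{F}$ exists (and the factorization $RF\simeq \widetilde{F}\circ L^{\op}$ is automatic and essentially unique) if and only if $RF$ inverts $W$.

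The main obstacle is therefore to verify that $RF$ inverts all $\tau$-local equivalences. This reduces, by saturation, to checking inversion on a class of generators; one may take these to be the canonical maps $|C_\bullet|\to \Spec(A)$ attached to \v{C}ech nerves $C_\bullet = \Spec(B^{\otimes_A(\bullet+1)})$ of $\tau$-covers $\Spec(B)\twoheadrightarrow\Spec(A)$ (and, in the hypercomplete setting, the augmentations of hypercovers, compare \cite{HTT,SAG}). Applying $RF$ to such a generator and using the pointwise formula for the right Kan extension along Yoneda yields the natural comparison
$$
F(A)\;\longrightarrow\;\lim_{[n]\in\Delta}F(B^{\otimes_A(n+1)}),
$$
which is an equivalence exactly by the (hyper)descent hypothesis on $F$. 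A standard saturation argument propagates this to all of $W$, supplying the factorization $RF\simeq \widetilde{F}\circ L^{\op}$ and completing the proof.
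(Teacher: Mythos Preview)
Your argument is correct. The key technical ingredients---that $RF$ is the essentially unique limit-preserving extension of $F$ (so the class it inverts is strongly saturated), that the \v{C}ech-nerve augmentations (resp.\ hypercover augmentations) generate the strongly saturated class of $\tau$-local equivalences by \cite[Proposition~5.5.4.15]{HTT}, and that subcanonicity lets the Yoneda embedding factor through $\Shv_\tau$---are all in place, and the factorization $RF\simeq\widetilde{F}\circ L^{\op}$ then yields the claim formally.

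The paper takes a different and somewhat shorter route: it first reduces to $\Ccal=\SS$ by applying $\Hom_\Ccal(C,-)$ for each object $C$ (which preserves limits and keeps the sheaf condition), and in that case $F$ is itself an object of $\Shv_\tau$, so the right Kan extension is simply $RF(X)\simeq\Map_{\Pcal}(X,F)\simeq\Map_{\Shv_\tau}(LX,F)$. The conclusion is then immediate from the adjunction $L\dashv i$ without ever invoking strongly saturated classes or generators. Your approach has the virtue of producing the factorization $RF\simeq\widetilde{F}\circ L^{\op}$ explicitly for arbitrary presentable $\Ccal$, which is conceptually satisfying; the paper's approach trades this for a quicker reduction to the corepresentable case where everything is transparent.
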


\begin{proof}
  The idea of the proof is to first reduce to $\Ccal\simeq \SS$ using the Yoneda lemma. Then $F$ itself is a derived stack and is thus local with respect to effective epimorphisms of derived stacks.
  
	A detailed proof of this can be found in \cite[Proposition 4.48]{NotesDAG}. 
\end{proof}

\begin{defi}
Let $\Ccal$ be a presentable $\infty$-category, and let  $\tau$ be the fpqc or \'etale topology on $\AniAlg{R}$.
A functor $F\colon \Pcal(\AniAlg{R}^{\op})^{\op}\rightarrow \Ccal$ is a (\textit{hypercomplete\,}) \textit{sheaf or satisfies $\tau$-descent} if for any effective epimorphism $X\rightarrow Y$ (resp.\ a hypercover $X^{\bullet}\rightarrow Y$), we have
$$
RF(Y)\simeq \lim_{\Delta}RF(\Cv(X/Y)_{\bullet})\quad\textup{(resp.\ }RF(Y)\simeq \lim_{\Delta_{s}}RF(X^{\bullet})).
$$
\end{defi}

\begin{rem}
\label{rem.sheaf.kan}
	In the setting of Proposition~\ref{right kan of sheaf}, we see that if $F$ is a (hypercomplete) sheaf, then so is its right Kan extension $RF$.
\end{rem}

\begin{prop}
	\label{right kan ex derived scheme}
	Let $\Ccal$ be a presentable $\infty$-category, and let $F\colon \AniAlg{R}\rightarrow \Ccal$ be an \'etale sheaf. Let $RF$ denote a right Kan extension of\, $F$ along the Yoneda embedding $\AniAlg{R}\hookrightarrow \Pcal(\AniAlg{R})^{\op}$. Then for any derived scheme $X$ over $R$, the natural morphism
	$$
	RF(X)\longrightarrow \lim_{\substack{U\hookrightarrow X \\ \textup{affine open}}} F(U)
	$$ 
	is an equivalence.
\end{prop}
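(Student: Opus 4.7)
The strategy is to reduce to the affine case via sheaf descent. First, by Remark~\ref{rem.sheaf.kan}, since $F$ is an étale sheaf, its right Kan extension $RF$ is an étale (and thus Zariski) sheaf as well. On an affine derived scheme $\Spec(A)$, one has $RF(\Spec(A))\simeq F(A)$ because the identity $\Spec(A)\to \Spec(A)$ is a terminal object in the indexing category of the right Kan extension formula; this value also equals $\lim_{V\hookrightarrow \Spec(A)\textup{ aff. open}} F(V)$, since $\Spec(A)$ itself is terminal in its poset of affine opens.

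For a general derived scheme $X$, the key step is to realize $X$ as the colimit, in $\Shv_{\et}(\AniAlg{R})$, of the diagram indexed by its poset of affine opens. By definition, $X$ admits an affine open cover $(U_i\hookrightarrow X)_{i\in I}$ whose induced morphism $\coprod_i U_i\to X$ is an effective epimorphism, so by Čech descent $X\simeq\colim_{[n]\in\Delta^{\op}}\Cv(\coprod U_i/X)_n$ in $\Shv_{\et}$. Each fibre product $U_{i_0}\times_X\cdots\times_X U_{i_n}$ is an open subscheme of the affine $U_{i_0}$, and therefore admits a Zariski cover by affine opens, which are in turn affine opens of $X$. Sheafifying and rearranging identifies this Čech colimit with the colimit of the diagram indexed by all affine opens $V\hookrightarrow X$.

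Invoking Proposition~\ref{right kan of sheaf} for this colimit presentation then yields
\begin{equation*}
RF(X)\simeq \lim_{V\hookrightarrow X\textup{ aff. open}} RF(V)\simeq \lim_{V\hookrightarrow X\textup{ aff. open}} F(V),
\end{equation*}
which is the desired equivalence.

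The main technical difficulty lies in the colimit identification: comparing the Čech nerve of a single chosen affine cover to the diagram indexed by all affine opens of $X$ requires carefully exploiting that every intersection of affine opens in a derived scheme admits affine open refinements, together with standard sheafification arguments. Once this is established, the rest is formal.
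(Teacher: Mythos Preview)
Your approach is correct and essentially the same as the paper's: both reduce to Proposition~\ref{right kan of sheaf} by showing that $X$ is, after sheafification, the colimit of its diagram of affine opens, so that $RF(X)$ is computed as the corresponding limit. The paper's proof is a bare citation (to Khan's notes and the author's own notes), so your write-up actually supplies more of the argument; the one step you flag as the ``main technical difficulty''---identifying the \v{C}ech colimit with the colimit over all affine opens---is indeed standard but deserves a cofinality or refinement argument rather than just ``sheafifying and rearranging.''
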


\begin{proof}
	This is a generalization of \cite[Lecture 1, Proposition 3.5]{Khan} using Proposition~\ref{right kan of sheaf} (see \cite[Proposition 4.52]{NotesDAG}). 
\end{proof}

\begin{rem}
	\label{descent qcoh}
	Let us note that the functors $A\mapsto \MMod_{A}$ and $A\mapsto \MMod_A^{\textup{perf}}$ are hypercomplete sheaves for the fpqc topology (see \cite[Corollary~D.6.3.3]{SAG}). Thus, using the definitions of the functors $\QQCoh$ and $\QQCoh_{\perf}$, we see, using Remark~\ref{rem.sheaf.kan}, that these functors are hypercomplete sheaves for the fpqc topology.
\end{rem}

\begin{prop}
\label{derived cat is kan ext}
	Let $X$ be a scheme. Then we have an equivalence of $\infty$-categories $\Dcal_{\textup{qc}}(X)\simeq \QQCoh(X)$, where $\Dcal_{\textup{qc}}(X)$ denotes the derived $\infty$-category of\, $\Ocal_{X}$-modules with quasi-coherent cohomologies.
\end{prop}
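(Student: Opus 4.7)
The strategy is to reduce both sides to a common description in terms of affine open covers of $X$. The right-hand side $\QQCoh(X)$ is defined as a right Kan extension from affine derived schemes; for a classical scheme $X$, every map $\Spec(A) \to X$ from an animated ring factors essentially through its $0$-truncation (since $X$ is $0$-truncated), and moreover $X$, viewed as a derived stack via $\iota$, is a derived scheme in the sense of the paper. Thus by Proposition~\ref{right kan ex derived scheme} (applied to the fpqc sheaf $A \mapsto \MMod_A$, which is a sheaf by Remark~\ref{descent qcoh}), we get
$$
\QQCoh(X) \simeq \lim_{\substack{U \hookrightarrow X \\ \text{affine open}}} \MMod_{\Gamma(U,\Ocal_U)}.
$$
Since each $\Gamma(U, \Ocal_U)$ is a discrete (classical) ring, the remark in Section~\ref{sec:derived commutative algebra} gives $\MMod_{\Gamma(U, \Ocal_U)} \simeq \Dcal(\Gamma(U, \Ocal_U))$.

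It therefore remains to establish the purely classical comparison
$$
\Dcal_{\textup{qc}}(X) \simeq \lim_{\substack{U \hookrightarrow X \\ \text{affine open}}} \Dcal(\Gamma(U,\Ocal_U)).
$$
There is an evident restriction functor from left to right. To show it is an equivalence, I would proceed by Zariski descent: both sides, viewed as functors on the poset of open subschemes of $X$, send Zariski covers to limits. For the right-hand side this is formal from the definition as a limit. For the left-hand side, it is the statement that $V \mapsto \Dcal_{\textup{qc}}(V)$ is a Zariski (in fact fpqc) hypersheaf of $\infty$-categories on $X$, which is proved in~\cite[Corollary~D.6.3.3]{SAG} (or classically by Bökstedt--Neeman for the triangulated enhancement). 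Reducing to an affine open cover $\{U_i\}$ of $X$ and its Čech nerve, both sides then take the form of the limit over $\Delta$ of products of terms $\Dcal_{\textup{qc}}(U_{i_0} \cap \cdots \cap U_{i_n})$; since each intersection is a quasi-compact separated scheme, one iterates the argument until the indexing is over affines, where the comparison $\Dcal_{\textup{qc}}(\Spec A) \simeq \Dcal(A)$ is classical.

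The main obstacle is the Zariski descent statement for $\Dcal_{\textup{qc}}$ on the classical side: the assertion that a complex of $\Ocal_X$-modules with quasi-coherent cohomology is recovered (homotopy-coherently, together with all its morphisms) from its restrictions to an open cover. Naively gluing in the ordinary derived category fails~-- this is precisely the motivation for working with $\infty$-categories emphasized in the introduction~-- so one must either quote the $\infty$-categorical descent result from~\cite{SAG} directly, or use a dg/model-categorical enhancement of $\Dcal_{\textup{qc}}(X)$ in which the descent statement is known. Once this is in hand, the rest of the argument is routine bookkeeping comparing the two limits.
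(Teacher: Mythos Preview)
Your approach is correct and is essentially what the paper's cited references (\cite{SAG}, \cite{DAG}, \cite{NotesDAG}) carry out: reduce both sides to a limit over affine opens via Zariski descent and compare on affines. The paper itself gives no argument beyond these citations, so your sketch is in fact more explicit than the paper's proof.

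One small correction: your parenthetical ``since $X$ is $0$-truncated'' is not quite right and is in any case unnecessary. A classical scheme $X$, viewed as a derived stack via $\iota$, is generally \emph{not} $0$-truncated as a sheaf of spaces (e.g.\ $\Spec(\ZZ[X])(B)\simeq\Omega^\infty B$). What you actually need, and what is true, is that $\iota(X)$ is a derived scheme whose affine opens $\Spec(A)\hookrightarrow X$ have $A$ discrete: an open immersion is flat, and a flat map out of a discrete ring has discrete target by the definition of flatness in Section~\ref{sec:simplicial commutative algebras}. This is what lets you invoke Proposition~\ref{right kan ex derived scheme} and then identify each $\MMod_{\Gamma(U,\Ocal_U)}$ with $\Dcal(\Gamma(U,\Ocal_U))$. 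Once you drop the $0$-truncated remark and state this instead, your argument goes through.
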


\begin{proof}
	This is shown in the spectral setting in \cite{SAG}. In our setting, one can deduce this from Lurie's PhD thesis \cite{DAG} and the spectral setting. For more details, see \cite[Proposition 4.57]{NotesDAG}.
\end{proof}

\subsection{Deformation theory of a derived stack}

This section is derived from \cite[Section 1.4]{TV2}, \cite[Section 4.2]{AG} and \cite[Lecture 5]{Khan}. For detailed properties and the existence of the cotangent complex, we refer to \cite[Section 4.4]{NotesDAG}.

We let $R$ be a ring and assume every derived stack is a derived stack over $R$.

Let $f\colon X\rightarrow Y$ be a morphism of derived stacks. Let $x\colon \Spec(A)\rightarrow X$ be an $A$-point, where $A$ is an animated $R$-algebra. Let $M\in\MMod^\cn_A$, and let us look at the commutative square
$$
\begin{tikzcd}
X(A\oplus M)\arrow[r,""]\arrow[d,""]&X(A)\arrow[d,"f"]\\
Y(A\oplus M)\arrow[r,""]&Y(A)\rlap{,}
\end{tikzcd}
$$ 
where the horizontal arrows are given by the canonical projection $A\oplus M\rightarrow A$. We define the \textit{derivations at the point $x$}
as
$$
\Der_x(X/Y,M)\coloneqq \fib_x(X(A\oplus M)\longrightarrow X(A)\times_{Y(A)}Y(A\oplus M)),
$$
where we see $x$ as a point in the target via the natural map induced by $
\Spec(A\oplus M)\rightarrow\Spec(A) \xrightarrow{x}X\xrightarrow{f}Y.
$

\begin{defi}[\textit{cf.} \protect{\cite[Definition 1.4.1.5]{TV2}}]
\label{defi.cotangent.global}
We say that $L_{f,x}\in\MMod_A$ is a \textit{cotangent complex for $f\colon X\rightarrow Y$ at the point $x\colon \Spec(A)\rightarrow X$} if it is $(-n)$-connective for some $n\geq 0$ and if for all $M\in\MMod_A^\cn$, there is a functorial equivalence
$$
\Hom_{\MMod_A}(L_{f,x},M)\simeq\Der_x(X/Y,M).
$$

	When such a complex $L_{f,x}$ exists, we say that \textit{$f$ admits a cotangent complex at the point $x$}. If there is no possibility of confusion, we also write $L_{X/Y,x}$ for $L_{f,x}$. We also write $L_X$ if $Y\simeq \Spec(R)$.
\end{defi}

\begin{defi}
  We say that $L_f\in \QQCoh(X)$ is a cotangent complex for $f\colon X\rightarrow Y$ if for all points $x\colon \Spec(A)\rightarrow X$, the $A$-module $x^*L_f$ is a cotangent complex for $f$ at the point $x$.
  
	If $L_f$ exists, we say that \textit{$f$ admits a cotangent complex}. We will write $L_{f,x}$ instead of $x^*L_f$ if $f$ admits a cotangent complex.
\end{defi}

\begin{rem}
	Since the cotangent complex is by definition $(-n)$-connective on points for suitable $n$, we can use a version of the Yoneda lemma to see that it is unique up to equivalence (see \cite[Proposition~1.2.11.3]{TV2}). 
\end{rem}

For affine derived schemes, the construction of the cotangent complex shows that it agrees with the cotangent complex of animated rings. The points are then given by base change. 

We list some facts concerning the cotangent complex that can be found in \cite[Section 4.4]{NotesDAG}.

\begin{lem}
	\label{cotangent monomorphism}
	Let $j\colon X\hookrightarrow Y$ be a monomorphism of derived stacks over $A$; then $j$ admits a cotangent complex and $L_j\simeq 0$.
\end{lem}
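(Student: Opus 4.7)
The plan is to produce the candidate $0\in\QQCoh(X)$ and verify the defining property of the cotangent complex pointwise. Since $\Hom_{\MMod_{A}}(0,M)\simeq\ast$ for every $M\in\MMod_{A}^{\cn}$ and $0$ is trivially connective, it suffices to show that for every $A$-point $x\colon\Spec(A)\to X$ and every such $M$, the space $\Der_{x}(X/Y,M)$ is contractible. By definition this is the fiber at $(x,j\circ x\circ p)$ of the canonical map
$$
\Phi\colon X(A\oplus M)\longrightarrow X(A)\times_{Y(A)}Y(A\oplus M),
$$
where $p\colon\Spec(A\oplus M)\to\Spec(A)$ is induced by the zero section $A\to A\oplus M$.

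The key claim is that $\Phi$ itself is a monomorphism of spaces. Since $j$ is $(-1)$-truncated as a morphism of derived stacks, both $j_{A\oplus M}\colon X(A\oplus M)\to Y(A\oplus M)$ and $j_{A}\colon X(A)\to Y(A)$ are $(-1)$-truncated in $\SS$. Pulling $j_{A}$ back along $Y(A\oplus M)\to Y(A)$ shows that the projection $q\colon X(A)\times_{Y(A)}Y(A\oplus M)\to Y(A\oplus M)$ is also $(-1)$-truncated. The composite $q\circ\Phi$ equals $j_{A\oplus M}$, and the left cancellation property of $(-1)$-truncated maps in $\SS$ (which follows either from the characterization via induced maps on $\pi_{0}$ and higher homotopy groups, or abstractly from the fact that monomorphisms form the right class of a factorization system and are therefore left-cancellable) then forces $\Phi$ to be $(-1)$-truncated as well.

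Next, the basepoint $(x,j\circ x\circ p)$ lies in the image of $\Phi$: the trivial deformation $x\circ p\in X(A\oplus M)$ satisfies $\Phi(x\circ p)=(x,j\circ x\circ p)$ since $p$ is a retraction of the augmentation $\Spec(A)\hookrightarrow\Spec(A\oplus M)$. Because a monomorphism of spaces has contractible fiber over any point of its image, $\Der_{x}(X/Y,M)\simeq\ast$. Verifying this pointwise for the zero object in $\QQCoh(X)$ yields both the existence of $L_{j}$ and the equivalence $L_{j}\simeq 0$. A more conceptual alternative would be to invoke $X\simeq X\times_{Y}X$ and the base change formula for the cotangent complex along that pullback square, but the direct verification above has the advantage of delivering existence in a single stroke; the only substantive step is the left cancellation for monomorphisms, which is the mild obstacle to overcome.
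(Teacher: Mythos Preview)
Your argument is correct and is precisely the kind of direct verification the paper gestures at when it calls this ``a straightforward calculation'' and defers to the references. The only minor cosmetic point is that you overload the letter $A$ for both the base in the statement and the test animated ring in the verification; since the computation is uniform in the test ring this is harmless, but writing $x\colon\Spec(B)\to X$ for an arbitrary animated $A$-algebra $B$ would read more cleanly.
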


\begin{proof}
	This is a straightforward calculation. Details can be found in the proofs of \cite[Lecture~5, Proposition 5.9]{Khan} or \cite[Lemma~4.64]{NotesDAG}).
\end{proof}

The vanishing of the cotangent complex shows that it is indeed not useful to impose a monomorphism condition on closed immersions of derived schemes.

\begin{prop}
	\label{cotangent implies smooth}
	Let $f\colon X\rightarrow Y$ be an $n$-geometric morphism of derived stacks. Then $f$ is smooth if and only if $t_{0}f$ is locally of finite presentation and $L_f$ exists, is perfect and has Tor-amplitude in $[-n-1,0]$.
\end{prop}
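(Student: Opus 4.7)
The plan is to proceed by induction on $n$. The base case $n = -1$ is immediate from Proposition \ref{prop-cotangent char}: a $(-1)$-geometric smooth morphism becomes a smooth morphism of animated rings after pullback to an affine, whose cotangent complex is finite projective, i.e., perfect of Tor-amplitude $[0,0] = [-n-1,0]$. Both conditions of the statement are local on $Y$, so one reduces to $Y = \Spec(A)$ affine and $X$ an $n$-geometric derived stack. Fix a smooth $(n-1)$-geometric atlas $\coprod v_i : \coprod \Spec(A_i) \to X$; by the inductive hypothesis applied to each $v_i$, the cotangent complex $L_{v_i}$ is perfect of Tor-amplitude in $[-n,0]$. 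The fundamental cofiber sequence for the composition $f\circ v_i$ is
\begin{equation*}
v_i^* L_f \longrightarrow L_{A_i/A} \longrightarrow L_{v_i},
\end{equation*}
and is the main tool.

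For the forward direction, by the definition of smoothness of an $n$-geometric morphism, we may choose the atlas so that each $A \to A_i$ is a smooth morphism of animated rings, which makes $L_{A_i/A}$ finite projective by Proposition \ref{prop-cotangent char}. The Tor-amplitude estimates in Lemma \ref{general props of Tor} applied to the above fiber sequence then yield that $v_i^* L_f$ is perfect of Tor-amplitude in $[-n-1,0]$. By smooth (hence fpqc) descent of perfect modules with bounded Tor-amplitude (Remark \ref{descent qcoh}), together with compatibility of the local constructions on the $(n-1)$-geometric intersections $\Spec(A_i) \times_X \Spec(A_j)$, these pieces assemble to the required $L_f \in \QQCoh(X)$; local finite presentation of $t_0 f$ is immediate from smoothness.

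For the backward direction, assume $t_0 f$ is locally of finite presentation and $L_f$ is perfect of Tor-amplitude in $[-n-1,0]$. Fix any atlas $v_i$ as above. Both $v_i^* L_f$ and $L_{v_i}$ are perfect of Tor-amplitude in $[-n-1,0]$, so the same holds for $L_{A_i/A}$ (the middle term of the fiber sequence). The decisive observation is that $L_{A_i/A}$ is the cotangent complex of a morphism of animated rings and is therefore connective, while any connective module has Tor-amplitude contained in $[0,\infty)$; intersecting these bounds forces Tor-amplitude exactly $[0,0]$, so $L_{A_i/A}$ is finite projective by Lemma \ref{general props of Tor}(6). Combining $t_0 f$ locally of finite presentation with $t_0 v_i$ smooth (hence locally of finite presentation) shows that $\pi_0(A) \to \pi_0(A_i)$ is finitely presented, so Proposition \ref{prop-cotangent char} gives that $A \to A_i$ is smooth. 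This exhibits the atlas as a smooth atlas in the ring-theoretic sense, so $f$ is smooth.

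The main obstacle will be the gluing step in the forward direction: checking that the locally defined $v_i^* L_f$ descend to a single global perfect complex of the claimed Tor-amplitude requires tracking the fiber sequence on the $(n-1)$-geometric double intersections, which is precisely where the full $n$-geometric structure (and the $(n-1)$-geometricity of the diagonal) enters. Alternatively one can argue pointwise via representability of the space of derivations, but in either approach the compatibilities on overlaps constitute the technical heart of the argument.
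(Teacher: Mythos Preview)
Your proposal is correct and follows essentially the same strategy as the paper: induction on $n$, reduction to affine $Y$, use of a smooth $(n-1)$-atlas $U\to X\to \Spec(A)$, and the cofiber sequence $v_i^*L_f\to L_{A_i/A}\to L_{v_i}$ in both directions, with the existence/gluing of the global $L_f$ (equivalently, the verification that the fiber of $L_{f\circ p}\to L_p$ represents derivations) singled out as the technical core of the forward direction. Your backward direction is actually spelled out in more detail than the paper's sketch, and your connectivity argument forcing $L_{A_i/A}$ into Tor-amplitude $[0,0]$ is exactly the right mechanism; note also that the paper's sketch appears to have swapped the ``if'' and ``only if'' labels, whereas your attribution of the hard gluing step to the smooth-implies-properties direction is the correct one.
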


\begin{proof}
  The proof follows those of \cite[Propositions~4.45 and~4.46]{AG}.
  Let us give a very brief sketch and refer to \cite[Corollary 4.77]{NotesDAG} for details.  
	The idea, again, is induction on $n$. We first reduce to the case where $Y$ is an affine derived scheme. Then we show the assertion for $(-1)$-geometric derived stacks, \textit{i.e.}\ affine derived schemes, which is more or less clear. The ``only if'' direction follows using an atlas of $X$, inducing a diagram of the form $U\xrightarrow{p} X\xrightarrow{f} Y$, with $p$ being $(n-1)$-geometric. For the ``if'' direction, one has to work more. 
 
	We first define $L_{f}$ as the  fiber of $L_{f\circ p}\rightarrow L_{p}$, which exists by induction. Now we need to understand the space of derivations and show that $L_{f}$ indeed represents it. The proof of this is too involved to sketch here, and we refer to \textit{loc.~cit.}~for the details. The fact that $L_{f}$ is perfect and has the right Tor-amplitude then follows immediately by a 2-out-of-3 argument.
\end{proof}

\begin{cor}
	\label{global lfp cotangent}
	Let $f\colon X\rightarrow Y$ be an $n$-geometric morphism of derived stacks locally of finite presentation. Then $L_{f}$ is perfect. 
\end{cor}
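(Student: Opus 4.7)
The plan is to deduce perfectness of $L_f$ by checking it locally on $X$ after pulling back along a smooth atlas. Since the cotangent complex is compatible with base change and perfectness of a quasi-coherent module is tested pointwise, I would first reduce to the case $Y \simeq \Spec(R)$ for an animated ring $R$: for any $\Spec(B) \to Y$, the pullback $X \times_{Y} \Spec(B) \to \Spec(B)$ remains $n$-geometric and locally of finite presentation, and its cotangent complex is the pullback of $L_f$ along $X \times_{Y} \Spec(B) \to X$.

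Next, I would pick an atlas as in the definition of $n$-geometricity, namely a family $(p_i\colon \Spec(A_i) \to X)_{i\in I}$ of $(n-1)$-geometric smooth morphisms whose coproduct $p\colon U \coloneqq \coprod_{i} \Spec(A_i) \to X$ is an effective epimorphism. Since $f$ is locally of finite presentation and smooth morphisms are locally of finite presentation, each composite $f \circ p_i$ corresponds to an lfp morphism of animated rings $R \to A_i$. By Proposition~\ref{prop-cotangent char}, $L_{A_i/R}$ is then perfect for every $i$, and consequently $L_{f \circ p}$ is a perfect object of $\QQCoh(U)$. Independently, applying Proposition~\ref{cotangent implies smooth} to the smooth $(n-1)$-geometric morphism $p$ gives that $L_p$ is perfect.

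The fundamental fiber sequence in the stable $\infty$-category $\QQCoh(U)$ (Remark~\ref{qcoh stable})
\[
p^{*} L_f \longrightarrow L_{f \circ p} \longrightarrow L_p
\]
now exhibits $p^{*}L_f$ as the fiber of a morphism between perfect modules, hence perfect itself. Because perfect modules form a hypercomplete fpqc sheaf (Remark~\ref{descent qcoh}) and $p$ is a smooth, a fortiori fpqc, effective epimorphism, perfectness descends from $U$ to $X$, so $L_f$ is perfect. The conceptual content of the proof is isolating the fiber sequence and combining the smooth atlas with the lfp hypothesis on $f$; the only genuine technical point is the final descent step, which is guaranteed by the fpqc descent for the presheaf of perfect modules already established.
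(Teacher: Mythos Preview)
Your proof is correct and follows essentially the same approach as the paper: reduce to affine $Y$, pull back along a smooth atlas $p\colon U\to X$, use the fundamental (co)fiber sequence $p^{*}L_f\to L_{f\circ p}\to L_p$ together with Proposition~\ref{cotangent implies smooth} and Proposition~\ref{prop-cotangent char} for the 2-out-of-3 argument, and conclude by fpqc descent of perfect modules. Your write-up is in fact more explicit than the sketch given in the paper, which simply refers to the same chain of reductions.
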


\begin{proof}
	Let us sketch the proof of \cite[Corollary 4.78]{NotesDAG}.
 
	We first reduce to the case where $Y$ is affine. Then we use that $X$ has a smooth atlas $p\colon U\rightarrow X$. As perfect modules satisfy fpqc descent, we can check perfectness of $L_{f}$ after pullback to $U$. Now we can use the cofiber sequence of the cotangent complexes with respect to $f$ and $p$ to see that a 2-out-of-3 argument, together with Proposition~\ref{cotangent implies smooth}, implies the result.
\end{proof}

\begin{lem}
\label{geometric hypercomplete}
	Let $X$ be an $n$-geometric stack; then $X$ is hypercomplete.
\end{lem}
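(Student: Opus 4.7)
The approach is to argue by induction on $n$, the geometricity level of $X$. For the base case $n = -1$, $X \simeq \Spec(A)$ is an affine derived scheme, and I would argue that the representable presheaf $\Spec(A) = \Hom_{\AniAlg{R}}(A, -)$ is a hypercomplete fpqc sheaf; hypercompleteness for the coarser \'etale topology then follows a fortiori. The key input is Remark~\ref{descent qcoh}, which gives that $B \mapsto \MMod_{B}$ is a hypercomplete fpqc sheaf of $\infty$-categories. From this, the underlying space of $B$ inherits fpqc hyperdescent as a functor of $B$, and since any $A \in \AniAlg{R}$ is a sifted colimit of polynomial $R$-algebras, one reduces to the polynomial case, in which $\Hom_{\AniAlg{R}}(A, -)$ is a finite product of copies of the underlying-space functor and thus again a hypercomplete fpqc sheaf.

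For the inductive step, I would assume every $(n-1)$-geometric stack is hypercomplete, and consider an $n$-geometric stack $X$ with smooth $(n-1)$-geometric atlas $p\colon U = \coprod_{i} \Spec(A_{i}) \twoheadrightarrow X$. Form the \v{C}ech nerve $U^{\bullet} = \Cv(X/U)_{\bullet}$. I claim each term $U^{k}$ is $(n-1)$-geometric: this reduces by induction on $k$ to showing that iterated fiber products of $U$ over $X$ are $(n-1)$-geometric, which follows from Lemma~\ref{geometric base change} together with the $(n-1)$-geometricity of the diagonal $X \to X \times X$. By the outer induction hypothesis, each $U^{k}$ is then hypercomplete. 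Since $p$ is an effective epimorphism, we have $X \simeq |U^{\bullet}|$ computed in the $\infty$-topos $\Shv_{\et}$, and to finish I would invoke the general principle that, in a Grothendieck $\infty$-topos, the geometric realization of a groupoid object (in particular, the \v{C}ech nerve of an effective epimorphism) whose terms are all hypercomplete is itself hypercomplete.

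The main obstacle is this final step, since hypercompletion $L\colon \Shv_{\et} \to \Shv_{\et}^{\wedge}$ is a left exact left adjoint whose right adjoint inclusion does not preserve colimits in general; one must exploit the specific \v{C}ech-nerve structure together with the effective epimorphism condition to conclude that the canonical comparison $X \to X^{\wedge}$ is already an equivalence. A clean abstract route goes through the interaction of $\infty$-connective morphisms with effective epimorphisms, but a more concrete fallback is to verify \'etale hyperdescent for $X$ directly: given an \'etale hypercover $V^{\bullet} \to V$ of an affine $V$, one forms the bisimplicial object $U^{\bullet} \times_{X} V^{\bullet}$ and reduces the required equivalence $X(V) \simeq \lim_{\Delta_{s}} X(V^{\bullet})$, by a Fubini-type exchange of limits, to the hyperdescent of each $U^{k}$, which holds by the inductive hypothesis.
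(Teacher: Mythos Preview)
Your approach is genuinely different from the paper's, and the gap you yourself identify in the final step is real and not resolved by what you wrote.

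The paper does \emph{not} argue by induction on the geometricity level via the \v{C}ech nerve of an atlas. Instead it uses nilcompleteness: one shows that for an $n$-geometric stack $X$ the canonical map $X \simeq \lim_{m} X\circ\tau_{\leq m}$ is an equivalence (this step, following \cite[Corollary 5.3.9]{DAG}, is where an induction on $n$ is hidden). Each $X\circ\tau_{\leq m}$ takes values in $(m+k)$-truncated spaces for some fixed $k$ depending on $n$, hence is a truncated \'etale sheaf, and truncated objects of any $\infty$-topos are automatically hypercomplete (\cite[Lemma 6.5.2.9]{HTT}). Since hypercomplete objects are closed under \emph{limits}, the inverse limit $\lim_{m} X\circ\tau_{\leq m}\simeq X$ is hypercomplete. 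The point is that this strategy only ever takes limits of hypercomplete objects, never colimits.

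Your strategy, by contrast, presents $X$ as a \emph{colimit} $|U^{\bullet}|$ of hypercomplete objects, and as you note, the inclusion $\Shv_{\et}^{\wedge}\hookrightarrow\Shv_{\et}$ need not preserve geometric realizations. Your ``clean abstract route'' via $\infty$-connective morphisms is not an argument as stated, and the bisimplicial fallback is not correct as written either: to compare $X(V)$ with $\lim_{m} X(V^{m})$ you would need to commute $\Hom(V,-)$ past the colimit $|U^{\bullet}|$, which is exactly the non-formal step. One can sometimes push this through using that $p\colon U\to X$ is smooth and hence admits sections \'etale-locally, but making that precise is substantially more work than you indicate, and is not the route the paper (or \cite{DAG}) takes.
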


\begin{proof}
	The proof is analogous to that of \cite[Corollary 5.3.9]{DAG}. The basic idea is to show that for any geometric stack $X$, we have $X\simeq \lim_{n} X\circ \tau_{n}$. Then we use that $ X\circ \tau_{n}$ takes values in $(n+k)$-groupoids, for some sufficiently large $k$, and the fact that truncated sheaves are automatically hypercomplete (see \cite[Lemma 6.2.9]{HTT}). 
 
	A detailed proof is given in \cite[Lemma 4.81]{NotesDAG}.
\end{proof}

\subsection{The stack of perfect modules}
\label{sec:perf}

In this section, we want to recall that the derived stack of perfect modules is locally geometric. This was already proven in \cite{TVaq} in the model-categorical setting, and a detailed proof can be found in \cite{NotesDAG}. We recall some lemmas needed for the proof, as they will become important later on when we analyze the substacks of derived $F$-zips.

\begin{lem}
\label{upper semi-cont qc}
	Let $A$ be a commutative ring and $P$ be a perfect complex of $A$-modules, and let $n\in \NN_{0}$. Further, for $k\in \ZZ$, let $\beta_{k}\colon \Spec(A)_{\cl}\rightarrow \NN_{0}$ be the function given by $s\mapsto \dim_{\kappa(s)}\pi_{k}(P\otimes_{A}\kappa(s))$. Then $\beta_{k}^{-1}([0,n])$ is quasi-compact open.
\end{lem}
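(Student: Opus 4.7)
The plan is to reduce to the case where $P$ is represented by a strict bounded complex of finite free modules, where the statement becomes a classical result about rank loci of matrices.

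First, since the property ``quasi-compact open'' is Zariski local on the target, and since a perfect complex over $A$ is, after passing to a finite affine open cover of $\Spec(A)_{\cl}$, equivalent to a strict bounded complex of finitely generated projective $A$-modules (see Lemma~\ref{general props of Tor}\,(6) together with the cofiber construction in (7)), I may reduce to the case where $P$ is globally equivalent to a complex
$$
P_\bullet = [P_b\xrightarrow{d_b} P_{b-1}\to \dots \to P_{a+1}\xrightarrow{d_{a+1}} P_a]
$$
with each $P_i$ a free $A$-module of constant rank $r_i$ (shrinking the cover further to trivialize the finite projective modules if necessary).

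Next, for any $s\in \Spec(A)_{\cl}$, the base change $P\otimes_A \kappa(s)$ is computed by the strict complex $P_\bullet\otimes_A \kappa(s)$ of finite-dimensional $\kappa(s)$-vector spaces. Writing $d_i(s)\coloneqq d_i\otimes_A \kappa(s)$ and applying the rank-nullity formula, I obtain
$$
\beta_k(s) = r_k - \mathrm{rank}_{\kappa(s)}\!\bigl(d_k(s)\bigr) - \mathrm{rank}_{\kappa(s)}\!\bigl(d_{k+1}(s)\bigr).
$$
Thus $\beta_k^{-1}([0,n])$ is the locus where $\mathrm{rank}(d_k(s)) + \mathrm{rank}(d_{k+1}(s)) \geq r_k - n$.

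The key input is then that for any map $d\colon A^m \to A^\ell$ of free modules and any integer $\rho\geq 0$, the locus $\{s\in\Spec(A)_{\cl} : \mathrm{rank}_{\kappa(s)}(d(s)) \geq \rho\}$ is the complement of $V(I_\rho(d))$, where $I_\rho(d)$ is the ideal generated by the $\rho\times\rho$ minors of the matrix of $d$. Since this matrix has finitely many entries, $I_\rho(d)$ is finitely generated, so its non-vanishing locus is quasi-compact open. The ranks $\mathrm{rank}(d_k(s))$ and $\mathrm{rank}(d_{k+1}(s))$ are bounded by the constants $\min(r_k,r_{k-1})$ and $\min(r_{k+1},r_k)$, so the rank inequality defining $\beta_k^{-1}([0,n])$ decomposes as a finite disjunction of conjunctions of conditions of the form $\mathrm{rank}(d_k(s))\geq \rho_1$ and $\mathrm{rank}(d_{k+1}(s))\geq \rho_2$. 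Hence $\beta_k^{-1}([0,n])$ is a finite union of finite intersections of quasi-compact opens, and is therefore itself quasi-compact open.

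There is no genuine obstacle in this argument; the only thing to verify carefully is that one does not need a Noetherian hypothesis on $A$, which follows because all the relevant matrices are of finite size and all minor ideals are finitely generated. The reduction to strict bounded complexes of free modules uses only the local structure of perfect complexes recorded earlier in the paper.
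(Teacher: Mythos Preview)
Your argument is correct. The paper's own proof is a one-line citation to \cite[0BDI]{stacks-project}, so you have essentially unpacked the content behind that reference: reduce locally to a bounded complex of finite free modules (using that $\Spec(A)_{\cl}$ is quasi-compact to make the cover finite), compute the Betti number via rank--nullity, and control the rank loci by minor ideals. This is exactly the standard proof; there is no substantive difference in approach, only in level of detail.
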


\begin{proof}
	This follows from \cite[0BDI]{stacks-project}.
\end{proof}

\begin{rem}
\label{tor and type}
	Let $A$ be a commutative ring and $P$ be a perfect complex of $A$-modules. Let $I\subseteq \ZZ$ be a finite subset, and for $k\in \ZZ$, let $\beta_{k}$ be as in Lemma~\ref{upper semi-cont qc}. Assume that $\beta_i$ is non-zero for $i\in I$ and zero everywhere else. Then using \cite[0BCD,066N]{stacks-project}, we see that $P$ has Tor-amplitude in $[\min(I),\max(I)]$.
\end{rem}

\begin{lem}
\label{lem perfect zero}
	Let $A$ be a commutative ring and $P$ be a perfect complex of $A$-modules. Then there exists a quasi-compact open subscheme $U\subseteq\Spec(A)_{\cl}$ with the following property: 
	\begin{itemize}
		\item an affine scheme morphism $\Spec(B)_{\cl}\rightarrow \Spec(A)_{\cl}$ factors through $U$ if and only if $P\otimes_{A} B\simeq 0$.
	\end{itemize}
\end{lem}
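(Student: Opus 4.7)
The plan is to define $U$ explicitly as the common fiberwise vanishing locus and then verify the two directions of the equivalence using the finite Tor-amplitude of $P$ together with a Nakayama-type statement for perfect complexes.

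Since $P$ is perfect, by Lemma~\ref{general props of Tor}(1) it has finite Tor-amplitude, say in $[a,b]$. Hence for any residue field $\kappa(s)$ of $A$ and any $k\notin[a,b]$, one has $\pi_{k}(P\otimes_{A}\kappa(s))=0$. For $k\in\ZZ$, let $\beta_{k}\colon\Spec(A)_{\cl}\to\NN_{0}$, $s\mapsto \dim_{\kappa(s)}\pi_{k}(P\otimes_{A}\kappa(s))$. By Lemma~\ref{upper semi-cont qc}, each $\beta_{k}^{-1}(\{0\})$ is quasi-compact open, so I define
\[
U\coloneqq \bigcap_{k=a}^{b}\beta_{k}^{-1}(\{0\}),
\]
which is a finite intersection of quasi-compact opens and thus itself quasi-compact open.

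For the ``only if'' direction, suppose $P\otimes_{A}B\simeq 0$. Since $U$ is open, the scheme morphism $\Spec(B)_{\cl}\to\Spec(A)_{\cl}$ factors through $U$ if and only if its image is contained in $U$ as a set of points. Let $t\in\Spec(B)_{\cl}$ with image $s\in\Spec(A)_{\cl}$. Factoring $A\to B\to\kappa(t)$ through $\kappa(s)$ gives
\[
0\simeq (P\otimes_{A}B)\otimes_{B}\kappa(t)\simeq P\otimes_{A}\kappa(t)\simeq (P\otimes_{A}\kappa(s))\otimes_{\kappa(s)}\kappa(t),
\]
so $\pi_{k}(P\otimes_{A}\kappa(s))\otimes_{\kappa(s)}\kappa(t)=0$ for every $k$. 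As $\kappa(s)\to\kappa(t)$ is faithfully flat, this forces $\pi_{k}(P\otimes_{A}\kappa(s))=0$ for all $k$, so $s\in U$.

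For the ``if'' direction, assume the morphism factors through $U$. By Lemma~\ref{general props of Tor}(2), $P\otimes_{A}B$ is a perfect $B$-module of Tor-amplitude in $[a,b]$, and it suffices to check that it vanishes on each residue field of $B$, since a perfect complex over a (local) ring is zero as soon as its fiber at the residue field is zero (a standard Nakayama argument: represent it as a bounded minimal complex of finite free modules and apply the classical Nakayama lemma in each degree). For $t\in\Spec(B)_{\cl}$ with image $s\in U$, the same chain of equivalences as above gives $(P\otimes_{A}B)\otimes_{B}\kappa(t)\simeq (P\otimes_{A}\kappa(s))\otimes_{\kappa(s)}\kappa(t)\simeq 0$ because $s\in U$ means all $\pi_{k}(P\otimes_{A}\kappa(s))$ vanish. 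Hence $P\otimes_{A}B\simeq 0$.

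The only non-formal input is the Nakayama-type statement for perfect complexes, which is classical and reducible to ordinary Nakayama via a minimal resolution; everything else is unraveling definitions and exploiting that finite Tor-amplitude reduces the a priori infinite intersection defining $U$ to a finite one.
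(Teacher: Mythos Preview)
Your proof is correct and follows exactly the approach indicated by the paper: define $U$ via the vanishing of the Betti numbers (quasi-compact open by upper semi-continuity, finite intersection thanks to bounded Tor-amplitude), then use the fiberwise characterization together with Nakayama for perfect complexes to get both directions. The paper's own proof is only a one-line reference to the upper semi-continuity of Betti numbers (and to \cite[Lemma~5.3]{NotesDAG} for details), and a later remark confirms that the Nakayama argument you invoke is precisely what is meant.
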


\begin{proof}
	This follows from the upper semi-continuity of the Betti numbers (see \cite[0BDI]{stacks-project}). For a detailed proof, we refer to \cite[Lemma 5.3]{NotesDAG}.
\end{proof}

The next lemma shows that the vanishing locus of perfect complexes is quasi-compact open. This will be applied to the cofiber of morphisms of perfect complexes. In particular, the locus classifying equivalences between fixed perfect modules is therefore quasi-compact open.

\begin{lem}
\label{equiv zariski open}
Let $A\in\AniAlg{R}$ and $P$ be a perfect $A$-module. Define the derived stack $V_P$ by letting $V_P(B)$ be the full sub-$\infty$-category of\, $\Hom_{\AniAlg{R}}(A,B)$ consisting of morphisms $u\colon A\rightarrow B$ such that $P\otimes_{A,u}B\simeq 0$. This is a quasi-compact open substack of\, $\Spec(A)$. 
\end{lem}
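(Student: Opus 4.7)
The plan is to identify $V_P$ with the derived lift of a classical quasi-compact open subscheme produced by Lemma~\ref{lem perfect zero}, and then check the universal properties agree.

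First I would apply Lemma~\ref{lem perfect zero} to the discrete ring $\pi_{0}A$ together with the perfect $\pi_{0}A$-complex $P\otimes_{A}\pi_{0}A$ to obtain a quasi-compact open subscheme $U\subseteq\Spec(\pi_{0}A)_{\cl}$ with the property that for any classical ring $C$ equipped with a map $\pi_{0}A\to C$, the morphism $\Spec(C)_{\cl}\to\Spec(\pi_{0}A)_{\cl}$ factors through $U$ if and only if $P\otimes_{A}C\simeq 0$. Next I would invoke Remark~\ref{lift along affine} to lift $U$ to a quasi-compact open immersion $V\hookrightarrow\Spec(A)$; concretely, cover $U$ by standard affines $\Spec((\pi_{0}A)[f_{i}^{-1}])_{\cl}$, lift each via Proposition and Definition~\ref{localization} to $\Spec(A[f_{i}^{-1}])\hookrightarrow\Spec(A)$, and take the union of the resulting open immersions.

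It then remains to prove $V\simeq V_{P}$ as subfunctors of $\Spec(A)$. Let $B\in\AniAlg{A}$ and $u\colon A\to B$ be an $R$-algebra morphism. By construction, $u\in V(B)$ if and only if the induced classical morphism $\pi_{0}u\colon\pi_{0}A\to\pi_{0}B$ factors through $U$, and by Step~1 this is equivalent to $P\otimes_{A,u}\pi_{0}B\simeq 0$. I therefore need the equivalence
\[
P\otimes_{A,u}B\simeq 0\iff P\otimes_{A,u}\pi_{0}B\simeq 0.
\]
The forward implication is immediate from base change along $B\to\pi_{0}B$. For the converse, set $N\coloneqq P\otimes_{A,u}B$, a perfect $B$-module by hypothesis. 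By Lemma~\ref{general props of Tor}(1), $N$ has finite Tor-amplitude $[a,b]$, so $N[-a]$ has Tor-amplitude $[0,b-a]$ and by Lemma~\ref{general props of Tor}(5) is connective with $\pi_{0}(N[-a])\simeq\pi_{0}(N[-a]\otimes_{B}\pi_{0}B)$; if $N\otimes_{B}\pi_{0}B\simeq 0$, this forces $\pi_{a}N=0$, and iterating (or applying upper semi-continuity from Lemma~\ref{upper semi-cont qc} to see all Betti numbers vanish and invoking the classical Nakayama on each fiber) yields $N\simeq 0$.

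The main obstacle is this last step — promoting the vanishing of $P\otimes_{A,u}B$ after reduction to $\pi_{0}B$ to its vanishing on the nose. Everything else is essentially a bookkeeping exercise built on the classical Lemma~\ref{lem perfect zero} and the standard lifting of étale/Zariski data through the truncation (Proposition~\ref{lift etale} and Remark~\ref{lift along affine}). Once $V\simeq V_{P}$ is established, the fact that $V\hookrightarrow\Spec(A)$ is a quasi-compact open immersion of derived stacks is immediate from its construction as a lift of a quasi-compact open subscheme.
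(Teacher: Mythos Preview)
Your proposal is correct and follows essentially the same approach as the paper's proof (which in turn follows \cite[Proposition 2.23]{TVaq}): use Lemma~\ref{lem perfect zero} to produce the quasi-compact open on $\pi_0A$, lift it via Remark~\ref{lift along affine}, and identify it with $V_P$. Your ``main obstacle''—that $P\otimes_{A,u}B\simeq 0$ iff $P\otimes_{A,u}\pi_0B\simeq 0$—can be streamlined slightly: since Tor-amplitude is defined via base change to $\pi_0B$, the vanishing of $N\otimes_B\pi_0B$ forces $N$ to have Tor-amplitude in $[0,0]$, so by Lemma~\ref{general props of Tor}(6) it is a finite projective $B$-module, and then Lemma~\ref{general props of Tor}(5) together with flatness gives $N\simeq 0$.
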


\begin{proof}
	The proof follows that of \cite[Proposition 2.23]{TVaq}. The idea of the proof is to use the upper semi-continuity of the Betti numbers of a perfect complex to see the openness on $\pi_{0}A$ and then lift the quasi-compact openness to $A$. For details, see \cite[Lemma 5.4]{NotesDAG}.	
\end{proof}

Next, we want to remark that the stack classifying morphisms between perfect modules is actually geometric and in good cases smooth. Since derived $F$-zips will come with two bounded perfect filtrations (\textit{i.e.}\ finite chains of morphisms of perfect modules), this lemma is crucial for the geometricity of derived $F$-zips.

\begin{lem}
	\label{spec sym geometric}
	Let $A$ be an animated $R$-algebra. Let $P$ be a perfect $A$-module with Tor-amplitude concentrated in $[a,b]$ with $a\leq 0$.
	Then the derived stack 
	\begin{align*}
		F^{A}_P\colon \AniAlg{A}&\longrightarrow \SS\\
		B&\longmapsto \Hom_{\MMod_A}(P,B)
	\end{align*}
	is $(-a-1)$-geometric and locally of finite presentation over $\Spec(A)$. $($We can view $F_{P}^{A}$ as a derived stack over $R$ with a morphism to $\Spec(A)$. So for any animated $R$-algebra $C$ that does not come with a morphism $A\rightarrow C$, the value of $F_{P}^{A}$ is empty.$)$ Further, the cotangent complex of $F^{A}_P$ at a point $x\colon \Spec(B)\rightarrow F^{A}_P$ is given by 
	$$
	L_{F_P^{A},x}\simeq P\otimes_A B.
	$$

	In particular, if $b\leq0$, then $F_P^{A}$ is smooth.
\end{lem}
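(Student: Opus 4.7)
My plan is to proceed by induction on the width $w \coloneqq b-a \geq 0$ of the Tor-amplitude, reducing systematically to the finite projective case via Lemma~\ref{general props of Tor}. The key point throughout is that the contravariant functor $Q \mapsto F_{Q}^{A}$ sends cofiber sequences of perfect $A$-modules to fiber sequences of pointed stacks over $\Spec(A)$, since $\Hom_{A}(-,B)$ converts cofibers to fibers of spectra and $\Omega^{\infty}$ preserves fiber sequences.

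For the base case $w=0$, Lemma~\ref{general props of Tor}\,(6) yields $P \simeq M[a]$ for a finite projective $A$-module $M$. When $a=0$, the adjunction between $\Sym_{A}$ and the forgetful functor identifies $F_{M}^{A}$ with $\Spec_{A}(\Sym_{A} M)$, which is affine and hence $(-1)$-geometric, smooth and locally of finite presentation. For $a<0$ I would identify $F_{M[a]}^{A}$ with the iterated classifying stack $B^{-a}(\Spec_{A}(\Sym_{A} M))$, using $\Omega^{\infty}((M^{\vee}\otimes_{A} B)[-a]) \simeq B^{-a}\Omega^{\infty}(M^{\vee}\otimes_{A}B)$ together with the commutative group-stack structure on $F_{M}^{A}$. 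Each delooping raises geometricity by one via Proposition~\ref{diag + proj geometric} applied inductively to the universal torsors, and smoothness propagates since the atlases remain smooth, so $F_{M[a]}^{A}$ is $(-a-1)$-geometric, smooth and locally of finite presentation.

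For the inductive step $w>0$, the case $a=0$ is handled directly, as $P$ is then connective and $F_{P}^{A}\simeq \Spec_{A}(\Sym_{A} P)$ is affine. Otherwise $a\leq -1$, and Lemma~\ref{general props of Tor}\,(7) produces a morphism $M[a]\to P$ with $M$ finite projective and cofiber $Q$ of Tor-amplitude $[a+1,b]$, where $a+1\leq 0$. The induced fiber sequence
$$
F_{Q}^{A} \longrightarrow F_{P}^{A} \longrightarrow F_{M[a]}^{A}
$$
of commutative group stacks extends to the right via the classifying stack of $F_{Q}^{A}$, realizing $F_{P}^{A}$ as the pullback
$$
\begin{tikzcd}
F_{P}^{A} \ar[r] \ar[d] & \Spec(A) \ar[d] \\
F_{M[a]}^{A} \ar[r] & BF_{Q}^{A}.
\end{tikzcd}
$$
By the inductive hypothesis, $F_{Q}^{A}$ is $(-a-2)$-geometric and smooth, hence $BF_{Q}^{A}$ is $(-a-1)$-geometric by Proposition~\ref{diag + proj geometric}. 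Combined with the base case applied to $M[a]$ and with Lemma~\ref{geometric base change}, I conclude that $F_{P}^{A}$ is $(-a-1)$-geometric and locally of finite presentation.

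The cotangent complex is computed directly. For $x\colon P\to B$ and a connective $B$-module $N$, the canonical product decomposition of the underlying $A$-module $B\oplus N$ yields $F_{P}^{A}(B\oplus N)\simeq F_{P}^{A}(B)\times \Hom_{\MMod_{A}}(P,N)$, so taking fibers at $x$ and applying the extension-of-scalars adjunction $\Hom_{\MMod_{A}}(P,N) \simeq \Hom_{\MMod_{B}}(P\otimes_{A} B, N)$ gives $L_{F_{P}^{A},x}\simeq P\otimes_{A} B$. For the ``in particular'' claim, Proposition~\ref{cotangent implies smooth} applies: under $b\leq 0$, the cotangent complex has Tor-amplitude in $[a,0] = [-(-a-1)-1,0]$, so $F_{P}^{A}$ is smooth. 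The main obstacle I expect is the inductive step, specifically making precise the commutative group-stack structure on $F_{Q}^{A}$ and the extension of the fiber sequence to the delooping $BF_{Q}^{A}$, and checking that geometricity and smoothness propagate correctly through this classifying-stack pullback construction.
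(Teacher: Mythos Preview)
Your overall strategy—induction using the cofiber sequence $M[a]\to P\to Q$ from Lemma~\ref{general props of Tor}(7), together with a direct computation of derivations—matches the paper's sketch (which in turn follows \cite[Theorem~5.2]{AG}). The cotangent complex computation and the base case via iterated delooping of $\Spec_A(\Sym_A M)$ are fine.

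There is, however, a genuine gap in the inductive step when $b>0$. You assert that ``by the inductive hypothesis, $F_Q^A$ is $(-a-2)$-geometric and smooth'', but the inductive hypothesis only yields smoothness when the upper Tor bound of $Q$ is $\leq 0$, and that bound is $b$. Without smoothness, Proposition~\ref{diag + proj geometric} does not apply to produce $BF_Q^A$ as a geometric stack. A related issue is that the extended fiber sequence is not $F_{M[a]}^A\to BF_Q^A$ but rather $F_{M[a]}^A\to F_{Q[-1]}^A$: applying $\Omega^\infty\underline{\Hom}_A(-,B)$ to the rotation $P\to Q\to M[a][1]$ gives a fiber sequence landing in $\Omega^\infty((Q^\vee\otimes_A B)[1])$, and this agrees with $B\,\Omega^\infty(Q^\vee\otimes_A B)$ only when $Q^\vee\otimes_A B$ is connective, i.e.\ when $b\leq 0$.

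The fix is immediate within your own framework. Replace $BF_Q^A$ by $F_{Q[-1]}^A$ in the pullback square. Since $Q[-1]$ has Tor-amplitude in $[a,b-1]$ with width $w-1$ and lower bound $a\leq 0$, your induction on width gives directly that $F_{Q[-1]}^A$ is $(-a-1)$-geometric and locally of finite presentation, with no smoothness needed. Then $F_P^A \simeq F_{M[a]}^A\times_{F_{Q[-1]}^A}\Spec(A)$ is $(-a-1)$-geometric by combining the base case for $F_{M[a]}^A$ with Proposition~\ref{diag geometric} and Lemmas~\ref{geometric base change} and~\ref{geometric comp}.
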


\begin{proof}
  The proof follows that of \cite[Theorem 5.2]{AG}. First one computes the space of derivations associated to $F_{P}^{A}$. Then we use that any perfect complex of Tor-amplitude in $[a,b]$ sits in a fiber sequence of perfect modules, where the remaining modules have Tor-amplitude in $[a+1,b]$, resp.\ $[a,a]$ (see Lemma~\ref{general props of Tor}).
  Lastly, an inductive argument concludes the proof (see \cite[Lemma 5.5]{NotesDAG} for further details).	
\end{proof}

\begin{theorem}
	\label{main thm}
	The derived stack
	\begin{align*}
	\PPerf_R\colon \AniAlg{R} &\longrightarrow \SS\\
	A&\longmapsto (\MMod_A^{\textup{perf}})^{\simeq}
	\end{align*}
	is locally geometric and locally of finite presentation.

	To be more specific, we can write $\PPerf_R = \colim_{a\leq b} \PPerf_R^{\,[a,b]}$, where $\PPerf_R^{\,[a,b]}$ is the moduli space consisting of perfect modules which have Tor-amplitude concentrated in degree $[a,b]$, each $\PPerf_R^{\,[a,b]}$ is $(b-a+1)$-geometric and locally of finite presentation and the inclusion $\PPerf_{R}^{\,[a,b]}\hookrightarrow \PPerf_{R}$ is a quasi-compact open immersion. If\, $b-a\leq 1$, then $\PPerf_R^{\,[a,b]}$ is in fact smooth. 
\end{theorem}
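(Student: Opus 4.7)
My plan is to prove the theorem by induction on the Tor-amplitude length $b-a \geq 0$, following the strategy of To\"en--Vaqui\'e. The first step is to establish that each $\PPerf_R^{[a,b]} \hookrightarrow \PPerf_R$ is a quasi-compact open immersion and that $\PPerf_R = \colim_{a \leq b} \PPerf_R^{[a,b]}$. By Lemma~\ref{upper semi-cont qc} combined with Remark~\ref{tor and type} and Lemma~\ref{lem perfect zero}, for any perfect $A$-module $P$ the locus where $P$ has Tor-amplitude in $[a,b]$ is cut out by the vanishing of finitely many fiberwise Betti numbers and is thus quasi-compact open in $\Spec(\pi_0 A)_{\cl}$, hence (via Remark~\ref{lift along affine}) quasi-compact open in $\Spec(A)$. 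This reduces the theorem to showing that each $\PPerf_R^{[a,b]}$ is $(b-a+1)$-geometric and locally of finite presentation, and smooth when $b - a \leq 1$.

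For the base case $b - a = 0$, Lemma~\ref{general props of Tor}(6) identifies $\PPerf_R^{[a,a]}$ with the stack classifying finite projective $R$-modules shifted by $[a]$. This decomposes as $\coprod_{n \geq 0} BGL_{n,R}$, which is classically $1$-geometric, smooth, and locally of finite presentation.

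For the inductive step, suppose $\PPerf_R^{[a+1,b]}$ is $(b-a)$-geometric and locally of finite presentation. By Lemma~\ref{general props of Tor}(7), every perfect $A$-module of Tor-amplitude $[a,b]$ arises as the fiber of a morphism $P' \to M[a+1]$ with $M$ finite projective of some rank $n$ and $P' \in \PPerf_R^{[a+1,b]}$. Accordingly I construct the auxiliary stack $X = \coprod_{n \geq 0} X_n$, where $X_n$ classifies tuples $(P',\phi)$ with $P' \in \PPerf_R^{[a+1,b]}$ and $\phi \colon P' \to A^n[a+1]$. Forgetting $\phi$ presents $X_n$ as a relative mapping stack over $\PPerf_R^{[a+1,b]}$, and Lemma~\ref{spec sym geometric} combined with the inductive hypothesis and Lemma~\ref{geometric comp} shows that $X$ is $(b-a)$-geometric and locally of finite presentation. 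Sending $(P',\phi)$ to $\fib(\phi)$ defines a morphism $X \to \PPerf_R^{[a,b]}$ which is an effective epimorphism by Lemma~\ref{general props of Tor}(7). Applying Proposition~\ref{diag + proj geometric} then yields the desired $(b-a+1)$-geometricity and local finite presentation, provided one checks that the two projections $X \times_{\PPerf_R^{[a,b]}} X \rightrightarrows X$ are $(b-a)$-geometric and smooth.

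The smoothness claim for $b - a \leq 1$ follows from Proposition~\ref{cotangent implies smooth}: one identifies the cotangent complex of $\PPerf_R$ at a point $P$ with $P \otimes_A P^\vee[-1]$, whose Tor-amplitude lies in $[a-b-1, b-a-1]$ by Lemma~\ref{general props of Tor}(3); for $b - a \leq 1$ this interval is contained in $[-(b-a+1)-1, 0]$, which is the bound required for smoothness of a $(b-a+1)$-geometric morphism. The main obstacle I anticipate is the verification that the two projections of the double fiber product $X \times_{\PPerf_R^{[a,b]}} X$ over $X$ are smooth and $(b-a)$-geometric, since unwinding the fiber product requires comparing two presentations $(P'_1, \phi_1)$ and $(P'_2, \phi_2)$ of the same $P$ and showing that the space parametrising such identifications is smoothly built out of Hom-stacks controlled by Lemma~\ref{spec sym geometric}; careful Tor-amplitude bookkeeping, using the additivity and fiber/cofiber bounds from Lemma~\ref{general props of Tor}, is what makes the inductive hypothesis close.
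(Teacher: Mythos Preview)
Your proposal is correct and follows essentially the same route as the paper: both argue by induction on the Tor-amplitude length, starting from $\PPerf_R^{[a,a]}\simeq\coprod_n\mathrm{B}\GL_n$, building an atlas via the fiber-of-morphism construction of Lemma~\ref{general props of Tor}(7), invoking Lemma~\ref{spec sym geometric} for geometricity of the mapping stacks, and deducing the open immersions from upper semi-continuity of the Betti numbers. The paper likewise defers the detailed verification of smoothness and geometricity of the projections $X\times_{\PPerf_R^{[a,b]}}X\rightrightarrows X$ to the references, so the gap you flag is the same one the paper leaves to \cite{NotesDAG}.
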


\begin{proof}
	The proof is analogous to those of \cite[Theorem 5.6]{AG}, \cite[Proposition 3.7]{TVaq}, and a full proof in this setting can be found in \cite[proof of Theorem~5.14]{NotesDAG}. The basic idea is to first prove that $\PPerf^{\,[a,a]}\simeq \BGL$ is geometric, then proceed by induction, using that any perfect module sits in a cofiber sequence of perfect modules with smaller Tor-amplitude. An atlas is then given by taking fibers of morphisms of perfect modules with smaller Tor-amplitude. In particular, Lemma~\ref{spec sym geometric} gives us the geometricity and smoothness of the atlas. The surjectivity is reduced to classical statements, but we refer to \cite[Theorem~5.14]{NotesDAG} for details as this reduction is too involved. The openness follows again from the upper semi-continuity of the Betti numbers of a perfect complex.
\end{proof}

\section{Derived \texorpdfstring{$\boldsymbol{F}$}{F}-zips}
\label{sec:F-Zip}

In the following, we fix a prime $p$ and an $\FF_{p}$-algebra $R$. Starting from here, it is important that we have chosen the $\infty$-category of animated rings for our study of derived algebraic geometry. The main reason is that we want to have a Frobenius in characteristic $p>0$. For $E_{\infty}$-rings, it is not clear how to define a Frobenius morphism. But for animated rings, we  naturally have a Frobenius. Namely, if we see an animated ring $A$ over $\FF_{p}$ as a contravariant functor from $\Poly_{\FF_{p}}$ to $\SS$, then the Frobenius morphism induces a natural transformation of the animated ring to itself, which we denote by $\Frob\colon A\rightarrow A$. For any animated $R$-algebra $A$ and any $A$-module $M$, we denote the base change of $M$ by the Frobenius of $A$ by $M^{(1)}\coloneqq M\otimes_{A,\Frob}A$. If $A$ is discrete, we can see an $M$-module as an element in the derived category via the equivalence $\MMod_{A}\simeq \Dcal(A)$, and we have $M^{(1)}\simeq M\otimes^{L}_{A,\Frob}A$ (here we abuse notation and identify $A$ with $\pi_{0}A$ if $A$ is discrete).
 
We want to define derived versions of $F$-zips presented in \cite{moon-wed}. In the reference, Moonen--Wedhorn defined $F$-zips over schemes of characteristic $p>0$ and analyzed the corresponding classifying stack. One application is the $F$-zip associated to a scheme with degenerate Hodge--de Rham spectral sequence. Examples of those are abelian schemes and K3-surfaces. The degeneracy of the spectral sequence is used to get two filtrations (note that  the conjugate spectral sequence also degenerates) on the $\supth{i}$ de Rham cohomology. Our goal is to eliminate the extra information given by the degeneracy of the spectral sequences. This information seems unnecessary since the two spectral sequences are induced by filtrations on the de Rham hypercohomology, and thus if we pass to the derived categories, we can use the perfectness of the de Rham hypercohomology, the two filtrations and the Cartier isomorphism to get derived $F$-zips, as explained in the following example.

\begin{example}
\label{ex fzip de rham}
	Let $f\colon X\rightarrow S$ be a proper smooth morphism of schemes, where $S$ is an $R$-scheme. The complex $Rf_*\Omega^\bullet_{X/S}$ is perfect and commutes with arbitrary base change (see \cite[0FM0]{stacks-project}). The conjugate and Hodge filtrations on the de Rham complex induce functors $\conj\colon\ZZ\rightarrow \Dcal(S)$ and $\HDG\colon\ZZ^{\op}\rightarrow \Dcal(S)$ given by\footnote{Here $\tau_{\leq n}$ denotes the canonical truncation and $\sigma_{\geq n}$ the stupid truncation in the sense of \cite[0118]{stacks-project}.} $\conj(n)= Rf_*\tau_{\leq n}\Omega^\bullet_{X/S}$ and $ \HDG(n)= Rf_*\sigma_{\geq n}\Omega^\bullet_{X/S}$ (recall that we see the ordered set $\ZZ$ as a $1$-category (and thus via the nerve functor as an $\infty$-category), where we have a unique map between $a,b\in\ZZ$ if and only if $a\leq b$). The associated colimits are naturally equivalent as we have $$\colim_{\ZZ}\conj\simeq Rf_*\Omega^\bullet_{X/S}\simeq \colim_{\ZZ^{\op}}\HDG.$$
 
	For $n\geq 0$, we have the following exact sequence of complexes of $f^{-1}\Ocal_{S}$-modules: 
      \begin{gather*}
	0\longrightarrow \tau_{\leq n-1}\Omega^\bullet_{X/S}\overset{\del^{n}}\longrightarrow \tau_{\leq n}\Omega^\bullet_{X/S}\longrightarrow \Coker(\del^{n})\longrightarrow 0,\\[2ex]      
        0\longrightarrow \sigma_{\geq n+1}\Omega^\bullet_{X/S}\longrightarrow \sigma_{\geq n}\Omega^\bullet_{X/S}\longrightarrow \Omega^n_{X/S}[-n]\longrightarrow 0.\end{gather*}
	Note that there is a quasi-isomorphism $\Coker(\del^{n})\xrightarrow{\lowsim}\Hcal^n(\Omega^\bullet_{X/S})[-n]$ in $\Dcal(S)$.
	These induce fiber sequences in $\Dcal(S)$ of the form
        \begin{gather*}
        \conj(n-1)\longrightarrow \conj(n)\longrightarrow Rf_*\Hcal^n(\Omega^\bullet_{X/S})[-n], \\[2ex]
		 \HDG(n+1)\longrightarrow \HDG(n)\longrightarrow Rf_*\Omega^n_{X/S}[-n].
	\end{gather*}
	It makes sense to think of $Rf_*\Hcal^n(\Omega^\bullet_{X/S})[-n]$  and $Rf_*\Omega^n_{X/S}[-n]$ as \textit{``cokernels''} of the respective maps in the distinguished triangles (as they are the cofibers in the stable $\infty$-category $\Dcal(S)$).

	The notation $\conj$ and $\HDG$ is chosen to indicate their influence on the classical conjugate and Hodge filtrations. Using these functors, one can naturally associate converging spectral sequences (as explained for example in \cite[Definition 1.2.2.9, Proposition 1.2.2.14]{HA} or \cite[0FM7]{stacks-project} for the Hodge filtration) on $R^{i}f_{*}\Omega^{\bullet}_{X/S}$. The filtration on the $\supth{i}$ cohomology of the colimit of $\HDG$ ($\simeq Rf_{*}\Omega^{\bullet}_{X/S}$), for example, is given by
        $$
        F^{n}R^{i}f_{*}\Omega^{\bullet}_{X/S} \coloneqq \im (H^{i}(\HDG(n))\longrightarrow R^{i}f_{*}\Omega^{\bullet}_{X/S}).
        $$
        The spectral sequence associated to the Hodge functor is given by 
	$$
	E_{1}^{p,q}=H^{q}(X,Rf_*\Omega^p_{X/S})=H^{q+p}(X,Rf_*\Omega^p_{X/S}[-p])\Longrightarrow R^{p+q}f_{*}\Omega^{\bullet}_{X/S}.
	$$
	Therefore, it seems reasonable to think of $\conj$, resp.\ $\HDG$, as an \textit{ascending}, resp.\ \textit{descending}, \textit{filtration} (see Definition~\ref{defi fil} below) with \textit{graded pieces}
        $$
        \gr^{n}\conj \coloneqq Rf_{*}\Hcal^{n}(\Omega^{\bullet}_{X/S})[-n]\quad \textup{resp.}\ \gr^{n}\HDG\coloneqq Rf_{*}\Omega^{n}_{X/S}[-n]
          $$
          (see Definition~\ref{defi gr} below).

	The Cartier isomorphism gives an equivalence $Rf_*\Hcal^{n}(\Omega^{\bullet}_{X/S})\simeq Rf^{(1)}_*\Omega^{n}_{X^{(1)}/S}$.
	Again by \cite[0FM0]{stacks-project}, $Rf_*\Omega^{n}_{X/S}$ commutes with arbitrary base change, and therefore $$(\gr^{n}\HDG)^{(1)}\simeq (Rf_*\Omega^{n}_{X/S})^{(1)}[-n]\simeq Rf^{(1)}_*\Omega^{n}_{X^{(1)}/S}[-n]\simeq Rf_*\Hcal^{n}(\Omega^{\bullet}_{X/S})[-n]\simeq \gr^{n}\conj .$$
	We claim that $\conj$ and $\HDG$ take values in perfect complexes of $\Ocal_{S}$-modules and their respective graded pieces are perfect.
 
	Indeed, first note that we can check this Zariski locally, so we may assume that $S$ is affine and in particular quasi-compact. Then for any $n\in\ZZ$, the complexes $\gr^{n}\HDG$  and $Rf_{*}\Omega^{\bullet}_{X/S}$ are perfect, and their formation commutes with arbitrary base change  (see \cite[0FM0]{stacks-project}). Since $\sigma_{\geq 0}\Omega^{\bullet}_{X/S}=\Omega^{\bullet}_{X/S}$, we see inductively using the distinguished triangles above that for all $n\in\ZZ$, the complex $\HDG(n)$ is perfect. Now certainly the base change of perfect complexes is perfect, and therefore the Cartier isomorphism shows that the graded pieces of $\conj$ are also perfect. The quasi-compactness of $S$ implies that there is an $n\in\NN_{0}$ such that $\tau_{\leq n}\Omega_{X/S}^{\bullet}=\Omega_{X/S}^{\bullet}$,  and thus again inductively with the distinguished triangles above, we see that $\conj(n)$ is perfect for all $n\in\ZZ$.
 
	Note that we heavily used that Zariski locally, there is an $n\gg 0$ such that for any $k\leq 0$ and $j\geq 0$, we have $\conj(k-1)\simeq 0\simeq \HDG(n+j)$ and $\conj(n+j)\simeq Rf_{*}\Omega_{X/S}^{\bullet}\simeq \HDG(k)$, and $\conj(n+j)\rightarrow\conj(n+j+1)$ and $\HDG(k+1)\rightarrow\HDG(k)$ are equivalent to the identity.
\end{example}

The example above gives us an idea for the definition of \textit{derived $F$-zips} (see Definition~\ref{defi derived F-zip}). Namely, a derived $F$-zip should consist of \textit{two filtrations} (one descending and one ascending) with perfect values that are locally determined by a finite chain of morphisms, \textit{i.e.}\ functors
$$
C^{\bullet}\in\Fun(\ZZ^{\op},\PPerf(S))\quad \textup{and}\quad D_{\bullet}\in\Fun(\ZZ,\PPerf(S)),
$$
such that their colimits are equivalent and that on affine opens, they are up to equivalence determined by their values on a finite ordered subset of $\ZZ$, together with \textit{equivalences} $\varphi_{\bullet}$ of their graded pieces up to Frobenius twist, \textit{i.e.}\ for $\gr^{n}C\coloneqq \cofib(C^{n+1}\rightarrow C^{n})$ and $\gr^{n}D\coloneqq \cofib(D_{n-1}\rightarrow D_{n})$, equivalences of the form
$$
\varphi_{n}\coloneqq (\gr^{n}C)^{(1)}\xrightarrow{\;\lowsim\;}\gr^{n}D.
$$
The $\infty$-category of derived $F$-zips should then be defined as the $\infty$-category of such triples $(C^{\bullet},D_{\bullet},\varphi_{\bullet}).$

\subsection{Filtrations}
\label{sec.filtration}
In the following, $A$ will denote an animated ring.

We will now define the notions of a \textit{filtration} and \textit{graded pieces} and look at properties of filtrations. These definitions are highly influenced by the work of Gwilliam--Pavlov \cite{GW} and Example~\ref{ex fzip de rham}. 

\begin{defi}
	A morphism $f\colon M\rightarrow N$ of $A$-modules is called a \textit{monomorphism} if for all $i\in \ZZ$, the morphism $\pi_{i}f$ is injective. We call $f$ a \textit{split monomorphism} if $f$ admits a retraction.
\end{defi}

\begin{rem}
Note that a monomorphism in our sense is not equivalently an $\infty$-categorical monomorphism; \textit{i.e.}\ if $f$ is a monomorphism of $A$-modules in our sense, then the diagonal $f$ may not be an equivalence.\footnote{Note that in stable $\infty$-categories, pullback diagrams are equivalently pushout diagrams. So if $f\colon M\rightarrow N$ is a morphism of $A$-modules such that the diagonal is an equivalence, then $f$ is an equivalence.}
 
Further, let us observe that any split monomorphism is automatically a monomorphism but the other way is not necessarily true.
For this let us note that a split monomorphism $f\colon M\rightarrow N$ is equivalently a splitting of the fiber sequence $M\xrightarrow{f}N\rightarrow\cofib(f)$; \textit{i.e.}~we have an equivalence of cofiber sequences
	$$
	\begin{tikzcd}
		M\arrow[r,"f"]\arrow[d,"\id"]&N\arrow[r,""]\arrow[d,"\simeq"]& \cofib(f)\arrow[d,"\id"]\\
		M\arrow[r]&M\oplus\cofib(f)\arrow[r,""]&\cofib(f)\rlap{.}
	\end{tikzcd}
	$$
	In particular, the natural morphism $\ZZ\xrightarrow{\cdot 2} \ZZ$ is a monomorphism in our sense but is not split (otherwise, the short exact sequence 
	$$
	0\longrightarrow \ZZ\overset{\cdot 2}\longrightarrow\ZZ\longrightarrow \ZZ/2\ZZ\longrightarrow 0
	$$
	would be split).
\end{rem}

\begin{defi}
\label{defi fil}
	An \textit{ascending} (\textit{resp.\ descending}) \textit{filtration of $A$-modules} is an element $F\in \Fun(\ZZ,\MMod_A)$ (resp.\ $F\in \Fun(\ZZ^{\op},\MMod_A)$).  
 
	We call an ascending (resp.\ descending) filtration $F$
	\begin{enumerate}
		\item \textit{right bounded} if there exists an $i\in \ZZ$ such that the natural map $F(k)\rightarrow \colim_{\ZZ} F$ (resp.\ $F(k)\rightarrow \colim_{\ZZ^{\op}} F$) is an equivalence for all $i\leq k$ (resp.\ $i\geq k$),
		\item \textit{left bounded} if there exists an $i\in \ZZ$ such that the natural map $0\rightarrow F(k)$ is an equivalence for all $k\leq i$ (resp.\ $k\geq i$), 
		\item \textit{bounded} if it is left and right bounded,
		\item \textit{perfect} if $F$ takes values in $\MMod_A^{\perf}$,
		\item \textit{strong} if for all $i\leq j$ (resp.\ $j\leq i$), we have that $F(i)\rightarrow F(j)$ is a monomorphism.
	\end{enumerate}
\end{defi}

\begin{rem}
  The definition of a \textit{strong filtration} seems natural since for a discrete module $M$ over a discrete ring $A$, a filtration is usually defined as a filtered chain of submodules
  $$
  \dots\subseteq M_{i}\subseteq M_{i+1} \subseteq \dots \subseteq M
  $$
  (for simplicity, we only consider ascending filtrations).  But we can show that the Hodge filtration $\HDG$ of Example~\ref{ex fzip de rham} is strong \textbf{if and only if} the Hodge--de Rham spectral sequence is degenerate and the modules of the $E_{1}$-page are finite locally free (see Theorem~\ref{de Rham strong degen}). Since we are particularly interested in the cases where the Hodge--de Rham spectral sequence is non-degenerate, strong filtrations are not used in the definition of derived $F$-zips. 
\end{rem}

The $\infty$-category of $A$-modules is stable. Thinking of stable $\infty$-categories as analogues of abelian categories, we may think of cofibers as cokernels. This allows for a definition of graded pieces of a filtration, that was used in Example~\ref{ex fzip de rham}.

\begin{defi}
\label{defi gr}
	Let $F$ be an ascending (resp.\ descending) filtration of $A$-modules. For any $i\in \ZZ$, we define the \textit{$\supth{i}$ graded piece of\, $F$} as $\gr^i F\coloneqq \cofib (F(i-1)\rightarrow F(i))$ (resp.\ $\gr^i F\coloneqq \cofib (F(i+1)\rightarrow F(i))$).
 
\end{defi}

\begin{rem}
	By the construction of the category $\Fun(\ZZ,\MMod_A)$, one sees that two filtrations $F$ and $G$ are equivalent if and only if there is a morphism $F\rightarrow G$ such that for all $n\in \ZZ$, the induced morphism $F(n)\rightarrow G(n)$ is an equivalence of $A$-modules. However, one can show that a morphism of bounded filtrations is an equivalence if and only if it induces an equivalence on the graded pieces (this is an easy consequence using induction or \cite[Remark 3.21]{GW}).
\end{rem}

\begin{remark}
	Note that for a perfect filtration $F$ of $A$-modules, the graded pieces $\gr^i F$ are again perfect (since the $\infty$-category of perfect  modules is per definition stable; see \cite[Section 7.2.4]{HA}).

\end{remark}

\begin{remark}
	\label{Day convolution}
	We want to attach a monoidal structure to filtrations of $A$-modules (we will only consider ascending filtrations, but the arguments work analogously for descending filtrations as explained at the end of this remark).
        First, note that for any (symmetric) monoidal $\infty$-category $\Ccal$, the $\infty$-category $\Fun(\ZZ,\Ccal)$ has two monoidal structures. The first one is simply given by termwise tensor product (see \cite[Remark 2.1.3.4]{HA}); the other one is given by the Day convolution (see \cite[Example 2.2.6.17]{HA}).
        We will not use the monoidal structure given by termwise tensor product since we want to consider bounded filtrations, and for such we do not have a unit element with respect to the termwise tensor product. Having this in mind, we will look closely into the monoidal structure induced by the Day convolution, which we will explain in the following.
 
	For the Day convolution, we first need a (symmetric) monoidal structure on $\ZZ$. For this, we simply take $\ZZ$ with the usual addition, seen as a symmetric monoidal structure on $\ZZ$. Then the Day convolution of two elements $F,G\in \Fun(\ZZ,\MMod_A)$, denoted by $F\otimes G$, is given by the formula
	$$
        (F\otimes G)(k)\simeq \colim_{n+m\leq k} F(n)\otimes_AG(m),
        $$
        where we take the colimit over the category of triples $(a,b,a+b\rightarrow k)$ where $a,b\in \ZZ$ and $a+b\rightarrow k$ is a morphism in $\ZZ$ (recall that this simply means $a+b\leq k$) and the morphisms are given componentwise, \textit{i.e.}\ a morphism
        $$
        (a,b,a+b\rightarrow k)\longrightarrow (a',b',a'+b'\rightarrow k')
        $$
        is given by the relations $a\leq a'$, $b\leq b'$ and $k\leq k'$. A unit element for this tensor product is given by the bounded perfect filtration $A_\bullet^{\triv}$ on $A$, where $A_i^{\triv}\simeq A$ for $i\geq 0$ and $A_i^{\triv}=0$ otherwise, the maps $A_m^{\triv}\rightarrow A_n^\triv$ for $0\leq m\leq n$ are given by the identity and the maps $A_m^{\triv}\rightarrow A_n^\triv$ for $m\leq n\leq 0$ are given by~$0$.
 
	If we replace ascending filtrations with descending ones, the relations above get opposed; \textit{i.e.}\ we have unique morphisms $a\rightarrow b$ in $\ZZ^{\op}$ if and only if $b\leq a$. Taking this to account, we can dually define the Day convolution for descending filtrations similarly.
\end{remark}

\begin{notation}
	In the definition of derived $F$-zips, we will have an ascending and a descending filtration. For clarity,  for an ascending filtration $F\in\Fun(\ZZ,\MMod_{A})$, we denote its values by $F_{n}\coloneqq F(n)$ for any $n\in\ZZ$, and for a descending filtration $G\in\Fun(\ZZ^{\op},\MMod_{A})$, we denote its values by $G^{n}\coloneqq G(n)$. We also denote the filtrations by $F_{\bullet}\coloneqq F$ and $G^{\bullet}\coloneqq G$.
 For the gradings,  we omit the $\bullet$; \textit{i.e.}\ we write $\gr^{i}F\coloneqq\gr^{i}F_{\bullet}$ and $\gr^{i}G\coloneqq\gr^{i}G^{\bullet}$
\end{notation}

\begin{remark}
\label{zigzag}
	Let us visualize the Day convolution using an easy example. Let $M$ and $N$ be $A$-modules, and let $C\rightarrow M$ and $D\rightarrow N$ be morphisms of $A$-modules. Now let us look at the filtrations $C_\bullet$ and $D_\bullet$ given 
	\begin{align*}
		C_{\bullet}&\colon\cdots\xrightarrow{\;\;0\;\;} 0\xrightarrow{\;\;0\;\;}C\longrightarrow M\xrightarrow{\;\id\;} M\xrightarrow{\;\id\;} \cdots\\
		D_{\bullet}&\colon \cdots\xrightarrow{\;\;0\;\;} 0\xrightarrow{\;\;0\;\;}D\longrightarrow N\xrightarrow{\;\id\;} N\xrightarrow{\;\id\;}\cdots\rlap{,}
	\end{align*}
	where we set $C_{0}=C$ and $D_{0} = D$. Then we have $(C_\bullet\otimes D_\bullet)_0\simeq C\otimes_A D$, and the $A$-module $(C_\bullet\otimes D_\bullet)_1$ is given by the pushout of the  diagram
	$$
	\begin{tikzcd}
	&C\otimes_A D\arrow[dr,""]\arrow[dl,""]&\\
	M\otimes_A D& & C\otimes_A N\rlap{.}
	\end{tikzcd}
	$$
	The $A$-module $(C_\bullet\otimes D_\bullet)_2$ is given by the colimit of the diagram
	$$
	\begin{tikzcd}
	& &C\otimes_A D\arrow[dr,""]\arrow[dl,""]\arrow[dd,""]& &\\
	&M\otimes_A D\arrow[dr,""]\arrow[dl,"\id", swap]& & C\otimes_A N\arrow[dr,"\id"]\arrow[dl,""]&\\
	M\otimes_A D &  & M\otimes_A N & &C\otimes_A N\rlap{;}
	\end{tikzcd}
	$$
	in particular, we may forget about the topmost module and only look at the colimit of the bottom zigzag (in the homotopy category). This diagram makes clear that $(C_\bullet\otimes D_\bullet)_2\simeq M\otimes_A N$. The same visualization works for higher degrees of the filtration $(C_\bullet\otimes D_\bullet)_\bullet$, and we will prove in the following proposition that the Day convolution descends to perfect bounded filtrations having this tree structure in mind.
\end{remark}

\begin{prop}
	\label{Day descend}
	The Day convolution on $\Fun(\ZZ,\MMod_A)$ descends to a symmetric monoidal structure on the full subcategory of perfect bounded ascending filtrations.
 The same holds for perfect bounded descending filtrations.
\end{prop}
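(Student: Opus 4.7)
My plan is to reduce the problem to checking closure under Day convolution on a set of ``perfect cell'' generators, and then exploit the fact that Day convolution preserves colimits in each variable.

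For a perfect $A$-module $P$ and an integer $n$, introduce the filtration $P\langle n\rangle \in \Fun(\ZZ,\MMod_{A})$ with value $P$ in degrees $\geq n$, value $0$ in degrees $<n$, and identity transitions on the nontrivial range. These are bounded perfect filtrations, and the unit $A_{\bullet}^{\triv}$ is precisely $A\langle 0\rangle$. First I would establish the key closure computation
$$P\langle n\rangle \otimes Q\langle m\rangle \simeq (P \otimes_{A} Q)\langle n+m\rangle$$
directly from the defining Day convolution colimit: at level $k$ the indexing diagram either has all values $0$ (when $n+m > k$) or is supported on the subposet $\{n'\geq n, m'\geq m, n'+m'\leq k\}$, which has $(n,m)$ as initial object and on which the diagram is constant with value $P\otimes_{A} Q$.

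Next I would show that every bounded perfect filtration $F$, say with support in $[a,b]$, is built from finitely many such cells. Concretely, the truncations $F^{\leq n}$ defined by $F^{\leq n}(k) := F(k)$ for $k\leq n$ and $F^{\leq n}(k) := F(n)$ for $k>n$ should fit into cofiber sequences
$$F^{\leq n-1} \longrightarrow F^{\leq n} \longrightarrow (\gr^{n} F)\langle n\rangle$$
in $\Fun(\ZZ,\MMod_{A})$, each $\gr^{n} F$ being perfect since $F$ is. Iterating from $F^{\leq a-1}\simeq 0$ up to $F^{\leq b}\simeq F$ then exhibits $F$ as a finite iterated cofiber of perfect cell filtrations.

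Finally, since Day convolution preserves colimits in each variable (being a left Kan extension of the external tensor product) and bounded perfect filtrations are closed under finite colimits in $\Fun(\ZZ,\MMod_{A})$, the tensor $F \otimes G$ is a finite iterated cofiber of objects of the form $(\gr^{n} F \otimes_{A} \gr^{m} G)\langle n+m\rangle$, each bounded and perfect. Hence the full subcategory is closed under Day convolution and the symmetric monoidal structure restricts automatically; the descending case is obtained by replacing $\ZZ$ with $\ZZ^{\op}$ throughout. The main (minor) obstacle is verifying the cellular cofiber sequences pointwise, in particular checking that the identity transitions above degree $n$ agree on both sides, which is immediate from the construction of $F^{\leq n}$ and $(\gr^{n}F)\langle n\rangle$.
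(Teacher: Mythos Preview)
Your proof is correct and takes a genuinely different route from the paper. The paper argues directly at the level of the colimit defining $(C_\bullet\otimes D_\bullet)_k$: it fixes stabilization indices $k,k'$ for $C_\bullet,D_\bullet$, then shows by a hands-on ``zigzag'' analysis of the indexing poset that $(C_\bullet\otimes D_\bullet)_{k+k'}\simeq M\otimes_A N$ and that all higher transition maps are equivalences; perfectness is then deduced from the observation that for bounded inputs each level is a finite colimit of perfect modules. Your approach instead builds every bounded perfect filtration from the ``cells'' $P\langle n\rangle$ via the truncation cofiber sequences, computes Day convolution on cells by a one-line cofinality argument, and propagates using that Day convolution is cocontinuous in each variable. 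Your method is more modular and makes the identity $\gr^k(C\otimes D)\simeq\bigoplus_n\gr^nC\otimes\gr^{k-n}D$ essentially immediate from the cell calculus, while the paper's direct approach avoids setting up the cell machinery but relies on a somewhat informal diagram-chasing argument for right-boundedness. Both reduce the final step to the same criterion from \cite[Proposition~2.2.1.1, Remark~2.2.1.2]{HA}.
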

\begin{proof}
	That the unit element for the Day convolution is a bounded perfect filtration on an $A$-module is shown in Remark~\ref{Day convolution}.
 
	Let $C_\bullet$ and $D_\bullet$ be bounded ascending filtrations of $A$-modules. We claim that $(C_\bullet\otimes D_\bullet)_{\bullet}$ is a bounded ascending filtration.
	That  $(C_\bullet\otimes D_\bullet)_{\bullet}$ defines a left-bounded filtration is clear.  To see that it is also right bounded, fix some integers $k,k'\in \ZZ$ such that the natural morphisms
        $$
        C_i\xrightarrow{\;\lowsim\;} \colim_{\ZZ}  C_{\bullet}\quad\textup{and}\quad D_j\xrightarrow{\;\lowsim\;} \colim_{\ZZ} D_{\bullet}
        $$
        are equivalences for all $i\geq k$ and $j\geq k'$. For simplicity, let us set $M\coloneqq \colim_{\ZZ}C_{\bullet}$ and $N\coloneqq \colim_{\ZZ}D_{\bullet}$. First, note that the morphism $C_{i\leq i+1}\colon C_i\rightarrow C_{i+1}$ is an equivalence for all $i\geq k$. Using this equivalence, we may assume that $C_{i\leq i+1}$ is given by $\id_M$ (it is not hard to find an equivalence of filtrations); we do the same for $D_\bullet$. Now let us look at $(C_\bullet\otimes D_\bullet)_{k+k'}$; we claim that this term is equivalent to $M\otimes N$. Indeed, $C_k\otimes_A D_{k'}\simeq M\otimes_A N$ by construction. Now let $(i,j)\in \ZZ^2$ be such that $i+j\leq k+k'$ but $i> k$ or $j>k'$, so there is no morphism from $C_i\otimes_{A} D_j$ to $C_k\otimes_A D_{k'}$. Without loss of generality, assume $i>k$ (in particular, $j<k'$).
 
	Let us visualize what we are going to do. Considering the zigzag from Remark~\ref{zigzag}, we will look at the following diagram: 
	$$
	\begin{tikzcd}
	C_{k}\otimes_A D_{j}\arrow[dr,"\id"]\arrow[d,""]\arrow[ddddr,"f"]& &  &\\
	\vdots \arrow[d,""]& C_{k+1}\otimes_{A}D_{j}\arrow[dr,"\id"]&  &\\
	\vdots \arrow[d,""] & & \dots \arrow[dr,"\id"] &\\
	C_{k}\otimes_A D_{k'} \arrow[dr,"g"] & & &  C_{i}\otimes_A D_{j}\arrow[dll,"h"]\\
	& (C_\bullet\otimes D_\bullet)_{k+k'}\rlap{.}& &
	\end{tikzcd}
	$$
By the definition of colimits, we automatically get a homotopy between $f$ and $h$ and a homotopy between $f$ and $g$. In particular, this diagram shows that $h$ is, up to homotopy, uniquely determined by $f$ and $g$. But certainly the morphisms of the filtrations and $g$ uniquely (up to homotopy) determine $f$. Using this and the universal property of colimits, we see that there exists a morphism $p\colon (C_\bullet\otimes D_\bullet)_{k+k'}\rightarrow C_{k}\otimes_A D_{k'}$ such that $\id_{C_{k}\otimes_A D_{k'}}\simeq p\circ g$. But $g\circ p$ induces a map $(C_\bullet\otimes D_\bullet)_{k+k'}\rightarrow (C_\bullet\otimes D_\bullet)_{k+k'}$ that is compatible with all transition maps in the colimit diagram, and thus $g\circ p\simeq \id_{(C_\bullet\otimes D_\bullet)_{k+k'}}$. In other words, $M\otimes_A N \simeq C_k\otimes_A D_{k'}\simeq (C_\bullet\otimes D_\bullet)_{k+k'}$.
 
The same argument shows that $(C_\bullet\otimes D_\bullet)_{l}\simeq M\otimes_A N$ and that the canonical maps $(C_\bullet\otimes D_\bullet)_{l}\rightarrow (C_\bullet\otimes D_\bullet)_{l+1}$ are homotopic to the identity for all $l\geq k+k'.$ So for all $l\geq k+k'$, the natural map
$$
(C_\bullet\otimes D_\bullet)_{l}\xrightarrow{\;\lowsim\;} \colim_{\ZZ}(C_\bullet\otimes D_\bullet)_{\bullet}\simeq M\otimes_{A} N
$$
is an equivalence;  \textit{i.e.}\ $(C_\bullet\otimes D_\bullet)_{\bullet}$ is right bounded.

The above computations show that since $C_{\bullet}$ and $D_{\bullet}$ are bounded,  for any $k\in \ZZ$, we have that $(C_{\bullet}\otimes D_{\bullet})_{k}$ is equivalent to the colimit taken over a finite filtered subset of $\ZZ$ (again seen as a category via the nerve functor and morphisms uniquely given by relations). Since finite colimits of perfect modules are perfect, we see that $(C_{\bullet}\otimes D_{\bullet})_{\bullet}$ is not only bounded but also perfect (note that stable $\infty$-categories are closed under finite colimits; see \cite[Proposition 1.1.3.4]{HA}).

	Combining everything above, we see that the Day convolution descends to bounded perfect filtrations and therefore gives us a symmetric monoidal structure on bounded perfect filtrations (see \cite[Proposition~2.2.1.1, Remark~2.2.1.2]{HA}).
 
	The proof for descending filtrations works analogously.
\end{proof}

\begin{remark}
	\label{graded of tensor}
	For two filtrations $C_\bullet$ and $D_\bullet$ of $A$-modules, it is known that $$\gr^k (C_\bullet\otimes D_\bullet)  \simeq \bigoplus_{n\in\ZZ} \gr^nC\otimes_{A} \gr^{k-n}D$$
	(see \cite[Lemma 5.2]{BMS2}).
\end{remark}

\begin{rem}
\label{day on gr}
Let us note that the construction of the Day convolution can also be done for $\Fun(\Ccal,\Dcal)$, where $\Ccal$ and $\Dcal$ are arbitrary symmetric monoidal $\infty$-categories (see \cite[Example 2.2.6.17]{HA}).

	An interesting example for us occurs if $\Ccal\simeq \ZZ^{\textup{disc}}$ (recall that this means the set $\ZZ$ as a discrete $1$-category and thus an $\infty$-category via the nerve functor), where we endow $\ZZ^{\disc}$ with a symmetric monoidal structure by addition, \textit{i.e.}\ $a\otimes b\coloneqq a+b$, and $\Dcal\simeq \MMod_{A}^{\perf}$. Then for functors $F,G\in \Fun(\ZZ^{\textup{disc}},\MMod_{A}^{\perf})$, we have 
	$$
	(F\otimes G)(k) \simeq \bigoplus_{n+m=k} F(n)\otimes_{A}G(m)\simeq \bigoplus_{n\in \ZZ} F(n)\otimes_{A}G(k-n).
	$$
	This will become important when constructing a symmetric monoidal structure on derived $F$-zips since we have to take into account the morphisms between graded pieces  and the behaviour of graded pieces of the tensor product of bounded perfect filtrations.
\end{rem}

\subsection{Derived $\boldsymbol{F}$-zips over affine schemes}

We are ready to define derived $F$-zips, and we will do so by axiomatizing the structures occurring in Example~\ref{ex fzip de rham}. We will first restrict ourselves to the local case; \textit{i.e.}\ we define derived $F$-zips over animated rings. The reason, besides simplicity, is that the theory of derived algebraic geometry was only developed for animated rings since we want a ``nice'' model category such as the model category associated to animated rings.  This is not a real issue since globalization of the results is achieved by considering right Kan extensions. There is also a direct way of defining derived $F$-zips for derived stacks, but we will see that the two constructions agree (see Remark~\ref{Fzips global given by affine}).

Recall that we fixed an $\FF_{p}$-algebra $R$ at the beginning of this section.

\begin{defi}
\label{defi derived F-zip}
	Let $A$ be an animated $R$-algebra.
	A \textit{derived $F$-zip over $A$} is a tuple $(C^\bullet,D_\bullet,\phi,\varphi_\bullet)$ consisting of 
	\begin{itemize}
		\item a descending bounded perfect filtration of $A$-modules $C^\bullet$,
		\item an ascending bounded perfect filtration of $A$-modules $D_\bullet$, 
		\item an equivalence $\phi\colon\colim_{\ZZ^{\op}}C^{\bullet}\simeq \colim_{\ZZ}D_{\bullet},$ and
		\item a family of equivalences $\varphi_k\colon (\gr^kC)^{(1)}\xrightarrow{\lowsim} \gr^kD$.
	\end{itemize}

	The $\infty$-category of $F$-zips over $A$, denoted by $\FZip_{\infty,R}(A)$, is defined as the full subcategory of 
	\begin{align*}
		(\Fun(\ZZ^{\op},\MMod_A^{\pPerf})&\times_{\colim,\MMod_A,\colim}\Fun(\ZZ,\MMod_A^{\pPerf}))\\ &\times_{((\gr^{i}_{-})^{(1)},\gr^{i}_{-})_{i\in\ZZ},\prod_{\ZZ}\Fun(\del\Delta^{1},\MMod_{A}^{\perf})}\prod_\ZZ \Fun(\Delta^1,\MMod_{A}^{\perf})
	\end{align*}
	consisting of derived $F$-zips over $A$.

	For an animated $R$-algebra homomorphism $A\rightarrow A'$, we have an obvious base change functor $\FZip_{\infty,R}(A)\rightarrow \FZip_{\infty,R}(A')$ via the tensor product, where the filtrations are base changed componentwise with induced morphisms.
\end{defi}

\begin{rem}
	In the above definition we have to fix the equivalence between the colimits of the ascending and descending filtrations. This comes from the fact that we want to define derived $F$-zips as a full subcategory, as above. To be more specific, let us look at a pullback diagram of $\infty$-categories
	$$
	\begin{tikzcd}
		\Dcal\arrow[r,""]\arrow[d,""]& \Acal\arrow[d,"g"]\\
		\Bcal\arrow[r,"n"]&\Ccal\rlap{.}
	\end{tikzcd}
	$$
	A morphism $\Delta^{0}\rightarrow \Dcal$ is by definition the same as a diagram 
	$$
	\begin{tikzcd}
		\Delta^{0}\arrow[r,"f"]\arrow[d,"m"]\arrow[rd,"h"]& \Acal\arrow[d,"g"]\\
		\Bcal\arrow[r,"n"]&\Ccal
	\end{tikzcd}
	$$
	with equivalences $g\circ f \simeq h$ and $n\circ m\simeq h$ in the $\infty$-category $\Fun(\Delta^{0},\Ccal)$, \textit{i.e.}~$1$-morphisms in $\Hom_{\ICat}(\Delta^{0},\Ccal)\coloneqq \Fun(\Delta^{0},\Ccal)^{\simeq}$ (resp.\ $2$-morphisms in $\ICat$). Important here is that we have to fix the homotopy equivalences $g\circ f \simeq h$ and $n\circ m\simeq h$; \textit{i.e.}~they are an additional datum. So an object in $\Dcal$ is the same as a tuple $(A,B,C)\in\Acal\times\Bcal\times\Ccal$ together with equivalences $g(A)\simeq C$ and $n(B)\simeq C$. This is equivalent to giving a tuple $(A,B,\phi)$ of objects $A\in \Acal$ and $B\in\Bcal$ and an equivalence $\phi\colon g(A)\simeq n(B)$.
\end{rem}

\begin{rem}
\label{F-zips.easy}
	Let $A$ be an animated $R$-algebra. The homotopy category of derived $F$-zips over $A$ forgets the extra datum of the equivalence between the colimits. This follows from the fact that the filtrations in the definition of a derived $F$-zips over $A$ are bounded. So, any derived $F$-zip $(C^{\bullet},D_{\bullet},\phi,\varphi_{\bullet})$ is isomorphic (not canonically) in $h\FZip_{\infty}(S)$ to a derived $F$-zip, where the equivalence between the colimits is actually given by the identity. In particular, up to equivalence we may replace $\phi$ with the identity, and we will write $(C^{\bullet},D_{\bullet},\varphi_{\bullet})$ in this case for a derived $F$-zip when we work with derived $F$-zips up to homotopy.
\end{rem}

\begin{example}
\label{ex fzip de rham local}
	Let us come back to Example~\ref{ex fzip de rham}. Let $f\colon X\rightarrow \Spec(A)$ be a proper smooth morphism of schemes. Then the associated Hodge and conjugate filtrations $\HDG$ and $\conj$ define, respectively, descending, and ascending perfect bounded filtrations of $A$-modules. We also have equivalences $\varphi_{n}\colon( \gr^{n}\HDG)^{(1)}\xrightarrow{\lowsim} \gr^{n}\conj$ between the graded pieces (up to Frobenius twist), induced by the Cartier isomorphism. Therefore, we get a derived $F$-zip associated to the proper smooth map $f$ of schemes
	$$
		\underline{R\Gamma_{\dR}(X/A)}\coloneqq (\HDG^{\bullet},\conj_{\bullet},\varphi_{\bullet})
	$$
	(note that $\colim_{\ZZ^{\op}}\HDG^{\bullet}\simeq\colim_{\ZZ}\conj$ naturally by the identity).
\end{example}

\begin{remark}
  Let us note that for any $A\in \AniAlg{R}$, the $\infty$-category of $F$-zips over $A$ is essentially small, even if we do not assume $\MMod_{A}$ to be small.\footnote{We want to note that we did not assume any smallness of the module categories explicitly, and this remark shows that it is not needed in this section. But as Remark~\ref{descent qcoh} shows, we need smallness of the module categories for globalization purposes, \textit{i.e.}\ when we want to extend derived $F$-zips to derived schemes via right Kan extension.} This is because the $\infty$-category $\MMod_A$ is compactly generated (see \cite[Proposition 7.2.4.2]{HA}) (thus accessible), and therefore the full subcategory of perfect objects is essentially small (see \cite[Proposition 5.4.2.2]{HTT}). Hence for any small $\infty$-category $K$, the $\infty$-category $\Fun(K,\MMod^{\perf}_A)$ is again essentially small (see \cite[Propositions~5.3.4.13 and 5.4.4.3]{HTT}).
   Finally, since $\FZip_{\infty,R}(A)$ is a full subcategory of finite limits of the objects of the form above, we see that indeed $\FZip_{\infty,R}(A)$ is  essentially small (note that by \cite[Corollary 4.2.4.8]{HTT}, the $\infty$-category of small $\infty$-categories has small limits). 
\end{remark}

\begin{lem}
	The $\infty$-category of derived $F$-zips over an animated $R$-algebra $A$ is stable.
\end{lem}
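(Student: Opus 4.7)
The plan is to identify $\FZip_{\infty,R}(A)$ with a full subcategory of an ambient stable $\infty$-category $\mathcal{E}$ that contains a zero object and is closed under fibers and cofibers; the result will then follow from \cite[Proposition~1.1.3.4]{HA}. For $\mathcal{E}$ I would take the iterated fiber product in Definition~\ref{defi derived F-zip}, but \emph{without} imposing the conditions that the filtrations be bounded and that the $\varphi_i$ be equivalences.

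To see that $\mathcal{E}$ is stable, I use that $\MMod_A^{\pPerf}$ is a thick subcategory of the stable $\infty$-category $\MMod_A$ and invoke \cite[Proposition~1.1.3.1]{HA} for the functor categories $\Fun(\ZZ^{\op},\MMod_A^{\pPerf})$, $\Fun(\ZZ,\MMod_A^{\pPerf})$, $\Fun(\Delta^1,\MMod_A^{\pPerf})$ and $\Fun(\del\Delta^1,\MMod_A^{\pPerf})$. Each transition functor appearing in the fiber product is exact: endpoint evaluations $\Fun(\Delta^1,-)\to\Fun(\del\Delta^1,-)$ are computed pointwise; $\gr^i$ is a cofiber, hence a finite colimit; the Frobenius twist $(-)^{(1)}\simeq(-)\otimes_{A,\Frob}A$ is a left adjoint between stable $\infty$-categories and therefore exact; and $\colim_{\ZZ}$, $\colim_{\ZZ^{\op}}$ are filtered colimits in the presentable stable $\infty$-category $\MMod_A$, so they preserve finite limits as well. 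Consequently $\mathcal{E}$ arises as a limit of a diagram in the subcategory of stable $\infty$-categories with exact functors; it is therefore stable, and finite (co)limits in $\mathcal{E}$ are computed componentwise.

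Next I would verify that the two defining conditions of an $F$-zip are preserved by fibers and cofibers in $\mathcal{E}$. Boundedness of filtrations is stable under finite (co)limits since any finite family of bounded filtrations admits a common support interval $[a,b]$ and fibers and cofibers are computed pointwise. For the equivalence condition, a morphism of $F$-zips $(C,D,\phi,\varphi)\to(C',D',\phi',\varphi')$ yields for each $i$ a commutative square
$$
\begin{tikzcd}
(\gr^iC)^{(1)} \arrow[r,"\varphi_i"] \arrow[d,"\alpha_i"] & \gr^iD \arrow[d,"\beta_i"] \\
(\gr^iC')^{(1)} \arrow[r,"\varphi_i'"] & \gr^iD'
\end{tikzcd}
$$
whose horizontal arrows are equivalences. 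By exactness of $(-)^{(1)}$ and of $\gr^i$, the $i$-th $\varphi$-component of the fiber (resp.\ cofiber) in $\mathcal{E}$ is canonically identified with the induced morphism on fibers (resp.\ cofibers) of $\alpha_i$ and $\beta_i$ in $\Fun(\Delta^1,\MMod_A^{\pPerf})$, and this induced morphism is an equivalence by the two-out-of-three property applied to the resulting fiber (resp.\ cofiber) sequences.

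The step requiring the most care is this last one: tracking simultaneously the fiber-product structure, the exact functors $\gr^i$ and $(-)^{(1)}$, and the condition that $\varphi_i$ be an equivalence. Once exactness of all the structure functors is in hand, however, the closure of the defining conditions under fibers and cofibers becomes a formal consequence of the stability of $\MMod_A$, completing the proof.
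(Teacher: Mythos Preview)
Your proposal is correct and follows essentially the same approach as the paper: embed $\FZip_{\infty,R}(A)$ in the ambient iterated fiber product, observe that the latter is stable because it is a limit of stable $\infty$-categories along exact functors (the paper singles out that filtered colimits preserve finite limits, just as you do), and then check that the defining conditions---boundedness of the filtrations and invertibility of the $\varphi_i$---are closed under shifts and (co)fibers computed pointwise. The only cosmetic difference is that the paper phrases the closure check in terms of shifts and cofibers rather than fibers and cofibers, and it does not spell out the two-out-of-three argument for the $\varphi_i$ as explicitly as you do.
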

\begin{proof}
We know that $\MMod_{A}$ and $\MMod^{\perf}_{A}$ are stable and thus also that for any $\infty$-category $\Ccal$, the $\infty$-category $\Fun(C,\MMod_{A}^{\perf})$ is stable. Since the limit of stable $\infty$-categories with finite limit-preserving transition maps is stable, it is enough to show that $\FZip_{\infty,R}(A)$ is a stable subcategory of 
\begin{align*}
		\left( \Fun\left(\ZZ^{\op},\MMod_A^{\pPerf}\right)\right.&\left. \times_{\colim,\MMod_A,\colim}\Fun\left(\ZZ,\MMod_A^{\pPerf}\right) \right) \\ &\times_{((\gr^{i}_{-})^{(1)},\gr^{i}_{-})_{i\in\ZZ},\prod_{\ZZ}\Fun\left(\del\Delta^{1},\MMod_{A}^{\perf}\right)}\prod_\ZZ \Fun\left(\Delta^1,\MMod_{A}^{\perf}\right)
	\end{align*}
	(note that $\prod_\ZZ \Fun(\Delta^1,\MMod_{A}^{\perf})\simeq \Fun(\ZZ^{\disc},\Fun(\Delta^{1},\MMod_{A}^{\perf}))$ and filtered colimits preserve finite limits).
	For this, we have to show that the perfect bounded filtrations, equivalences between colimits of filtrations and equivalences between the graded pieces (up to Frobenius twist) are stable under shifts and cofibers.
 
	 That perfect bounded filtrations are stable under shift and cofibers follows immediately from the fact that limits and colimits of functors can be computed pointwise (see \cite[Corollary 5.1.2.3]{HTT}). The same argument implies that equivalences between the graded pieces (up to Frobenius twist) are stable under cofibers and shifts. Since filtered colimits commute with shifts and cofibers, we also see that the equivalence between the colimits is preserved under those operations.
\end{proof}

In the following, we want to construct a symmetric monoidal structure on derived $F$-zips. The idea is very simple. We know that derived $F$-zips are contained in a larger $\infty$-category (see Definition~\ref{defi derived F-zip}); let us denote this category with $\Ccal$. This $\infty$-category $\Ccal$ is constructed by limits of functor categories, that we can endow with the Day convolution. For the morphisms between graded pieces, we have to be a bit careful, but Remarks~\ref{graded of tensor} and~\ref{day on gr} show us that this will not be a problem. Since passing to the graded pieces and taking the colimit of a filtration are both monoidal functors, we see that indeed $\Ccal$ is symmetric monoidal. Now we only need to show that the unit object of $\Ccal$ is a derived $F$-zip, which follows immediately, that the Day convolution of perfect bounded filtrations is bounded perfect (this is Proposition~\ref{Day descend}) and that the induced morphism of graded pieces (up to Frobenius twist) is an equivalence, which is also immediate.

\begin{prop}
\label{fzip monoidal}
	The $\infty$-category of derived $F$-zips over an animated $R$-algebra $A$ admits a symmetric monoidal structure.
\end{prop}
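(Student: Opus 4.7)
The strategy is to equip the ambient $\infty$-category $\Ccal$ of Definition~\ref{defi derived F-zip}, into which $\FZip_{\infty,R}(A)$ embeds as a full subcategory, with a symmetric monoidal structure built from Day convolutions, and then to check that the full subcategory of derived $F$-zips is closed under the tensor product and contains the unit.

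First, I would put Day convolution symmetric monoidal structures on each of the constituent functor categories. View $\ZZ$, $\ZZ^{\op}$ and $\ZZ^{\disc}$ as symmetric monoidal $\infty$-categories under addition, and use the standard tensor product on $\MMod_{A}^{\perf}$ as the target monoidal structure (see \cite[Example~2.2.6.17]{HA}). The product $\prod_{\ZZ}\Fun(\Delta^{1},\MMod_{A}^{\perf})$ I would identify with $\Fun(\ZZ^{\disc}\times\Delta^{1},\MMod_{A}^{\perf})$ and endow with the resulting Day convolution, and likewise for the variant with $\partial\Delta^{1}$. The following functors appearing in the iterated pullback defining $\Ccal$ should then be promoted to symmetric monoidal ones: the colimit $\colim\colon\Fun(\ZZ^{(\op)},\MMod_{A}^{\perf})\to\MMod_{A}$, as the left Kan extension along the terminal symmetric monoidal map; the Frobenius twist $(-)^{(1)}$, as base change along a ring map; the inclusion $\Fun(\partial\Delta^{1},\MMod_{A}^{\perf})\hookrightarrow\Fun(\Delta^{1},\MMod_{A}^{\perf})$; and the associated graded $\gr^{\bullet}\colon\Fun(\ZZ^{(\op)},\MMod_{A}^{\perf})\to\Fun(\ZZ^{\disc},\MMod_{A}^{\perf})$, whose object-level formula is recorded in Remarks~\ref{graded of tensor} and~\ref{day on gr}.

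Next, one assembles these pieces in $\CAlg(\ICat)$. Since the forgetful functor to $\ICat$ preserves limits, the iterated pullback defining $\Ccal$ inherits a symmetric monoidal structure whose underlying $\infty$-category is the original $\Ccal$. It then remains to verify the two conditions for $\FZip_{\infty,R}(A)\subseteq\Ccal$ to be a symmetric monoidal subcategory. The unit of $\Ccal$ is the trivial filtration $A^{\triv}$ on both sides (concentrated in degree $0$), equipped with identity comparison at the colimits and identity Frobenius maps, which is manifestly a derived $F$-zip. For closedness under tensor product, the bounded perfect property of the tensor product of filtrations is exactly Proposition~\ref{Day descend}; the equivalence on graded pieces (after Frobenius twist) is preserved because the tensor product of equivalences is an equivalence, combined with Remark~\ref{graded of tensor}; and the equivalence between colimits is preserved by the monoidality of $\colim$.

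The step that requires the most care is establishing the homotopy coherent symmetric monoidality of $\gr^{\bullet}$, since Remark~\ref{graded of tensor} only gives the formula on objects. The cleanest route is to present $\gr^{\bullet}$ as an operadic left Kan extension along a monoidal functor between the indexing categories, or alternatively to appeal to the coherent compatibility of cofibers with Day convolution already established for filtered stable $\infty$-categories in the literature. Once this is in hand, every other step in the plan is a formal consequence of the universal property of pullbacks in $\CAlg(\ICat)$ together with the two observations about the unit and the closedness of derived $F$-zips under tensor products.
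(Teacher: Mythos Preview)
Your proposal is correct and follows essentially the same strategy as the paper: equip the ambient pullback $\Ccal$ with a symmetric monoidal structure via Day convolution on the filtration factors and on $\Fun(\ZZ^{\disc},\MMod_A^{\perf})$, verify that the connecting functors (colimit, associated graded, Frobenius twist) are symmetric monoidal, take the limit in $\CAlg(\ICat)$, and then check that the unit lies in $\FZip_{\infty,R}(A)$ and that derived $F$-zips are closed under tensor product using Proposition~\ref{Day descend} and Remark~\ref{graded of tensor}. Two small points: the arrow you wrote as an inclusion $\Fun(\partial\Delta^{1},-)\hookrightarrow\Fun(\Delta^{1},-)$ goes the wrong way---the functor in the pullback is restriction $\Fun(\Delta^{1},-)\to\Fun(\partial\Delta^{1},-)$; and the paper equips the $\Delta^{1}$-direction with the \emph{pointwise} tensor product (see \cite[Remark~2.1.3.4]{HA}) rather than Day convolution, which makes the monoidality of restriction along $\partial\Delta^{1}\hookrightarrow\Delta^{1}$ immediate.
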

\begin{proof}
	We know that $\MMod_A^{\perf}$ admits a symmetric monoidal structure (see \cite[Remark~2.2.1.2]{HA} and note that as an $A$-module, $A$ is perfect and the tensor product of perfect $A$-modules is again perfect). We now show how to construct a symmetric monoidal structure on derived $F$-zips.
  
	The monoidal structure on the filtrations is  given by the Day convolution (see Proposition~\ref{Day descend}). The monoidal structure on the equivalences of graded pieces is given in the following way.

	We endow $\Fun(\ZZ^{\disc},\MMod_A^{\perf})$ with the Day convolution (as explained in Remark~\ref{day on gr}), where we endow $\ZZ^{\disc}$ with a symmetric monoidal structure by usual addition. The unit object in $\Fun(\ZZ^{\disc},\MMod_A^{\perf})$ is given by $A_{\triv}^{\disc}$, where $A_{\triv}^{\disc}(n) \simeq A$ if $n=0$ and $0$ otherwise.
        Now we endow the $\infty$-category $\Fun(\Delta^1,\Fun(\ZZ^{\disc},\MMod_A^{\perf}))$ with the pointwise tensor product (see \cite[Remark 2.1.3.4]{HA});  we do exactly the same for $\Fun(\del\Delta^1,\Fun(\ZZ^{\disc},\MMod_A^{\perf})).$ Certainly, by this construction, for a derived $F$-zip $(C^\bullet,D_\bullet,\varphi_\bullet)$ over $A$, the family $\varphi_\bullet$ defines an element of $\Fun(\Delta^1,\Fun(\ZZ^{\disc},\MMod_A^{\perf}))$.

	Now let us note that taking the colimit defines a symmetric monoidal functor from ascending (resp.\ descending) filtrations to $\MMod_{A}$ (as the tensor product of spectra commute with colimits in each variable, see \cite[Corollary 4.8.2.19]{HA}). Also, sending a filtration to its graded piece is symmetric monoidal by Remark~\ref{graded of tensor}. Therefore, we can attach a symmetric monoidal structure to 
	\begin{align*}
		\Ccal(A)\coloneqq \left(\Fun\left(\ZZ^{\op},\MMod_A^{\pPerf}\right)\right.&\times_{\colim,\MMod_A,\colim}\Fun\left.\left(\ZZ,\MMod_A^{\pPerf}\right)\right)\\ &\times_{\Fun\left(\del\Delta^1,\Fun\left(\ZZ^{\disc},\MMod_A^{\perf}\right)\right)}\Fun\left(\Delta^1,\Fun\left(\ZZ^{\disc},\MMod_A^{\perf}\right)\right), 
	\end{align*}
	where we use that the $\infty$-category of symmetric monoidal $\infty$-categories has limits (see\footnote{The reference shows the existence of limits in commutative algebra objects of symmetric monoidal $\infty$-categories. But using \cite[Proposition 4.1.7.10]{HA}, we can endow the $\infty$-category of $\infty$-categories with the Cartesian model structure (a concrete description of the associated $\infty$-operad is given in \cite[Notation 4.8.1.2]{HA}). The commutative algebra objects of the $\infty$-category of $\infty$-categories with this monoidal structure is then equivalent to the $\infty$-category of symmetric monoidal $\infty$-categories (defined for example in \cite[Variant 2.1.4.13]{HA}).}
        \cite[Remark 2.4.2.6, Proposition 3.2.2.1]{HA}).
 
Since derived $F$-zips over $A$ form a full subcategory of $\Ccal(A)$, it suffices to check that the unit element of $\Ccal(A)$ is in $\FZip_{\infty,R}(A)$ and that it is closed under the tensor product (see \cite[Remark~2.2.1.2]{HA}).  But this follows from Proposition~\ref{Day descend} and Remarks~\ref{graded of tensor} and~\ref{day on gr}. Concretely, the unit element in $\FZip_{\infty,R}(A)$ is given by 
	$$
	\underline{\onebb_A}\coloneqq (A^\bullet_{\triv},A_\bullet^{\triv},\id_{A},(\id_A)_0),
	$$
	where $A_\bullet^{\triv}$ is defined as in Remark~\ref{Day convolution}, $A_{\triv}^\bullet$ is defined dually, \textit{i.e.}~is given by $A_{\triv}^n\simeq A$ for $n\leq 0$  and $0$ elsewhere and has identity as transition maps, $(\id_A)_0$ denotes the family of morphisms $\varphi_\bullet$, where $\varphi_0\simeq \id_A$ and $\varphi_n=0$ elsewhere.
        
	Note that for $\varphi_{\bullet},\vartheta_{\bullet}\in\Fun(\Delta^1,\Fun(\ZZ^{\disc},\MMod_A^{\perf}))$ which induce equivalences in $\Fun(\ZZ^{\disc},\MMod_A^{\perf})$, their tensor product is still an equivalence, by the explicit description given in Remark~\ref{day on gr}.
\end{proof}

Our next goal is to show that the functor that sends an animated $R$-algebra to the $\infty$-category of derived $F$-zips over it is locally geometric. For this we need that it is a hypercomplete sheaf for the \'etale topology. We will show that it is a hypercomplete sheaf even for the fpqc topology.  Since every geometric derived stack is hypercomplete (see Lemma~\ref{geometric hypercomplete}), the hypercompleteness condition is necessary, at least for the \'etale topology.
 
Again the idea is very simple and follows the proof of descent for perfect modules seen in \cite[proof of Lemma 5.4]{AG}. We again embed $F$-zips into a larger category as in the proof of Proposition~\ref{fzip monoidal}, which satisfies hyperdescent. Then we only need to check that the property \textit{bounded perfect} of a filtration and the property \textit{equivalence} of a morphism between modules satisfy fpqc hyperdescent. But since our cover is affine and perfectness is equivalent to dualizability, both properties satisfy hyperdescent, and we are done.
 
To see that the larger category satisfies descent, one only needs that perfect filtrations satisfy descent, which will follow the from descent of perfect modules.

\begin{lem}
	\label{Fun sheaf}
	Let $F\colon \AniAlg{R}\rightarrow \ICat$ be a hypercomplete  fpqc sheaf. Then for any $\infty$-category $\Ccal$, the functor $\Fun(\Ccal, F(-))\colon  \AniAlg{R}\rightarrow \ICat$ is a hypercomplete fpqc sheaf.
\end{lem}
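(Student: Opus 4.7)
The plan is to exploit the fact that the functor $\Fun(\Ccal,-)\colon \ICat\rightarrow \ICat$ preserves arbitrary limits, so any descent statement valid for $F$ automatically transfers to $\Fun(\Ccal,F(-))$.

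First I would unpack what needs to be checked. By the definition of hypercomplete fpqc sheaf recalled just before this lemma, for any fpqc hypercover $B^{\bullet}\rightarrow A$ in $\AniAlg{R}$ (i.e.\ a morphism in $\Fun(\Delta_{s}^{\op},\AniAlg{R})$ that becomes an effective epimorphism after fpqc sheafification), the induced map
\[
F(A)\longrightarrow \lim_{\Delta_{s}} F(B^{\bullet})
\]
is an equivalence in $\ICat$. I want to show the analogous statement with $F$ replaced by $\Fun(\Ccal,F(-))$.

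The key input is that for any $\infty$-category $\Ccal$, the functor $\Fun(\Ccal,-)\colon \ICat\rightarrow \ICat$ preserves all small limits; this is \cite[Proposition 5.1.2.3]{HTT}, which says that limits in functor $\infty$-categories can be computed pointwise and which, applied on the target side, gives $\Fun(\Ccal,\lim_{i} \Dcal_{i})\simeq \lim_{i}\Fun(\Ccal,\Dcal_{i})$ naturally. Alternatively, this follows because $\Fun(\Ccal,-)$ is right adjoint to $\Ccal\times(-)$.

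With this in hand, the proof is a direct chain of equivalences: for any fpqc hypercover $B^{\bullet}\rightarrow A$,
\[
\Fun(\Ccal,F(A))\simeq \Fun\Bigl(\Ccal,\lim_{\Delta_{s}}F(B^{\bullet})\Bigr)\simeq \lim_{\Delta_{s}}\Fun(\Ccal,F(B^{\bullet})),
\]
where the first equivalence uses that $F$ is a hypercomplete fpqc sheaf and the second uses the limit-preservation above. I do not expect any serious obstacle; the only minor subtlety to keep track of is that the statement is about sheaves of $\infty$-categories (values in $\ICat$), so one must invoke limit-preservation in $\ICat$ rather than in $\SS$, but this is exactly the content of \cite[Proposition 5.1.2.3]{HTT}.
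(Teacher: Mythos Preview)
Your proposal is correct and essentially identical to the paper's proof: both argue that $\Fun(\Ccal,-)$ preserves limits (the paper phrases this as ``right adjoint to the product'', which is your second justification) and then conclude immediately by commuting $\Fun(\Ccal,-)$ past the totalization $\lim_{\Delta_{s}}$.
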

\begin{proof}
	As $\Fun(\Ccal,-)$ is right adjoint to the product, it preserves limits. Since $F$ is a hypercomplete sheaf, we see that indeed the natural morphism $\Fun(\Ccal, F(A))\xrightarrow{\lowsim}\lim_{\Delta_{s}} \Fun(\Ccal, F(A^\bullet))$ is an equivalence for any fpqc hypercovering $A\rightarrow A^\bullet$.
\end{proof}

\begin{defprop}
	\label{fzip infty sheaf}
	The functor 
	\begin{align*}
	\FZip_{\infty,R}\colon \AniAlg{R}&\longrightarrow \ICat\\
	A &\longmapsto \FZip_{\infty,R}(A)
	\end{align*}
	is a hypercomplete sheaf for the fpqc topology.
\end{defprop}
\begin{proof}
	In the following, we will denote the functor $\AniAlg{R}\rightarrow \ICat,\ A\mapsto \MMod^{\perf}_{A}$ by $\MMod^{\perf}_{(-)}$ to avoid confusion with the notation of the stack of perfect modules over $R$.

	Let $A\rightarrow A^{\bullet}$ be an fpqc hypercovering given by a functor $\Delta_{+,s}\rightarrow \AniAlg{R}$. We have to show that $\FZip_\infty (A) \rightarrow \lim_{\Delta_{s}}  \FZip_{\infty,R}(A^{\bullet})$ is an equivalence.
 
	For convenience, we first set
	\begin{align*}
		\Ccal(-)\coloneqq \Fun\left(\ZZ^{\op},\MMod_{-}^{\pPerf}\right)&\times_{\colim,\MMod_{-},\colim}\Fun\left(\ZZ,\MMod_{-}^{\pPerf}\right)\\ &\times_{\Fun\left(\del\Delta^1,\Fun\left(\ZZ^{\disc},\MMod_{-}^{\perf}\right)\right)}\Fun\left(\Delta^1,\Fun\left(\ZZ^{\disc},\MMod_{-}^{\perf}\right)\right). 
	\end{align*}

	For the fully faithfulness, let us look at the  diagram
	$$
	\begin{tikzcd}
	\FZip_{\infty,R}(A)\arrow[r,""]\arrow[d,"",hook]&\lim_{\Delta_{s}}  \FZip_{\infty,R}(A^{\bullet})\arrow[d,"",hook]\\
	\Ccal(A)\arrow[r,""]&\lim_{\Delta_{s}} \Ccal(A^{\bullet})\rlap{.}
	\end{tikzcd}
	$$ 
	By Lemma~\ref{Fun sheaf} and descent of (perfect) modules (see Remark~\ref{descent qcoh}), we see that $\Ccal$ is a hypercomplete sheaf for the fpqc topology, and thus the bottom horizontal arrow is an equivalence, and thus the upper horizontal arrow is fully faithful.
 
	For the essential surjectivity, note that we have to check that a filtration is bounded if and only if it is fpqc hyperlocally. But this follows immediately from the definition of a hypercovering since $A\rightarrow A_{0}$ has to be an fpqc covering, and thus if a filtration is bounded on the hypercovering, it  certainly is on $A$. Also, we have to check that the induced morphism on the graded pieces (up to Frobenius twist) is an equivalence if and only if it is so fpqc hyperlocally, but again this follows from the descent of modules.
\end{proof}

Recall that the functor $(-)^{\simeq}$ that sends an $\infty$-category to its underlying Kan complex is right adjoint to the inclusion and thus preserves limits. In particular, the hypercomplete sheaf $\FZip_{\infty,R}$ induces a derived stack. We want to show that this stack is locally geometric. To do so, we have to write it as a filtered colimit of geometric stacks. In the case of perfect modules, we restricted ourselves to perfect modules of fixed Tor-amplitude $\PPerf_{R}^{[a,b]}$. To use the geometricity of $\PPerf_{R}^{[a,b]}$ to our advantage, we fix the Tor-amplitude of the graded pieces associated to the descending filtration of a derived $F$-zip $(C^{\bullet},D_{\bullet},\phi,\varphi_{\bullet})$. By the boundedness of the filtrations, this also fixes the Tor-amplitude of each $C^{i}$ and $D_{i}$ for all $i\in\ZZ$ (for $D_{i}$ we use the equivalences given by $\varphi_{\bullet}$). But this is still not enough for geometricity since we would need to cover filtrations that could get bigger and bigger. To solve this problem, we also fix a finite subset $S\subseteq\ZZ$, where the $\supth{i}$ graded piece vanishes for $i\not\in S$. This is analogous to fixing the \textit{type} (see Definition~\ref{local type}), which is done in the classical setting  by \cite{moon-wed}. This approach also works, as seen later in Remark~\ref{F-zip as colim of open im}, but amounts to the same proof.

\begin{defi}
\label{defi derived fzip}
	We define the \textit{derived stack of\, $F$-zips}
	\begin{align*}
	\FZip_{R}\colon  \AniAlg{R}&\longrightarrow \SS\\
	A&\longmapsto \FZip_{\infty,R}(A)^\simeq.
	\end{align*}

	For a finite subset $S\subseteq \ZZ$ and $a\leq b\in\ZZ$, we define $\FZip_{R}^{[a,b],S}$ as the derived substack over $\FZip_{R}$, where we restrict ourselves to the $F$-zips $(C^\bullet,D_\bullet,\phi,\varphi_\bullet)$ such that $\gr^{i}C\simeq 0$ for  $i\not\in S$ and the Tor-amplitude of $\gr^i C$ is contained in $[a,b]$ for all $i\in S$ (note that both conditions can be tested locally, and thus $\FZip_{R}^{[a,b],S}$ indeed defines a derived substack).
\end{defi}

Let us quickly insert a remark that is needed for the proof of the next theorem.

\begin{rem}
\label{kan fib pull}
	Let us look at the following diagram in $\SS$: 
	$$
	\begin{tikzcd}
		Z\arrow[r,""]\arrow[d,""]& X\arrow[d,"a"]\\
		Y\arrow[r,"b"]&W\rlap{.}
	\end{tikzcd}
	$$
	Let us assume this diagram is a pullback diagram in $\SS$. If one of the morphisms $a$ or $b$ is a Kan fibration, then $Z$ is equivalent in $\SS$ to the ordinary pullback in the category of simplicial sets. This is a standard result for homotopy pullbacks in model categories (see \cite[Remark~A.2.4.5]{HTT}). 
        
	If we replace $\SS$ with $\ICat$, where we also replace ``Kan fibration'' with ``isofibration'', the same holds after applying $(-)^{\simeq}\colon \ICat\rightarrow \SS$ to the diagram. A more detailed overview of this can also be found in \cite[Remark~4.13]{NotesDAG}.
 
	An important example of a Kan fibration (resp.\ an isofibration) is the naturally induced morphism $\Fun(B,X)\rightarrow \Fun(A,X)$ for a simplicial set $B$, a simplicial subset $A\subseteq B$ and a Kan complex (resp.\ an $\infty$-category) $X$ (see \cite[00TJ, 01BS]{kerodon}; again, a more details can also be found in \cite[Remark~4.13]{NotesDAG}).
\end{rem}

\begin{thm}
	\label{F-Zips locally geometric}
	Let $S\subseteq \ZZ$ be a finite subset and $a\leq b\in\ZZ$. The derived stack $\FZip_{R}^{[a,b],S}$ is $(b-a+1)$-geometric and locally of finite presentation. Further, the functor $\FZip^{[a,b],S}_{R}\rightarrow \PPerf^{\,[a,b]}_{R}$ induced by $(C^\bullet,D_\bullet,\phi,\varphi_\bullet)\mapsto \colim_{\ZZ^{\op}}C^{\bullet}$ is locally of finite presentation.
\end{thm}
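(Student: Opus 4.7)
The plan is to realise $\FZip_R^{[a,b],S}$ as a finite iterated fibre product built from $\PPerf^{[a,b]}_R$ and morphism stacks of perfect modules, using Theorem~\ref{main thm} and Lemma~\ref{spec sym geometric} as the main geometricity inputs. First, I reduce to finite data: writing $S=\{s_{\min},\ldots,s_{\max}\}$, the conditions $\gr^iC\simeq 0$ for $i\notin S$ together with boundedness of the filtration force $C^n\simeq 0$ for $n>s_{\max}$ and $C^n\simeq C^{s_{\min}}\simeq\colim_{\ZZ^{\op}}C^\bullet$ for $n\le s_{\min}$, and analogously for $D_\bullet$. So a point of $\FZip_R^{[a,b],S}(A)$ is equivalent to the finite data of two chains $C^{s_{\max}}\to\cdots\to C^{s_{\min}}$ and $D_{s_{\min}}\to\cdots\to D_{s_{\max}}$ with cofibres of Tor-amplitude in $[a,b]$, an equivalence $\phi\colon C^{s_{\min}}\simeq D_{s_{\max}}$ of the colimits, and equivalences $\varphi_i\colon (\gr^iC)^{(1)}\simeq\gr^iD$ for $i\in S$. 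Iterating Lemma~\ref{general props of Tor} on the fibre sequences $C^{i+1}\to C^i\to\gr^iC$ shows that every term of the chains and every graded piece is perfect with Tor-amplitude in a fixed interval depending only on $[a,b]$ and $|S|$.

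Next, I build the chain stacks inductively. Set $\Ccal_0\coloneqq\PPerf^{[a,b]}_R$, which is $(b-a+1)$-geometric and locally of finite presentation by Theorem~\ref{main thm}. Given $\Ccal_k$ carrying a universal perfect module $\widetilde{C}^{s_{\max}-k}$, I define $\Ccal_{k+1}\to\Ccal_k$ as the stack classifying a further morphism $\widetilde{C}^{s_{\max}-k}\to C^{s_{\max}-k-1}$ whose cofibre lies in $\PPerf^{[a,b]}$. The morphism stack is geometric and locally of finite presentation by Lemma~\ref{spec sym geometric} (applied after a uniform shift ensuring the hypothesis $a\le 0$), and the Tor-amplitude condition on the cofibre cuts out a quasi-compact open substack by Lemma~\ref{upper semi-cont qc} and Theorem~\ref{main thm}. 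By Lemmas~\ref{geometric base change} and~\ref{geometric comp} the iterated stack stays $(b-a+1)$-geometric and locally of finite presentation. The ascending chain is handled identically, yielding a stack $\Dcal$.

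Third, the equivalences $\phi$ and $\varphi_\bullet$ are enforced by pulling back, over $\Ccal\times\Dcal$, along the equivalence loci inside the morphism stacks $\Hom(\widetilde{C}^{s_{\min}},\widetilde{D}_{s_{\max}})$ and $\Hom((\gr^i\widetilde{C})^{(1)},\gr^i\widetilde{D})$; these loci are quasi-compact open by Lemma~\ref{equiv zariski open} applied to the cofibre, and the Frobenius twist preserves perfectness and Tor-amplitude so poses no issue. The iterated fibre product is $\FZip^{[a,b],S}_R$, $(b-a+1)$-geometric and locally of finite presentation. The functor $(C^\bullet,D_\bullet,\phi,\varphi_\bullet)\mapsto\colim_{\ZZ^{\op}}C^\bullet\simeq C^{s_{\min}}$ factors as the composite of the forgetful maps $\FZip^{[a,b],S}_R\to\Ccal\to\Ccal_0=\PPerf^{[a,b]}_R$, each locally of finite presentation by construction, so the total map is as well.

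The main obstacle will be the bookkeeping of Tor-amplitudes: although graded pieces are constrained to $[a,b]$, intermediate chain terms can have strictly larger Tor-amplitude, and one must verify that the morphism stacks entering the induction still satisfy the hypothesis of Lemma~\ref{spec sym geometric} (possibly after a uniform shift) and that the geometricity level does not accumulate past $(b-a+1)$ over the $|S|-1$ iteration steps. The latter rests on the observation that the morphism stacks added at each step are at most $(b-a-1)$-geometric over their base, so iteration against a $(b-a+1)$-geometric stack does not raise the level.
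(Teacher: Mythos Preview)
Your proposal is essentially the same strategy as the paper's: build the stack by iterated fibre products of $\PPerf^{[a,b]}_R$, morphism stacks from Lemma~\ref{spec sym geometric}, and equivalence loci from Lemma~\ref{equiv zariski open}. The paper organises the two chains together (gluing copies of a single ``arrow'' stack $V$ via $\dom$/$\codom$) rather than building $\Ccal$ and $\Dcal$ separately and then taking a product, but this is cosmetic.

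Two remarks. First, the ``main obstacle'' you flag is not one: if $X$ and $Z$ have Tor-amplitude in $[a,b]$ and $X\to Y\to Z$ is a fibre sequence, then $Y$ also has Tor-amplitude in $[a,b]$ (apply the long exact sequence after tensoring with any discrete $\pi_0A$-module). Since your chain starts at $C^{s_{\max}}\simeq\gr^{s_{\max}}C\in\PPerf^{[a,b]}$ and each successive cofibre is in $[a,b]$, every intermediate term stays in $[a,b]$, and the morphism stacks at each step are exactly $(b-a-1)$-geometric over their base with no shift needed; the paper exploits this by simply constraining all chain terms to $\PPerf^{[a,b]}$ from the outset. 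Second, the paper spends some care on the final identification of the iterated pullback with $\FZip^{[a,b],S}_R$ as an $\infty$-categorical equivalence (explicitly extending a finite chain to a $\ZZ^{\op}$-indexed functor by degenerating simplices, and exhibiting a retraction), whereas you assert this in one line; Remark~\ref{F-zips.easy} justifies this at the level of homotopy categories, but the full argument takes a bit more.
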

We want to note that in fact $\FZip$ is locally geometric, which will be shown later on (see Theorem~\ref{fzip local geometric plus}).

The idea of the proof is straightforward. We know that perfect modules with fixed Tor-amplitude, morphisms between those and stacks classifying equivalences between those are geometric. Since the filtrations have finite length, we can see them as finite chains between perfect modules with fixed Tor-amplitude, which are also geometric. The only thing left is to extend finite chains of perfect modules to functors from $\ZZ$ (resp.\ $\ZZ^{\op}$) to perfect modules with fixed degree where the graded pieces do not vanish. But this is also straightforward since the only thing left is to degenerate each vertex in the finite chain such that it sits in the right degree.

\begin{proof}[Proof of Theorem~\ref{F-Zips locally geometric}]
	Let $k$ be the number of elements of $S$, and let us index $S$ in the following way $\lbrace s_0<\dots<s_{k-1}\rbrace$. Let us set $n\coloneqq b-a+1$.
	Consider the pullback square 
	$$
	\begin{tikzcd}
	V\arrow[r,""]\arrow[d,"p"]&\Fun\left(\Delta^1,\MMod^{\perf}_{(-)}\right)^{\simeq}\arrow[d,""]\\
	\PPerf^{\,[a,b]}_{R}\times_R\PPerf^{\,[a,b]}_{R}\arrow[r,""]&\Fun\left(\del\Delta^1,\MMod^{\perf}_{(-)}\right)^{\simeq}\rlap{.}
	\end{tikzcd}
	$$
	Then $V$ is an $n$-geometric stack locally of finite presentation, since it classifies morphisms between perfect complexes with Tor-amplitude in $[a,b]$. Thus, the fiber under $p$ of a point $\Spec(A)\rightarrow  \PPerf^{\,[a,b]}_{R}\times_R\PPerf^{\,[a,b]}_{R}$, classified by perfect $A$-modules $(P,Q)$ of Tor-amplitude in $[a,b]$, is given by $F_{P\otimes Q^\vee}$. This is an $(n-2)$-geometric derived stack and is locally of finite presentation by Lemma~\ref{spec sym geometric}. Hence, using that the product $\PPerf^{\,[a,b]}_{R}\times_R\PPerf^{\,[a,b]}_{R}$ is $n$-geometric (since $\PPerf^{\,[a,b]}_{R}$ is $n$-geometric by Theorem~\ref{main thm}), we see that $V$ is $n$-geometric.
 
	We will now glue copies of $V$ together so that we can classify a chain of morphisms, and since $F$-zips have two filtrations, we will do this twice. Let us start with $\codom \colon V\rightarrow \PPerf^{\,[a,b]}_{R}$, which sends a morphism to its codomain. This morphism is $n$-geometric and locally of finite presentation since it is the composition of $V\rightarrow \PPerf^{\,[a,b]}_{R}\times_R \PPerf^{\,[a,b]}$ and $p_2\colon\PPerf^{\,[a,b]}_{R}\times_R \PPerf^{\,[a,b]}_{R}\rightarrow \PPerf^{\,[a,b]}_{R}$, which both are $n$-geometric and locally of finite presentation (analogously, the map $\dom\colon V\rightarrow \PPerf^{\,[a,b]}$ which sends a morphism to its domain is $n$-geometric and locally of finite presentation). In particular, $\widetilde{V_{1}}\coloneqq V\times_{\codom, \PPerf^{\,[a,b]}_{R},\codom}V$ is $n$-geometric and locally of finite presentation.

	The derived stack $\widetilde{V_{1}}$ classifies tuples of morphisms of perfect modules $(M\rightarrow M',N\rightarrow N')$ such that $M'$ is equivalent to $N'$. This is not an extra datum, as $\codom$ from $V$ to $\PPerf_{R}^{[a,b]}$ is pointwise a Kan fibration, so we can use Remark~\ref{kan fib pull} to see that $\widetilde{V_{1}}$ is pointwise equivalent in $\SS$ to the ordinary pullback of simplicial sets, where we do not need to keep track of the equivalence of $M'$ and $N'$.
	Since we want to keep track of the equivalence between the colimits, we define $V_{1}$ via the pullback square
	$$
	\begin{tikzcd}
		V_{1}\arrow[rrr,""]\arrow[d,"\widetilde{p}"]&&& \Fun\left(\Delta^{1},\PPerf^{\,[a,b]}_{R}\right)\arrow[d,""]\\
		\widetilde{V_{1}}\arrow[rrr,"{(\codom,\codom)}"]&&&\Fun\left(\del\Delta^{1},\PPerf^{\,[a,b]}_{R}\right)\rlap{.}
	\end{tikzcd}
	$$
	Note that for any morphism $\Spec(A)\rightarrow\widetilde{V_{1}}$, classified by two morphisms $(M\rightarrow M',N\rightarrow N')$ of perfect $A$-modules of Tor-amplitude $[a,b]$, the fiber under $\widetilde{p}$ is given by the stack classifying equivalences between $M'$ and $N'$, which is open in the derived stack $\Hom_{\MMod_{A}}(M'\otimes_{A} (N')^{\vee},-)$ over $A$ by Lemma~\ref{equiv zariski open} and thus $(n-2)$-geometric and locally of finite presentation by Lemma~\ref{spec sym geometric}. By our construction, $V_{1}$ is $n$-geometric and classifies tuples $(f,g,\psi)$, where $f,g$ are morphisms of perfect modules and $\psi$ is an equivalence between their codomains.

        Let us set recursively
        $$
        V_{l}\coloneqq (V\times_R V) \times_{\codom\times\codom,\PPerf^{\,[a,b]}_{R}\times_R \PPerf^{\,[a,b]}_{R},\dom\times \dom} V_{l-1}
        $$
        for $l\geq 2$. Let us also define $V_0$ as the stack classifying equivalences between perfect modules (analogously defined as $V$). Here $\dom\times \dom\colon V_{l-1}\rightarrow \PPerf_{R}^{\,[a,b]}\times_R \PPerf_{R}^{\,[a,b]} $ is defined for $l>2$ by projecting to $V\times_R V$ and then further projecting by $\dom\times\dom$, and for $l=2$, it is directly given by $\dom\times\dom$ (note that we have two projections from $V_{1}$ to $V$). Using the same arguments as before, we see that $V_l$ is $n$-geometric and locally of finite presentation for all $l\geq 0$. Now let us look at $V_{k-1}$. The stack $V_{k-1}$ classifies two chains of length $k-1$ of morphisms of perfect modules with Tor-amplitude in $[a,b]$, with an equivalence of the ends of the two chains.

	Later in the proof, we will identify $V_{k-1}$ with the stack that classifies perfect modules with Tor-amplitude in $[a,b]$ and two bounded filtrations  with graded pieces of Tor-amplitude in $[a,b]$ (one descending, one ascending) with $k$ non-trivial graded pieces. 

	We extend the chains by zero on the left by defining
        $$
        \widetilde{V_{k-1}}\coloneqq V_{k-1}\times_{\dom\times\dom, \PPerf_{R}^{\,[a,b]}\times_R\PPerf_{R}^{\,[a,b]},p_0\times p_0} W\times_R W,
        $$
	where $W$ is defined via the pullback
	$$
	\begin{tikzcd}
		W\arrow[rr,""]\arrow[d,"p_0"]&&\Fun\left(\Delta^1,\MMod^{\perf}_{(-)}\right)^{\simeq}\arrow[d,""]\\
		\PPerf^{\,[a,b]}_{R}\arrow[rr,"{M\mapsto (0, M)}"]&&\Fun\left(\del\Delta^1,\MMod^{\perf}_{(-)}\right)^{\simeq}\rlap{.}
	\end{tikzcd}
	$$	
	Since $0$ is the initial object in perfect modules, we see that $W\simeq \PPerf_{R}^{\,[a,b]}$ and thus $\widetilde{V_{k-1}}\simeq V_{k-1}$, but this description will ease the connection to derived $F$-zips (we have to add zeroes to get left-bounded filtrations, and by working with $\widetilde{V_{k-1}}$, this is automatic if we degenerate the leftmost vertex). Note that we can identify an element in $\widetilde{V_{k-1}}(A)$ with two functors 
	$$
	C_{\bullet},D_{\bullet}\colon S_{-}\coloneqq \lbrace s_{0}-1< s_0<\dots< s_{k-1}\rbrace\longrightarrow \MMod^{\perf}_{A}.
	$$
	Sending $C_{\bullet}$ to the functor 
	\begin{align*}
		C^{\bullet}\colon S_{+}^{\op}\coloneqq\lbrace s_0<\dots< s_{k-1}< s_{k-1}+1\rbrace^{\op}&\longrightarrow \MMod^{\perf}_{A}\\
		s_{i}&\longmapsto C_{s_{k-1-i}}\\
		s_{k-1}+1&\longmapsto C_{s_{0}-1}
	\end{align*}
	defines an obvious equivalence between the corresponding functor categories, and so we may see an element $\widetilde{V_{k-1}}(A)$ as a tuple $(C^{\bullet},D_{\bullet},\psi)$ of a finite descending chain $C^{\bullet}$ of perfect $A$-modules, a finite ascending chain $D_{\bullet}$ of perfect $A$-modules and an equivalence of their respective colimits, \textit{i.e.}\ 
	$$
		\psi\colon\colim_{S_{+}^{\op}}C^{\bullet}\simeq \colim_{S_{-}}D_{\bullet}.
	$$

	With this identification, we get a morphism from $\widetilde{V_{k-1}}$ to the $k$-fold product of $\Fun(\del\Delta^1,\PPerf)$, by sending a pair of chains to the graded pieces of the chains (resp.\ if we see them as filtrations, we send them to the non-trivial graded pieces); \textit{i.e.}\ if the filtrations of an element in $\widetilde{V_{k-1}}(A)$, for some $A\in\AniAlg{R}$, are given by $(C^\bullet,D_\bullet,\psi)$, we take $((\gr^{s}C)^{(1)},\gr^{s}D)_{s\in S}$ (here  $\gr^{s}C$ is defined as the cofiber of $C^{s_{i+1}}\rightarrow C^{s_{i}}$, where $s_{i}$ corresponds to $s$ in the notation above, and analogously for $\gr^{s}D$). With this, let us look at the  pullback square
	$$
	\begin{tikzcd}
	\widetilde{V}\arrow[rrrr,""]\arrow[d,"p"]&&&&\prod_{s\in S}\Fun(\Delta^1,\PPerf_{R})\arrow[d,""]\\
	\widetilde{V_{k-1}}\arrow[rrrr,"{((\gr^{s}(-))^{(1)},\gr^{s}(-))_{s\in S}}"]&&&&\prod_{s\in S}\Fun(\del\Delta^1,\PPerf_{R})
	\end{tikzcd}
	$$
	(note the difference with the previous squares: previously, we considered morphisms in $\MMod_{-}^{\perf}$, whereas here we consider morphisms in the underlying Kan complex, so only invertible ones).

	Now let $\Spec(A)\rightarrow \widetilde{V_{k}}$ be a morphism classified by a perfect $A$-module with Tor-amplitude in $[a,b]$ and a descending (resp.\ ascending) chain $C^\bullet$ (resp.\ $D_\bullet$). Then $p^{-1}(\Spec(A))$ is given by the stack $$\prod_{s\in S}\Equiv((\gr^{s}C)^{(1)},\gr^{s}D),$$ where $\Equiv((\gr^{s}C)^{(1)},\gr^{s}D)$ denotes the stack classifying equivalences between $ (\gr^{s}C)^{(1)}$ and $\gr^{s}D$. This stack is $n$-geometric and locally of finite presentation (since each term is $n$-geometric and locally of finite presentation)\footnote{Again, $\Equiv((\gr^{s}C)^{(1)},\gr^{s}D)$ is open in the derived stack $\Hom_{\MMod_{A}}((\gr^{s}C)^{(1)}\otimes(\gr^{s}D)^{\vee},-)$ over $A$ by Lemma~\ref{equiv zariski open}, and $\Hom_{\MMod_{A}}((\gr^{s}C)^{(1)}\otimes(\gr^{s}D)^{\vee},-)\rightarrow\Spec(A)$ is $(n-2)$-geometric by Lemma~\ref{spec sym geometric}.}, and therefore we see that $\widetilde{V}$ is $n$-geometric and locally of finite presentation. 
	By construction, we also have that the morphism 
	$$
	\widetilde{V}\longrightarrow \PPerf^{\,[a,b]}_{R}
	$$
        given by sending one of the chains to its colimit is $n$-geometric and locally of finite presentation (as it is given by projections down to $\PPerf^{\,[a,b]}_{R}$, which are all $n$-geometric and locally of finite presentation).

	We can naturally define a functor $F\colon \widetilde{V}\rightarrow \FZip^{[a,b],S}_{R}$ by extending the filtrations via the identity to get non-trivial graded pieces at the points of $S$ and the isomorphisms of the graded pieces by the essentially unique zero morphism.
 
	To be more specific, we will show how to give $F$ as a functor of simplicial sets. The extension by the identity will just be degeneration of simplicial sets. Let $\sigma \in\widetilde{V}(A)$ be an $m$-simplex. Then $\sigma$ is given by functors 
	\begin{align*}
		D\colon S_{-}\times\Delta^m\longrightarrow \MMod^{\perf}_{A},\quad 
		C\colon S_{+}^{\op}\times\Delta^m\longrightarrow \MMod^{\perf}_{A}
	\end{align*}
	with 
	$$\psi\colon C_{|\lbrace s_{0}\rbrace\times \Delta^m} \simeq D_{|\lbrace s_{k-1}\rbrace\times \Delta^m},\quad C_{|\lbrace s_{k-1}+1\rbrace\times \Delta^m} \simeq 0\simeq D_{|\lbrace s_{0}-1\rbrace\times \Delta^m}
	$$ 
	and $(\varphi_{s})_{s\in S}$, where $\varphi_s\colon \Delta^1\times\Delta^m\rightarrow \PPerf_{R}(A)$ is a functor between simplicial sets, such that 
	$$
	\varphi_{s|\Delta^{\lbrace 0\rbrace} \times \Delta^{\lbrace i\rbrace}} \simeq (\gr^s C)^{(1)}\quad \textup{and}\quad \varphi_{s|\Delta^{\lbrace 1\rbrace} \times \Delta^{\lbrace i\rbrace}} \simeq \gr^s D.
	$$
	 We will show how to define an extension of $D$, \textit{i.e.}\ a functor $\Dbar\colon \ZZ\times\Delta^m\rightarrow \MMod^{\perf}_{A}$, that restricted to $S_{-}$ is equivalent to $D$.

	   Let $(\alpha,\sigma)$ be an $l$-simplex in $\ZZ\times \Delta^m$; note that $\alpha$ is given by a sequence of integers $\alpha_0\leq\dots\leq \alpha_{l-1}$. We are going to count the number of $\alpha_{i}$ that are between two vertices in $S_{-}$ and degenerate our finite chain for this amount. This can be thought of as adding the right amount of identities between vertices so that the resulting filtration has non-vanishing graded pieces precisely at all $s\in S$.

	Let us denote the degeneracy maps of $S_{-}$ by $\lambda_{\bullet}$ (here we see the finite ordered set $S_{-}$ as an $\infty$-category and set $\lambda_{-\infty}$ as the degeneracy map corresponding to $s_{0}-1$). For $s_d\in S_{-}$, define sets $A_d\coloneqq \lbrace\alpha_j\mid  s_{d}\leq \alpha_j \leq s_{d+1}\rbrace$, where we set  $s_{k}=\infty$ and $A_{-\infty}\coloneqq \lbrace \alpha_{j}\mid \alpha_{j}\leq s_{0}-1\rbrace$. Let us further set $n_i=\#A_i$. We consider the $l$-simplex in $S_{-}$ of the form 
	$$
	\agbar\coloneqq \lambda^{n_{k-1}-1}_{k-1}\circ \dots\circ \lambda^{n_{-\infty}-1}_{-\infty}\langle s_j\mid A_j\not = \emptyset\rangle,
	$$ where $\lambda_i^{-1}\coloneqq\id$. Thus, we can set $\Dbar(\alpha,\sigma)\coloneqq D(\agbar,\sigma)$. Similarly, this can be done to extend $C$, $\psi$ and $(\varphi_s)_s$, which defines the functor $F$.
 
	We also get a projection $P\colon \FZip^{[a,b],S}_{R}\rightarrow \widetilde{V}$ via restricting the filtrations to $\lbrace s_0-1\leq s_0\leq\dots\leq s_{k-1}\rbrace\simeq S_{-}$, resp.\ $\lbrace  s_0\leq\dots\leq s_{k-1}\leq s_{k-1}+1\rbrace\simeq S_{+}$, and the equivalences to $\Fun(\Delta^{1},\Fun(S^{\disc},\PPerf_{R}(A)))$. Since by this construction $P\circ F$ is the identity, we see that $F$ is in fact a monomorphism, and since it is an effective epimorphism (by Remark~\ref{F-zips.easy}, $F$ is pointwise essentially surjective on the level of homotopy categories), this shows that $F$ is an equivalence of derived stacks.  
\end{proof}

\begin{cor}
	Let $S\subseteq \ZZ$ be a finite subset and $a\leq b\in\ZZ$. The derived stack $\FZip^{[a,b],S}_{R}$ has a perfect cotangent complex .
\end{cor}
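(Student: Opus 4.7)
The plan is to apply Corollary~\ref{global lfp cotangent} directly to the structural morphism $\FZip^{[a,b],S}_{R}\rightarrow \Spec(R)$. By Theorem~\ref{F-Zips locally geometric}, this stack is $(b-a+1)$-geometric and locally of finite presentation over $R$, so both hypotheses of the corollary are satisfied. Since $\Spec(R)$ is $(-1)$-geometric and local finite presentation of a stack over $R$ means local finite presentation of its structure morphism, there is nothing further to verify on this front.

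More explicitly, I would first note that the cotangent complex $L_{\FZip^{[a,b],S}_{R}}\coloneqq L_{\FZip^{[a,b],S}_{R}/\Spec(R)}$ exists. This is guaranteed by the inductive construction used in the proof of Proposition~\ref{cotangent implies smooth}: for an $n$-geometric morphism $f$ with smooth atlas $p\colon U\rightarrow X$, one defines $L_{f}$ as the fiber of $L_{f\circ p}\rightarrow L_{p}$, using that the latter two exist by induction (the base case being affine derived schemes, where the cotangent complex of animated rings applies). Since $\FZip^{[a,b],S}_{R}$ is $(b-a+1)$-geometric by Theorem~\ref{F-Zips locally geometric}, this construction applies.

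Perfectness is then immediate from Corollary~\ref{global lfp cotangent}: the argument there proceeds by reducing to the affine-target case, pulling back along a smooth atlas $p\colon U\rightarrow \FZip^{[a,b],S}_{R}$ (which exists and is $(b-a)$-geometric and smooth by the geometricity), and then using the fiber sequence $p^{*}L_{\FZip^{[a,b],S}_{R}}\rightarrow L_{U}\rightarrow L_{p}$ together with a $2$-out-of-$3$ argument. Perfectness of $p^{*}L_{\FZip^{[a,b],S}_{R}}$ follows because $L_{U}$ is perfect (as $U$ is a coproduct of affines of finite presentation over $R$) and $L_{p}$ is perfect (in fact finite projective) by Proposition~\ref{prop-cotangent char} applied to the smooth atlas. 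Since perfectness satisfies fpqc descent and $p$ is a smooth effective epimorphism, perfectness of $p^{*}L_{\FZip^{[a,b],S}_{R}}$ descends to $L_{\FZip^{[a,b],S}_{R}}$.

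There is no real obstacle here, as the substantive work has already been carried out in Theorem~\ref{F-Zips locally geometric} and Corollary~\ref{global lfp cotangent}; the corollary is essentially just a formal consequence, and the proof reduces to a one-line citation of these two results.
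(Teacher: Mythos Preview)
Your proposal is correct and takes essentially the same approach as the paper: the paper's proof is the one-line citation of Theorem~\ref{F-Zips locally geometric} and Corollary~\ref{global lfp cotangent} that you yourself identify at the end. Your additional elaboration of how Corollary~\ref{global lfp cotangent} works is accurate in spirit, though the perfectness of $L_{p}$ for the smooth atlas $p$ should be attributed to Proposition~\ref{cotangent implies smooth} rather than Proposition~\ref{prop-cotangent char}, since $p$ is a geometric morphism of stacks and not a morphism of animated rings.
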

\begin{proof}
	This follows from Theorem~\ref{F-Zips locally geometric} and Corollary~\ref{global lfp cotangent}. 
 
\end{proof}

\subsection{On some substacks of $\FZip$}
\label{sec substacks}

In this section, we want to define the \textit{type} of a derived $F$-zip and look at the derived substacks classified by the type. We do the same with those derived $F$-zips where the underlying module has some fixed Euler characteristic. These derived substacks will be open (resp.\ locally closed), and we will use these to write the derived stack of derived $F$-zips as a filtered colimit of open derived substacks.

\medskip
We do this in the spirit of the classical theory of $F$-zips over a scheme $S$. There, the type of a classical $F$-zip $(M,C^{\bullet},D_{\bullet},\varphi_{\bullet})$ is a function from $S$ to functions $\ZZ\rightarrow \NN_{0}$ with finite support that assigns to a point $s\in S$ the function $k\mapsto \dim_{\kappa(s)}(\gr^{k}_{C}M\otimes_{\Ocal_{S}}\kappa(s))$ (one uses that the graded pieces of a classical $F$-zip are finite projective and thus the dimension on the fibers is locally constant with respect to $s$).
 
Since we are working with complexes, we have to modify the definition of the type. To be more specific, we will look at fiberwise dimensions of all cohomologies at once. For a perfect complex $P$ over $S$, the assignment $s\in S\mapsto H^{i}(P\otimes_{\Ocal_{S}}\kappa(s))$ defines an upper semi-continuous function. Since derived $F$-zips have only finitely many non-zero graded pieces, we will use this result to analyze the geometry of the derived substacks classifying derived $F$-zips with certain type.

\begin{defrem}
	\label{local rank}
	Let $A$ be an animated ring, and let $P$ be a perfect $A$-module. We define the function
	\begin{align*}
	\beta_P\colon \Spec(\pi_0A)_{\cl}&\longrightarrow \NN_0^{\ZZ}\\
	a&\longmapsto (\dim_{\kappa(a)}\pi_i(P\otimes_A \kappa(a)))_{i\in\ZZ}.
	\end{align*}
 
	Since $P\otimes_A\pi_0A$ is perfect and thus has bounded Tor-amplitude, we see that $\beta_{P}$ takes values in functions $\ZZ\rightarrow \NN_{0}$ with finite support  and $\beta_P^{-1}(([0,k_i])_{i\in\ZZ})$ is open and quasi-compact for any $(k_i)_i\in\NN_0^{\ZZ}$ (see 
Lemma~\ref{upper semi-cont qc}).
 
	Let $I\subseteq \ZZ$ be a finite subset. Assume that $\textup{Supp}(\beta_P(a))\subseteq I$ for all $a\in\Spec(\pi_{0}A)_{\cl}$. Recall from Remark~\ref{tor and type} that this implies that $P$ has Tor-amplitude in $[\min(I),\max(I)]$. Further, as explained in the proof of Lemma~\ref{lem perfect zero}, Nakayama's lemma  implies that if $\beta_{P}$ constant with value equal to $(0)_{i\in\ZZ}$, then $P\simeq 0$.  
\end{defrem}

The following definition will be used to ease notation.

\begin{defi}
	Let $f\colon \ZZ\rightarrow\NN_{0}^{\ZZ}$ be a function. We say that $f$ has \textit{finite support} if the induced function $\ZZ\times\ZZ\rightarrow \NN_{0}$ given by $(n,m)\mapsto f(n)_{m}$ has finite support.
\end{defi}

\begin{rem}
	A function $f\colon\ZZ\rightarrow \NN_{0}^{\ZZ}$ has finite support if the sets
	$$
	\lbrace n\in\ZZ\mid f(n)\neq (0)_{i\in\ZZ}\rbrace,\quad \lbrace k\in\ZZ\mid f(n)_{k}\neq 0\rbrace
	$$
	are finite.
\end{rem}

\begin{defrem}
	\label{local type}
	Let $A$ be an animated $R$-algebra and $\Fline\coloneqq (C^\bullet,D_\bullet,\phi,\varphi_\bullet)$ be a derived $F$-zip over $A$. Consider the function 
	\begin{align*}
		\beta_{\Fline}\colon a\longmapsto (k\longmapsto \beta_{\gr^kC}(a))
	\end{align*}
	from $\Spec(\pi_{0}A)_{\cl}$ to functions with finite support.
	\begin{enumerate}
		\item The function $\beta_{\Fline}$ is called the \textit{type of the derived $F$-zip $\Fline$}.
		\item Let $\tau\colon \ZZ\rightarrow \NN_0^\ZZ$ be a function with finite support. We say that \textit{$\Fline$ has type at most $\tau$} if for all $a\in\Spec(\pi_{0}A)_{\cl}$, we have $\beta_{\Fline}(a)\leq \tau$.\footnote{Again, we view $\beta_{P}(a)$ and $\tau$ as functions from $\ZZ\times\ZZ$ to $\NN_{0}$ and define the inequality pointwise.}
	\end{enumerate}

	Further, for any $a\in\Spec(\pi_0A)_{\cl}$, there exist a quasi-compact open neighbourhood $U_a$ of $a$  (resp.\ locally closed subset $V_{a}$ containing $a$) and a function $\tau\colon\ZZ\rightarrow\NN_{0}^{\ZZ}$ with finite support, such that $\beta_{\Fline |U_a}\leq \tau$ in the above sense (resp.\ $\beta_{\Fline |V_a}$ is constant and equal to $\tau$); this follows from Lemma~\ref{upper semi-cont qc}.
\end{defrem}

\begin{rem}
	Let us come back to our example of a proper smooth morphism $f\colon X\rightarrow \Spec(A)$.
	Let $\tau\colon \ZZ\rightarrow \NN_{0}^{\ZZ}$ be a function with finite support. Then the derived $F$-zip  $\underline{R\Gamma_{\dR}(X/A)}$ of Example~\ref{ex fzip de rham local} has type at most $\tau$ if the Hodge numbers satisfy $$\dim_{\kappa(a)}H^{-j-i}(X_{\kappa(a)},\Omega^{i}_{X_{\kappa(a)}/\kappa(a)})\leq \tau(i)_{j}$$ for all $i,j\in\ZZ$ and $a\in\Spec(A)$ (note that the minus signs in the Hodge numbers appear since we use \textit{homological} notation).
\end{rem}

\begin{defrem}
	\label{defi Euler char}
	Let $A$ be an animated $R$-algebra and $\Fline\coloneqq (C^\bullet,D_\bullet,\phi,\varphi_\bullet)$ be a derived $F$-zip over $A$.	Let us look at the function 
	\begin{align*}
	\chi_k(\Fline)\colon \Spec(\pi_0A)_{\cl}&\longrightarrow \ZZ\\
	s&\longmapsto \chi(\gr^kC \otimes_{A}\kappa(s)).
	\end{align*}
	This is a locally constant function (see \cite[0B9T]{stacks-project}). Since the filtrations on derived $F$-zips are bounded, we also know that the function $\chi_{\Fline}\colon a\mapsto (k\mapsto \chi_k(\Fline)(a))$ is also locally constant as a map from $\Spec(\pi_{0}A)_{\cl}$ to functions $\ZZ\rightarrow \ZZ$ with finite support. We call $\chi_{\Fline}$ the \textit{Euler characteristic} of $\Fline$.
 
	If $\tau\colon \ZZ\rightarrow \ZZ$ is a function with finite support, we say that \textit{$\Fline$ has Euler characteristic $\tau$} if  $\chi_{\Fline}$ is constant with value $\tau$.
\end{defrem}

The reason behind the following definition will become clear later on. One problem will be that we cannot classify $F$-zips of fixed type since the type is only ``upper semi-continuous'' (we do not explicitly define this notion but hope that the idea is clear from the previous definitions and remarks). This can be resolved when we assume that the homotopies of the graded pieces are finite projective.

\begin{defi}
	Let $A$ be an animated ring.
	\begin{enumerate}
		\item Let $M\in\MMod_A$ be a perfect $A$-module. Fix a map $r\colon \ZZ\rightarrow \NN_0$. We call $M$ \textit{homotopy finite projective of rank $r$} if for all schemes $S$ and scheme morphisms $f\colon S\rightarrow \Spec(\pi_0A)_{\cl}$, the $\Ocal_{S}$-module $\pi_i( f^*(M\otimes_{A}\pi_{0}A))$ is finite locally free of rank $r_i$.
 
		\item Let $\Fline\coloneqq (C^\bullet,D_\bullet,\phi,\varphi_\bullet)$ be a derived $F$-zip over $A$ and $\tau\colon \ZZ\rightarrow \NN_0^{\ZZ}$ be a function with finite support. We say that $\Fline$ is \textit{homotopy finite projective of type $\tau$} if for all $i\in\ZZ$, we have that $\gr^iC$ is homotopy finite projective of rank $\tau(i)$.
	\end{enumerate}
\end{defi}

\begin{remark}
	By the above definition, if a derived $F$-zip is homotopy finite projective of type $\tau$, for $\tau$ as above, then its ascending filtration has type $\tau$. 
\end{remark}

\begin{defi}
	Let $A$ be an animated $R$-algebra. Let $\tau\colon \ZZ\rightarrow \NN_0^\ZZ$ be a function with finite support. 
	\begin{enumerate}
		\item We define $\FZip^{\leq \tau}_{\infty,R}(A)$ to be the subcategory of $\FZip_{\infty,R}(A)$ consisting of those derived $F$-zips with type at most $\tau$.
		\item We define $\FZip^\tau_{\infty,R}(A)$ to be the subcategory of $\FZip_{\infty,R}(A)$ consisting of those derived $F$-zips that are homotopy finite projective of type $\tau$.
		\end{enumerate} 
	The associated functors are denoted by $\FZip^{\leq \tau}_{\infty,R}$ and $\FZip^{\tau}_{\infty,R}$, and the associated functors from $\AniAlg{R}$ to $\SS$ are denoted by $\FZip^{\leq \tau}_{R}$ and $\FZip^{\tau}_{R}$, respectively.
\end{defi}

\begin{prop}
	\label{open and closed substacks of Fzip}
	 The functors  $\FZip^{\leq \tau}_{\infty,R}$ and $\FZip^{\tau}_{\infty,R}$ are hypercomplete fpqc sheaves.
	In particular, $\FZip^{\leq \tau}_{R}$ and $\FZip^{\tau}_{R}$ are derived substacks of $\FZip_{R}$.
\end{prop}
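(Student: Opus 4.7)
The plan is to leverage the hypercomplete descent already established for $\FZip_{\infty,R}$ in Definition and Proposition~\ref{fzip infty sheaf}, and show that the two conditions carving out $\FZip^{\leq\tau}_{\infty,R}$ and $\FZip^{\tau}_{\infty,R}$ as full subcategories are fpqc local on the base. Since these subfunctors are full, it suffices to check two things for each: (a) stability under base change, so the subfunctors are well-defined on morphisms, and (b) fpqc hyperdescent of the defining condition, so membership in the subcategory descends.

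For stability, fix a morphism $A\rightarrow A'$ in $\AniAlg{R}$ and a perfect $A$-module $P$. Any $a'\in\Spec(\pi_{0}A')_{\cl}$ lies over some $a\in\Spec(\pi_{0}A)_{\cl}$, and because $\kappa(a)\rightarrow\kappa(a')$ is flat, one has $\pi_{i}(P\otimes_{A}\kappa(a'))\simeq \pi_{i}(P\otimes_{A}\kappa(a))\otimes_{\kappa(a)}\kappa(a')$. Hence the two fiberwise dimensions agree. Applying this to each $\gr^{k}C$ of a derived $F$-zip $\Fline$, one concludes $\beta_{\Fline\otimes_{A}A'}(a')=\beta_{\Fline}(a)$, which handles the type-at-most-$\tau$ condition. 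The homotopy finite projective of type $\tau$ condition is preserved by construction, since its definition universally quantifies over all scheme morphisms $S\rightarrow\Spec(\pi_{0}A)_{\cl}$, and any such morphism out of $\Spec(\pi_{0}A')_{\cl}$ composes to one out of $\Spec(\pi_{0}A)_{\cl}$.

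For descent, let $A\rightarrow A^{\bullet}$ be an fpqc hypercover and let $\Fline$ be a derived $F$-zip over $A$ whose pullback to each $A^{n}$ lies in the respective subcategory. The map $\Spec(\pi_{0}A^{0})_{\cl}\rightarrow\Spec(\pi_{0}A)_{\cl}$ is faithfully flat, in particular surjective. In the type case, any $a\in\Spec(\pi_{0}A)_{\cl}$ admits a preimage $a^{0}$, and the residue-field comparison above gives $\beta_{\Fline}(a)=\beta_{\Fline\otimes_{A}A^{0}}(a^{0})\leq\tau$, so $\Fline\in\FZip^{\leq\tau}_{\infty,R}(A)$. In the homotopy finite projective case, flat base change for homotopy groups of a perfect complex yields $\pi_{i}(\gr^{k}C\otimes_{A}\pi_{0}A^{0})\simeq \pi_{i}(\gr^{k}C\otimes_{A}\pi_{0}A)\otimes_{\pi_{0}A}\pi_{0}A^{0}$, and since finite locally free of fixed rank is a property that descends along faithfully flat maps of ordinary rings (\textit{cf.} \cite[058S]{stacks-project}), the conclusion follows.

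Combining these checks with the hyperdescent of $\FZip_{\infty,R}$ shows that both $\FZip^{\leq\tau}_{\infty,R}$ and $\FZip^{\tau}_{\infty,R}$ are hypercomplete fpqc sheaves with values in $\ICat$. Applying the limit-preserving functor $(-)^{\simeq}$ gives that $\FZip^{\leq\tau}_{R}$ and $\FZip^{\tau}_{R}$ are hypercomplete fpqc sheaves of spaces, and the inclusion into $\FZip_{R}$ is a monomorphism on each $A$; thus they are derived substacks of $\FZip_{R}$. The only subtle point is the flat base-change formula for $\pi_{i}$ of a perfect module, which is a standard consequence of flatness applied to the underlying spectral $E_{\infty}$-module structure.
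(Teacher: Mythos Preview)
Your proof is correct and follows essentially the same approach as the paper: reduce to the established hyperdescent of $\FZip_{\infty,R}$ and verify that the defining conditions descend fpqc-locally via the residue-field dimension comparison (the paper carries out only the type case explicitly, using exactly this computation). Your handling of the homotopy-finite-projective case via flat base change and descent of the finite-locally-free property is slightly more explicit than the paper's, which subsumes it under ``the definitions involved easily show the claim''.
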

\begin{proof}
	Using  arguments as in the proof of Proposition~\ref{fzip infty sheaf}, we only have to show that if $A\rightarrow \widetilde{A}$ is faithfully flat, a derived $F$-zip over $A$ has type at most $\tau$, respectively is homotopy finite projective of type $\tau$, if and only this holds after base change to $\widetilde{A}$. But by faithful flatness, we know that $\Spec(\pi_{0}\widetilde{A})_{\cl}\rightarrow \Spec(\pi_{0}A)_{\cl}$ is faithfully flat.  Now the definitions involved easily show the claim as we note that for an $A$-module $M$ and any commutative diagram of the form
	$$
	\begin{tikzcd}
		\kappa(a')\arrow[r,""]\arrow[d,""]& \Spec(\pi_{0}\widetilde{A})\arrow[d,""]\\
		\kappa(a)\arrow[r,""]&\Spec(\pi_{0}A)\rlap{,}
	\end{tikzcd}
	$$
	where $\kappa(a)$ is the residue field of a point $a\in \Spec(\pi_{0}A)$ and $\kappa(a')$ is the residue field of a lift $a'\in\Spec(\pi_{0}\widetilde{A})$ of $a$ (this exists by faithful flatness), we have
	\begin{align*}
		\dim_{\kappa(a')}\pi_i (M\otimes_{A}\widetilde{A}\otimes_{\widetilde{A}} \kappa(a'))&= \dim_{\kappa(a')}\pi_i \left(M\otimes_A \kappa(a) \otimes_{\kappa(a)} \kappa(a')\right)\\
		& = \dim_{\kappa(a')}\pi_i \left(M\otimes_A \kappa(a)\right) \otimes_{\kappa(a)} \kappa(a') \\&= \dim_{\kappa(a)}\pi_i \left(M\otimes_{A} \kappa(a)\right),
	\end{align*}
	where we use the flatness of field extensions for the second equality.
\end{proof}

\begin{defi}
	Let $A$ be an animated $R$-algebra. Let $\tau\colon \ZZ\rightarrow\ZZ$ be a locally constant function with finite support. We define $\FZip_{\infty,R}^{\chi=\tau}(A)$  to be the subcategory of $\FZip_{\infty,R}(A)$ classifying those derived $F$-zips $\Fline$ with $\chi_{\Fline}=\tau$.
 
	The associated functor is denoted by $\FZip_{\infty,R}^{\chi=\tau}$, and the associated presheaf from $\AniAlg{R}$ to $\SS$ is denoted by $\FZip_{R}^{\chi=\tau}$.
\end{defi}

\begin{lem}
	The functor $\FZip_{\infty,R}^{\chi=\tau}$ is a hypercomplete fpqc sheaf. In particular, the functor $\FZip_{R}^{\chi=\tau}$ is a derived substack of $\FZip_{R}$.
\end{lem}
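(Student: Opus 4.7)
The plan is to mimic the argument given for Proposition~\ref{open and closed substacks of Fzip}, and reduce to checking that ``having Euler characteristic $\tau$'' can be verified fpqc-hyperlocally. Since $\FZip_{\infty,R}^{\chi=\tau}$ sits inside $\FZip_{\infty,R}$ as a full sub-$\infty$-category defined by a condition on objects, this suffices by the same formal argument (full faithfulness is automatic from the analogous property of $\FZip_{\infty,R}$, which is a hypercomplete fpqc sheaf by Proposition~\ref{fzip infty sheaf}), and essential surjectivity reduces to descent of the Euler characteristic condition.

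Concretely, given an fpqc hypercover $A \to A^{\bullet}$ coming from $\Delta_{+,s}\to \AniAlg{R}$, I would consider the commutative square
$$
\begin{tikzcd}
\FZip_{\infty,R}^{\chi=\tau}(A)\arrow[r,""]\arrow[d,hook]& \lim_{\Delta_{s}}\FZip_{\infty,R}^{\chi=\tau}(A^{\bullet})\arrow[d,hook]\\
\FZip_{\infty,R}(A)\arrow[r,"\simeq"]& \lim_{\Delta_{s}}\FZip_{\infty,R}(A^{\bullet})\rlap{,}
\end{tikzcd}
$$
where the bottom arrow is an equivalence by Proposition~\ref{fzip infty sheaf} and both vertical arrows are fully faithful inclusions of full subcategories. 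This immediately gives fully faithfulness of the top arrow. For essential surjectivity, one has to verify that if $\Fline$ is a derived $F$-zip over $A$ such that its base change along $A \to A_{0}$ has Euler characteristic $\tau$, then $\chi_{\Fline} = \tau$ already.

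For this, I use that $A\to A_{0}$ is faithfully flat, so the induced map $\Spec(\pi_{0}A_{0})_{\cl}\to \Spec(\pi_{0}A)_{\cl}$ is surjective. Given a point $a\in\Spec(\pi_{0}A)_{\cl}$ with a lift $a'\in\Spec(\pi_{0}A_{0})_{\cl}$, the field extension $\kappa(a)\hookrightarrow\kappa(a')$ is faithfully flat, hence for every $k\in\ZZ$ one has
$$
\chi\bigl(\gr^{k}C\otimes_{A}\kappa(a)\bigr) = \chi\bigl(\gr^{k}C\otimes_{A}\kappa(a)\otimes_{\kappa(a)}\kappa(a')\bigr) = \chi\bigl(\gr^{k}(C\otimes_{A}A_{0})\otimes_{A_{0}}\kappa(a')\bigr),
$$
using that the Euler characteristic of a perfect complex over a field is preserved by extension of scalars and that forming graded pieces commutes with base change. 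Therefore $\chi_{\Fline}(a) = \chi_{\Fline\otimes_{A}A_{0}}(a') = \tau$ for every $a$, which says exactly that $\chi_{\Fline}=\tau$. This gives essential surjectivity and thus shows the sheaf property.

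The main obstacle, such as it is, is purely bookkeeping: making sure the argument works at the level of $\infty$-categories (not just sets of equivalence classes) and that the condition $\chi_{\Fline}=\tau$ is genuinely stable under equivalences of derived $F$-zips, so that the full subcategory $\FZip_{\infty,R}^{\chi=\tau}(A)\subseteq \FZip_{\infty,R}(A)$ is well-defined. Both are immediate from the definition via the locally constant function $\chi_{\Fline}$ on $\Spec(\pi_{0}A)_{\cl}$, which only depends on the equivalence class of $\Fline$. The second assertion, that $\FZip_{R}^{\chi=\tau}$ is a derived substack, then follows from the first by applying the functor $(-)^{\simeq}$, which preserves limits and hence the hypercomplete sheaf property.
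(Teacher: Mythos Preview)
Your proof is correct and follows essentially the same approach as the paper, which simply states that the argument is completely analogous to that of Proposition~\ref{open and closed substacks of Fzip}. You have spelled out in detail exactly what the paper leaves implicit: the commutative square reducing to descent of the condition $\chi_{\Fline}=\tau$, and the residue-field computation showing this condition is fpqc-local.
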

\begin{proof}
	The proof is completely analogous to that of Proposition~\ref{open and closed substacks of Fzip}.
\end{proof}

In the following, we want to show that the inclusions of the derived substacks $\FZip^{\leq \tau}$, $\FZip^{\tau}$ and $\FZip^{\chi=\tau}$ into $\FZip$ are in fact geometric. To show the geometricity of $\FZip^{\tau}$, we will need a proposition from the book \cite{WED2} of G\"ortz--Wedhorn. This proposition in particular shows the reason behind the definition of \textit{homotopy finite projectiveness}. The finite projectiveness of the homotopy groups is needed to have some geometric structure if we fix the type.

\begin{lem}[\textit{cf.}~\protect{\cite[Proposition 23.130]{WED2}}]
	\label{betti numbers locally closed}
	Let $S$ be a scheme, let $E$ be a perfect complex in $D(S)$ of Tor-amplitude $[a,b]$, and let $I\subseteq [a,b]$ be an interval containing $a$ or $b$. Fix a map $r\colon I\rightarrow \NN_0$, $i\mapsto r_i$. Then there exists a unique locally closed subscheme $j\colon Z= Z_r\hookrightarrow S$ such that a morphism $f\colon T\rightarrow S$ factors through $Z$ if and only if for all morphisms $g\colon T'\rightarrow T$, the $\Ocal_{T'}$-module $\pi_i(L(f\circ g)^\ast E)$ is finite locally free of rank $r_i$ for all $i\in I$. Moreover,
	\begin{enumerate}
		\item the immersion $j\colon Z\hookrightarrow S$ is of finite presentation;
		\item as a set, one has
		$$
		Z=\lbrace s\in S\mid \dim_{\kappa(s)}\pi_i(E\otimes^L_{\Ocal_S} \kappa(s))=r_i\text{ for all }i\in I\rbrace; 
		$$
		\item\label{bnlc-3} if $f\colon T\rightarrow S$ factors as $T\xrightarrow{\fbar}Z\xrightarrow{j} S$, then $\pi_i(Lf^*E\otimes_{\Ocal_T}^{L} \Gcal)=\fbar^*\pi_i(j^*E)\otimes_{\Ocal_T} \Gcal$ for all $i\in I$ and for all quasi-coherent $\Ocal_{T}$-modules $\Gcal$.
	\end{enumerate}
\end{lem}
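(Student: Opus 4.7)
The plan is to work Zariski-locally on $S$ and induct on the cardinality of $I$, reducing to the case of a single homotopy group sitting at the boundary of the Tor-amplitude, which is controlled by Fitting ideals.

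Uniqueness of $Z$ follows from Yoneda applied to the universal property, and the construction is Zariski-local on $S$, so I may assume $S = \Spec(A)$ is affine. Since $E$ has Tor-amplitude in $[a,b]$, after possibly refining the cover I can represent $E$ by a strictly bounded complex $P_\bullet = [P_b \xrightarrow{d_b} P_{b-1} \xrightarrow{d_{b-1}} \cdots \xrightarrow{d_{a+1}} P_a]$ of finite projective $A$-modules concentrated in degrees $[a,b]$. The proof then proceeds by induction on $\#I$, treating WLOG the case $b \in I$ (the case $a \in I$ is symmetric, with $\mathrm{coker}(d_{a+1})$ playing the role of $\ker(d_b)$).

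\emph{Base case} ($|I|=1$, $I=\{b\}$). For any base change $f\colon T \to S$ one has $\pi_b(Lf^\ast E) = \ker(f^\ast d_b)$. The Fitting ideals of the map $d_b\colon P_b \to P_{b-1}$ cut out a canonical finite-presentation locally closed stratification of $\Spec(A)$; the stratum $Z_b$ where the $(p_b - r_b)$-th Fitting ideal of $d_b$ equals $A$ and the $(p_b-r_b-1)$-th Fitting ideal vanishes is precisely the locus where $d_b$ has locally constant rank $p_b - r_b$, equivalently where $\mathrm{im}(d_b)$ is finite locally free of that rank. On $Z_b$, the short exact sequence $0 \to \ker(d_b) \to P_b \to \mathrm{im}(d_b) \to 0$ is locally split, so $\ker(d_b)|_{Z_b}$ is finite locally free of rank $r_b$ and commutes with arbitrary base change and with tensoring by any quasi-coherent sheaf, giving (1)--(3) in this case.

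\emph{Inductive step.} Assume $b \in I$ and $|I| \geq 2$. Form $Z_b$ as above. Over $Z_b$ the inclusion $\ker(d_b) \hookrightarrow P_b$ locally splits, so Zariski-locally on $Z_b$ the complex $P_\bullet$ decomposes as a direct sum $\pi_b(E)[{-}b] \oplus P'_\bullet$, where $P'_\bullet = [\,\mathrm{im}(d_b) \hookrightarrow P_{b-1} \to \cdots \to P_a\,]$ is a strictly bounded complex of finite locally free modules representing a perfect complex $E'$ of Tor-amplitude in $[a, b-1]$ satisfying $\pi_i(E'|_T) = \pi_i(E|_T)$ for every $i < b$ and every base change $T \to Z_b$. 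The interval $I \setminus \{b\}$ is again an interval, contained in $[a, b-1]$, and still contains $a$ or $b-1$ (the new right endpoint), so the induction hypothesis applies to $(E', I \setminus \{b\}, r|_{I\setminus\{b\}})$ and produces a locally closed subscheme $Z \hookrightarrow Z_b$ with the required universal property. Composition yields $j\colon Z \hookrightarrow S$, of finite presentation by finite presentation of each step; property (2) is a set-theoretic reformulation of the Fitting-ideal condition at each stage, and property (3) follows from the fact that on $Z$ the complex $E|_Z$ splits as a direct sum of shifted finite locally free modules in precisely the degrees $i \in I$ (plus a residual complex), so all $\pi_i$ for $i \in I$ commute with arbitrary further base change and tensor product with quasi-coherent sheaves.

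The main obstacle is the \emph{local splitting step}: one must argue that on $Z_b$ the inclusion $\ker(d_b) \hookrightarrow P_b$ is locally a direct summand, and equivalently that $\mathrm{im}(d_b)|_{Z_b}$ is finite locally free. This is exactly what the Fitting-ideal description of $Z_b$ guarantees, since on $Z_b$ the matrix of $d_b$ has locally constant rank, and a morphism of finite locally free modules of locally constant rank locally admits a splitting of its image. Once this splitting is in hand, the induction runs smoothly and (2)--(3) reduce to the corresponding assertions for finite locally free modules.
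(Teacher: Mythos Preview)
The paper does not give its own proof of this lemma; it is simply stated with a reference to \cite[Proposition 23.130]{WED2}. Your argument is correct and is essentially the standard proof one finds in that reference: reduce Zariski-locally to a bounded complex of finite projectives, use the ideals of minors of the extremal differential to cut out the finite-presentation locally closed locus where that differential has constant rank, split off the resulting locally free kernel, and induct on $\lvert I\rvert$. One cosmetic slip: in the paper's homological conventions the module $\pi_b(E)$ placed in degree $b$ is $\pi_b(E)[b]$, not $\pi_b(E)[-b]$.
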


We are going to show that derived $F$-zips of certain type are classified by open (resp.\ locally closed) substacks of $\FZip_{R}$. We have seen in Remark~\ref{lift along affine} that an open immersion $U\hookrightarrow \Spec(\pi_{0}A)$ of $R$-algebras, where $A\in\AniAlg{R}$, can be lifted to an open immersion $\widetilde{U}\hookrightarrow \Spec(A)$. Also, any morphism $\Spec(T)\rightarrow \Spec(A)$ factors through $\widetilde{U}$ if \'etale locally $\Spec(\pi_{0}T)\rightarrow \Spec(\pi_{0}A)$ factors through $U$.
 
We could now try to do the same for closed immersions. So, for a closed immersion, let us say induced by an element $a\in\pi_{0}A$, so of the form $\Spec(\pi_{0}A/(a))\rightarrow \Spec(\pi_{0}A)$, we get a closed immersion $\Spec(A\!\sslash\!\!(a))\rightarrow \Spec(A)$, where $A\!\sslash\!\!(a)$ is the derived quotient.\footnote{The derived quotient is defined in the following way. An element in $a\in\pi_{0}A=\pi_{0}\Omega^{\infty}A$ gives rise to an element $f\in\Hom(\ZZ[X],A)\simeq \Omega^{\infty}A$. Thus we can define the derived quotient as $A\!\sslash\!\!(a)\coloneqq A\otimes_{f,\ZZ[X],X\mapsto 0}\ZZ.$} But it is not clear that a morphism $\Spec(T)\rightarrow \Spec(A)$ factors through $\Spec(A\!\sslash\!\! (a))$ if and only it does \'etale locally on $\pi_{0}$. In particular, in the following proposition we cannot show that $\FZip^{\tau}\hookrightarrow \FZip^{\leq\tau}$ is a locally closed immersion; we can only do so after restricting the functors to $R$-algebras (we can even show that it is open and closed).

\begin{rem}
\label{rem.substack.forget}
	In the next proposition, we will analyze the geometric structure of the inclusion $i\colon \FZip_{R}^{\leq\tau}\hookrightarrow \FZip_{R}$. For this, we want to understand the pullback
	$$
		\begin{tikzcd}
			X\arrow[r,""]\arrow[d,""]& \Spec(A)\arrow[d,""]\\
			\FZip_{R}^{\leq\tau}\arrow[r,"i"]&\FZip_{R}\rlap{,}
		\end{tikzcd}
	$$
	where $\Spec(A)\rightarrow \FZip_{R}$ is given by some derived $F$-zip $\Fline\coloneqq (C^{\bullet},D_{\bullet},\phi,\varphi_{\bullet})$. As explained in Remark~\ref{F-zips.easy}, we can find an equivalence between $\Fline$ and a derived $F$-zip $\Gline\coloneqq (C'^{\bullet},D'_{\bullet},\phi',\varphi'_{\bullet})$, where $\phi'$ is given by the identity on $\colim_{\ZZ^{\op}}C'^{\bullet}$. This equivalence between $\Fline$ and $\Gline$ defines homotopies
	$$
		\begin{tikzcd}
			\Spec(A)\arrow[rd,"\Gline",swap]\arrow[r,"\id_{\Spec(A)}"]& \Spec(A)\arrow[d,"\Fline"]\\
			&\FZip_{R}\rlap{,}
		\end{tikzcd} \quad
		\begin{tikzcd}
			\Spec(A)\arrow[rd,"\Fline",swap]\arrow[r,"\id_{\Spec(A)}"]& \Spec(A)\arrow[d,"\Gline"]\\
			&\FZip_{R}\rlap{.}
		\end{tikzcd} 
	$$
	So, in particular, the pullbacks $\FZip_{R}^{\leq\tau}\times_{i,\FZip_{R},\Fline}\Spec(A)$ and $\FZip_{R}^{\leq\tau}\times_{i,\FZip_{R},\Gline}\Spec(A)$ are equivalent. Thus, for geometric properties of the inclusion $i$, we may only work with derived $F$-zips of the form $\Gline$ (\textit{i.e.}\ those derived $F$-zips where the equivalence between the colimits is given via the identity).
 
	The same reasoning also works  for other substacks. 
\end{rem}

\begin{prop}
	\label{prop open and closed of F-zip}
	Let $\tau\colon \ZZ\rightarrow \NN_0^\ZZ$ be a function with finite support. Then the inclusion $i\colon \FZip_{R}^{\leq\tau}\hookrightarrow \FZip_{R}$ is a quasi-compact open immersion.
  Further, let  $p\colon \FZip_{R}^\tau\hookrightarrow \FZip^{\leq \tau}_{R}$ denote the inclusion. Then $t_0 p$ is a closed immersion locally of finite presentation. $($Recall that $t_{0}p$ is the restriction of $p$ to $R$-algebras by the inclusion $\Alg{R}\hookrightarrow\AniAlg{R}$.$)$
\end{prop}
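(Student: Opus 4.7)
The plan is to pull back along affine charts and apply the upper semi-continuity results already established earlier in the paper.

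For the open immersion statement, I would fix an affine test object $\Spec(A)\to\FZip_R$ classified by a derived $F$-zip $\Fline=(C^\bullet,D_\bullet,\phi,\varphi_\bullet)$; by Remark~\ref{rem.substack.forget} I may assume $\phi$ is the identity. Since $C^\bullet$ is bounded and $\tau$ has finite support, only finitely many pairs $(i,k)$ contribute, so the condition ``type $\leq\tau$'' reduces to a finite conjunction of inequalities $\dim_{\kappa(s)}\pi_k(\gr^iC\otimes_A\kappa(s))\leq\tau(i)_k$. Applying Lemma~\ref{upper semi-cont qc} to each perfect module $\gr^iC$, I obtain a quasi-compact open subset $U\subseteq\Spec(\pi_0A)_{\cl}$ cut out by these inequalities, and I lift $U$ to a quasi-compact open immersion $\widetilde{U}\hookrightarrow\Spec(A)$ via Remark~\ref{lift along affine}. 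Invoking the base-change invariance of fiberwise homotopy groups (as used in the proof of Proposition~\ref{open and closed substacks of Fzip}), I would identify $\widetilde{U}$ with the pullback of $i$ along $\Fline$, giving the first claim.

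For the closed immersion part, I restrict to discrete $R$-algebras via $t_0$ and perform the analogous affine reduction. Given $\Spec(A)\to t_0\FZip_R^{\leq\tau}$ with $A$ discrete, classified by some $\Fline$, I apply Lemma~\ref{betti numbers locally closed} to each of the finitely many perfect complexes $\gr^iC$ with $r=\tau(i)$ to obtain locally closed subschemes $Z_i\hookrightarrow\Spec(A)$ of finite presentation classifying those base changes on which $\pi_k(\gr^iC)$ is finite locally free of rank $\tau(i)_k$ for every $k$. The crucial observation is that on the base $\Spec(A)$ we already have $\dim_{\kappa(s)}\pi_k(\gr^iC\otimes\kappa(s))\leq\tau(i)_k$ everywhere; combined with upper semi-continuity of Betti numbers, this forces the ``open part'' of each locally closed $Z_i$ to be saturated by $\Spec(A)$, so each $Z_i$ is in fact a closed subscheme. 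The finite intersection $Z\coloneqq\bigcap_i Z_i$ is then a finitely presented closed subscheme, and the universal property from Lemma~\ref{betti numbers locally closed} identifies $Z$ with the pullback of $t_0p$.

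The principal obstacle will be making rigorous this last step: one must carefully verify that the scheme-theoretic ``locally closed'' structure of each $Z_i$ genuinely collapses to a closed immersion on our base, rather than just coinciding with the set-theoretic closed subset where $\dim=\tau(i)_k$. This hinges both on the pointwise bound valid everywhere on $t_0\FZip^{\leq\tau}$ and on the scheme-theoretic assertion in Lemma~\ref{betti numbers locally closed}(\ref{bnlc-3}), which guarantees that the formation of $\pi_k$ commutes with base change on the locus where it is finite locally free, so the universal property cuts out a closed rather than merely locally closed subfunctor. Once this is established, finite presentation follows immediately from part~(1) of the lemma and stability under finite intersection.
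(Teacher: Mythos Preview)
Your approach is essentially the same as the paper's: both parts proceed by affine base change, then invoke Lemma~\ref{upper semi-cont qc} (lifted via Remark~\ref{lift along affine}) for the open part and Lemma~\ref{betti numbers locally closed} for the closed part.

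One small clarification on your ``principal obstacle'': the paper dispatches the closedness of each $Z_i$ more directly than you anticipate. Rather than invoking part~(\ref{bnlc-3}) of Lemma~\ref{betti numbers locally closed}, it simply observes that the \emph{set-theoretic} image of $Z$ in $\Spec(A)$ is closed (since the $\leq\tau$ bound holds everywhere and upper semi-continuity makes the equality locus closed), and then cites \cite[01IQ]{stacks-project}: an immersion with closed image is a closed immersion. Your intuition that ``the open part is saturated by $\Spec(A)$'' is correct and amounts to the same thing, but the route through 01IQ avoids having to unpack the scheme structure of the locally closed subscheme from the lemma's construction.
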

\begin{proof}
	Let $\Spec(A)\rightarrow \FZip_{R}$ be a morphism of derived stacks with $A\in \AniAlg{\FF_p}$ classified by a derived $F$-zip $\Fline=(C^\bullet,D_\bullet,\varphi_\bullet)$.\footnote{Keeping Remark~\ref{rem.substack.forget} in mind, we do not need to keep track of the equivalence connecting the colimits of the ascending and descending filtrations.} Then a morphism $f\colon \Spec(T)\rightarrow \Spec(A)$ factors through the product $i^{-1}(\Fline)\coloneqq \Spec(A)\times_{\FZip_{R}}\FZip^{\leq \tau}$ if and only if $f^*\Fline$ has type at most $\tau$. By Lemma~\ref{upper semi-cont qc}, we know that there is a quasi-compact open subscheme $\widetilde{U}$ of $\Spec(\pi_0A)$ classifying those points of $\Spec(\pi_0A)$ where $\Fline$ has type at most $\tau$ (note that the filtrations are bounded and that perfect complexes have only finitely many non-zero homotopy groups). We claim that $f$ factors through $i^{-1}(\Fline)$ if and only if it factors through the lift $U$ of $\widetilde{U}$ constructed in Remark~\ref{lift along affine}; \textit{i.e.}\ if we write $\widetilde{U}=\bigcup_{i=0}^{n}\Spec(\pi_{0}A_{f_{i}})$, we define $U$ as the image of $\coprod_{i=0}^{n} \Spec(A[f_{i}^{-1}])\rightarrow \Spec(A)$ (note that this construction implies that $U$ is quasi-compact).
 
	Indeed, it is clear that if $f$ factors through $U$, then it certainly factors through $i^{-1}(\Fline)$. Now assume $f$ factors through $i^{-1}(\Fline)$. In particular, we have $$\dim_{\kappa(t)}\pi_i(f^*\gr^jC\otimes_T \kappa(t))\leq \tau(j)_i$$ for all $t\in \Spec(\pi_0T)$ and $i,j\in \ZZ$. But we have the equalities
	\begin{align*}
		\dim_{\kappa(t)}\pi_i (f^*\gr^jC\otimes_T \kappa(t))&= \dim_{\kappa(t)}\pi_i (\gr^jC\otimes_A \kappa(t))\\&= \dim_{\kappa(t)}\pi_i \left(\gr^jC\otimes_A \kappa(\pi_0f(t)) \otimes_{\kappa(\pi_0f(t))} \kappa(t)\right)\\
		& = \dim_{\kappa(t)}\pi_i \left(\gr^jC\otimes_A \kappa(\pi_0f(t))\right) \otimes_{\kappa(\pi_0f(t))} \kappa(t) \\&= \dim_{\kappa(\pi_0f(t))}\pi_i \left(\gr^jC\otimes_A \kappa(\pi_0f(t))\right),
	\end{align*}
	where we use the flatness of $\kappa(f(t))\rightarrow\kappa(t)$ in the fourth equality. This shows that $\pi_0f$ factors through~$\widetilde{U}$. Let us write $\widetilde{U}=\bigcup_{j\in J}\Spec((\pi_{0}A)_{f_j})_{\cl}$ as a finite union of principal affine opens in $\Spec(\pi_0A)$. Then $f$ factors though $U= \bigcup_{j\in J}{\Spec(A[f_j^{-1}])}$ if and only if there is an \'etale cover $(T\rightarrow T_k)_{k\in I}$ such that $\Spec(\pi_0T_i)$ factors through some $\Spec((\pi_{0}A)_{f_j})$ (see Remark~\ref{lift along affine}). But this is clear since the base change of $\coprod_{j\in J} \Spec((\pi_{0}A)_{f_{j}})\rightarrow \widetilde{U}$ to $\Spec(\pi_0T)$ gives an \'etale cover of $\Spec(\pi_0T)$, which can be lifted to an \'etale cover of $\Spec(T)$ (use Proposition~\ref{lift etale} and note that faithful flatness can be checked on $\pi_{0}$), where this property holds by definition.
 
	For $t_0\FZip^{\tau}_{R}$, let $\Spec(A)\rightarrow t_0\FZip^{\leq\tau}_{R}$ be classified by a derived $F$-zip $\Fline$ over an $R$-algebra $A$. Then a morphism $f\colon \Spec(T)\rightarrow \Spec(A)$ factors through the projection $t_0p^{-1}(A)\rightarrow\Spec(A)$ if and only if $f^*\Fline$ is homotopy finite projective of type $\tau$. By Lemma~\ref{betti numbers locally closed} and the finiteness of the filtrations, we can find a locally closed subscheme $Z$ of $\Spec(A)$, where $Z\hookrightarrow \Spec(A)$ is finitely presented, such that $f$ factors through $Z$ if and only if $f^*\Fline$ is homotopy finite projective of type $\tau$. So, in particular, $Z\simeq t_0p^{-1}(A)$.
 
What is left to show is that $Z$ is also closed in $\Spec(A)$. Using the upper semi-continuity of the Betti numbers (see Lemma~\ref{upper semi-cont qc}), we see that the image of $Z$ in $\Spec(A)$ is a closed subset, and therefore the immersion $Z\hookrightarrow \Spec(A)$ is in fact closed (see \cite[01IQ]{stacks-project}).
\end{proof}

\begin{prop}
	\label{f-zip decomp}
	Let $\tau\colon\ZZ\rightarrow\ZZ$ be a function with finite support. The inclusion $i\colon \FZip^{\chi=\tau}_{R}\hookrightarrow \FZip_{R}$ is an open immersion of derived stacks, and further $t_0i$ is an open and closed immersion. 
\end{prop}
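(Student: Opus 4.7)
The plan is to follow the strategy of the analogous Proposition~\ref{prop open and closed of F-zip}, with the key observation that the Euler characteristic function is \emph{locally constant} rather than merely upper semi-continuous. Let $\Spec(A)\rightarrow \FZip_{R}$ be a morphism classified by a derived $F$-zip $\Fline = (C^{\bullet},D_{\bullet},\varphi_{\bullet})$; by Remark~\ref{rem.substack.forget} we may suppress the equivalence $\phi$. A morphism $f\colon \Spec(T)\rightarrow \Spec(A)$ factors through $i^{-1}(\Fline) \coloneqq \Spec(A)\times_{\FZip_{R}}\FZip^{\chi=\tau}_{R}$ if and only if $\chi_{f^{*}\Fline}$ is constant with value $\tau$.

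First I would record the base-change identity for graded pieces: since $\gr^{k}(f^{*}C) \simeq (\gr^{k}C)\otimes_{A} T$, for any $t\in\Spec(\pi_{0}T)_{\cl}$ over $a = \pi_{0}f(t)\in \Spec(\pi_{0}A)_{\cl}$, the flatness of the field extension $\kappa(a)\hookrightarrow\kappa(t)$ gives $\chi(\gr^{k}(f^{*}C)\otimes_{T}\kappa(t)) = \chi(\gr^{k}C\otimes_{A}\kappa(a))$. Hence $\chi_{f^{*}\Fline} = \chi_{\Fline}\circ \pi_{0}f$, so $f$ factors through $i^{-1}(\Fline)$ iff $\pi_{0}f$ factors through the locus $\widetilde{U}\coloneqq \chi_{\Fline}^{-1}(\tau)\subseteq \Spec(\pi_{0}A)_{\cl}$. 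By Definition~\ref{defi Euler char}, the function $\chi_{\Fline}$ is locally constant, so $\widetilde{U}$ is open and closed in $\Spec(\pi_{0}A)_{\cl}$, and in particular it is quasi-compact open and can be written as a finite union of principal affines.

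Next, following Remark~\ref{lift along affine}, I would lift $\widetilde{U}$ to a quasi-compact open immersion $U\hookrightarrow \Spec(A)$. The verification that $i^{-1}(\Fline)\simeq U$ proceeds exactly as in the proof of Proposition~\ref{prop open and closed of F-zip}: if $\pi_{0}f$ factors through $\widetilde{U} = \bigcup_{j\in J}\Spec((\pi_{0}A)_{f_{j}})_{\cl}$, then pulling back the associated \'etale cover to $\Spec(\pi_{0}T)$ and lifting via Proposition~\ref{lift etale} shows that $f$ factors through $U$ \'etale-locally, hence on the nose by descent. This establishes that $i$ is an open immersion of derived stacks.

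For $t_{0}i$, I restrict to classical $R$-algebras so that $\Spec(A) = \Spec(A)_{\cl}$ and $\widetilde{U}$ itself is already a subscheme of $\Spec(A)$; because $\chi_{\Fline}$ is locally constant, both $\widetilde{U}$ and its complement are open, so $\widetilde{U}\hookrightarrow \Spec(A)$ is an open and closed immersion. The only genuinely new content beyond Proposition~\ref{prop open and closed of F-zip} is the observation that local constancy (rather than upper semi-continuity) of $\chi_{\Fline}$ upgrades openness to clopenness in the classical case, and I do not expect any serious obstacle beyond carefully transcribing the \'etale-covering factorization argument from \emph{loc.~cit.}
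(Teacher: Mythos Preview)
Your proposal is correct and follows essentially the same approach as the paper's proof: both reduce to Proposition~\ref{prop open and closed of F-zip} with the Euler characteristic's local constancy (Remark~\ref{defi Euler char}) replacing upper semi-continuity, and both carry out the identical base-change computation, lifting of the clopen locus $\widetilde{U}$ via Remark~\ref{lift along affine}, and \'etale-covering factorization argument.
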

\begin{proof}
	This is completely analogous to the proof of Proposition~\ref{prop open and closed of F-zip} with Remark~\ref{defi Euler char}. Nevertheless, we give a proof for completeness.

	Let $\Spec(A)\rightarrow \FZip$ be a morphism of derived stacks with $A\in \AniAlg{\FF_p}$ classified by a derived $F$-zip $\Fline=(C^\bullet,D_\bullet,\varphi_\bullet)$. Then a morphism $f\colon \Spec(T)\rightarrow \Spec(A)$ factors through $i^{-1}(\Fline)$ if and only if $\chi(f^*\Fline)=\tau$. Since $\chi$ is locally constant, we know that we can find an open and closed subscheme $\widetilde{U}$ of $\Spec(\pi_0A)$ classifying those points of $\Spec(\pi_0A)$ where $\Fline$ has Euler characteristic $\tau$ (note that the filtrations are bounded). Let $U$ be the lift of $\widetilde{U}$ on $\Spec(A)$ constructed in Remark~\ref{lift along affine}. Then by construction, $U$ is open in $\Spec(A)$. We claim that $f$ factors through $i^{-1}(\Fline)$ if and only if it factors through $U$.
 
	Indeed, if $f$ factors through $U$, then certainly it also factors through $i^{-1}(\Fline)$. Now assume $f$ factors through $i^{-1}(\Fline)$. In particular, we have $\chi(f^*\gr^k_CM\otimes_T\kappa(t))=\tau(k)$ for all $t\in \Spec(\pi_0T)$ and $k\in \ZZ$. But analogously to the proof of Proposition~\ref{prop open and closed of F-zip}, we have
        $$
        \chi(f^*\gr^k_CM\otimes_T\kappa(t))=\chi(f^*\gr^k_CM\otimes_A\kappa(\pi_0f(t))).
        $$
        This shows that $\pi_0f$ factors through $\widetilde{U}$. Now, this factorization can be lifted to a factorization of $f$ through $U$ (again the argumentation is the same as in the proof of Proposition~\ref{prop open and closed of F-zip}).
 
	That $t_0i$ is open and closed follows immediately from the above.
\end{proof}

\begin{remark}
	\label{F-zip as colim of open im}
	By the above, $\FZip^{\leq\tau}_{R}\hookrightarrow \FZip_{R}$ is a $0$-geometric open immersion. In fact, we can see that $\colim_{\tau}\FZip^{\leq\tau}_{R}\simeq \FZip_{R}$, where $\tau$ runs through the functions $\ZZ\rightarrow\NN_0^\ZZ$ with finite support. This colimit is filtered, as we can view $\tau$ as a function $\ZZ\times\ZZ\rightarrow\NN_{0}$ and get a pointwise order.
 
	For the equivalence, note that any $F$-zip $\Fline$ has finitely many graded pieces, which all have finite Tor-amplitude, so we can find a function $\sigma\colon \ZZ\rightarrow \NN_{0}^{\ZZ}$ with finite support such that $\Fline$ has type at most $\sigma$, \textit{i.e.}\ $\Fline\in \FZip^{\leq\sigma}$.
 
	Further, the inclusion $\FZip^{\leq\tau}_{R}\hookrightarrow \FZip_{R}$ has to factor through some $\FZip^{[a,b],S}_{R}$ by the fact that the filtrations are bounded and by Remark~\ref{local rank}. By the same arguments as in the proof of Proposition~\ref{prop open and closed of F-zip}, we see that $\FZip^{\leq\tau}_{R}\hookrightarrow \FZip^{[a,b],S}_{R}$ is a quasi-compact open immersion, which implies that $\FZip^{\leq\tau}_{R}$ itself is $(b-a+1)$-geometric and locally of finite presentation. In particular, we can write $\FZip_{R}$ as the filtered colimit of geometric derived open substacks:  
	$$
	\FZip_{R}\simeq \limind_{\tau}\FZip^{\leq \tau}_{R}.
	$$
\end{remark}

\begin{thm}
\label{fzip local geometric plus}
	The derived stack $\FZip_{R}$ is locally geometric and locally of finite presentation.
\end{thm}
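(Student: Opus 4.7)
The plan is to assemble the ingredients already developed in the excerpt: the geometricity of the stacks $\FZip_R^{[a,b],S}$ from Theorem~\ref{F-Zips locally geometric}, the openness result of Proposition~\ref{prop open and closed of F-zip}, and the filtered colimit description indicated in Remark~\ref{F-zip as colim of open im}. Essentially all the work has been done; what remains is to verify that the family of substacks $\FZip_R^{\leq \tau}$, indexed by finitely supported functions $\tau\colon \ZZ\to \NN_0^{\ZZ}$, covers $\FZip_R$ in the sense required by the definition of locally geometric.

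First I would index the family. Given any derived $F$-zip $\Fline = (C^\bullet, D_\bullet, \phi, \varphi_\bullet)$ over some $A\in \AniAlg{R}$, the filtrations are bounded and each graded piece is perfect, hence has bounded Tor-amplitude. By Remark~\ref{local rank}, the type $\beta_{\Fline}$ is a function with finite support on $\Spec(\pi_0 A)_{\cl}$, so by quasi-compactness of $\Spec(\pi_0 A)_{\cl}$ (or by a local argument and pointwise comparison), after refining we can find some finitely supported $\tau\colon \ZZ\to \NN_0^{\ZZ}$ with $\Fline \in \FZip_R^{\leq \tau}(A)$. This, together with the fact that the partial order of such $\tau$ (under pointwise inequality on $\ZZ\times \ZZ$) is filtered, gives the identification
\[
\FZip_R \simeq \colim_{\tau}\, \FZip_R^{\leq \tau}.
\]

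Next I would verify that each $\FZip_R^{\leq \tau}$ is geometric and locally of finite presentation, and that the transition maps are open immersions. Given $\tau$, choose $a \leq b$ and a finite $S \subseteq \ZZ$ large enough that $\tau$ is supported in $S \times [a,b]$. Then on the level of presheaves, $\FZip_R^{\leq \tau}$ is contained in $\FZip_R^{[a,b],S}$, and by an argument identical to the proof of Proposition~\ref{prop open and closed of F-zip} (upper semi-continuity of the Betti numbers of a perfect complex, applied to each graded piece, combined with the lifting of open subschemes from $\pi_0$ to $A$ via Remark~\ref{lift along affine}), the inclusion $\FZip_R^{\leq \tau} \hookrightarrow \FZip_R^{[a,b],S}$ is a quasi-compact open immersion. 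Since $\FZip_R^{[a,b],S}$ is $(b-a+1)$-geometric and locally of finite presentation by Theorem~\ref{F-Zips locally geometric}, and open immersions preserve both geometricity and the property of being locally of finite presentation, we conclude that $\FZip_R^{\leq \tau}$ itself is $(b-a+1)$-geometric and locally of finite presentation.

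Finally, the transition maps $\FZip_R^{\leq \tau} \hookrightarrow \FZip_R^{\leq \tau'}$ for $\tau \leq \tau'$ are open immersions by Proposition~\ref{prop open and closed of F-zip} (both sit as open substacks inside a common $\FZip_R^{[a,b],S}$). Thus $\FZip_R$ is the filtered colimit of geometric derived stacks along open immersions, each locally of finite presentation, which is precisely the definition of locally geometric and locally of finite presentation. I do not expect a genuine obstacle here, since the content of the theorem is really a bookkeeping statement about the filtered colimit decomposition; the only care required is to pair each $\tau$ with a compatible choice of $[a,b]$ and $S$ so that one can invoke Theorem~\ref{F-Zips locally geometric}.
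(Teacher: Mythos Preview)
Your proposal is correct and matches the paper's approach essentially verbatim: the paper's proof simply cites Remark~\ref{F-zip as colim of open im}, which records exactly the filtered colimit decomposition $\FZip_R \simeq \colim_\tau \FZip_R^{\leq\tau}$, the factorization through some $\FZip_R^{[a,b],S}$, and the resulting geometricity and finite presentation of each $\FZip_R^{\leq\tau}$.
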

\begin{proof}
	This follows from Remark~\ref{F-zip as colim of open im}.
\end{proof}

\subsection{Strong derived $\boldsymbol{F}$-zips over affine schemes}

In the following, we want to look at derived $F$-zips where the underlying ascending and descending filtrations are strong. A morphism between modules is a monomorphism if and only if the induced long exact sequence splits into short exact sequences. Using classical theory, it is not hard to see that restricting to those derived $F$-zips where the underlying filtrations are strong and homotopy finite projective gives an open and closed derived substack of $t_{0}\FZip$. We can also use this open and closed derived substack to easily embed the stack of classical $F$-zips into the derived version via an open immersion.

\begin{defi}
	Let $A$ be an animated $R$-algebra and $\tau\colon \ZZ\rightarrow \NN_0^{\ZZ}$ be a function with finite support. We define the full sub-$\infty$-category $\sFZip^{\tau}_{\infty,R}(A)$ of $\FZip^{\tau}_{\infty,R}(A)$ as consisting of those derived $F$-zips $(C^\bullet,D_\bullet,\phi,\varphi)$ where the filtrations $C^\bullet$ and $D_\bullet$ are in fact strong filtrations. An element in $\sFZip^{\tau}_{\infty,R}(A)$ is called a \textit{strong derived $F$-zip over $A$ of type $\tau$}.
\end{defi}

\begin{rem}
	Let $\tau\colon \ZZ\rightarrow \NN_0^{\ZZ}$ be a function with finite support. Let $A$ be a classical ring. By definition, it is immediate that the base change along classical rings of strong derived $F$-zips of type $\tau$ is again strong of type $\tau$. Therefore, the pullback induces a functor $A\mapsto \sFZip^{\tau}_{\infty,R}(A)$ from $\Alg{R}$ to $\ICat$ (here $\Alg{R}$ denotes the nerve of the category of $R$-algebras).
\end{rem}

\begin{rem}
	The condition that the underlying filtrations are strong  may seem  useful,  as  it turns out, this is a very strong condition (see Theorem~\ref{de Rham strong degen}). This is because monomorphisms of perfect complexes (on a classical ring) with finite projective homotopy groups are automatically \textit{split}. So a strong filtration is automatically determined by its underlying graded pieces, which indicates that the corresponding spectral sequence should degenerate. But the reason behind the derived $F$-zips is precisely the study of those filtrations with non-degenerate spectral sequences.
 
	In the above definition, we have to fix a type as otherwise it is not clear that strongness is stable under base change. Nevertheless, in this paper we will not encounter strong filtrations that are not homotopy finite projective.
\end{rem}

\begin{defprop}
	\label{strong fzip infty sheaf}
	 Let $\tau\colon \ZZ\rightarrow \NN_0^{\ZZ}$ be a function with finite support.
	The presheaf 
	\begin{align*}
	\sFZip_{\infty,R}^{\tau}\colon \Alg{R}&\longrightarrow \ICat\\
	A &\longmapsto \sFZip^{\tau}_{\infty,R}(A)
	\end{align*}
	is a hypercomplete sheaf for the fpqc topology.
\end{defprop}
\begin{proof}
	Analogously to the proof of Proposition~\ref{fzip infty sheaf},  it is enough to show that a morphism of perfect modules with homotopy finite projective cofiber is a monomorphism if and only if it is after passage to an fpqc cover.
 
	Let $A\rightarrow A^{\bullet}$ be an  fpqc hypercover  of an $R$-algebra. For a morphism $f\colon M\rightarrow N$ of $A$-modules to be a monomorphism is equivalent to the fact that the natural maps $\pi_{i}N\rightarrow \pi_{i}\cofib(f)$ are surjections for all $i\in\ZZ$. By the flatness of $A\rightarrow A^{\bullet}$,  for any $A$-module $L$, we have  that $\pi_{i}(L\otimes_{A}A^{\bullet})\cong \pi_{i}(L)\otimes_{A}A^{\bullet}$.  Note that by assumption, the $\pi_{i}\cofib(f)$ are finite projective and compatible with base change.  Thus, we see that it is enough to check that any morphism of classical $A$-modules with finite projective target is surjective if it is so after passage to an fpqc cover. But this is classical (see \cite[Proposition 8.4]{WED}).
\end{proof}

\begin{defi}
	Let $\tau\colon \ZZ\rightarrow \NN_0^{\ZZ}$ be a function with finite support. We define the \textit{derived stack of strong $F$-zips of type $\tau$} as
	\begin{align*}
	\sFZip^{\tau}_{R}\colon  \Alg{R}&\longrightarrow \SS\\
	A&\longmapsto \sFZip^{\tau}_{\infty,R}(A)^\simeq .
	\end{align*}
\end{defi}

Next, we want to show that the stack of strong derived $F$-zips is geometric. This is clear if we know that strong bounded perfect filtrations are open and closed in bounded perfect filtrations (note that the proof of Theorem~\ref{F-Zips locally geometric} shows that the derived stack of bounded perfect filtrations is geometric). This is an immediate consequence of the following lemma.  

\begin{lem}
\label{lem.ret.mono}
	Let $A$ be a $($classical\,$)$ ring and $M,N$ be perfect $A$-modules. Let $\Vcal\colon \Alg{A}\rightarrow \SS$ be the derived stack classifying $A$-module morphisms $f\colon M\rightarrow N$ such that $M$ and $\cofib(f)$ are homotopy finite projective of some types. Further, let $\Wcal$ be the derived substack defined by those morphisms $f\colon M\rightarrow N$ that are monomorphisms. Then the inclusion $\Wcal\hookrightarrow \Vcal$ is an open and closed immersion.
\end{lem}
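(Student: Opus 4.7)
The idea is to exploit \emph{formality} of homotopy finite projective complexes over a classical ring: this turns the monomorphism condition into a splitting of the cofiber sequence, which is then seen to be a discrete/type-theoretic datum.

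First, I would show that any homotopy finite projective complex $X$ over a classical ring $B$ is formal, i.e.\ $X \simeq \bigoplus_i (\pi_i X)[i]$. Proceeding inductively along the Postnikov tower, the $k$-invariant at stage $n$ lives in $\Hom_{D(B)}(\tau_{\leq n-1}X,\pi_nX[n+1]) \simeq \bigoplus_{i\leq n-1}\Ext^{n+1-i}(\pi_iX,\pi_nX)$, and every such Ext group vanishes since $n+1-i \geq 1$ and $\pi_iX$ is finite projective. As a consequence, for any two hfp complexes $X,Y$ over $B$ one has a decomposition $\Hom_{D(B)}(X,Y) \simeq \bigoplus_i \Hom_B(\pi_iX,\pi_iY)$ with no higher Ext contributions; in particular, a map between hfp complexes is nullhomotopic iff it induces the zero map on every $\pi_i$.

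Next I would apply this to the boundary class $\delta \in \Hom_{D(B)}(\cofib(f),M_B[1])$ of the cofiber sequence $M_B \to N_B \to \cofib(f)$. Since both $\cofib(f)$ and $M_B[1]$ are hfp (here $M_B := M\otimes_A B$, and the conditions defining $\Vcal$ guarantee hfp), the detection principle above gives $\delta = 0$ iff $\delta_i\colon \pi_i\cofib(f)\to\pi_{i-1}M_B$ vanishes for every $i$. The long exact sequence then shows this is equivalent to each $\pi_if$ being injective, i.e.\ $f\in\Wcal$. Hence $f\in\Wcal$ iff the cofiber sequence splits iff $N_B \simeq M_B \oplus \cofib(f)$, which in turn is hfp of type $\tau_M+\tau_C$ (the pointwise sum of the types of $M_B$ and $\cofib(f)$, which are locally constant on $\Vcal$).

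Finally, to conclude that $\Wcal \hookrightarrow \Vcal$ is both open and closed, I would identify $\Wcal$ as the locus in $\Vcal$ where $N_B$ is homotopy finite projective of type $\tau_M+\tau_C$. By Lemma~\ref{betti numbers locally closed} this locus is cut out by a locally closed condition on $N_B$. On $\Vcal$, the long exact sequence forces the pointwise inequality $\dim_{\kappa(\mathfrak{p})}\pi_i(N_B\otimes\kappa(\mathfrak{p})) \leq \tau_M^i+\tau_C^i$ everywhere, which already absorbs the open half of ``locally closed''; the remaining half, $\dim_{\kappa(\mathfrak{p})}\pi_i(N_B\otimes\kappa(\mathfrak{p})) = \tau_M^i+\tau_C^i$, is closed by upper semi-continuity (Lemma~\ref{upper semi-cont qc}). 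For openness, note that at a point in $\Wcal$ the splitting $N_B \simeq M_B\oplus\cofib(f)$ realizes $N_B$ as hfp of type $\tau_M+\tau_C$; the hfp condition of this fixed type is preserved under pullback and deformation on $\Vcal$ because the relevant obstructions to propagating the splitting to a Zariski neighborhood live in the $\Ext^{\geq 1}$ groups between the projective modules $\pi_i\cofib(f)$ and $\pi_jM_B$, which vanish.

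\textbf{Main obstacle.} The heart of the argument, and the only really delicate point, is the openness: one must show that a fiberwise splitting of the cofiber sequence at a mono point extends to a Zariski neighborhood, equivalently that the Betti function of $N_B$ is not only bounded above by $\tau_M+\tau_C$ but in fact locally constant along $\Vcal$ near $\Wcal$. The formality/Ext-vanishing argument above is what makes this work, but it has to be executed with some care to rule out Betti jumps away from the mono locus.
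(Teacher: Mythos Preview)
Your closedness argument is fine, but the openness argument has a genuine gap. You write that the splitting propagates to a Zariski neighbourhood because ``the relevant obstructions \dots\ live in the $\Ext^{\geq 1}$ groups between the projective modules $\pi_i\cofib(f)$ and $\pi_jM_B$, which vanish.'' Obstruction theory governs lifting along infinitesimal thickenings, not spreading out from a residue field to an open neighbourhood. Concretely, after your formality reduction the boundary class $\delta$ is the tuple of maps $\delta_i\colon \pi_i\cofib(f)\to\pi_{i-1}M$ between finite projective modules, and asking for $\delta$ to vanish on a neighbourhood of a point where $\delta_i\otimes\kappa=0$ is asking for matrix entries to vanish near a point where they lie in the maximal ideal---which of course they need not. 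So Ext-vanishing does not give you openness here.

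The paper's proof is much more direct and avoids formality altogether. It uses two elementary facts about maps of ordinary modules (both are \cite[Proposition~8.4]{WED}): the locus where a map to a finite projective module is surjective is open, and the locus where a map out of a finite projective module vanishes is closed. Since $f$ is a monomorphism iff each $\pi_iN\to\pi_i\cofib(f)$ is surjective (open, as $\pi_i\cofib(f)$ is finite projective on $\Vcal$) iff each connecting map $\delta_i\colon\pi_i\cofib(f)\to\pi_{i-1}M$ vanishes (closed, as $\pi_i\cofib(f)$ is finite projective), both halves follow immediately. Your formality detour correctly identifies that everything is controlled by the $\delta_i$, but then the right move is simply to invoke these classical openness/closedness criteria on the $\delta_i$ directly, rather than to pass through the Betti numbers of $N$.
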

\begin{proof}
	The locus where $N\rightarrow \cofib(f)$ is surjective is represented by an open subscheme of $\Spec(A)_{\cl}$ (see \cite[Proposition 8.4]{WED}). In particular, $\Wcal\hookrightarrow \Vcal$ is represented by an open immersion. By the projectivity of the $\pi_{i}(M)$, the locus where $\pi_{i}f$ is injective for all $i\in \ZZ$ is equivalently the locus where $\pi_{i}\cofib(f)\rightarrow \pi_{i-1}M$ vanishes. But this is closed in $\Spec(A)_{\cl}$ (again by \cite[Proposition 8.4]{WED}).
\end{proof}

\begin{lem}
\label{lem.strong.fil.open}
Let $R$ be a ring and $\tau\colon \ZZ\rightarrow \NN_0^{\ZZ}$ be a function with finite support. Let $\Fun^{b}_{\tau}(\ZZ,\PPerf_{R})$ be the derived stack of perfect filtrations that are homotopy finite projective of type $\tau$. Let $$\Fun^{s}_{\tau}(\ZZ,\PPerf_{R})\subseteq t_{0}\Fun_{\tau}(\ZZ,\PPerf_{R})$$ be the derived substack of strong perfect filtrations that are homotopy finite projective of type $\tau$. Then the inclusion 
$$
	\Fun^{s}_{\tau}(\ZZ,\PPerf_{R})\xhookrightarrow{\hphantom{aaa}} t_{0}\Fun_{\tau}(\ZZ,\PPerf_{R})
$$
is an open and closed immersion.
 
The same holds if we replace $\ZZ$ with $\ZZ^{\op}$.
\end{lem}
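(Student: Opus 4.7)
The plan is to iteratively reduce to Lemma~\ref{lem.ret.mono} applied to the transition maps of the filtration. Since $\tau\colon\ZZ\to\NN_{0}^{\ZZ}$ has finite support, fix a finite interval $[i_{0},i_{1}]\subset\ZZ$ containing every $i$ for which $\tau(i)$ is nonzero. For any ascending filtration $F$ of type $\tau$ over a classical $R$-algebra, the graded piece $\gr^{i}F$ vanishes for $i\notin [i_{0},i_{1}]$ by Remark~\ref{local rank}, and then left boundedness forces $F(i)=0$ for $i<i_{0}$ and the transitions $F(i-1)\to F(i)$ to be equivalences for $i>i_{1}$. Hence $F$ is strong if and only if the finitely many transitions $F(i-1)\to F(i)$ with $i\in[i_{0},i_{1}]$ are monomorphisms, which can be checked one at a time.

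For a classical $R$-algebra $A$ with a morphism $\Spec(A)\to t_{0}\Fun^{b}_{\tau}(\ZZ,\PPerf_{R})$ classifying a filtration $F$, we build open and closed subschemes $\Spec(A)=U_{i_{0}-1}\supseteq U_{i_{0}}\supseteq\dots\supseteq U_{i_{1}}$ inductively so that $U_{i}$ is precisely the locus where the transitions $F(j-1)\to F(j)$ for $i_{0}\leq j\leq i$ are monomorphisms, and such that $F(i)|_{U_{i}}$ is homotopy finite projective. The base case $i=i_{0}-1$ is immediate since $F(i_{0}-1)=0$. For the inductive step, Lemma~\ref{lem.ret.mono} applied over $U_{i-1}$ to $F(i-1)|_{U_{i-1}}\to F(i)|_{U_{i-1}}$ (whose source is homotopy finite projective by induction and whose cofiber $\gr^{i}F$ is homotopy finite projective by the type assumption) yields $U_{i}\subseteq U_{i-1}$ as an open and closed subscheme where this transition is a monomorphism; since composition of open and closed immersions is open and closed, $U_{i}\hookrightarrow\Spec(A)$ is also open and closed. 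On $U_{i}$ the monomorphism condition turns the associated long exact sequence of homotopies into short exact sequences $0\to \pi_{j}F(i-1)\to \pi_{j}F(i)\to\pi_{j}\gr^{i}F\to 0$, which split since $\pi_{j}\gr^{i}F$ is finite projective. This yields that $F(i)|_{U_{i}}$ is again homotopy finite projective, compatibly with arbitrary base change, because the flatness of the homotopies makes the Tor spectral sequence degenerate and identifies $\pi_{j}F(i)\otimes_{-}B$ with $\pi_{j}(F(i)|_{B})$ for any scheme morphism into $U_{i}$.

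Iterating over the finitely many indices $i\in[i_{0},i_{1}]$, the subscheme $U_{i_{1}}\subseteq\Spec(A)$ is exactly the strong locus for $F$ and is open and closed in $\Spec(A)$. This proves the claim for ascending filtrations. The descending case is handled by the symmetric argument starting from the right end of the support of $\tau$, using the fiber sequences $F(i+1)\to F(i)\to\gr^{i}F$ and exploiting that $F(i_{1}+1)=0$ gives the starting homotopy finite projective piece $F(i_{1})\simeq\gr^{i_{1}}F$.

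The main technical point is ensuring that the intermediate filtered piece $F(i)|_{U_{i}}$ remains homotopy finite projective at each stage, so that Lemma~\ref{lem.ret.mono} can be reapplied to the next transition. This rests on the splitting of the short exact sequences of homotopies (whose rightmost term is finite projective) and on the preservation of the homotopy finite projective condition under arbitrary base change, which follows from the flatness of finite projective modules.
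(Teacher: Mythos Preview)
Your proof is correct and follows the same route as the paper: reduce to Lemma~\ref{lem.ret.mono} applied to the finitely many nontrivial transition maps. The paper's proof is a one-line invocation of that lemma together with boundedness, whereas you carefully supply the inductive step the paper leaves implicit, namely that on the locus $U_{i-1}$ already constructed the source $F(i-1)$ is itself homotopy finite projective, so that the hypotheses of Lemma~\ref{lem.ret.mono} are met for the next transition.
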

\begin{proof}
	By Lemma~\ref{lem.ret.mono}, the stack classifying monomorphisms between perfect complexes is open and closed. As the filtrations are bounded, this proves the lemma (see Remark~\ref{local rank}).
\end{proof}

\begin{prop}
\label{strong open}
	Let  $\tau\colon \ZZ\rightarrow \NN_0^{\ZZ}$ be a function with finite support. The inclusion $\sFZip^{\tau}_{R}\hookrightarrow t_{0}\FZip^{\tau}_{R}$ is an open and closed immersion.
\end{prop}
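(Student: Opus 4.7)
The plan is to realize $\sFZip^{\tau}_{R}$ as a pullback of $t_{0}\FZip^{\tau}_{R}$ along the substacks of strong filtrations from Lemma~\ref{lem.strong.fil.open}, and then use that open and closed immersions are stable under pullback and composition.

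More precisely, I would first construct natural projection morphisms
$$
p_{C}\colon t_{0}\FZip^{\tau}_{R}\longrightarrow t_{0}\Fun_{\tau}(\ZZ^{\op},\PPerf_{R}),\qquad p_{D}\colon t_{0}\FZip^{\tau}_{R}\longrightarrow t_{0}\Fun_{\tau'}(\ZZ,\PPerf_{R})
$$
sending a derived $F$-zip $(C^{\bullet},D_{\bullet},\phi,\varphi_{\bullet})$ to its descending and ascending filtration, respectively. These projections factor through the substacks of filtrations that are homotopy finite projective of the prescribed type because $\gr^{i}C$ has rank $\tau(i)$ by definition, and the same holds for $\gr^{i}D$ via the equivalences $\varphi_{i}\colon (\gr^{i}C)^{(1)}\xrightarrow{\lowsim}\gr^{i}D$, noting that base change along the (classical) Frobenius preserves the rank of finite projective modules so that the type $\tau'$ of the ascending filtration agrees with $\tau$.

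By the very definition of $\sFZip^{\tau}_{R}$, we have a Cartesian square
$$
\begin{tikzcd}
\sFZip^{\tau}_{R}\arrow[r,hook]\arrow[d]& t_{0}\FZip^{\tau}_{R}\arrow[d,"{(p_{C},p_{D})}"]\\
\Fun^{s}_{\tau}(\ZZ^{\op},\PPerf_{R})\times \Fun^{s}_{\tau'}(\ZZ,\PPerf_{R})\arrow[r,hook]& t_{0}\Fun_{\tau}(\ZZ^{\op},\PPerf_{R})\times t_{0}\Fun_{\tau'}(\ZZ,\PPerf_{R})\rlap{,}
\end{tikzcd}
$$
since strongness of $(C^{\bullet},D_{\bullet},\phi,\varphi_{\bullet})$ is precisely strongness of each of the two underlying filtrations.

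Lemma~\ref{lem.strong.fil.open} implies that the bottom horizontal arrow is an open and closed immersion (as a product of two such). Since the property of being an open and closed immersion is stable under pullback, the top horizontal arrow is an open and closed immersion, which proves the proposition. I do not anticipate any serious obstacle; the only subtle point is checking that the type of the ascending filtration is determined by $\tau$ via the Frobenius-twist equivalences, which is immediate on classical rings where Frobenius base change preserves ranks of finite projective modules.
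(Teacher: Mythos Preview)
Your proposal is correct and is essentially the same approach as the paper's: the paper's proof simply cites Lemma~\ref{lem.strong.fil.open}, and your argument spells out the implicit pullback square through which that lemma is being applied. Your extra observation that the ascending filtration inherits type $\tau$ via the Frobenius-twist equivalences is a detail the paper leaves to the reader.
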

\begin{proof}
	This follows from Lemma~\ref{lem.strong.fil.open}.
\end{proof}

In the next proposition, we want to show that the stack of classical $F$-zips lies quasi-compact open in $t_{0}\FZip_{R}$. To do so, we fix some type of a classical $F$-zip, let us say $\sigma$ (as the classical $F$-zips are the disjoint union of such, this is enough). Then we only look at strong derived $F$-zips where the graded pieces are all finite projective modules sitting in one degree. This can also be achieved by fixing a type $\tau$ of the strong derived $F$-zips (see Remark~\ref{local rank}). But if we choose $\tau$ nicely in relation to $\sigma$, we see that strong derived $F$-zips of type $\tau$ are precisely classical $F$-zips of type $\sigma$. Since we only work with derived $F$-zips corresponding to finite projective modules, we thus see the quasi-compact openness of classical $F$-zips in derived ones.

\begin{lem}	
	\label{F-zip in classical}
	Let $\sigma\colon \ZZ\rightarrow \NN_0$ be a function with finite support. Let us define $\tau^\sigma\colon \ZZ\rightarrow \NN_0^{\ZZ}$ to be the function given by $k\mapsto \tau^\sigma(k)_0=\sigma(k)$ and $\tau^\sigma(k)_j=0$ for $j\neq 0$. 
	Let $\cl \FZip_R$ denote the classical stack of $F$-zips. Then the inclusion $\cl \FZip^{\sigma}_R\hookrightarrow t_0\FZip_R$ is a quasi-compact open.
\end{lem}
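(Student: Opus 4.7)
The plan is to identify $\cl\FZip^{\sigma}_R$ with $\sFZip^{\tau^\sigma}_R$ and then factor the inclusion through known open/closed substacks of $t_0\FZip_R$.

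First, I would establish an equivalence $\cl\FZip^{\sigma}_R\simeq \sFZip^{\tau^\sigma}_R$. A classical $F$-zip of type $\sigma$ over $A\in\Alg{R}$, viewed through $\Alg{R}\hookrightarrow\AniAlg{R}$ and $\Dcal(A)\simeq \MMod_A$, gives a strong derived $F$-zip whose graded pieces are finite projective modules sitting in degree $0$, hence of type $\tau^\sigma$. Conversely, for a strong derived $F$-zip of type $\tau^\sigma$ over a classical ring $A$, I would argue by descending induction on the (bounded) filtration $C^\bullet$: since each $\gr^nC$ is finite projective concentrated in degree $0$, the strongness of $C^\bullet$ implies that the long exact sequence associated to $C^{n+1}\to C^n\to \gr^nC$ splits into short exact sequences, so $\pi_iC^{n+1}\xrightarrow{\lowsim}\pi_iC^n$ for $i\neq 0$. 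Starting from $C^n\simeq 0$ for $n\gg 0$, this forces each $C^n$ to be concentrated in degree $0$ with $\pi_0 C^n$ finite projective, and similarly for $D_\bullet$. Thus we recover a classical $F$-zip of type $\sigma$.

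Second, I would decompose the inclusion $\sFZip^{\tau^\sigma}_R\hookrightarrow t_0\FZip_R$ as the composition
$$
\sFZip^{\tau^\sigma}_R\xhookrightarrow{\hphantom{aaa}} t_0\FZip^{\tau^\sigma}_R\xhookrightarrow{\hphantom{aaa}}t_0\FZip^{\leq\tau^\sigma}_R\xhookrightarrow{\hphantom{aaa}}t_0\FZip_R.
$$
The outer inclusion is a quasi-compact open immersion by Proposition~\ref{prop open and closed of F-zip}, and the leftmost inclusion is open and closed by Proposition~\ref{strong open}. The crucial step is the middle inclusion: a priori it is only closed and locally of finite presentation (Proposition~\ref{prop open and closed of F-zip}), but in our situation $\tau^\sigma$ is concentrated in degree $0$, so any derived $F$-zip of type at most $\tau^\sigma$ has graded pieces whose only possibly non-zero homotopy group is $\pi_0$. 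For such $F$-zips, the condition $\beta_{\gr^kC}(a)=\tau^\sigma(k)$ becomes a condition on a single integer, equivalent to the condition $\chi_{\Fline}=\sigma$ (viewing $\sigma\colon\ZZ\rightarrow\ZZ$). By Proposition~\ref{f-zip decomp}, this defines an open immersion, which over classical rings is in fact open and closed. Moreover, once $\pi_0(\gr^kC\otimes_A\kappa(s))$ has constant dimension $\sigma(k)$, the module $\gr^kC$ is automatically finite projective, so the homotopy-finite-projective requirement is automatic.

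Combining these two identifications, the composition
$$
\cl\FZip^\sigma_R\simeq \sFZip^{\tau^\sigma}_R\xhookrightarrow{\hphantom{aaa}}t_0\FZip_R
$$
is a quasi-compact open immersion, concluding the proof. The main obstacle is the initial identification of $\cl\FZip^\sigma_R$ with $\sFZip^{\tau^\sigma}_R$: one must show carefully that the classical data of submodule filtrations and $\Ocal_S$-linear Cartier isomorphisms match the derived data of strong bounded perfect filtrations with their higher-coherent equivalences $\varphi_\bullet$. Once the terms of the filtration are known to lie in the heart (degree $0$), this reduces to comparing the classical category of filtered finite projective modules with the $\infty$-category of strong perfect filtrations valued in the heart, which is a standard comparison result.
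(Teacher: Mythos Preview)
Your proposal is correct and follows essentially the same route as the paper: identify $\cl\FZip^\sigma_R$ with $\sFZip^{\tau^\sigma}_R$, then factor the inclusion as $\sFZip^{\tau^\sigma}_R\hookrightarrow t_0\FZip^{\tau^\sigma}_R\hookrightarrow t_0\FZip^{\leq\tau^\sigma}_R\hookrightarrow t_0\FZip_R$ and invoke Propositions~\ref{strong open} and~\ref{prop open and closed of F-zip}. The only cosmetic difference is in the middle step: the paper observes directly that for $\Fline\in t_0\FZip^{\leq\tau^\sigma}_R$ each $\gr^kC$ has Tor-amplitude in $[0,0]$ and is therefore a finite projective module in degree~$0$, so $\beta_{\gr^kC}$ is locally constant and $t_0\FZip^{\tau^\sigma}_R$ is already quasi-compact open in $t_0\FZip_R$; you reach the same conclusion by routing through the Euler characteristic via Proposition~\ref{f-zip decomp}.
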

\begin{proof}
	We know that $t_{0}\FZip_R^{\leq \tau^\sigma}$ is open in $t_{0}\FZip_R$, but by our construction, for an element $\Fline=(C^\bullet,D_\bullet,\varphi_\bullet)$ given by $\Spec(A)\rightarrow t_{0}\FZip_R^{\leq \tau^\sigma}$, we have that $\gr^k C$ is equivalent to a finite projective module sitting in degree $0$. In particular, the function $\beta_{\gr^k C}$ is locally constant. Therefore, even $t_{0}\FZip_R^{\tau^\sigma}$ is quasi-compact open in $t_{0}\FZip_R$. Now we have an equivalence $\cl \FZip_{R}^{\sigma}\simeq t_0\sFZip_{R}^{\tau^\sigma}$, concluding the proof using Pro\-po\-si\-tion~\ref{strong open}.
\end{proof}

\subsection{Globalization}

\subsubsection{Globalization of derived stacks}

In the following, we want to look at derived $F$-zips over derived schemes. We also want to look at some properties of the corresponding sheaf. Important here is that derived stacks take \textit{affine derived schemes} as parameters and not \textit{derived schemes}. Let us show how to fix this.

Let $R$ be a ring. We start by extending a derived stack $X\colon \AniAlg{R}\rightarrow \SS$ via right Kan extension to a presheaf $\RR X\colon \Pcal(\AniAlg{R}^{\op})^{\op}\rightarrow \SS$. Using Remark~\ref{rem.sheaf.kan}, we see that $\RR X$ is in fact an \'etale sheaf. In particular, since every derived scheme has an open cover by affines, $\RR X_{|\dSch}$ is uniquely determined by $\RR X_{|\AniAlg{R}}\simeq X$.

\medskip
In the case of derived $F$-zips, we could finish this section with the arguments above. But we can also define derived $F$-zips over derived schemes analogously to Definition~\ref{defi derived fzip}. We will see that this definition agrees with the definition given by right Kan extension.

\subsubsection{Filtrations over derived schemes}
\label{sec global filtration}

We want to globalize the construction of derived $F$-zips. We could do this by right Kan extension but also give a direct definition by globalizing filtrations and define derived $F$-zips over derived schemes
analogously to Definition~\ref{defi derived fzip}. In fact, both definitions will agree (see Lemma~\ref{Fzips global given by affine}).
 
When working with derived schemes, we always assume that our module categories are small; \textit{i.e.}\ for any animated ring $A$, we assume that $\MMod_{A}$ is small. This is because we want to use Proposition~\ref{right kan of sheaf} to see that quasi-coherent modules satisfy descent. 
\newline

Let us look at the functor from $\AniAlg{R}$ to $\ICat$ given by $A\mapsto \Fun(\ZZ, \MMod_{A})$. By Lemma~\ref{Fun sheaf}, we know that this functor satisfies fpqc hyperdescent. So its right Kan extension to derived schemes will still be an fpqc sheaf (see Remark~\ref{descent qcoh}). Since $\Fun(\ZZ, -)$ commutes with limits (which was used in the proof of Lemma~\ref{Fun sheaf}), we immediately see that $\RR\Fun(\ZZ,\MMod_{-})(S)\simeq \Fun(\ZZ,\QQCoh(S))$ for any derived scheme $S$.

We have that boundedness can be checked fpqc hyperlocally (which was used in the proof of Proposition~\ref{fzip infty sheaf}). Thus, we can right Kan extend to derived schemes and get a sheaf we denote by
$$
\Fun^{b}(\ZZ,\QQCoh(-)).
$$
By Proposition~\ref{right kan ex derived scheme}, we see that for a derived scheme $S$, an element in $\Fun^{b}(\ZZ,\QQCoh(-))$ is given by  a functor $F\in \Fun(\ZZ,\QQCoh(S))$ such that for any affine open $\iota\colon \Spec(A)\hookrightarrow S$, the ascending filtration $\iota^{*}F$ is bounded.

Also, we have that a functor $F\in\Fun(\ZZ,\QQCoh(S))$ is in $\Fun^{b}(\ZZ,\QQCoh(-))$ if and only if there is a flat atlas $(\Spec(A_{i})\xrightarrow{p_{i}}S)_{i\in I}$ such that $p_{i}^{*}F$ is bounded.
 
The same argumentation as above also works for perfect filtrations and strong filtrations of some given type (here we have to restrict to classical schemes). Let us note that the above stays true if we replace $\ZZ$ with $\ZZ^{\op}$ (or in general with any $\infty$-category, but we do not need this).

\begin{defi}
	Let $\tau\colon\ZZ\rightarrow \NN_{0}^{\ZZ}$ be a function with finite support. Let $S$ be a derived scheme. An \textit{ascending} (\textit{resp.\ descending}) \textit{filtration of quasi-coherent modules over $S$} is an element $F\in \Fun(\ZZ,\QQCoh(S))$ (resp.\ $F\in \Fun(\ZZ^{\op},\QQCoh(S)))$.
	\begin{enumerate}
		\item We say that $F$ is \textit{locally bounded} if $F$ lies in $\Fun^{b}(\ZZ,\QQCoh(S))$ (resp.\ $\Fun^{b}(\ZZ^{\op},\QQCoh(S)))$,
		\item We say that $F$ is \textit{perfect} if $F$ lies in $\Fun_{\perf}(\ZZ,\QQCoh(S))$ (resp.\ $\Fun_{\perf}(\ZZ^{\op},\QQCoh(S)))$.
		\item Moreover, if $S$ is a (classical) scheme, then we say  that $F$ is \textit{strong of type $\tau$} if $F$ lies in $\Fun^{s}_{\tau}(\ZZ,\QQCoh(S))$ (resp.\ $\Fun^{s}_{\tau}(\ZZ^{\op},\QQCoh(S)))$.
	\end{enumerate} 
\end{defi}

\begin{rem}
  Note that for any derived scheme
  $S\in\Pcal(\AniAlg{R}^{\op})$, the $\infty$-categories $\Fun_{\perf}(\ZZ,\QQCoh(S))$ and
  $\Fun^{b}(\ZZ,\QQCoh(S))$ (resp.\ $\Fun^{s}_{\tau}(\ZZ,\QQCoh(S))$) can be seen as full sub-$\infty$-categories of $\Fun(\ZZ,\QQCoh(S))$ (resp.\  $t_{0}\Fun(\ZZ,\QQCoh(S))$) (the same holds with $\ZZ$ replaced by $\ZZ^{\op}$). 
\end{rem}

\begin{lem}
	Let $\tau\colon\ZZ\rightarrow \NN_{0}^{\ZZ}$ be a function with finite support. Let $S$ be a $($classical\,$)$ scheme and $F$ an ascending $($resp.\ descending\,$)$ strong filtration of $\Ocal_{S}$-modules of type $\tau$. Then for all $i\in\ZZ$, the morphism $\del_{i}\colon F(i-1)\rightarrow F(i)$ $($resp.\ $F(i+1)\rightarrow F(i))$ is a monomorphism. 
\end{lem}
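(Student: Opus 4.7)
My plan is to reduce the statement to the affine case via the descent-theoretic setup laid out in Section~\ref{sec global filtration}. By construction, a filtration $F \in \Fun(\ZZ, \QQCoh(S))$ is strong of type $\tau$ precisely when there exists a flat atlas (equivalently, an affine open cover) $(p_{j}\colon \Spec(A_{j}) \to S)_{j \in J}$ such that each pullback $p_{j}^{*}F \in \Fun(\ZZ, \MMod_{A_{j}})$ is a strong filtration of $A_{j}$-modules of type $\tau$ in the local sense of Definition~\ref{defi fil}(5). This characterization for strong filtrations is the direct analogue of the ones stated in Section~\ref{sec global filtration} for bounded and perfect filtrations, and it is obtained by applying Proposition~\ref{right kan ex derived scheme} to the hypercomplete fpqc sheaf $\sFZip_{\infty,R}^\tau$-style sheaf of strong filtrations (compare Proposition~\ref{strong fzip infty sheaf}).

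On each affine $\Spec(A_{j})$, the very definition of strong (Definition~\ref{defi fil}(5)) gives that each transition map $p_{j}^{*}F(i-1) \to p_{j}^{*}F(i)$ is a monomorphism of $A_{j}$-modules; that is, the induced map on $\pi_{k}$ is injective for every $k \in \ZZ$.

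It then suffices to observe that the property of a morphism of quasi-coherent $\Ocal_{S}$-modules $\varphi\colon \Fcal \to \Gcal$ inducing injective maps $\pi_{k}\varphi$ on every homotopy sheaf is Zariski (and in fact fpqc) local on $S$: injectivity of morphisms of abelian sheaves can be checked on stalks, hence on any affine open cover, and the formation of $\pi_{k}$ of a quasi-coherent module commutes with restriction to open subschemes. Applied to the morphisms $\del_{i}\colon F(i-1) \to F(i)$ and the cover $(p_{j})_{j \in J}$, this yields that each $\del_{i}$ is a monomorphism globally on $S$. The descending case is identical after replacing $\ZZ$ by $\ZZ^{\op}$.

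There is no real obstacle; the argument is a formal unpacking of the right-Kan-extension definition of strong filtrations combined with the local-to-global principle for injectivity of sheaf morphisms. The only subtle point to keep in mind is that ``monomorphism'' is used in the weak sense of Definition~\ref{defi fil} (injectivity on all homotopy sheaves), \emph{not} in the $\infty$-categorical sense, so one does not need to worry about diagonals being equivalences — a distinction explicitly flagged in the remark following Definition~\ref{defi fil}.
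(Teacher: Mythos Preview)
Your proof is correct and follows essentially the same strategy as the paper: reduce to the affine case via the right-Kan-extension definition of strong filtrations, then use that the monomorphism condition is local on $S$. The only cosmetic difference is that the paper phrases locality via the equivalent condition that $\pi_{n}F(i)\to\pi_{n}\cofib(\del_{i})$ is surjective (using the long exact sequence), whereas you check injectivity of $\pi_{n}\del_{i}$ directly; both are the same argument.
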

\begin{proof}
	For all $n\in\ZZ$, the morphism $\pi_{n}F(i)\rightarrow \pi_{n}\cofib(\del_{i})$ is a morphism of $\Ocal_{S}$-modules (this follows from Proposition~\ref{derived cat is kan ext}). In particular, surjectivity can be checked Zariski locally (see \cite[Proposition 8.4]{WED}).
\end{proof}

\begin{notation}
	Let $S$ be a derived scheme, and let $\Fcal$ be an $\Ocal_{S}$-module. If $F$ is a descending filtration on $\Fcal$, we write $F^k\coloneqq F(k)$ for $k\in \ZZ$ and $F^\bullet\coloneqq F$. If $G$ is an ascending filtration on $\Fcal$, we write $G_\bullet$ for $G$ and denote its points by $G_k$.
\end{notation}

\begin{defi}
	Let $S$ be a derived scheme, and let $\Fcal$ be an $\Ocal_{S}$-module. Let $F$ be an ascending (resp.\ descending) filtration on $\Fcal$. For any $i\in \ZZ$, we define the \textit{$\supth{i}$ graded piece of\, $F$} as $\gr^i F\coloneqq \cofib (F(i-1)\rightarrow F(i))$ (resp.\ $\gr^i F\coloneqq \cofib (F(i+1)\rightarrow F(i))$).
\end{defi}

\subsubsection{Derived $\boldsymbol{F}$-zips over schemes}

Before defining derived $F$-zips for derived schemes, we first have to make sense of the Frobenius of derived schemes. Classically, the Frobenius on a scheme $X$ is equivalent to the morphism given by composition $X\rightarrow X^{(1)}\rightarrow X$ of the relative Frobenius and the natural map. The points of $X^{(1)}$ are given by restriction along the Frobenius. This can be used to define the Frobenius for derived schemes (even for derived stacks) as in the following.

\begin{remdef}
	 Let $X$ be a derived scheme over $R$. For an animated $R$-algebra $A$, we have an $R$-morphism $\Spec(A)\rightarrow X^{(1)}\coloneqq X\times_{\Spec(R),\Frob_R}\Spec(R)$ if and only if there is a morphism $\Spec(A_{\Frob_R})\rightarrow X$, where $A_{\Frob_{R}}$ is the restriction of $A$ along the Frobenius,\footnote{Using the Frobenius on $R$, we can restrict any $R$-algebra $A$ along the Frobenius $\Frob_{R}\colon R\rightarrow R$; \textit{i.e.}\ $A_{\Frob_{R}}$ is the animated $R$-algebra obtained by composing the natural morphism $R\rightarrow A$ with $\Frob_{R}$.} \textit{i.e.}\ $X^{(1)}(A) \simeq X(A_{\Frob_{R}})$. Also, we have an $R$-algebra map $\Frob_A\colon A\rightarrow A_{\Frob_{R}}$. Thus $X(\Frob_A)$ induces a map $F_{X/S}\colon X\rightarrow X^{(1)}$, which we call the \textit{relative Frobenius of\, $X$}. The composition with the projection gives a map $F_X\colon X\rightarrow X^{(1)}\rightarrow X$, called the \textit{Frobenius of\, $X$}.
\end{remdef}

\begin{rem}
	Let $S$ be a classical scheme. As the (nerve of the) category of schemes lies fully in the $\infty$-category of derived schemes, we see by construction that $F_{S}$ agrees with the classical Frobenius morphism. This can be tested on points given by $R$-algebras, where it holds by definition.
 
	The definition above also agrees with the definition of the Frobenius on animated $R$-algebras, as we have $\AniAlg{R}\simeq \Fun_{\pi}(\Poly_{R}^{\op},\SS)$ (recall that the Frobenius is induced by the Frobenius on polynomial $R$-algebras).

	Moreover, this argument shows that the Frobenius morphism on derived schemes defined above is equivalent to the morphism induced by right Kan extension of the Frobenius on animated rings.
\end{rem}

\begin{defi}
	Let $S$ be a derived scheme over $R$. A \textit{derived $F$-zip over $S$} is a tuple $(C^\bullet,D_\bullet,\phi,\varphi_\bullet)$ consisting of  
	\begin{itemize}
		\item a descending locally bounded perfect filtration $C^\bullet$ of quasi-coherent modules over $S$,
		\item an ascending locally bounded perfect filtration $D_\bullet$ of quasi-coherent modules over $S$,
		\item an equivalence $\phi\colon \colim_{\ZZ^{\op}}C^{\bullet}\simeq \colim_{\ZZ}D_{\bullet}$ and

		\item a family of equivalences $\varphi_k\colon F_S^*\gr^{k}C \xrightarrow{\lowsim} \gr^{k}D$.
        \end{itemize}

	The $\infty$-category of $F$-zips over $S$, \textit{i.e.}\ the full subcategory of 
	\begin{align*}
		\left( \Fun_{\perf}(\ZZ^{\op},\QQCoh(S))\right.&\left.\times_{\colim,\QQCoh(S),\colim}\Fun_{\perf}(\ZZ,\QQCoh(S))\right)\\ &\times_{\prod_\ZZ \Fun(\del\Delta^1,\QQCoh(S))}\prod_\ZZ \Fun(\Delta^1,\QQCoh(S))
	\end{align*}
	 consisting of $F$-zips, is denoted by $\FZip_{\infty,R}(S)$.

	For a morphism $S'\rightarrow S$ of derived schemes over $R$, we have an obvious base change functor $\FZip_{\infty,R}(S)\rightarrow \FZip_{\infty,R}(S')$ via the pullback.
\end{defi}

\begin{rem}
	By definition, for any affine derived scheme $\Spec(A)$, the $\infty$-category $\FZip_{\infty,R}(\Spec(A))$ indeed recovers Definition~\ref{defi derived fzip}. 
 
	Also, as in Remark~\ref{F-zips.easy}, on affine schemes, up to equivalence we may assume that the equivalence between the colimits of the filtrations of a derived $F$-zip is given by the identity.
\end{rem}

\begin{rem}
	Note that if we have a locally bounded perfect filtration $C^{\bullet}$ over some derived scheme $S$,  we have that its colimit in $\QQCoh(S)$ is actually perfect. This can be checked Zariski locally, where the filtrations actually become bounded (note that the colimit is filtered), so  the colimit can be taken over a finite subcategory of $\ZZ$ and thus is perfect.
\end{rem}
 
 The next lemma shows that the $\infty$-category of derived $F$-zips satisfies fpqc descent and that we can extend the definition of derived $F$-zips to arbitrary derived (pre-)stacks.
 
\begin{lem}
	\label{Fzips global given by affine}
	Let\, $\RR\FZip_{\infty,R}$ be the the right Kan extension of\, $\FZip_{\infty,R}\colon \AniAlg{R}\rightarrow \SS$ along the Yoneda embedding $\AniAlg{R}\hookrightarrow\Pcal(\AniAlg{R}^{\op})^{\op}$. Then for any derived scheme $S$ over $R$,  the natural morphism induced by base change
	$$
	\FZip_{\infty,R}(S)\longrightarrow\RR\FZip_{\infty,R}(S)
	$$
	is an equivalence.
\end{lem}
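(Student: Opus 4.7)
The plan is to apply Proposition~\ref{right kan ex derived scheme} to identify $\RR\FZip_{\infty,R}(S)$ with the limit
$$
\lim_{\substack{\iota_U\colon U\hookrightarrow S\\ U\ \textup{affine open}}}\FZip_{\infty,R}(U),
$$
and then to show, by unpacking the definitions, that the natural comparison functor from $\FZip_{\infty,R}(S)$ to this limit is an equivalence. Since by construction the pullback functors on both sides are induced by base change of filtrations and of the equivalences $\phi$ and $\varphi_\bullet$ termwise, this natural comparison functor is already well defined as soon as we know that the pullback $\iota_U^*$ carries the defining data on $S$ to the defining data on $U$.

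First, I would observe that $\FZip_{\infty,R}(S)$ sits inside an ambient $\infty$-category built from the functors $\Fun(\ZZ^{\op},\QQCoh(-))$, $\Fun(\ZZ,\QQCoh(-))$, and $\Fun(\Delta^1,\QQCoh(-))$, together with finite fiber products. Since $\QQCoh$ is by definition the right Kan extension along the Yoneda embedding of $A\mapsto \MMod_A$, and since $\Fun(K,-)$ and finite fiber products both commute with arbitrary limits of $\infty$-categories, this whole ambient construction is itself a right Kan extension from $\AniAlg{R}$. By Proposition~\ref{right kan ex derived scheme}, on the derived scheme $S$ this ambient $\infty$-category is computed by the limit of the corresponding ambient $\infty$-categories on any affine open cover.

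Second, I would verify that the full-subcategory conditions defining derived $F$-zips are local in the sense needed. The conditions that each filtration is perfect and locally bounded are \emph{defined} in Section~\ref{sec global filtration} exactly by requiring them on every affine open, so they agree with the condition of lying in $\FZip_{\infty,R}(U)$ for each $U$ in the cover. The conditions that $\phi$ and each $\varphi_k$ are equivalences in $\QQCoh(S)$ are pointwise conditions and hence, by fpqc (and in particular Zariski) descent of equivalences of quasi-coherent modules (see Remark~\ref{descent qcoh}), can be checked after pulling back along any affine open cover. Together these observations imply that the comparison functor
$$
\FZip_{\infty,R}(S)\longrightarrow \lim_{U\hookrightarrow S}\FZip_{\infty,R}(U)
$$
is both fully faithful and essentially surjective: fully faithful because this is already true at the level of the ambient $\infty$-category, and essentially surjective because a compatible system of derived $F$-zips on an affine open cover assembles into an object of the ambient $\infty$-category on $S$ satisfying each defining condition on the cover and hence globally.

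The only delicate point, and where I expect the bookkeeping to be the trickiest, is matching the \emph{locally bounded perfect} condition on filtrations over $S$ with the term-wise bounded perfect condition on each $U$ once we pass through the right Kan extension. This is however exactly the content of the discussion following $\Fun^{b}(\ZZ,\QQCoh(-))$ in Section~\ref{sec global filtration}, where it is shown that $\Fun^{b}_{\perf}(\ZZ,\QQCoh(-))$ is already the hypercomplete fpqc sheafification of its affine counterpart via Lemma~\ref{Fun sheaf}. Invoking this, everything else in the proof is formal.
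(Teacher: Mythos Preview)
Your argument is correct and follows essentially the same approach as the paper's proof: both embed $\FZip_{\infty,R}(S)$ into the ambient $\infty$-category built from $\Fun(K,\QQCoh(-))$ and fiber products, observe that this ambient object is a right Kan extension (hence a sheaf), and then check that the defining conditions (bounded, perfect, $\phi$ and $\varphi_\bullet$ equivalences) are local. The only cosmetic difference is that you invoke Proposition~\ref{right kan ex derived scheme} to pass directly to the limit over affine opens, whereas the paper phrases the reduction as \v{C}ech descent along a chosen affine open cover and appeals to Proposition~\ref{fzip infty sheaf}; these are interchangeable here.
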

\begin{proof}
	Affine locally on $S$, the assertion is certainly true. So it is enough to show that for an affine open cover $(\Spec(A_{i})\hookrightarrow S)_{i\in I}$, we have
	$$
		\FZip_{\infty,R}(S)\simeq \lim_{\Delta}\FZip_{\infty,R}\left(\Cv\left(\coprod\Spec( A_{i})/S\right)_{\bullet}\right).
	$$
	This is again completely analogous to the proof in the affine case (see Proposition~\ref{fzip infty sheaf}), where we embedded derived $F$-zips into a larger category that satisfied descent. Here we have to use that 
	\begin{align*}
		S\mapsto \Fun_{\perf}(\ZZ^{\op},\QQCoh(-))&\times_{\colim,\QQCoh(-),\colim}\Fun_{\perf}(\ZZ,\QQCoh(S))\\ &\times_{\prod_\ZZ \Fun(\del\Delta^1,\QQCoh(-))}\prod_\ZZ \Fun(\Delta^1,\QQCoh(S))
	\end{align*}
	is given by the limit of right Kan extensions (see the discussion in Section~\ref{sec global filtration}) of sheaves and Remark~\ref{rem.sheaf.kan}.
\end{proof}

Lemma~\ref{Fzips global given by affine} allows us to globalize the derived stack of derived $F$-zips and gives us a direct description of its points.

\begin{defrem}
  We define 
  $$
  \FZip_R\colon \Pcal(\AniAlg{R})^{\op}\longrightarrow \SS
  $$
  as the right Kan extension of $\FZip\colon \AniAlg{R}\rightarrow\SS$ along the inclusion $\AniAlg{R}\hookrightarrow \Pcal(\AniAlg{R})^{\op}$.

	By Remark~\ref{rem.sheaf.kan}, we see that $\FZip$ is a hypercomplete fpqc sheaf.

	Further,  for any derived $R$-scheme $S$, we have that $\FZip_{R}(S)\simeq \FZip_{\infty,R}(S)^{\simeq}$ by Lemma~\ref{Fzips global given by affine} as $(-)^{\simeq}$ commutes with limits.
\end{defrem}

\begin{example}
	\label{ex F-zip proper smooth}
	Let us globalize Example~\ref{ex fzip de rham local}. Let $f\colon X\rightarrow S$ be a proper smooth morphism of schemes. Again, the associated Hodge and conjugate filtrations $\HDG$ and $\conj$ define, respectively, a descending and an ascending perfect bounded filtration of quasi-coherent modules over $S$. We also have equivalences $\varphi_{n}\colon F^{*}_{S}\gr^{n}\HDG\xrightarrow{\lowsim} \gr^{n}\conj$ between the graded pieces (up to Frobenius twist), induced by the Cartier isomorphism. Therefore, we get a derived $F$-zip associated to the proper smooth map $f$ of schemes
	$$
		\underline{Rf_{*}\Omega^\bullet_{X/S}}\coloneqq (\HDG^{\bullet},\conj_{\bullet},\varphi_{\bullet}).
	$$
\end{example}

\begin{example}
	The above construction works analogously for log smooth scheme morphisms (\textit{i.e.}\ schemes with a fine log structure as explained in \cite{kato}).
 
	If $f\colon X\rightarrow S$ is a proper log smooth morphism of Cartier type (note that $f$ is by definition integral and thus flat; see \cite[Corollary 4.5]{kato}), then $\Omega^{1}_{X/S}$ (the sheaf of log differentials) is locally free of finite rank (see \cite[Proposition 3.10]{kato}), and because $f$ is proper and flat, the associated Hodge filtrations $\HDG_{\log}$ are perfect (use the distinguished triangle associated to the stupid truncation and conclude via induction and the fact that $f$ is proper, locally of finite presentation and flat; see \cite[0B91]{stacks-project}; this is analogous to the proof of \cite[0FM0]{stacks-project}). We also have a Cartier isomorphism in this setting (see \cite[Theorem 4.12]{kato}) (this implies, using the distinguished triangles for the conjugate filtration $\conj_{\log}$ and induction, that the conjugate filtration is perfect). Hence, we have equivalences $\varphi_{n}\colon F^{*}_{S}\gr^{n}\HDG_{\log}\xrightarrow{\lowsim} \gr^{n}\conj^{\log}$; thus analogously to the above, we can attach the structure of a derived $F$-zip to $f$ via, again,
        $$
        \left(\HDG^{\bullet}_{\log},\conj_{\bullet}^{\log},\varphi_{\bullet}\right). 
        $$ 
\end{example}

Let us consider the notion of strong $F$-zips. The condition that the filtration is given by monomorphisms seems very natural, but as Theorem~\ref{de Rham strong degen} will show, in this case we cannot expect a generalization from classical theory. In particular, the following lemma shows that perfect complexes with finite projective cohomologies are particularly easy to handle.

\begin{lem}
	\label{locally acyclic}
	Let $A$ be a ring, and let $P$ be a perfect complex over $A$ such that for all $i\in \ZZ$, the $A$-module $\pi_{i}(P)$ is finite projective. Then there exists a quasi-isomorphism $P\xrightarrow{\lowsim}\bigoplus_{n\in\ZZ}\pi_{n}(P)[n]$.
\end{lem}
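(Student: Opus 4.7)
The plan is to induct on the number $N(P) \coloneqq \#\{i\in\ZZ : \pi_i(P) \neq 0\}$, which is finite because $P$ is perfect: by Lemma~\ref{general props of Tor}(1), $P$ has finite Tor-amplitude, and this bounds the range of non-trivial homotopy groups. For the base case $N(P) = 1$ with $\pi_a(P) \neq 0$, the module $P$ lies in the shifted heart of the standard $t$-structure on $\MMod_A$, which is equivalent to ordinary $A$-modules, giving $P \simeq \pi_a(P)[a]$; alternatively this is Lemma~\ref{general props of Tor}(6), since such a $P$ has Tor-amplitude concentrated in $[a,a]$.

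For the inductive step, let $a$ be the least integer with $\pi_a(P) \neq 0$, and consider the truncation fiber sequence
$$\tau_{>a} P \longrightarrow P \longrightarrow \tau_{\leq a} P \simeq \pi_a(P)[a]$$
in $\MMod_A$. In a stable $\infty$-category a fiber sequence splits if and only if the boundary map $\delta\colon \pi_a(P)[a] \to (\tau_{>a} P)[1]$ is null-homotopic. Because $\pi_a(P)$ is a projective $A$-module, all higher Ext groups out of $\pi_a(P)$ vanish, so for every $A$-module $N$ one has $\pi_i \Map_{\MMod_A}(\pi_a(P), N) \simeq \Hom_A(\pi_a(P), \pi_i N)$, and after shifting
$$\pi_0\Map_{\MMod_A}\bigl(\pi_a(P)[a], (\tau_{>a}P)[1]\bigr) \simeq \Hom_A\bigl(\pi_a(P), \pi_{a-1}(\tau_{>a} P)\bigr) = 0,$$
the last equality holding because $\tau_{>a} P$ has no homotopy in degrees $\leq a$. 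Hence $\delta$ is null, and we obtain an equivalence $P \simeq \pi_a(P)[a] \oplus \tau_{>a} P$.

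The truncation $\tau_{>a} P$ is again perfect (its cofiber with $P$ is $\pi_a(P)[a]$, which is perfect; apply Lemma~\ref{general props of Tor}) and inherits finite projectivity of its homotopy groups from $P$, while having strictly fewer non-zero ones. The inductive hypothesis therefore supplies $\tau_{>a} P \simeq \bigoplus_{n > a} \pi_n(P)[n]$, and combined with the splitting above this yields the required quasi-isomorphism $P \xrightarrow{\lowsim}\bigoplus_{n\in\ZZ} \pi_n(P)[n]$. The only delicate point is the vanishing of the obstruction class: everything reduces to the elementary observation that for a projective module $M$, mapping out of $M$ in $\MMod_A$ is purely controlled by $\Hom$ into the homotopy groups of the target, which is precisely what lets us read off $\delta$ from $\pi_{a-1}(\tau_{>a}P)$.
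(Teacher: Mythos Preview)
Your argument is correct and follows the same overall strategy as the paper: peel off the lowest nonvanishing homotopy group $\pi_a(P)[a]$ as a direct summand using its projectivity, then induct. The difference is purely in presentation. The paper works at the chain level, representing $P$ by an explicit bounded complex of finite projectives, and uses the projectivity of $\pi_a(P)$ to lift the quotient map $P_a \twoheadrightarrow \pi_a(P)$ to a splitting of the short exact sequence $0 \to P^{\leq a} \to P \to \pi_a(P)[a] \to 0$; the induction is on the length $b-a$ of the Tor-amplitude interval. You instead stay in the stable $\infty$-category, use the truncation triangle, and kill the boundary map by the Ext computation $\pi_0\Map(\pi_a(P)[a],(\tau_{>a}P)[1])=\Hom(\pi_a(P),\pi_{a-1}\tau_{>a}P)=0$; your induction is on the number of nonzero homotopy groups. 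Your route is slightly more conceptual and avoids choosing a model for $P$, while the paper's is more hands-on; both are equally short.

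One small remark: your ``alternatively this is Lemma~\ref{general props of Tor}(6)'' for the base case is not quite self-contained, since knowing $P$ has a single nonzero homotopy group does not by itself force the Tor-amplitude to be concentrated in one degree without already knowing $P\simeq\pi_a(P)[a]$. Your primary $t$-structure argument for the base case is fine, so this is harmless.
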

\begin{proof}
	Since $P$ is perfect, we may assume that there exist  $a\leq b\in\ZZ$  such that $P$ has Tor-amplitude in $[a,b]$.  We further assume that $P$ is represented by the complex of finite projective $A$-modules
	$$
	\dots \longrightarrow 0\longrightarrow P_{b}\xrightarrow{\,\del_{b}\,} P_{b-1}\xrightarrow{\del_{b-1}}\cdots\xrightarrow{\del_{a+1}} P_{a}\longrightarrow 0\rightarrow \cdots .
	$$
	Let us define a new complex $P^{\leq a}$ given by 
	$$
		\dots \longrightarrow 0\longrightarrow P_{b}\xrightarrow{\,\del_{b}\,} P_{b-1}\xrightarrow{\del_{b-1}}\cdots\xrightarrow{\del_{a+1}} \im(\del_{a+1})\longrightarrow 0\longrightarrow \cdots .
	$$
	We get a short exact sequences of complexes
	$$
	0\longrightarrow P^{\leq a}\longrightarrow P\longrightarrow \pi_{a}(P)[a]\longrightarrow 0.
	$$
	Since $\pi_{a}(P)$ is projective, this induces a section $\pi_{a}(P)\rightarrow P_{a}$, and we can extend this to a morphism $\pi_{a}(P)[a]\rightarrow P$ which induces a section of $P\rightarrow \pi_{a}(P)[a]$. Also, this induces a retraction of $P^{\leq a}\rightarrow P$, and, in particular, $P\simeq P^{\leq a}\oplus \pi_{a}(P)[a]$. Now we claim that $P^{\leq a}$ is perfect and has Tor-amplitude in $[a+1,b]$, concluding the proof by induction on the Tor-amplitude of $P$.
 
	Indeed, note that $P^{\leq a}$ is equivalent to the complex
	$$
	\cdots \longrightarrow 0\longrightarrow P_{b}\xrightarrow{\,\del_{b}\,} P_{b-1}\xrightarrow{\del_{b-1}}\cdots\xrightarrow{\del_{a}} P_{a+1}\longrightarrow 0\longrightarrow \cdots
	$$
	which is by construction a complex of finite projective modules concentrated in degrees $[a+1,b]$, \textit{i.e.}~a perfect complex of Tor-amplitude in $[a+1,b]$.
\end{proof}

We can use Lemma~\ref{locally acyclic} to see that a morphism of perfect complexes with finite projective homotopy groups is a split monomorphism if and only if it is so on the cohomologies. This is clear since if the induced map on the cohomologies is injective, then the long exact homotopy sequence corresponding to a fiber sequence consists of short exact sequences. The projectiveness gives us retractions on the level of cohomology groups and thus a retraction on the whole complex.

\begin{rem}
\label{strong finite locally free injective}
	Let $A$ be a ring, and let $P$ and $Q$ be a perfect complexes over $A$. Further assume we have a morphism $f\colon P\rightarrow Q$ such that for all $i\in\ZZ$, the $A$-modules $\pi_{i}P$, $\pi_{i}Q$ and $\pi_{i}\cofib(f)$ are finite projective. Then we claim that $f$ is a split monomorphism if and only if $f$ is a monomorphism.
 
	Indeed, the ``only if'' direction is clear.\footnote{Let $P\xrightarrow{f} Q\xrightarrow{g} \cofib(f)$ be a cofiber sequence of $A$-modules,  and let $h\colon \cofib(f)[-1]\rightarrow P$ be the naturally induced morphism. By construction, we have $f\circ h\simeq 0$, and as $f$ is a monomorphism, this implies $h\simeq 0$ and so indeed $\ker(\pi_{i}f)=\im(\pi_{i+1}h)=0$.}
	For the ``if'' direction, let $\pi_{i}f$ be injective for all $i\in\ZZ$. By Lemma~\ref{locally acyclic}, we may assume that $P\xrightarrow{\lowsim}\bigoplus_{n\in\ZZ}\pi_{n}P[n]$ and $Q\xrightarrow{\lowsim}\bigoplus_{n\in\ZZ}\pi_{n}Q[n]$. It is enough to find retractions $g_{i}$ of $\pi_{i}f$ since then this induces a retraction of $f$.
 
	Since $\pi_{i}f$ is injective for all $i\in\ZZ$, we get short exact sequences
	\begin{equation}
	\label{eq.strong.SES}
		0\rightarrow \pi_{i}P\xrightarrow{\pi_{i}f}\pi_{i}Q\rightarrow \pi_{i}\cofib(f)\rightarrow 0.
	\end{equation}
	As $\pi_{i}\cofib(f)$ is projective, we see that the short exact sequence (\ref{eq.strong.SES}) is split, giving us the retractions of $\pi_{i}f$.
\end{rem}

Let $f\colon X\rightarrow S$ be a proper smooth morphism. In the following, we want to prove that the derived $F$-zip $\underline{Rf_{*}\Omega^{\bullet}_{X/S}}$ of Example~\ref{ex F-zip proper smooth} is strong if and only if the Hodge--de Rham spectral sequence degenerates and the $\Ocal_{S}$-modules $R^{i}f_{*}\Omega^{j}_{X/S}$ are finite locally free. 
 
We first treat the ``only if''   direction in the case where $S$ is locally Noetherian. This is done by proving the case where $S$ is a local Artinian ring and then reducing from complete Noetherian rings to local Artinian rings. After this, we insert a lemma showing us that we can reduce to the Noetherian case from the non-Noetherian one. Next, we treat the ```if'' direction more generally, for arbitrary perfect bounded strong filtrations in the derived category and the spectral sequence associated to it.

\begin{prop}
\label{lem.hodge.noeth}
	Let $f\colon X\rightarrow S$ be a proper smooth morphism of schemes, with $S$ locally Noetherian. If the filtrations of the derived $F$-zip $\underline{Rf_{*}\Omega^{\bullet}_{X/S}}$ of Example~\ref{ex F-zip proper smooth} are strong $($\textit{i.e.}\ $\HDG$ and $\conj$ are strong$)$, then the Hodge--de Rham spectral sequence degenerates and $R^{i}f_{*}\Omega_{X/S}^j$ is finite locally free for all $i,j\in\ZZ$.
\end{prop}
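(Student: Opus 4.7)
The strategy is to reduce to the case $S = \Spec(A)$ with $A$ a local Artinian ring, prove the conclusion there via a length-counting argument enabled by strongness, and then conclude using base-change compatibility and a fiberwise comparison.

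\textbf{Reductions.} Both claims (degeneration of the Hodge--de Rham spectral sequence on fibers and finite local freeness of $R^if_*\Omega^j_{X/S}$) are Zariski-local on $S$, so we may assume $S = \Spec(A)$ for $A$ Noetherian local. Passing to the $\mathfrak{m}$-adic completion, which is faithfully flat, it suffices to treat $\widehat{A}$. Writing $\widehat{A} = \varprojlim \widehat{A}/\mathfrak{m}^n$ as an inverse limit of Artinian quotients, finite freeness of the Hodge cohomologies over $\widehat{A}$ can be tested via the local criterion for flatness using the base-change compatibility of $Rf_*\Omega^\bullet_{X/S}$ and $Rf_*\Omega^j_{X/S}$ recalled in Example~\ref{ex fzip de rham}, while degeneration at $E_1$ depends only on the closed fiber $X_k$, where $k = A/\mathfrak{m}$. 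In this way one reduces to the case where $A$ itself is local Artinian.

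\textbf{The Artinian case.} Now let $A$ be Artinian local with residue field $k$. The strongness of $\HDG$, applied to the cofiber sequences $\HDG(p+1) \to \HDG(p) \to \gr^p\HDG \simeq Rf_*\Omega^p_{X/S}[-p]$, makes the induced long exact cohomology sequences split into short exact sequences
\begin{equation*}
0 \longrightarrow H^n(\HDG(p+1)) \longrightarrow H^n(\HDG(p)) \longrightarrow R^{n-p}f_*\Omega^p_{X/S} \longrightarrow 0
\end{equation*}
of finitely generated $A$-modules. Since the filtration is bounded, iterating and using $\HDG(-\infty) \simeq Rf_*\Omega^\bullet_{X/S}$ yields the length identity
\begin{equation*}
\length_A R^nf_*\Omega^\bullet_{X/S} \;=\; \sum_{p+q=n} \length_A R^qf_*\Omega^p_{X/S}.
\end{equation*}

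\textbf{Passing to the closed fiber.} Since $Rf_*\Omega^\bullet_{X/S}$ and each $Rf_*\Omega^p_{X/S}$ are perfect complexes whose formation commutes with arbitrary base change, tensoring with $k$ over $A$ yields $Rf_{k,*}\Omega^\bullet_{X_k/k}$ and $Rf_{k,*}\Omega^p_{X_k/k}$ respectively. Combined with the Nakayama inequality $\length_A M \leq \length(A)\cdot\dim_k(M\otimes_A k)$ for a finitely generated $A$-module $M$ (with equality if and only if $M$ is free), the always-valid Hodge--de Rham inequality
\begin{equation*}
\dim_k H^n_{\dR}(X_k) \;\leq\; \sum_{p+q=n} \dim_k H^q(X_k,\Omega^p_{X_k/k})
\end{equation*}
on the closed fiber, and the length identity from the previous step, all inequalities are forced to be equalities. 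This simultaneously yields degeneration of the Hodge--de Rham spectral sequence on $X_k$ and freeness of each $R^qf_*\Omega^p_{X/S}$ over $A$.

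\textbf{Main obstacle.} The delicate part is upgrading the length identity, which compares only the total $n$-th cohomology with an alternating-like sum, to a termwise equality of fiber dimensions that pins down each Hodge cohomology individually. Naively one only gets an identity of alternating sums or Euler characteristics; forcing equality at every $(n,p)$ requires carefully playing the base-change spectral sequence $\Tor_p^A(\pi_q(-),k) \Rightarrow \pi_{p+q}(-\otimes_A^L k)$ against the Hodge spectral sequence on $X_k$ so that any slack in one forces slack in the other, and then using the Nakayama criterion to deduce freeness of each $R^qf_*\Omega^p_{X/S}$ and degeneration at $E_1$ on $X_k$.
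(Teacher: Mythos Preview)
Your setup through the length identity in the Artinian case is correct and matches the paper. The genuine gap is exactly where you flag it: the ``Main obstacle'' paragraph is an admission that the proof is incomplete, and the resolution you gesture at (playing the $\Tor$ spectral sequence against the Hodge spectral sequence on $X_k$) is not carried out and does not obviously work. The chain of inequalities you have available is
\[
\sum_{p+q=n}\length_A R^qf_*\Omega^p_{X/S} \;=\; \length_A R^nf_*\Omega^\bullet_{X/S} \;\leq\; \length(A)\cdot\dim_k H^n_{\dR}(X_k) \;\leq\; \length(A)\sum_{p+q=n} h^{p,q}(X_k),
\]
together with the separate Nakayama bound $\sum\length_A R^qf_*\Omega^p \leq \length(A)\sum h^{p,q}(X_k)$. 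These point in the same direction and do not squeeze.

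The paper closes this gap with three ingredients you do not use. First, it invokes strongness of the \emph{conjugate} filtration as well, obtaining degeneration of the conjugate spectral sequence and hence, via the Cartier isomorphism, the identity
\[
\sum_{p+q=n}\length_A H^q\bigl(X,\Omega^p_{X/S}\bigr) \;=\; \sum_{p+q=n}\length_A H^q\bigl(X^{(1)},\Omega^p_{X^{(1)}/S}\bigr).
\]
Second, it runs an induction on $\length(A)$ (following Deligne--Illusie): choosing $N'$ with $1\leq N'<\length(A)$ and $pN'>\length(A)$, the Frobenius $\Frob_S$ factors through $T=\Spec(A/\mathfrak{m}^{N'})$, so $X^{(1)}$ is pulled back from $X_T/T$, where by induction the Hodge cohomologies are already free of rank $h^{p,q}$. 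This makes the right-hand side above equal to $\length(A)\sum h^{p,q}$, which \emph{does} force equality in the Nakayama inequality for each $R^qf_*\Omega^p_{X/S}$ and hence freeness. Third, the passage from Artinian to complete Noetherian local uses the theorem of formal functions to identify $H^i(X,\Omega^j_{X/S})$ with $\varprojlim_n H^i(X_n,\Omega^j_{X_n/A_n})$, rather than an unspecified appeal to the local criterion for flatness. Your outline uses only the Hodge filtration; the argument genuinely needs both filtrations and the Frobenius-induction trick.
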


\begin{proof}
	Since the question is local, we may assume that $S=\Spec(A)$ is a Noetherian affine scheme. 
	 
	Let us first treat the case where $A$ is a local Artinian ring with maximal ideal $\mfr$ and residue field $k$. We can check the degeneracy of the Hodge--de Rham spectral sequence via comparing lengths of the limit term and the $E_1$-terms of the spectral sequence.  But in this case this is clear since we get a short exact sequence 
	\begin{equation*}
	\label{eq.SES}
	0\longrightarrow H^n\left(X,\sigma_{\geq i+1}\Omega_{X/S}^\bullet\right)\longrightarrow H^n\left(X,\sigma_{\geq i}\Omega_{X/S}^\bullet\right)\longrightarrow H^{n-i}\left(X,\Omega_{X/S}^i\right)\longrightarrow 0
	\end{equation*}
	for all $i\geq 0$ and $n\in \ZZ$ (by the strongness of our filtration). This implies that
        $$
        \length_AH^n\left(X,\sigma_{\geq n}\Omega_{X/S}^\bullet\right)= \length_AH^n\left(X,\sigma_{\geq i+1}\Omega_{X/S}^\bullet\right)+\length_AH^{n-i}\left(X,\Omega_{X/S}^n\right)
        $$
        for all $i\geq 0$ and $n\in \ZZ$. As $H^{n}_{\dR}(X/S)=H^{n}(X,\sigma_{\geq 0}\Omega_{X/S}^{\bullet})$, this implies inductively that 
	$$
		\length_{A}H^{n}_{\dR}(X/S) = \sum_{i\geq 0} \length_AH^{n-i}\left(X,\Omega_{X/S}^i\right) =  \sum_{p+q = n} \length_AH^{q}\left(X,\Omega_{X/S}^p\right).
	$$
		  
	Thus, for all $n\in \ZZ$, we get  the equation
	$$
	\sum_{p+q=n}\length_A E^{p,q}_{\infty} =\length_AH^{n}_{\dR}(X/S)= \sum_{p+q=n}\length_AH^{q}\left(X,\Omega_{X/S}^p\right),
	$$
	so, in particular, in this case the Hodge--de Rham spectral sequence degenerates.
 
	Analogously, we see that  the conjugate spectral sequence also degenerates (again by a length argument using the strongness of the conjugate filtration).
	Let us now show, following the arguments of the proof of \cite[Proposition 4.1.2]{DI}, that the $H^{i}(X,\Omega^{j}_{X/S})$ are finite (locally) free.
 
	Since $f$ is proper smooth, we know that $Rf_{*}\Omega^{j}_{X/S}$ is a perfect complex and its formation commutes with arbitrary base change (see \cite[0FM0]{stacks-project}). In particular, $(Rf_{*}\Omega^{j}_{X/S})\otimes_{A}k\cong Rf_{*}\Omega^{j}_{X_{k}/k}$. Therefore, we may assume that $Rf_{*}\Omega^{j}_{X/S}$ is given by a complex $K^{j}_{\bullet}$ of free $A$-modules $K^{j}_{i}=A^{h^{ij}}$, where $h^{ij}\coloneqq\dim_{k}H^{i}(X,\Omega^{j}_{X_{k}/k})$ (see \cite[0BCD]{stacks-project}). In particular, we see that
        $$
        \dim_{k}(K^{j}_{\bullet}\otimes_{A}k)=\dim_{k}H^{i}(K^{j}_{\bullet}\otimes_{A}k)=h^{ij}.
        $$
 
	Thus, we get the following inequality: 
	\begin{equation}
	\label{eq.hodge.lem.1}
	\length_{A}H^{i}\left(X,\Omega^{j}_{X/S}\right)=\length_{A} H^{i}(K^{j}_{\bullet})\leq \length_{A}K^{j}_{\bullet} = h^{ij}\length_{A}A.
	\end{equation}
	Equality in (\ref{eq.hodge.lem.1})  holds if and only if the differentials of $K^{j}_{\bullet}$ are zero, implying that $H^{i}(X,\Omega^{j}_{X/S})$ is finite (locally) free. The inequality (\ref{eq.hodge.lem.1}) also shows that equality holds if and only if
	\begin{equation}
	\label{eq.hodge.lem.2}
		\sum_{i+j=n}\length_{A}H^{i}\left(X,\Omega^{j}_{X/S}\right)=\sum_{i+j=n}h^{ij}\length_{A}A.
	\end{equation}
By the degeneracy of the Hodge--de Rham spectral sequence, the left-hand side of (\ref{eq.hodge.lem.2}) is equal to $\length_{A}H^{n}_{\dR}(X/S)$. The degeneracy of the conjugate spectral sequence, together with the Cartier isomorphism, implies that
\begin{equation}
	\label{eq.hodge.lem.3}
		\sum_{i+j=n}\length_{A}H^{i}\left(X,\Omega^{j}_{X/S}\right)=\sum_{i+j=n}\length_{A}H^{i}\left(X^{(1)},\Omega^{j}_{X^{(1)}/S}\right),
	\end{equation}
	where the notation is induced by the pullback diagram
	$$
	\begin{tikzcd}
		X^{(1)}\arrow[r,"F_{X/S}"]\arrow[d,"f^{(1)}", swap]& X\arrow[d,"f"]\\
		S\arrow[r,"\Frob_{S}"]&S\rlap{,}
	\end{tikzcd}
	$$
	where $\Frob_{S}$ denotes the Frobenius of $S$.
	We will show that
	\begin{equation}
	\label{eq.hodge.lem.4}
		\sum_{i+j=n}\length_{A}H^{i}\left(X^{(1)},\Omega^{j}_{X^{(1)}/S}\right) =\sum_{i+j=n}h^{ij}\length_{A}A
	\end{equation}
	by induction on the length of $A$, proving the lemma in the local Artinian case.
 
	If $A$ is a field, the assertion is trivial. So let us assume that $\length_{A}A=N$ and assume the lemma was shown for $N'< N$. Let $1\leq N'<N$ be such that $pN'>N$, and let us set $T=\Spec(A/\mfr^{N'})$. The choice of $N'$ assures that $\Frob_{S}$ factors through $T$. So, we get the following diagram with pullback squares: 
	$$
	\begin{tikzcd}
		X^{(1)}\arrow[rr,"F_{X/S}", bend left =15]\arrow[r,""]\arrow[d,"f^{(1)}", swap]& X_{T} \arrow[d,"f_{T}"]\arrow[r,"",hook]&X\arrow[d,"f"]\\
		S\arrow[rr,"\Frob_{S}", bend right =15,swap]\arrow[r,""]&T\arrow[r,"",hook]&S\rlap{.}
	\end{tikzcd}
	$$
	By the induction hypothesis,  $H^{i}(X_{T},\Omega_{X_{T}/T}^{j})$ is finite locally free of rank $h^{ij}$ and commutes with arbitrary base change. Therefore,  the base change along $S\rightarrow T$ also has this property. Hence, $H^{i}(X^{(1)},\Omega_{X^{(1)}/S}^{j})$ is finite locally free of rank $h^{ij}$, and therefore (\ref{eq.hodge.lem.4}) is fulfilled. But this shows that (\ref{eq.hodge.lem.3}) and thus (\ref{eq.hodge.lem.2}) are fulfilled. So, by the discussion above, we see that $H^{i}(X,\Omega_{X/S}^{j})$ is finite locally free of rank $h^{ij}$, concluding the induction.

	Now let us show how to reduce to the case where $A$ is a local Artinian ring.	
As the question is local, we can even assume that $A$ is given by a local Noetherian ring (see \cite[Proposition 7.27]{WED}). By the faithful flatness of completion (see \cite[00MC]{stacks-project}), we can assume that $A$ is a complete Noetherian local ring $(A,\mfr)$.
 
Let us first show that $H^{i}(X,\Omega_{X/S}^{j})$ is finite locally free and commutes with arbitrary base change. Let us set $A_n\coloneqq A/\mfr^n$ and $X_{n}\coloneqq X\otimes_{A}A_{n}$. Then $f_{n}\colon X_{n}\rightarrow\Spec(A_{n})$ is proper smooth, and the Hodge filtration is strong. Since $A_{n}$ is a local Artinian ring, we know that the Hodge--de Rham spectral sequence relative to $X_{n}/A_{n}$ degenerates, the $H^{i}(X_{n},\Omega^{j}_{X_{n}/A_{n}})$ are finite locally free and
$$
H^{i}\left(X_{n},\Omega^{j}_{X_{n}/A_{n}}\right)\otimes_{A_{n}}A_{n-1}\cong H^{i}\left(X_{n-1}\Omega^{j}_{X_{n-1}/A_{n-1}}\right)
$$
for $n-1\geq n$ for all $n\geq 0$. The theorem of formal functions (see \cite[02OC]{stacks-project}) now implies that
\begin{equation}
\label{eq.hodge.degen.4}
	\lim_{n\geq 0}H^{i}\left(X_{n},\Omega^{j}_{X_{n}/A_{n}}\right) \cong \lim_{n\geq 0}  H^{i}\left(X,\Omega^{j}_{X/A}\right)\otimes_{A}A_{n}.
\end{equation}
Using \cite[0D4B]{stacks-project}, we see that the left-hand side of (\ref{eq.hodge.degen.4}) is finite projective and $H^{i}(X_{n},\Omega^{j}_{X_{n}/A_{n}})\simeq H^{i}(X,\Omega^{j}_{X/S})\otimes_{A}A_{n}$. But as $A$ is complete and Noetherian and $H^{i}(X,\Omega^{j}_{X/S})$ is finite (by the perfectness of $Rf_{*}\Omega_{X/S}^{j}$), the right-hand side of (\ref{eq.hodge.degen.4}) is equal to $Rf_{*}\Omega_{X/S}^{j}$ (see \cite[00MA]{stacks-project}). In particular, it is finite locally free, and its formation commutes with arbitrary base change.
 Again, since finite modules over complete Noetherian rings are complete, we see that if the Hodge spectral sequence degenerates for all $S_n=\Spec(A/\mfr^n)$, then it degenerates on the limit of all $A/\mfr^n$, namely $A$  (see \cite[00MA]{stacks-project} and note that the $H^{i}(X,\Omega^{j}_{X/S})$ are finite projective, and therefore their formation commutes with arbitrary base change). Hence, we may assume that $A$ is a local Artinian ring, which we already discussed at the beginning.
\end{proof}

\begin{lem}
\label{lem.Hodge.red}
	Let $f\colon X\rightarrow S$ be a proper smooth morphism of schemes, with $S=\Spec(A)$ affine. Assume that the Hodge filtration $\HDG_{f}$ associated to $f$ is strong. Then there exists a Noetherian scheme $S'$ with a morphism $g\colon S\rightarrow S'$ and a proper smooth morphism of schemes $f'\colon X'\rightarrow S'$ such that the diagram
	$$
	\begin{tikzcd}
		X\arrow[r,"f"]\arrow[d,""]& S\arrow[d,"g"]\\
		X'\arrow[r,"f'"]&S'
	\end{tikzcd}
	$$
	is Cartesian. Further, the Hodge filtration $\HDG_{f'}$ associated to $f'$ is strong.
\end{lem}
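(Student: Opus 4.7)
The plan is standard Noetherian approximation. Write $A = \colim_{i\in I} A_i$ as a filtered colimit of its finitely generated $\FF_p$-subalgebras; each $A_i$ is Noetherian by Hilbert's basis theorem. By standard limit-theoretic descent for finitely presented morphisms (see the chapter on limits of schemes in \cite{stacks-project}), there exist an index $i_0 \in I$ and a proper smooth morphism $f_0\colon X_0 \to S_0 \coloneqq \Spec(A_{i_0})$ with $f \cong f_0 \times_{S_0} S$. For each $i \geq i_0$, denote by $f_i\colon X_i \to S_i \coloneqq \Spec(A_i)$ the corresponding base change of $f_0$.

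Since $f_0$ is proper smooth, $\Omega^n_{X_0/S_0}$ is finite locally free and $Rf_{0,\ast}\Omega^n_{X_0/S_0}$ is perfect and commutes with arbitrary base change by \cite[0FM0]{stacks-project}. Iterating the stupid-truncation distinguished triangles as in Example~\ref{ex fzip de rham} shows that $\HDG_{f_0}$ is a bounded perfect filtration whose base change along $A_{i_0} \to A_i$ (resp.\ $A_{i_0} \to A$) recovers $\HDG_{f_i}$ (resp.\ $\HDG_f$), and similarly for the graded pieces $\gr^n\HDG_{f_i} \simeq Rf_{i,\ast}\Omega^n_{X_i/S_i}[-n]$.

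It remains to locate $i_1 \geq i_0$ making $\HDG_{f_{i_1}}$ strong. Strongness of $\HDG_{f_i}$ is equivalent to the vanishing, for the finitely many relevant pairs $(k,n)$, of the connecting maps
\[
\partial_i^{k,n}\colon \pi_{k+1}\gr^n\HDG_{f_i} \longrightarrow \pi_k\HDG_{f_i}^{n+1}
\]
in the long exact sequence of the fiber sequence $\HDG_{f_i}^{n+1} \to \HDG_{f_i}^n \to \gr^n\HDG_{f_i}$. Since filtered colimits commute with homotopy groups and with derived tensor products, $\partial^{k,n} = \colim_i \partial_i^{k,n}$, and this colimit vanishes by the assumed strongness of $\HDG_f$. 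Because each source $\pi_{k+1}\gr^n\HDG_{f_i}$ is finitely generated over the Noetherian ring $A_i$, a standard limit argument---there are only finitely many relevant $(k,n)$, only finitely many generators at each stage, and each becomes zero at some finite stage of the colimit target---produces an index $i_1 \geq i_0$ with $\partial_{i_1}^{k,n} = 0$ for all relevant $(k,n)$. Setting $S' \coloneqq \Spec(A_{i_1})$ and $f' \coloneqq f_{i_1}$ yields the required Cartesian diagram with $\HDG_{f'}$ strong.

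The main obstacle is precisely this final descent step: the homotopy groups $\pi_k\gr^n\HDG_{f_i}$ can acquire new elements from Tor contributions under the non-flat base change $A_{i_0} \to A_i$, so one cannot check vanishing of $\partial$ merely on generators lifted from a single early stage. A careful iterative bookkeeping that exploits the Noetherianness of each $A_i$ to control the finitely many newly appearing generators, within the bounded range of relevant $(k,n)$, is what makes the argument terminate in finitely many enlargements.
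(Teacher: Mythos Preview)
Your Noetherian approximation and base-change compatibility steps are fine, but the final descent step has a genuine gap that you yourself flag without resolving. Knowing that $\colim_i \partial_i^{k,n} = 0$ only tells you that each individual element of $M_i \coloneqq \pi_{k+1}\gr^n\HDG_{f_i}$ dies in some later $N_j$; it does \emph{not} give an index $i_1$ with $\partial_{i_1}^{k,n}=0$. Your proposed ``iterative bookkeeping'' goes: pick generators of $M_i$, pass to $j$ where they die, pick generators of $M_j$, repeat. But nothing forces this to terminate. The transition maps $M_i \to M_j$ are neither injective nor surjective in general (precisely because of the Tor contributions you mention), so each enlargement can introduce genuinely new generators whose images under $\partial_j$ are nonzero, and Noetherianness of each individual $A_i$ gives no uniform bound across the tower. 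Vanishing of an induced map on homotopy groups of a perfect complex is simply not a finitely presented condition on the base, so there is no abstract limit-descent principle to invoke here.

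The paper avoids this entirely by a different route: after descending to a single Noetherian $S_0$, it invokes Proposition~\ref{lem.hodge.noeth} (the already-proved Noetherian case) to know that over $S_0$ the strong locus for $\HDG_{f_0}$ coincides with the locus where $\HDG_{f_0}$ is strong \emph{and} homotopy finite projective of some fixed type. By Lemma~\ref{lem.strong.fil.open} this latter locus is open and closed in $S_0$, so one simply takes $S'$ to be the open subscheme $U\subseteq S_0$ through which $g_0$ factors. The key leverage is that Proposition~\ref{lem.hodge.noeth} upgrades ``strong'' to ``strong with locally free graded pieces'', which turns an a priori non-constructible condition into an open-closed one; your colimit argument tries to descend strongness directly and lacks exactly this geometric input.
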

\begin{proof}
  The existence of an affine scheme $S_{0}=\Spec(A_{0})$ with a morphism $g_{0}\colon S\rightarrow S_{0}$ and a proper smooth $S_{0}$-scheme $X_{0}$ such that the base change of $f_{0}\colon X_{0}\rightarrow S_{0}$ along $g_{0}$ is equal to $f$ is standard.\footnote{Use \cite[Theorem 10.69]{WED} to find an affine Noetherian $\widetilde{S}=\Spec(\widetilde{A})$ and morphisms $S\rightarrow\widetilde{S}$ and $\widetilde{X}\rightarrow \widetilde{S}$ such that the induced base change morphism identifies with $f$. Then write $S$ as a projective limit of affine $\widetilde{S}$-schemes of finite type by adjoining variables to $\widetilde{A}$ and conclude with \cite[Th\'eor\`eme (8.10.5)]{EGA4-66} and \cite[Proposition~(17.7.8)]{EGA4-67}.}
	By Proposition~\ref{lem.hodge.noeth}, the locus where $\HDG_{f_0}$ is strong is equivalently the locus where $\HDG_{f_0}$ is strong and the graded pieces are homotopy finite projective (of some type). Thus, by Lemma~\ref{lem.strong.fil.open} there is an open subscheme $U\subseteq S_{0}$ such that $g_{0}$ factors through $U$ and the Hodge filtration associated to $X_{0,U}\rightarrow U$ is strong. So, in particular, setting $S'=U$ and $X'=X_{0,U}$ finishes the proof.
\end{proof}

On the other hand, we can show that if the Hodge--de Rham spectral sequence degenerates, then the Hodge filtration is strong. Also, we do not need the particular form of the Hodge filtration, and thus we will show generally that a bounded perfect filtration with degenerate associated spectral sequence is automatically strong. Later on in Section~\ref{classical theory}, we will use this result to show that the derived $F$-zips associated to a proper smooth morphism with degenerative Hodge--de Rham spectral sequence and finite projective $E_{1}$-terms is completely determined by the underlying classical $F$-zips. So, for example in the abelian scheme case, the theory of derived $F$-zips gives us no new information and recovers the classical theory by passing to the cohomologies of the filtration (in a suitable sense as explained in Section~\ref{classical theory}).

\begin{prop}
\label{lem.Hodge.degen}
	Let $A$ be a ring and $C^{\bullet}$ be a descending bounded perfect filtration of $A$-modules. Assume that the $\pi_{i}\gr^{k}C$ are finite projective for all $i,k\in\ZZ$ and that the spectral sequence 
	$$
	E_1^{p,q} = \pi_{p+q}\gr^{p}C\Longrightarrow \pi_{p+q}\colim_{\ZZ^{\op}}C^{\bullet}
	$$
	associated to $C^{\bullet}$  degenerates. Then the filtration $C^{\bullet}$ is strong, and the statement stays true if we replace $C^{\bullet}$ with an ascending filtration.
\end{prop}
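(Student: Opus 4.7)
I treat the descending case; the ascending case is strictly dual. Strongness of $C^{\bullet}$ is equivalent to the vanishing, for all $i,n\in\ZZ$, of the connecting morphism $\delta_{n}^{i}\colon \pi_{n}\gr^{i}C \to \pi_{n-1}C^{i+1}$ in the long exact sequence of the cofiber sequence $C^{i+1}\to C^{i}\to\gr^{i}C$: once these boundaries all vanish, the long exact sequences split into short exact sequences $0\to\pi_{n}C^{i+1}\to\pi_{n}C^{i}\to\pi_{n}\gr^{i}C\to 0$, so each $C^{i+1}\to C^{i}$ is a monomorphism, and composing yields the monomorphism condition for every transition $C^{j}\to C^{i}$ with $j\geq i$.

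I identify each $\delta_{n}^{i}$ with the boundary map $k$ of the exact couple associated to the filtration,
$$
D_{1}^{p,q}\;=\;\pi_{p+q}C^{p},\qquad E_{1}^{p,q}\;=\;\pi_{p+q}\gr^{p}C,
$$
whose derived spectral sequence is precisely the one in the hypothesis. The structure maps are $\iota$ induced by $C^{p+1}\to C^{p}$, $j$ induced by $C^{p}\to\gr^{p}C$ and $k$ the boundary, satisfying $d_{1}=j\circ k$. Strongness is then the single assertion $k=0$, which I will establish from the degeneracy ($d_{r}=0$ for all $r\geq 1$) together with the left boundedness of $C^{\bullet}$.

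The plan is to iteratively push preimages of $k(e)$ deeper into the filtration. Given $e\in E_{1}^{p,q}$, $d_{1}(e)=0$ gives $k(e)\in\ker j=\im\iota$ (by exactness of the couple), so $k(e)=\iota(e_{1})$ with $e_{1}$ sitting at filtration level $p+2$. Inductively, assuming $d_{1},\ldots,d_{r-1}=0$ and that coherent lifts $\iota(e_{s+1})=e_{s}$ for $s<r-1$ have been chosen, the class $[j(e_{r-1})]\in E_{r}$ represents $d_{r}([e])$, and the degeneracy $d_{r}=0$ permits modifying $e_{r-1}$ by an element of $\im\iota$ to produce a further lift $e_{r}$ at filtration level $p+r+1$. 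By left boundedness there exists $N$ with $C^{j}\simeq 0$ for $j>N$; choosing $r$ with $p+r+1>N$, the lift $e_{r}\in\pi_{\ast}C^{p+r+1}$ must vanish, and consequently $k(e)=0$. As $e\in E_{1}^{p,q}$ was arbitrary, $k=0$ throughout the couple, concluding the argument.

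The main technical obstacle is the inductive step: each lift $e_{r}$ is determined only modulo $\ker\iota=\im k$, so one must check that $[j(e_{r-1})]$ is a well-defined element of $E_{r}$ genuinely representing $d_{r}([e])$, in order for the hypothesis $d_{r}=0$ to really produce a lift one level deeper. This is the standard bookkeeping of the exact couple and is essentially the content of \cite[Proposition~1.2.2.14]{HA} in the stable setting. I note in passing that the finite projectivity of the $\pi_{i}\gr^{k}C$ plays no role in this argument, although it is a natural hypothesis in the context of derived $F$-zips.
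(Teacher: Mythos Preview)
Your proof is correct and takes a genuinely different route from the paper. You work directly with the exact couple $(D_{1},E_{1},\iota,j,k)$ and show that the boundary map $k$ vanishes identically: since the spectral sequence degenerates at $E_{1}$, one has $B_{r}=0$ and $E_{r}=E_{1}$ at every page, so $d_{r}([e])=[j(e_{r-1})]=0$ in $E_{r}$ literally means $j(e_{r-1})=0$ in $E_{1}$, whence $e_{r-1}\in\ker j=\im\iota$, and the chain of lifts terminates at $0$ by left boundedness. (In particular no ``modification'' of $e_{r-1}$ is actually needed in the degenerate case; your phrasing there is slightly over-general, and the modification you mention should be by an element of $\ker\iota^{r-1}$ rather than $\im\iota$, but this is harmless since in fact no modification is required.) The paper instead runs a downward induction on $k$: assuming $C^{k}\to M\coloneqq\colim C^{\bullet}$ is a monomorphism, it uses Lurie's explicit description $E_{r}^{p,q}=\im\bigl(\pi_{p+q}\cofib(C^{p+r}\to C^{p})\to\pi_{p+q}\cofib(C^{p+1}\to C^{p-r+1})\bigr)$ together with boundedness to produce a surjection $\pi_{*}C^{k}\twoheadrightarrow E_{1}^{k,*}=\pi_{*}\gr^{k}C$, which is equivalent to $C^{k+1}\to C^{k}$ being a monomorphism on homotopy. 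Your approach is cleaner in that it avoids the outer induction and the specific model from \cite{HA}, and your closing observation is accurate: the paper's argument for strongness also never actually uses the finite projectivity of $\pi_{i}\gr^{k}C$, only invoking it for the incidental remark that the filtered pieces $\pi_{*}C^{k}$ are finite projective.
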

\begin{proof}
For convenience, let us set $M\coloneqq  \colim_{\ZZ^{\op}}C^{\bullet}$.

	It is enough to show\footnote{As the $C^{\bullet}$ is bounded,  there  exists an $n$ large enough such that $C^{n}\rightarrow M$ is an equivalence and thus also a monomorphism. Hence, induction indeed shows that $C^{\bullet}$ is strong.} that 
	\begin{itemize}
		\item if for any $k\in\ZZ$, the natural map $C^{k}\rightarrow M$ is a monomorphism, then the map $C^{k+1}\rightarrow C^{k}$ is a monomorphism.
	\end{itemize}
	Also, we will see that changing a descending filtration to an ascending one only changes the indices in the following proof and thus works similarly.
 
	So let us fix some $k\in \ZZ$ and assume that $C^{k}\rightarrow M$ is a monomorphism. The degeneracy of the spectral sequence implies that $E_{1}^{k,q} =\pi_{k+q}\gr^{k} C$ is naturally isomorphic to $E_{\infty}^{k,q}$ for any $q\in\ZZ$. By the construction of the spectral sequence (see \cite[Section 1.2.2]{HA}), we see that $E_{\infty}^{k,q}\cong \im(h_{q})/\im(g_{q})$, where $h_{q}\colon \pi_{k+q}C^{k}\rightarrow \pi_{k+q}M$ and $g_{q}\colon \pi_{k+q}C^{k+1}\rightarrow \pi_{k+q}M$. As $C^{k}\rightarrow M$ is a monomorphism, we see that $h_{q}$ is injective and therefore $\im(h_{q})\cong \pi_{k+q}C^{k}$. As the filtration on $M$ induced by the spectral sequence is bounded and the graded pieces are finite projective by degeneracy, we see that $\pi_{k+q}C^{k}$ is finite projective (since it is isomorphic to a filtered piece by the relation $\im(h_{q})\cong \pi_{k+q}C^{k}$).	
We claim that the morphism $f_{q}\colon\pi_{k+q}C^{k}\rightarrow \pi_{k+q}\gr^{k}C$ induced by the fiber sequence
	$$
		C^{k+1}\longrightarrow C^{k}\longrightarrow \gr^{k}C 
	$$
	is surjective for all $q\in\ZZ$.
 
	Indeed, recall from \cite[Section 1.2.2]{HA} that $E_{r}^{p,q}$ is defined by
        $$
        \im\left(\pi_{p+q}\cofib\big(C^{p+r}\longrightarrow C^{p}\big)\longrightarrow \pi_{p+q}\cofib\left(C^{p+1}\longrightarrow C^{p-r+1}\right)\right).
        $$
        As the spectral sequence is degenerate, the proof of \cite[Proposition 1.2.2.7(2)]{HA} shows that for any $r\geq 2$, we get a commutative diagram
	$$
		\begin{tikzcd}
			\pi_{k+q}\cofib\left(C^{k+r}\longrightarrow C^{k}\right)\arrow[r,""]\arrow[rd,"\phi_{r}",swap]& \pi_{k+q}\cofib\left(C^{k+1}\longrightarrow C^{k-r+1}\right)\\
			&E_{r-1}^{k,q}\arrow[u,"\vartheta_{r}",swap]\rlap{,}
		\end{tikzcd}
	$$	
	where $\phi_{r}$ is surjective and $\vartheta_{r}$ is injective. The morphism $\phi_{r}$ is induced by the natural morphism $\cofib(C^{k+r}\rightarrow C^{k})\rightarrow \cofib(C^{k+1}\rightarrow C^{k-r+1})$. Since $C^{\bullet}$ is bounded, we see that for $r$ large enough, we have $\cofib(C^{k+r}\rightarrow C^{k})\simeq C^{k}$. Thus, we get a morphism $\alpha_{q}\colon\pi_{k+q}C^{k}\rightarrow E_{1}^{k,q}= \pi_{k+p}\gr^{k}C$ that is surjective, using the $\phi_{r}$ and the degeneracy. But by construction, $\alpha_{q}$ is induced by the natural map $C^{k}\rightarrow \gr^{k}C$, and hence $\alpha_{q}$ agrees with $f_{q}$, showing the desired surjectivity.
\end{proof}

Combining all the arguments, we get the connection between the degeneracy of the Hodge--de Rham spectral sequence and the strongness of the Hodge filtration.

\begin{thm}
	\label{de Rham strong degen}
	Let $f\colon X\rightarrow S$ be a smooth proper morphism of schemes. Let us consider the Hodge--de Rham spectral sequence
	$$
	E_1^{p,q} = R^qf_*\Omega_{X/S}^p\Leftrightarrow R^{p+q}f_*\Omega_{X/S}^\bullet.
	$$
	 The filtrations of the derived $F$-zip $\underline{Rf_*\Omega^\bullet_{X/S}}$ of Example~\ref{ex F-zip proper smooth} are strong $($in the sense that $\HDG$ and $\conj$ are strong$)$ if and only if the Hodge--de Rham spectral sequence degenerates and $R^if_*\Omega_{X/S}^j$ is finite locally free for all $i,j\in\ZZ$
\end{thm}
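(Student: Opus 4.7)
The plan is to assemble the three preceding results, namely Proposition~\ref{lem.hodge.noeth} (``only if'' direction, locally Noetherian base), Lemma~\ref{lem.Hodge.red} (reduction to the Noetherian setting), and Proposition~\ref{lem.Hodge.degen} (``if'' direction for any bounded perfect filtration with degenerate associated spectral sequence and finite projective graded homotopies). Since strongness of a filtration, degeneracy of the Hodge--de Rham spectral sequence and finite local freeness of $R^i f_*\Omega^j_{X/S}$ are all Zariski local on $S$, I would first reduce to the case $S=\Spec(A)$ affine.

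For the ``only if'' direction, suppose $\HDG$ and $\conj$ are strong. I would invoke Lemma~\ref{lem.Hodge.red} to spread $f$ out: there is a Noetherian affine $S'$, a morphism $g\colon S\to S'$, and a proper smooth morphism $f'\colon X'\to S'$ with $f\simeq f'\times_{S'}S$ such that $\HDG_{f'}$ is strong. Proposition~\ref{lem.hodge.noeth} applied to $f'$ yields that the Hodge--de Rham spectral sequence of $f'$ degenerates and that each $R^i f'_*\Omega^j_{X'/S'}$ is finite locally free. Since $Rf'_*\Omega^j_{X'/S'}$ commutes with arbitrary base change along $g$ by \cite[0FM0]{stacks-project}, the same conclusions descend to $f$, giving the desired implication.

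For the ``if'' direction, assume that the Hodge--de Rham spectral sequence degenerates and that each $R^i f_*\Omega^j_{X/S}$ is finite locally free. Then $\pi_i(\gr^k\HDG)\simeq R^{i-k}f_*\Omega^k_{X/S}$ is finite projective, and the associated spectral sequence is precisely the (degenerate) Hodge--de Rham spectral sequence; Proposition~\ref{lem.Hodge.degen} applied to the descending bounded perfect filtration $\HDG$ therefore shows that $\HDG$ is strong. For $\conj$, the Cartier isomorphism provides the equivalence $(\gr^k\HDG)^{(1)}\simeq \gr^k\conj$ already used in Example~\ref{ex F-zip proper smooth}, so that $\pi_i(\gr^k\conj)\simeq (\pi_i(\gr^k\HDG))^{(1)}$ remains finite projective (Frobenius pullback preserves finite projectivity). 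The conjugate spectral sequence itself degenerates by \cite[Proposition~(2.3.2)]{Katz}, and so a second application of Proposition~\ref{lem.Hodge.degen}, this time to the ascending filtration $\conj$, shows $\conj$ is strong.

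No substantial obstacle remains once the three cited results are in hand; the only point requiring a little care is checking that the hypothesis of Proposition~\ref{lem.Hodge.degen} applies to $\conj$, which is handled via the Cartier isomorphism together with the base-change property of $Rf_*\Omega^n_{X/S}$ used to identify $\gr^n\conj$ with $(\gr^n\HDG)^{(1)}$.
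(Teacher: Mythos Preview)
Your proposal is correct and follows essentially the same route as the paper: reduce to the affine case, handle the ``only if'' direction via Lemma~\ref{lem.Hodge.red} (Noetherian approximation) together with Proposition~\ref{lem.hodge.noeth}, and handle the ``if'' direction by applying Proposition~\ref{lem.Hodge.degen} to both filtrations, invoking \cite[Proposition~(2.3.2)]{Katz} for degeneracy of the conjugate spectral sequence. The only slip is the index in $\pi_i(\gr^k\HDG)\simeq R^{i-k}f_*\Omega^k_{X/S}$ (it should be $R^{-i-k}f_*\Omega^k_{X/S}$ in the paper's homological convention), but this does not affect the argument.
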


\begin{proof}
  The ``if'' part follows from Proposition~\ref{lem.Hodge.degen}, where we use that degeneracy of the Hodge--de Rham spectral sequence implies that the conjugate spectral sequence is degenerate (see \cite[Proposition~(2.3.2)]{Katz};
  note that we need that the $R^if_*\Omega_{X/S}^j$ are finite locally free).
 
	For the ``only if'' part, we may assume that $S=\Spec(A)$ is affine, as the question is local. We can use Lemma~\ref{lem.Hodge.red}  to find a proper smooth scheme morphism $f'\colon X'\rightarrow S'$, where $S'$ is Noetherian, and a morphism $g\colon S\rightarrow S'$ such that the following diagram is a pullback diagram: 
	$$
	\begin{tikzcd}
		X\arrow[r,"f"]\arrow[d,""]& S\arrow[d,"g"]\\
		X'\arrow[r,"f'"]&S'\rlap{.}
	\end{tikzcd}
	$$
	Also, by the lemma the Hodge filtration associated to $f'$ is strong. Now if the Hodge--de Rham spectral sequence associated to $f'$ degenerates and the $R^{i}f'_{*}\Omega^{j}_{X'/S'}$ are finite locally free,  by the compatibility of the spectral sequence with base change,  we have that the Hodge--de Rham spectral sequence associated to $f$ degenerates and the $R^{i}f_{*}\Omega^{j}_{X/S}$ are finite locally free. In particular, we may assume that $S$ is Noetherian and affine. But this case was already treated in Proposition~\ref{lem.hodge.noeth}.
\end{proof}

We conclude this section by defining the substacks corresponding to a function $\tau\colon \ZZ\rightarrow \NN_{0}^{\ZZ}$. Again, we could do this by right Kan extension, but as before, there is also an \textit{ad hoc} definition that agrees with the one given by right Kan extension.

The following definitions are globalizations of the definitions given in Section~\ref{sec substacks}.

\begin{defi}\label{def:3.75}
	Let $S$ be a derived scheme, and let $\Fcal\in\PPerf(S)$. We define the function
	\begin{align*}
	\beta_\Fcal\colon S_{\cl}&\longrightarrow \NN_0^{\ZZ}\\
	s&\longmapsto (\dim_{\kappa(s)}\pi_i(\cl^*\Fcal\otimes^L_{\Ocal_{S_{\cl}}} \kappa(s)))_{i\in\ZZ},
	\end{align*}
	where $\cl\colon S_{\cl}\rightarrow S$ is the natural morphism.
 
	This function is locally upper semi-continuous in the sense that for every $s\in S_\cl$, there is a neighbourhood $U_s$ such that for any family $(k_i)_i\in\NN_0^\ZZ$, the set $\beta_{\Fcal|U_{s}}^{-1}(([0,k_i])_{i})$ is open (see \cite[0BDI]{stacks-project}).
\end{defi}

\begin{remark}
	Note that in Definition~\ref{def:3.75}, we implicitly assume that $\cl^\ast \Fcal$ is a perfect complex of quasi-coherent $\Ocal_{S_{\cl}}$-module. This makes sense as $\cl^{*}\Fcal$ is in $\Dcal_{\textup{qc}}(S_{\cl})$ by Proposition~\ref{derived cat is kan ext} and is by definition perfect.
\end{remark}

\begin{defrem}
Let $S$ be a derived scheme. Let $\Fline\coloneqq (C^{\bullet},D_{\bullet},\phi,\varphi_{\bullet})$ be a derived $F$-zip over $S$. Consider the function 
	\begin{align*}
		\beta_{\Fline}\colon s\longmapsto (k\longmapsto \beta_{\gr^kC}(s))
	\end{align*}
	from $S_\cl$ to functions $\ZZ\rightarrow \NN_0^\ZZ$.
	\begin{enumerate}
		\item The function $\beta_{\Fline}$ is called the \textit{type of the derived $F$-zip $\Fline$}.
		\item Let $\tau\colon \ZZ\rightarrow \NN_0^\ZZ$ be a function with finite support. We say that \textit{$\Fline$ has type at most $\tau$} if for all $s\in S_{\cl}$, we have $\beta_{\Fline}(s)\leq \tau$ (again, the relation is given pointwise as functions $\ZZ\times\ZZ\rightarrow\NN_{0}$).
	\end{enumerate}
	 
	Further, for any $s\in S_{\cl}$, there exist a quasi-compact open (resp.\ locally closed) neighbourhood $U_s$ of $s$ and a function $\tau\colon\ZZ\rightarrow\NN_{0}^{\ZZ}$ with finite support, such that $\beta_{\Fline |U_s}\leq \tau$ in the sense above (resp.\ $\beta_{\Fline |U_s}$ is constant and equal to $\tau$) (this follows from Lemma~\ref{upper semi-cont qc}).
\end{defrem}

\begin{defrem}
	\label{defi Euler char2}
	Let $S$ be a derived scheme. Let $\Fline\coloneqq (C^{\bullet},D_{\bullet},\phi,\varphi_{\bullet})$ be a derived $F$-zip over $S$. Let us look at the function 
	\begin{align*}
	\chi_k(\Fline)\colon S_{\cl}&\longrightarrow \ZZ\\
	s&\longmapsto \chi(\cl^{*}\gr^kC \otimes_{\Ocal_{S_{\cl}}}\kappa(s)),
	\end{align*}
	where $\cl\colon S_{\cl}\rightarrow S$ is the natural morphism.
	This is a locally constant function (see \cite[0B9T]{stacks-project}). Since the filtrations on derived $F$-zips are locally bounded, we  know that the function $\chi_{\Fline}\colon s\mapsto (k\mapsto \chi_k(\Fline)(s))$ is also locally constant as a map from $S_{\cl}$ to functions $\ZZ\rightarrow \ZZ$ with finite support. We call $\chi_{\Fline}$ the \textit{Euler characteristic} of $\Fline$.
 
	If $\tau\colon \ZZ\rightarrow \ZZ$ is a function with finite support, we say that \textit{$\Fline$ has Euler characteristic $\tau$} if $\chi_{\Fline}$ is constant with value $\tau$.
\end{defrem}

\begin{defi}
	Let $S$ be a derived scheme.
	\begin{enumerate}
		\item Let $M\in\PPerf(S)$ be a perfect module over $S$. Fix a map $r\colon \ZZ\rightarrow \NN_0$. We call $M$ \textit{homotopy finite locally free of rank $r$} if for each scheme $f\colon X\rightarrow S_{\cl}$, the $\Ocal_{X}$-module $\pi_i(f^*\cl^{*}M)$ is finite locally free of rank $r_i$, where we say that it is finite locally free of rank $0$ if it is isomorphic to $0$.
		\item Let $\Fline\coloneqq (C^\bullet,D_\bullet,\varphi_\bullet)$ be a derived $F$-zip over $S$ and $\tau\colon \ZZ\rightarrow \NN_0^{\ZZ}$ be a function with finite support. We say that $\Fline$ is \textit{homotopy finite locally free of type $\tau$} if for all $i\in\ZZ$, we have that $\gr^iC$ is homotopy finite locally free of rank $\tau(i)$.
	\end{enumerate}
\end{defi}

\begin{defi}
	Let $\tau\colon\ZZ\rightarrow\NN_0^\ZZ$ be a function with finite support, and let $S$ be a derived $R$-scheme. We define $\FZip_{\infty,R}^{\leq\tau}(S)$ $($resp.\ $\FZip_{\infty,R}^{\tau}(S)$, $\FZip_{\infty,R}^{\chi=\tau}(S))$ as the full subcategory of derived $F$-zips over $S$ of type at most $\tau$ $($resp.\ homotopy finite locally free of type $\tau$, of Euler characteristic $\tau)$. 

\end{defi}

\begin{lem}
	Let $\tau\colon\ZZ\rightarrow\NN_0^\ZZ$ be a function with finite support.
	Let $\RR\FZip^{\leq\tau}_{\infty,R}$ $($resp.\ $\RR\FZip_{\infty,R}^{\tau}$, $\RR\FZip_{\infty,R}^{\chi=\tau})$ be the  right Kan extension of $\FZip^{\leq\tau}_{\infty,R}$ $($resp.\ $\FZip_{\infty,R}^{\tau}$, $\FZip_{\infty,R}^{\chi=\tau})$ along the Yoneda embedding $\AniAlg{R}\hookrightarrow\Pcal(\AniAlg{R}^{\op})^{\op}$. Then for any derived scheme $S$ over $R$,  the natural morphism induced by base change
	\begin{align*}
		&\FZip^{\leq\tau}_{\infty,R}(S)\longrightarrow\RR\FZip^{\leq\tau}_{\infty,R}(S) \\ &\textup{ (resp.\ }\FZip^{\tau}_{\infty,R}(S)\longrightarrow\RR\FZip^{\tau}_{\infty,R}(S),\ \FZip^{\chi=\tau}_{\infty,R}(S)\longrightarrow\RR\FZip^{\chi =\tau}_{\infty,R}(S) \textup{)}
	\end{align*}
	is an equivalence.
\end{lem}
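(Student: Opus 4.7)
The plan is to bootstrap from Lemma~\ref{Fzips global given by affine}, using that each of the three subcategories $\FZip^{?}_{\infty,R}$ (for $?\in\{{\leq}\tau,\tau,\chi=\tau\}$) sits as a full sub-$\infty$-category of $\FZip_{\infty,R}$ cut out by a condition that is Zariski local on the base. I would first note that since the inclusion of a full subcategory is preserved by right Kan extension, the natural map $\FZip^{?}_{\infty,R}(S)\to\RR\FZip^{?}_{\infty,R}(S)$ fits into a commutative square whose ambient horizontal map $\FZip_{\infty,R}(S)\to\RR\FZip_{\infty,R}(S)$ is an equivalence by Lemma~\ref{Fzips global given by affine}. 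Consequently the top map is automatically fully faithful, and only essential surjectivity remains to be checked.

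For essential surjectivity, I would invoke Proposition~\ref{right kan ex derived scheme} to rewrite
$$
\RR\FZip^{?}_{\infty,R}(S)\simeq\lim_{\iota\colon\Spec A\hookrightarrow S}\FZip^{?}_{\infty,R}(A),
$$
where the limit ranges over affine opens of $S$. An object on the right corresponds, via the ambient equivalence, to a global derived $F$-zip $\Fline\in\FZip_{\infty,R}(S)$ whose restriction $\iota^*\Fline$ lies in $\FZip^{?}_{\infty,R}(A)$ for every affine open $\iota$; it then remains to verify that $\Fline$ itself satisfies the global property $?$ on $S$.

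This step reduces each case to a locality statement. For ${\leq}\tau$, the function $\beta_{\Fline}$ is defined pointwise on $S_\cl$ via fibers of the graded pieces, and under an affine open $\iota\colon\Spec A\hookrightarrow S$ one has $(\Spec A)_\cl\simeq\Spec(\pi_0 A)_\cl$ and $\beta_{\iota^*\Fline}$ agrees with $\beta_\Fline$ restricted to the image of $(\Spec A)_\cl$ in $S_\cl$; thus $\beta_\Fline\leq\tau$ holds globally if and only if it holds on each member of an affine open cover. For homotopy finite locally freeness of type $\tau$, being finite locally free of a given rank is Zariski local on the classical locus, and this locality is inherited directly by the condition on the graded pieces. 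For $\chi=\tau$, the Euler characteristic $\chi_\Fline$ is locally constant on $S_\cl$ (Remark~\ref{defi Euler char2}) and compatible with pullback along open immersions, so equality with a prescribed $\tau$ is tested on an affine open cover. The only real work is this bookkeeping; no essentially new ingredient beyond the affine case, Lemma~\ref{Fzips global given by affine}, and compatibility of perfect modules with pullback is required, and I expect the (minor) obstacle to be cleanly identifying classical points of $S$ with those of its affine opens.
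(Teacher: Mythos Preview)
Your proposal is correct and follows essentially the same approach as the paper. The paper's proof is a one-liner: it says the argument is completely analogous to Lemma~\ref{Fzips global given by affine}, with the only additional observation being that the properties ``type $\leq\tau$'', ``homotopy finite locally free of type $\tau$'', and ``Euler characteristic $\tau$'' can be checked on an affine open cover of $S$; your write-up simply unpacks this in detail, using the ambient equivalence and Proposition~\ref{right kan ex derived scheme} exactly as intended.
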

\begin{proof}
	The proof is completely analogous to that of Lemma~\ref{Fzips global given by affine}. We only need to verify that the properties ``has type $\leq \tau$'', ``is homotopy finite locally free of type $\tau$'' and ``has Euler characteristic $\tau$'' can be checked on an affine open cover of $S$, but this is clear.
\end{proof}

\begin{defrem}
  Let $\tau\colon\ZZ\rightarrow\NN_0^\ZZ$ be a function with finite support, and let $S$ be a derived $R$-scheme. We define 
  $$
  \FZip_{R}^{\leq\tau}\colon \Pcal(\AniAlg{R}^{\op})^{\op}\longrightarrow \SS
  $$
  (resp.\ $\FZip_{R}^{\tau}$, $\FZip_{R}^{\chi=\tau}$) as the right Kan extension of $\FZip_{R}^{\leq\tau}$ (resp.\ $\FZip_{R}^{\tau}$, $\FZip_{R}^{\chi=\tau}$) along the Yoneda embedding $\AniAlg{R}\hookrightarrow \Pcal(\AniAlg{R}^{\op})^{\op}$.

	By Remark~\ref{rem.sheaf.kan}, these define fpqc sheaves and define subsheaves of $\FZip_{R}$.
\end{defrem}
 
\subsection{Perfect complexes on the pinched projective space}

\label{app perf on pinched}

In this subsection, we want to understand the perfect complexes on the pinched projective space, \textit{i.e.}\ the $\infty$-category $\QQCoh_{\perf}(\Xfr_{S})$ for a scheme $S$ in characteristic $p>0$. (See the appendix for the notation). We  show in the appendix that the vector bundles on $\Xfr$ are precisely the $F$-zips. In this section, we will see a similar result for derived $F$-zips; namely, we will show that $\QQCoh_{\perf}(\Xfr_{S})\simeq \FZip_{\infty,R}(S)$.
 
As explained in \cite[Proposition 4.1.1]{halpern}, quasi-coherent sheaves on $[\AA_{S}^{1}/\Gm_{,S}]$ are the same as $\ZZ$-indexed diagrams of quasi-coherent $\Ocal_{S}$-modules, so a chain of morphisms of $\Ocal_{S}$-modules
$$
\cdots\longrightarrow \Fcal_{i}\longrightarrow \Fcal_{i+1}\longrightarrow \Fcal_{i+2}\longrightarrow \cdots
$$
(for vector bundles, we showed the computation behind it in Theorem~\ref{F-zips as VB}). Equivalently, the category of quasi-coherent sheaves on $[\AA_{S}^{1}/\Gm_{,S}]$ is given by the category of graded $\Ocal_{S}$-modules together with an endomorphism of degree $1$ (this endomorphism is induced by multiplication with $X$).
This gives a description of the category of chain complexes and so
$$
D_{\textup{qc}}\left(\left[\AA_{S}^{1}/\Gm_{,S}\right]\right)\simeq\Fun(\ZZ,D_{\textup{qc}}(S)).
$$
 
Now let us endow the abelian category of chain complexes of quasi-coherent modules over $[\AA_{S}^{1}/\Gm_{,S}]$ with the usual model structure and $\Fun(\ZZ,\Ch(\QQCoh(S)))$ with the pointwise model structure. The natural identification of the categories explained above induces a Quillen equivalence and therefore an equivalence of $\infty$-categories
$$
\Dcal_{\textup{qc}}\left(\left[\AA_{S}^{1}/\Gm_{,S}\right]\right)\simeq \Fun(\ZZ,\Dcal_{\textup{qc}}(S)).
$$
In fact, this equivalence can be upgraded naturally to a symmetric monoidal equivalence.\newline
 
We will compute $\QQCoh_{\perf}(\Xfr_{S})$ via descent. For this, we want to understand the pullback of perfect complexes along $[\lbrace 0\rbrace/\Gm_{,\FF_{p}}]\rightarrow[\AA^{1}_{S}/\Gm_{,\FF_{p}}]$. We will need the following proposition to ease computation.

\begin{lem}
\label{lem restr}
Let $S$ be a scheme. Further, let $G$ be a group scheme over $S$, and let $X$ and $Z$ be $S$-schemes with a smooth $G$-action denoted by $a_{X}$ and $a_{Z}$, respectively.
Let $i\colon Z\hookrightarrow X$ be a $G$-equivariant closed immersion such that the diagram 
$$
\begin{tikzcd}
	G\times_{S} Z\arrow[r,"\id\times i"]\arrow[d,"a_{Z}"]& G\times_{S}X\arrow[d,"a_{X}"]\\
	Z\arrow[r,"i",hook]&X
\end{tikzcd}
$$
is Cartesian. Then the restriction functor $i_{*}\colon\Dcal_{\textup{qc}}(\left[Z/G\right])\rightarrow \Dcal_{\textup{qc}}(\left[X/G\right])$ is conservative.
\end{lem}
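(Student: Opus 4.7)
The plan is to pass from the quotient stacks down to their atlases $X$ and $Z$, where the claim reduces to the well-known conservativity of the pushforward along a closed immersion of schemes. For this, the key geometric input will be that the hypothesized Cartesian square allows us to realize the induced morphism $\bar{\imath}\colon [Z/G]\rightarrow [X/G]$ as a base change of $i\colon Z\hookrightarrow X$ along the atlas $q_{X}\colon X\rightarrow [X/G]$.

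First, I would verify that the commutative square
$$
\begin{tikzcd}
Z\arrow[r,"i",hook]\arrow[d,"q_{Z}"]& X\arrow[d,"q_{X}"]\\
[Z/G]\arrow[r,"\bar{\imath}"]& [X/G]
\end{tikzcd}
$$
is Cartesian. Since $[Z/G]$ and $[X/G]$ are presented by the action groupoids $G\times_{S}Z\rightrightarrows Z$ and $G\times_{S}X\rightrightarrows X$ (with source the projection and target the action map), the morphism $\bar{\imath}$ comes from a morphism of groupoid objects given by $i$ on objects and $\id\times i$ on morphisms. Such a morphism induces a Cartesian square on quotient stacks precisely when the target-map square is Cartesian, which is exactly our hypothesis. (The projection squares are trivially Cartesian.)

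Second, I would invoke flat base change. Since $q_{X}$ is smooth and hence flat, and $i$ is a closed immersion and in particular quasi-compact and quasi-separated, the Cartesian square above yields a canonical equivalence $q_{X}^{*}\bar{\imath}_{*}\simeq i_{*}q_{Z}^{*}$ on $\Dcal_{\textup{qc}}$. (For derived pushforward along affine morphisms of quasi-compact quasi-separated objects, flat base change is standard; one can check this directly via \v{C}ech descent since the question is affine-local on $X$.) Now suppose $F\in \Dcal_{\textup{qc}}([Z/G])$ satisfies $\bar{\imath}_{*}F\simeq 0$. Applying $q_{X}^{*}$ and using base change gives $i_{*}q_{Z}^{*}F\simeq 0$.

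Finally, I would conclude by two conservativity facts. The pushforward $i_{*}$ along the closed immersion $i$ of schemes is conservative (it is even fully faithful, with retraction $i^{*}$ on the essential image), so $i_{*}q_{Z}^{*}F\simeq 0$ forces $q_{Z}^{*}F\simeq 0$. The pullback $q_{Z}^{*}$ along the smooth surjective atlas $q_{Z}\colon Z\rightarrow [Z/G]$ is conservative by smooth (hence fpqc) descent for $\Dcal_{\textup{qc}}$, so $F\simeq 0$, as required. The main subtlety in this proof will be the first step, namely checking that the hypothesized Cartesian condition on the action maps translates into Cartesianness of the square of atlases; once this is in place, the argument proceeds by the standard formalism of flat base change and smooth descent.
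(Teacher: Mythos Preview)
Your proof is correct and follows essentially the same strategy as the paper's: reduce to conservativity of $i_{*}$ for closed immersions of schemes, then invoke fpqc descent for $\Dcal_{\textup{qc}}$. The paper phrases this via the full Barr resolution (showing the pushforward is levelwise a closed-immersion pushforward, hence conservative, and then taking the limit), whereas you work only with the first level of the resolution together with an explicit flat base change along the atlas square; your version makes the implicit base-change step in the paper's argument visible.

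One minor remark: the Cartesianness of your atlas square $Z \simeq X\times_{[X/G]}[Z/G]$ is in fact automatic for group quotients---both vertical maps are $G$-torsors, and the $G$-equivariant map $Z\to X\times_{[X/G]}[Z/G]$ over $[Z/G]$ is then a map of $G$-torsors, hence an isomorphism---so your appeal to the hypothesized Cartesian action square at that point is not strictly needed. (The hypothesis is what guarantees that the higher simplicial squares in the Barr ladder are Cartesian, which is what the paper's formulation needs.) This does not affect the validity of your argument.
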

\begin{proof}
	Using the Barr resolution of $\left[Z/G\right]$ and $\left[X/G\right]$, we get a commutative diagram 
	$$
	\begin{tikzcd}
		\dots\arrow[r,""]\arrow[r,"",shift right = 0.3em,swap]\arrow[r,"",shift left = 0.3em]\arrow[d,""]&G\times_{S} Z\arrow[r,"a_{Z}",shift right = 0.3em,swap]\arrow[r,"p",shift left = 0.3em]\arrow[d,"\id\times i"]& Z\arrow[d,"i"]\\
		\dots\arrow[r,"",shift right = 0.3em,swap]\arrow[r,"",shift left = 0.3em]\arrow[r,""]&G\times_{S} X\arrow[r,"a_{X}",shift right = 0.3em,swap]\arrow[r,"p",shift left = 0.3em]&X\rlap{,}
	\end{tikzcd}
	$$
	where the vertical arrows are all closed immersion. The derived pushforward along the vertical arrows is therefore conservative (see \cite[08I8]{stacks-project}). As the derived $\infty$-category with quasi-coherent cohomology satisfies fpqc descent (see Remark~\ref{descent qcoh}), this concludes the proof.
\end{proof}

\begin{prop}
\label{prop.B3}
Let $\Gm_{,S}$ act on $\AA^{1}_{S}$ by multiplication of degree $1$ $($resp.\ $-1)$ and trivially on the closed subscheme $\lbrace 0\rbrace\subseteq\AA^{1}_{S}$. We denote by $f\colon[\lbrace0\rbrace /\Gm_{,S}]\rightarrow [\AA_{S}^{1}/\Gm_{,S}]$ the naturally induced morphism. Further, let $M\in \Dcal_{\textup{qc}}([\AA_{S}^{1}/\Gm_{,S}])$. Then we have
$$
Lf^{*}M\simeq \bigoplus_{i\in\ZZ}\gr^{i}M,
$$
where we consider $M$ as an element of $\Fun(\ZZ,\Dcal_{\textup{qc}}(S))$.
\end{prop}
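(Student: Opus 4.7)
The plan is to use the identifications of both derived $\infty$-categories with functor categories in order to reduce the statement to an explicit derived tensor product computation. The subsection already establishes $\Dcal_{\textup{qc}}([\AA_{S}^{1}/\Gm_{,S}]) \simeq \Fun(\ZZ, \Dcal_{\textup{qc}}(S))$, and exactly the same argument applied to the trivial $\Gm_{,S}$-action on $\{0\}$ yields $\Dcal_{\textup{qc}}([\{0\}/\Gm_{,S}]) \simeq \Fun(\ZZ^{\textup{disc}}, \Dcal_{\textup{qc}}(S))$, since no nontrivial transition maps between graded degrees can appear when the action is trivial. Under these equivalences, the proposition reduces to showing that $Lf^*$ sends a filtration $M \in \Fun(\ZZ, \Dcal_{\textup{qc}}(S))$ to the family of graded pieces $(\gr^i M)_{i \in \ZZ}$, viewed as an object of $\Fun(\ZZ^{\textup{disc}}, \Dcal_{\textup{qc}}(S))$.

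Since both sides satisfy fpqc descent on $S$ by Remark~\ref{descent qcoh}, I reduce to $S = \Spec(A)$ affine. In that case $[\AA_{S}^{1}/\Gm_{,S}]$ presents the stack of graded $A[X]$-modules, with $\deg X = \pm 1$ according to the chosen convention, and $f$ is induced by the graded ring map $A[X] \twoheadrightarrow A$ sending $X \mapsto 0$; hence $Lf^* M \simeq M \otimes^{L}_{A[X]} A$, computed in the category of graded $A$-modules. I then resolve $A$ by the graded Koszul complex
\begin{equation*}
0 \longrightarrow A[X](\mp 1) \xrightarrow{\;\cdot X\;} A[X] \longrightarrow A \longrightarrow 0,
\end{equation*}
so that the derived tensor product is represented by the two-term complex $M(\mp 1) \xrightarrow{\cdot X} M$. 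In graded degree $i$, this is precisely the transition map $M_{i-1} \to M_i$ (respectively $M_{i+1} \to M_i$), whose cofiber in $\Dcal_{\textup{qc}}(S)$ is by construction $\gr^i M$ (Definition~\ref{defi gr}). Hence $(Lf^*M)_i \simeq \gr^i M$ for every $i \in \ZZ$, and since there are no nontrivial transition maps on the target side, this gives $Lf^*M \simeq \bigoplus_{i \in \ZZ} \gr^i M$ as desired.

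The step I expect to be the main obstacle is promoting the equivalence $\Dcal_{\textup{qc}}([\AA_{S}^{1}/\Gm_{,S}]) \simeq \Fun(\ZZ, \Dcal_{\textup{qc}}(S))$ to one that is compatible with $Lf^*$ as a functor of stable $\infty$-categories, rather than merely as an equivalence of triangulated homotopy categories. The Quillen equivalence invoked at the start of the subsection handles the identification of objects; to match the pullback functor, one can verify the formula on the compact generators given by the equivariant line bundles $\Ocal(n)$, which correspond under the equivalence to one-step filtrations with nonzero value in a single degree, and then extend to arbitrary $M$ by the fact that $Lf^*$ preserves all small colimits. Alternatively, one can invoke Lemma~\ref{lem restr} to further reduce the verification by $\Gm_{,S}$-equivariant descent to the ordinary (non-equivariant) derived pullback along $\{0\} \hookrightarrow \AA^1_S$, where the Koszul computation is standard.
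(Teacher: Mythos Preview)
Your proof is correct and follows essentially the same strategy as the paper: both arguments compute $Lf^*M$ via the graded Koszul resolution $A[X](\mp 1)\xrightarrow{\cdot X}A[X]$ of $A$ and identify the resulting two-term complex in each graded degree with the cofiber defining $\gr^i M$.

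The only notable difference is in how the bookkeeping is handled. The paper invokes Lemma~\ref{lem restr} at the outset to reduce to checking the equivalence after applying $f_*$, then carries out the tensor product and the cone computation as explicit double complexes in $\Dcal_{\textup{qc}}([\AA^1_S/\Gm_{,S}])$ and matches them term by term. You instead work directly with the identification $\Dcal_{\textup{qc}}([\{0\}/\Gm_{,S}])\simeq\Fun(\ZZ^{\textup{disc}},\Dcal_{\textup{qc}}(S))$ and read off the graded components of the Koszul tensor product, which is a bit cleaner; you then correctly flag the compatibility of $Lf^*$ with the functor-category equivalences as the residual point, proposing either a check on generators or Lemma~\ref{lem restr} as a remedy. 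The paper's use of Lemma~\ref{lem restr} is precisely what handles that compatibility, so the two arguments converge.
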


\begin{proof}
We first claim that it is enough to show that after restricting to $\Dcal_{\textup{qc}}([\AA_{S}^{1}/\Gm_{,S}])$, we have $Lf^{*}M\simeq \bigoplus_{i\in\ZZ}\gr^{i}M$. In particular, to compute $Lf^{*}$, we may resolve $\Ocal_{S}$ as a K-flat complex in $\Dcal_{\textup{qc}}([\AA_{S}^{1}/\Gm_{,S}])$, which is straightforward as we are going to see. 

Indeed, let $F\colon \Dcal_{\textup{qc}}([\AA_{S}^{1}/\Gm_{,S}])\rightarrow \Dcal_{\textup{qc}}([\lbrace 0\rbrace /\Gm_{,S}])$ denote the graded functor $M\mapsto \bigoplus_{i\in\ZZ} \gr^{i}M$. Assume that $f_{*}Lf^{*}M\simeq f_{*}F(M)$. Then by adjunction, we get a morphism $\alpha \colon Lf^{*}M\rightarrow F(M)$ in $\Dcal_{\textup{qc}}([\lbrace 0\rbrace /\Gm_{,S}])$ induced by the identity on $Lf^{*}M$. But $\alpha$ is an equivalence by Lemma~\ref{lem restr} (the smoothness of the $\Gm_{,S}$-action on $\AA^{1}_{S}$ follows from the proof of Lemma~\ref{lem.action.smooth}), concluding the reduction step.

We will give a proof in the case where $\Gm_{,S}$ acts by multiplication of degree $1$. The degree $-1$ case is completely analogous;  we will note the places where the proof changes.

Important for us is that, as explained above, a quasi-coherent $\Gm_{,S}$-equivariant $\Ocal_{\AA^{1}_{S}}$-module $\Fcal$ is equivalently a graded $\Ocal_{S}$-module $\Fcal=\bigoplus_{i\in\ZZ} \Fcal^{i}$ together with an endomorphism $\Fcal\rightarrow\Fcal$ of degree $1$ (resp.~$-1$) that is induced by multiplication with $X$.

	The category of quasi-coherent modules over $[\lbrace0\rbrace /\Gm_{,S}]$ is analogously equivalent to the category of quasi-coherent graded $\Ocal_{S}$-modules. As $f$ is equivariant, we get a pullback functor $f^{*}$ from the category of cochain complexes of graded $\Ocal_{S}$-modules with endomorphism of degree $1$ (resp.\ $-1$) to the category of cochain complexes of graded $\Ocal_{S}$-modules. Let us write $M$ as $(M_{\bullet},\del_{\bullet})=(\bigoplus_{i\in\ZZ} M^{i}_{\bullet},\bigoplus_{i\in\ZZ}\del^{i}_{\bullet})$ (a chain complex of graded $\Ocal_{S}$-modules) together with an endomorphism $X\colon M\rightarrow M$ that is induced by multiplication with $X$. The complex $f^{*}M$ is given by $M\otimes_{\Ocal_{\AA^{1}_{S}}} \Ocal_{S}$, where we identify $\Ocal_{S}$ with $\Ocal_{S}[X]/(X)$ (which endows $\Ocal_{S}$ with a trivial grading and degree $1$ (resp.\ $-1$) endomorphism given by $0$).
	
	To compute $Lf^{*}M$, it is enough to find a cochain complex $P\in\Dcal_{\textup{qc}}([\AA_{S}^{1}/\Gm_{,S}])$ with a quasi-isomorphism $P\xrightarrow{\lowsim} \Ocal_{S}$ in $\Dcal_{\textup{qc}}([\AA_{S}^{1}/\Gm_{,S}])$, such that the functor $P\otimes_{\Ocal_{\AA^{1}_{S}}}\!\!-$ in the category of cochain complexes of $\Gm_{, S}$-equivariant $\Ocal_{\AA^{1}_{S}}$-modules is exact. Then $Lf^{*}M$ is equivalent to $M\otimes_{\Ocal_{\AA^{1}_{S}}}\!\!P$. We claim that $P$ is naturally given by the Koszul complex of $\Ocal_{S}$.
 
        Indeed, a flat resolution of $\Ocal_{S}$ is given by the complex $P^{\bullet}$ that is zero everywhere except in degrees $-1$ and $0$, where it is given by the morphism
        $$
        \Ocal_{S}[X]\xrightarrow{\,\cdot X\,}\Ocal_{S}[X].
        $$
        Now $\Ocal_{S}[X]$ is endowed with the obvious grading together with an endomorphism of degree $1$ (resp.\ $-1$) by multiplication with $X$. Then $P^{\bullet}$ becomes an element of $\Dcal_{\textup{qc}}([\AA_{S}^{1}/\Gm_{,S}])$ by shifting the grading of $P^{-1}$ by $-1$ (resp.\ $1$). Note that the functor $P^{\bullet}\otimes_{\Ocal_{\AA^{1}_{S}}}-$ is exact, and therefore we have
        $$
        Lf^{*}M\simeq P^{\bullet}\otimes_{\Ocal_{\AA^{1}_{S}}}\!\!M.
        $$
 
        Now let us explicitly compute $Lf^{*}M$. By the definition of the tensor product of chain complexes, we deduce from the above that $M\otimes_{\Ocal_{\AA^{1}_{S}}}\!\!P^{\bullet}$ is equivalent to the complex
        $$
        (M_{n+1}\oplus M_{n},\iota_{n})_{n\in\ZZ},
        $$
        where the differentials $\iota_{n}$  are given by 
$$
\begin{pmatrix}
\del_{n+1}&0\\
(-1)^{n}X&\del_{n}
\end{pmatrix}.
$$
The induced grading is given by $(M_{n+1}\oplus M_{n})^{i}=M_{n+1}^{i-1}\oplus M_{n}^{i}$ (resp.\ $(M_{n+1}\oplus M_{n})^{i}=M_{n+1}^{i+1}\oplus M_{n}^{i}$).

Now let us analyze the graded pieces $\gr^{i}M$. As explained above, we can also consider
$$
\cdots\longrightarrow (M^{i-1}_{\bullet},\del^{-1}_{\bullet})\xrightarrow{\,\cdot X\,} (M^{i}_{\bullet},\del^{i}_{\bullet})\longrightarrow \cdots
$$
as a filtration in $\Dcal_{\textup{qc}}(S)$. Let us calculate $\cofib((M^{i-1}_{\bullet},\del^{i-1}_{\bullet})\xrightarrow{\cdot X} (M^{i}_{\bullet},\del^{i}_{\bullet}))$. We can do so by calculating the cone of multiplication with $X$, which is given by
$$
\left(M^{i-1}_{n+1}\oplus M^{i}_{n},\iota_{n}^{i}\right)_{n\in\ZZ},
$$
where the differentials $\iota_{n}^{i}$ are, up to equivalence, given by 
$$
\begin{pmatrix}
\del^{i-1}_{n+1}&0\\
(-1)^{n}X&\del^{i}_{n}
\end{pmatrix}. 
$$
(For $\cofib((M^{i+1}_{\bullet},\del^{i-1}_{\bullet})\xrightarrow{\cdot X} (M^{i}_{\bullet},\del^{i}_{\bullet}))$, this is analogous; one just changes the indices).
 
Finally, these constructions imply that $Lf^{*}M\simeq \bigoplus_{i\in\ZZ} \gr^{i}M$ in $\Dcal_{\textup{qc}}([\AA_{S}^{1}/\Gm_{,S}])$.
\end{proof}

We are finally ready to compute the perfect complexes on $\Xfr_{S}$ for any scheme $S$. We will describe it as a full subcategory of 
\begin{align*}
		 \Ccal(S)\coloneqq(\Fun(\ZZ^{\op},\Dcal_{\textup{qc}}(S))&\times_{\colim,\Dcal_{\textup{qc}}(S),\colim} \Fun(\ZZ,\Dcal_{\textup{qc}}(S)))\\ &\times_{(\bigoplus(\gr^{i})^{(1)},\bigoplus\gr^{i}),\Dcal_{\textup{qc}}(S)\times\Dcal_{\textup{qc}}(S),\dom\times\codom}\Fun(\Delta^{1},\Dcal_{\textup{qc}}(S)).
\end{align*}

\begin{thm}
\label{thm.B1}
	For any scheme $S$, the $\infty$-category $\QQCoh_{\perf}(\Xfr_{S})$ is equivalent to the full subcategory in $\Ccal(S)$ of tuples $(C^{\bullet},D_{\bullet},\phi,\varphi)$, where
\begin{itemize}
	\item $C^{\bullet}\in\Fun(\ZZ^{\op},\Dcal_{\textup{qc}}(S))$ is  such that all  $\gr^{i}C$ and $\bigoplus_{i\in\ZZ}\gr^{i}C$ are perfect;  
	\item $D_{\bullet}\in\Fun(\ZZ,\Dcal_{\textup{qc}}(S))$ is such that all $\gr^{i}D$ and $\bigoplus_{i\in\ZZ}\gr^{i}D$ are perfect; 
	\item $\phi\colon \colim_{\ZZ^{\op}}C^{\bullet}\xrightarrow{\lowsim}\colim_{\ZZ} D^{\bullet}$ is an equivalence;  and 
	\item $\varphi\colon \bigoplus_{i\in\ZZ}\gr^{i}C^{(1)}\xrightarrow{\lowsim}\bigoplus_{i\in\ZZ}\gr^{i}D$ is an equivalence.
\end{itemize}
\end{thm}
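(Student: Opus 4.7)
The plan is to compute $\QQCoh_{\perf}(\Xfr_S)$ by fpqc descent along an explicit presentation of $\Xfr_S$ as a pushout. Recall that $\Xfr_S$ is obtained from $\PP^1_S$ by pinching $0$ to $\infty$ via Frobenius and quotienting by the induced $\Gm_{,\FF_p}$-action. Choosing the two standard affine charts $\AA^1_+$ and $\AA^1_-$ of $\PP^1_S$ (on which $\Gm$ acts with opposite weights), the pinching identifies the origin of one with the origin of the other via Frobenius. Thus I would write
$$
\Xfr_S \;\simeq\; \bigl[\AA^1_+/\Gm\bigr] \cup_{[\lbrace 0 \rbrace/\Gm]} \bigl[\AA^1_-/\Gm\bigr]
$$
as a pushout of derived stacks, where the two inclusions are the origin inclusions, one of them precomposed with the relative Frobenius of $[\lbrace 0\rbrace/\Gm]$ (this encodes the pinching). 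Since $\QQCoh_{\perf}$ satisfies fpqc descent (Remark~\ref{descent qcoh}), applying the sheaf condition to this pushout yields a pullback square of $\infty$-categories
$$
\QQCoh_{\perf}(\Xfr_S) \;\simeq\; \QQCoh_{\perf}\bigl(\bigl[\AA^1_+/\Gm\bigr]\bigr) \times_{\QQCoh_{\perf}([\lbrace 0\rbrace/\Gm])} \QQCoh_{\perf}\bigl(\bigl[\AA^1_-/\Gm\bigr]\bigr),
$$
where one of the restriction functors is twisted by the Frobenius pullback.

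Next I would identify each factor using the equivalences recalled at the start of the subsection. Under the Quillen equivalence extended to derived categories, $\Dcal_{\textup{qc}}([\AA^1_+/\Gm])$ (weight $+1$ action) is $\Fun(\ZZ,\Dcal_{\textup{qc}}(S))$ (ascending filtrations), $\Dcal_{\textup{qc}}([\AA^1_-/\Gm])$ (weight $-1$ action) is $\Fun(\ZZ^{\op},\Dcal_{\textup{qc}}(S))$ (descending filtrations), and $\Dcal_{\textup{qc}}([\lbrace 0\rbrace/\Gm])$ is $\Fun(\ZZ^{\disc},\Dcal_{\textup{qc}}(S))$ (graded objects). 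By Proposition~\ref{prop.B3}, the two restriction maps to $[\lbrace 0\rbrace/\Gm]$ send a filtration $F$ to $\bigoplus_{i\in\ZZ}\gr^i F$. The Frobenius twist on one side introduces the $(1)$-twist on the graded pieces of $C^\bullet$, so the fiber product becomes: a tuple $(C^\bullet, D_\bullet, \phi, \varphi)$ with $C^\bullet$ descending, $D_\bullet$ ascending, and $\varphi\colon \bigoplus_i (\gr^i C)^{(1)} \xrightarrow{\lowsim} \bigoplus_i \gr^i D$. The equivalence $\phi$ of colimits comes from glueing the quasi-coherent sheaves along the overlap $[\Gm/\Gm] \simeq \Spec S$ of the two charts, where both $[\AA^1_\pm/\Gm]$ restrict to $\Dcal_{\textup{qc}}(S)$ by sending a filtration to its colimit (the generic fiber).

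Finally I would translate the perfectness condition through these equivalences. A quasi-coherent complex on $[\AA^1/\Gm]$ is perfect iff its pullback to a smooth affine atlas is, which via the graded $\Ocal_S[t]$-module description amounts to: the associated graded $\bigoplus_i \gr^i F$ is a perfect $\Ocal_S$-module (equivalently perfect on $[\lbrace 0\rbrace/\Gm]$, which forces only finitely many $\gr^i F$ to be nonzero since a direct sum of perfect modules is compact iff finite) and each $\gr^i F$ is individually perfect. This exactly gives the conditions in the statement on $C^\bullet$ and $D_\bullet$. Combining the three descent-factor conditions with the glueing data $\phi$ and $\varphi$ produces the claimed full subcategory.

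The main technical obstacle is verifying cleanly that perfectness on $[\AA^1/\Gm]$ is equivalent to the conjunction of pointwise perfectness of the $\gr^i$ and perfectness of their sum; one must use Proposition~\ref{prop.B3} together with the conservativity of pullback to the origin (Lemma~\ref{lem restr}) and compare to perfectness on the smooth cover $\AA^1 \to [\AA^1/\Gm]$. A secondary subtlety is keeping track of the Frobenius twist: the pinching morphism $[\lbrace 0\rbrace/\Gm] \to [\lbrace 0\rbrace/\Gm]$ is the relative Frobenius of the classifying stack, whose pullback on graded modules is precisely the Frobenius twist $(-)^{(1)}$ on each weight space, which is what produces $\varphi$ in the stated form rather than a naive isomorphism.
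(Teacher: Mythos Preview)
Your strategy---descent along an explicit presentation, then identification with filtered/graded categories via Proposition~\ref{prop.B3}---is the same as the paper's. But the pushout you display is wrong. Gluing $[\AA^1_+/\Gm]$ and $[\AA^1_-/\Gm]$ along $[\{0\}/\Gm]$ via the two origin inclusions produces (before quotienting) two affine lines meeting transversally at a point, i.e.\ a node, not the Frobenius-pinched $\PP^1$. The pinched $\PP^1$ is built in two steps: first glue the two $\AA^1$'s along their common open $\Gm$ to form $\PP^1$, \emph{then} identify $0$ with $\infty$ via Frobenius. You evidently know this, since you later invoke the overlap $[\Gm/\Gm]\simeq S$ to manufacture $\phi$; but that overlap is absent from your displayed pushout, so the fiber product you write down computes perfect complexes on the wrong stack, and $\phi$ has no formal origin in your argument.

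The paper runs exactly this two-step descent. First the Ferrand pinching square
\[
\begin{tikzcd}
\QQCoh_{\perf}(\Xfr_S)\arrow[r]\arrow[d] & \QQCoh_{\perf}\bigl([\PP^1_S/\Gm]\bigr)\arrow[d]\\
\QQCoh_{\perf}\bigl([\{\infty\}/\Gm]\bigr)\arrow[r] & \QQCoh_{\perf}\bigl([\{\infty\}/\Gm]\bigr)\times\QQCoh_{\perf}\bigl([\{0\}/\Gm]\bigr)
\end{tikzcd}
\]
yields a triple $(M,N,\varphi)$ with $M$ perfect on $[\PP^1_S/\Gm]$, $N$ a perfect graded object, and $\varphi$ the Frobenius-twisted matching. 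Second, the standard affine cover of $\PP^1$ unpacks $M$ as $(C^\bullet,D_\bullet,\phi)$, where $\phi$ now arises honestly from the overlap. In this organization the condition ``$\bigoplus_i\gr^iC$ is perfect'' comes from $N$, not from perfectness on $[\AA^1/\Gm]$ as you propose; the condition ``each $\gr^iC$ is perfect'' is what characterizes perfectness of a filtration (the paper cites \cite[Proposition~2.45]{GW}). Once you replace your single pushout by this two-stage colimit, the rest of your outline goes through.
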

\begin{proof}
	By Proposition~\ref{right kan of sheaf}, we can use the Barr resolution of $\Xfr_{S}$ to see that  
\begin{equation}
\label{eq.barr.Xfr}
\begin{tikzcd}
	\QQCoh_{\perf}(\Xfr_{S})\simeq \lim\big(\QQCoh_{\perf}(X_{S})\arrow[r,"",shift left = 0.3em]\arrow[r,"",shift right = 0.3em]&\QQCoh_{\perf}\left(X_{S}\times_{\FF_{p}}\Gm_{,\FF_{p}}\right)\arrow[r,"",shift left = 0.3em]\arrow[r,"",shift right = 0.3em]\arrow[r,""]&\cdots\big)\rlap{.}
\end{tikzcd}
\end{equation}
We can again use Proposition~\ref{right kan of sheaf} to see that we have a limit diagram of the form
$$
\begin{tikzcd}
	\QQCoh_{\perf}(X_{S})\arrow[r,""]\arrow[d,""]& \QQCoh_{\perf}\left(\PP_{S}^{1}\right)\arrow[d,""]\\
	\QQCoh_{\perf}(\lbrace\infty\rbrace)\arrow[r,""]&\QQCoh_{\perf}(\lbrace \infty\rbrace)\times\QQCoh_{\perf}(\lbrace 0\rbrace)\rlap{.}
\end{tikzcd}
$$
Since the $\Gm_{,\FF_{p}}$-action on $X_{S}$  is induced by the pushout of the $\Gm_{,\FF_{p}}$-actions on $\PP^{1}_{S}$, $\lbrace\infty\rbrace$ and $\lbrace\infty\rbrace\amalg\lbrace 0\rbrace$ (see (\ref{eq.pushout})), we see that an object $X\in\QQCoh_{\perf}(\Xfr_{S})$ corresponds to a tuple $(M,N,\varphi)$, where 
\begin{itemize}
	\item $M\in\QQCoh_{\perf}([\PP^{1}_{S}/\Gm_{,\FF_{p}}])$,
	\item $N\in\QQCoh_{\perf}([\lbrace\infty\rbrace/\Gm_{,\FF_{p}}])$, and
	\item $\varphi$ is an equivalence of the images of $M$ and $N$ in
          $$
          \QQCoh_{\perf}\left(\left[\lbrace\infty\rbrace/\Gm_{,\FF_{p}}\right]\right)\times\QQCoh_{\perf}\left(\left[\lbrace 0\rbrace/\Gm_{,\FF_{p}}\right]\right).
         $$
\end{itemize}    
Using the standard cover of $\PP^{1}_{S}$ by affine lines and the discussion at the beginning of this subsection, we see that $M\in\QQCoh_{\perf}([\PP^{1}_{S}/\Gm_{,\FF_{p}}])$ is equivalently given by a tuple $(C^{\bullet},D_{\bullet},\phi)$, where $C^{\bullet}\in\Fun(\ZZ^{\op},\Dcal_{\textup{qc}}(S))$ is perfect, $D_{\bullet}\in\Fun(\ZZ,\Dcal_{\textup{qc}}(S))$ is perfect and  $\phi\colon \colim_{\ZZ^{\op}}C^{\bullet}\xrightarrow{\lowsim}\colim_{\ZZ} D^{\bullet}$ is an equivalence.\footnote{By construction, $\PP^{1}_{S}$ is the pushout of the maps $\Spec(\Ocal_{S}[X,X^{-1}])\xrightarrow{x\mapsto x^{-1}} \Spec(\Ocal_{S}[X])$ and  $\Spec(\Ocal_{S}[X,X^{-1}])\xrightarrow{x\mapsto x} \Spec(\Ocal_{S}[X])$,
where the maps are given on $T$-valued points. So again, the description of $\QQCoh_{\perf}(\PP^{1}_{S})$ follows from Proposition~\ref{right kan of sheaf} and the fact that the derived pullback along open immersions is given by the usual pullback.} 
 Further (as explained in the proof of Proposition~\ref{prop.B3}), $\QQCoh_{\perf}([\lbrace\infty\rbrace/\Gm_{,\FF_{p}}])$ consists of perfect chain complexes of graded $\Ocal_{S}$-modules. Also, we have seen in Proposition~\ref{prop.B3} that the image of $(C^{\bullet},D_{\bullet})$ in the product $\QQCoh_{\perf}([\lbrace\infty\rbrace/\Gm_{,\FF_{p}}])\times\QQCoh_{\perf}([\lbrace 0\rbrace/\Gm_{,\FF_{p}}])$ is equivalent to $\left(\bigoplus_{i} \gr^{i}C,\bigoplus_{i} \gr^{i}D\right)$.
 Lastly, we want to note that by \cite[Proposition 2.45]{GW}, an element in $\Fun(\ZZ^{\op},\Dcal_{\textup{qc}}(S))$ (resp.\ $\Fun(\ZZ,\Dcal_{\textup{qc}}(S))$) is perfect if and only if each graded piece is perfect.
 
Combining all of this, we get the desired description of $\QQCoh_{\perf}(\Xfr_{S})$ as a full subcategory of $\Ccal(S)$.
\end{proof}

It is clear by construction that $\FZip(S)$ is a full subcategory of $\QQCoh_{\perf}(\Xfr_{S})$. But let us show that we have an equivalence. This will follow immediately if we can show that the filtrations associated to an element in $\QQCoh_{\perf}(\Xfr_{S})$ are locally bounded.

\begin{lem}
\label{lem.B2}
	Let $S$ be a scheme, and let $F\in\Fun(\ZZ,\Dcal_{\textup{qc}}(S))$ be an ascending filtration such that $\gr^{i}F$ and $\bigoplus_{i\in\ZZ}\gr^{i}F$ are perfect $\Ocal_{S}$-modules. Then $F$ is locally bounded and perfect.
 
	The assertion stays true if we replace $\ZZ$ with $\ZZ^{\op}$.
\end{lem}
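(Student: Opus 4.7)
The plan is to reduce to the affine case, extract from perfectness of $\bigoplus_i \gr^i F$ that only finitely many $\gr^i F$ are nonzero locally, deduce right- and left-boundedness, and then obtain perfectness by induction along the defining cofiber sequences.

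First, since both the hypothesis and the conclusion are local on $S$, I may assume $S=\Spec(A)$ is affine and work in $\MMod_A$. Set $P\coloneqq \bigoplus_{i\in\ZZ}\gr^{i}F$. By assumption $P$ is perfect, so by Lemma~\ref{general props of Tor} it has finite Tor-amplitude in some $[a,b]$, and by descent of coherent cohomology each $\pi_n(P)\cong \bigoplus_i \pi_n(\gr^{i}F)$ is a finitely generated $A$-module. A finitely generated direct sum of modules admits only finitely many nonzero summands, so for each $n\in[a,b]$ only finitely many indices $i$ contribute. Since only finitely many $n$ are in $[a,b]$, the set $I\coloneqq\{i\in\ZZ:\gr^{i}F\neq 0\}$ is finite; write $N\coloneqq \min I$ and $M\coloneqq \max I$.

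For $i>M$ and for $i\leq N$ the cofiber $\gr^{i}F=\cofib(F_{i-1}\to F_i)$ vanishes, hence the transition $F_{i-1}\to F_i$ is an equivalence. Iterating gives right-boundedness $F_M\xrightarrow{\lowsim}\colim_{\ZZ}F$ and shows the $F_i$ for $i\leq N-1$ stabilize to a common value $L\in\Dcal(A)$. For left-boundedness I must show $L\simeq 0$, which I do via the graded-module description recalled at the beginning of Section~\ref{app perf on pinched}: the ascending filtration $F$ corresponds to the graded $A[X]$-module $\widetilde M\coloneqq\bigoplus_i F_i$ with $X$ acting by the transition maps, and with this identification $\bigoplus_i\gr^{i}F\simeq \widetilde M/X\widetilde M$. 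A graded-Nakayama argument then forces any generator of $\widetilde M$ to lift from a nonzero homogeneous component of $\widetilde M/X\widetilde M$, all of which live in degrees $\geq N$; since multiplication by $X$ only raises degree, $\widetilde M$ vanishes in degrees $<N$, i.e.\ $F_i=0$ for all $i\leq N-1$.

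Once left-boundedness is established, perfectness is immediate: for $i\in[N,M]$ the cofiber sequences $F_{i-1}\to F_i\to\gr^{i}F$, starting from $F_{N-1}\simeq 0$, exhibit each $F_i$ as a finite iterated extension of perfect $A$-modules, and perfects are closed under finite colimits; for $i>M$ we have $F_i\simeq F_M$, which is perfect. The argument for a descending filtration is symmetric, replacing $\ZZ$ with $\ZZ^{\op}$ (and using the Day-convolution conventions of Section~\ref{sec.filtration}).

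The main obstacle is the vanishing $L\simeq 0$ in the second paragraph: the literal hypotheses on the graded pieces alone do not rule out a constant tail (e.g.\ the constant filtration with value $\Ocal_S$ has all graded pieces zero), so the graded-Nakayama step requires the additional input that $\widetilde M$ is finitely generated as an $A[X]$-module. In the intended use of this lemma immediately after Theorem~\ref{thm.B1}, this is automatic, because the filtration is extracted from a perfect complex on $\Xfr_S$ whose restriction to $[\AA^1_S/\Gm_{,\FF_p}]$ is perfect, so the corresponding graded $A[X]$-module is compact and hence finitely generated; this is where the perfectness of $\bigoplus_i\gr^{i}F$ enters substantively rather than merely as an object of $\MMod_A$.
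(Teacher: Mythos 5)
You have correctly identified a genuine gap, and it is in fact a gap in the paper's own proof (and, strictly read, in the statement of Lemma~\ref{lem.B2} itself). The constant filtration $F$ with $F(i)=M$ for all $i\in\ZZ$, transition maps $\id_M$, and $M\in\QQCoh(S)$ non-zero satisfies the stated hypotheses vacuously: every $\gr^{i}F\simeq 0$, so $\gr^{i}F$ and $\bigoplus_{i}\gr^{i}F$ are perfect, yet $F$ is not left-bounded. The paper's proof only establishes, after passing to an affine cover, that finitely many $\gr^{i}F$ are non-zero (via \cite[0BCD]{stacks-project} and quasi-compactness) and then asserts ``we deduce the lemma''; this does not exclude the constant tail on the left. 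Your diagnosis is the correct one: the missing hypothesis is that $F$ itself be a perfect object of $\QQCoh([\AA^{1}_{S}/\Gm_{,S}])\simeq\Fun(\ZZ,\Dcal_{\textup{qc}}(S))$, equivalently that the associated graded $\Ocal_S[X]$-module be compact, and under that extra input your graded-Nakayama step does close the gap. This stronger condition \emph{is} available in the only place the lemma is used, namely Corollary~\ref{cor.fzip.perf}, since there the filtration arises by restricting a perfect complex on $\Xfr_{S}$ along the flat cover $[\PP^{1}_{S}/\Gm_{,\FF_p}]\to\Xfr_{S}$. The imprecision can be traced to Theorem~\ref{thm.B1}, whose conditions on $C^{\bullet}$ and $D_{\bullet}$ should say that these are perfect filtered objects rather than that their graded pieces are perfect, and to the appeal there to \cite[Proposition 2.45]{GW}, which does not literally say that a filtered object is perfect if and only if its associated graded is.

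A secondary remark: your route to finiteness of the set of $i$ with $\gr^{i}F$ non-zero via finite generation of $\pi_n(P)$ is not safe over a non-Noetherian base, since the homotopy groups of a perfect complex need not be finitely generated (the kernel of a map of finite free modules need not be). The conclusion is nonetheless correct and can be reached cleanly from compactness of $P=\bigoplus_{i}\gr^{i}F$: the identity $\id_P$ must factor through some finite subsum $\bigoplus_{i\in I_0}\gr^{i}F$ of the filtered colimit, forcing $\gr^{i}F\simeq 0$ for $i\notin I_0$; alternatively, the paper's fiberwise semicontinuity argument via \cite[0BCD]{stacks-project} works. The remaining steps of your proposal --- right-boundedness, the cofiber induction giving perfectness of each $F_i$ (matching the paper's footnote), and the $\ZZ^{\op}$ case --- are fine.
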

\begin{proof}
	As this is a local question, we may assume that $S=\Spec(A)$ is affine. Fiberwise, the question is clear since a perfect complex over a field is quasi-isomorphic to a finite direct sum of finite-dimensional vector spaces sitting in one degree. For every point $s\in S$, we can find an open neighbourhood $U_{s}$ around $s$ such that only finitely many $\gr^{i}F$ are non-zero (see \cite[0BCD]{stacks-project}). As $S$ is quasi-compact, we deduce the lemma.\footnote{Inductively, any bounded filtration with perfect graded pieces is perfect.} 
\end{proof}

\begin{cor}
\label{cor.fzip.perf}
	Let $R$ be an $\FF_{p}$-algebra and $S$ an $R$-scheme. Then we have $$\FZip_{\infty,R}(S)\simeq \QQCoh_{\perf}(\Xfr_{S}).$$
\end{cor}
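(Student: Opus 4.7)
The strategy is to identify the description of $\QQCoh_\perf(\Xfr_S)$ obtained in Theorem~\ref{thm.B1} with the definition of $\FZip_{\infty,R}(S)$, passing from a ``total'' equivalence on the direct sum of graded pieces to a family of componentwise equivalences via the $\Gm_{,\FF_p}$-grading, and from abstract perfectness of graded pieces to local boundedness of the filtrations via Lemma~\ref{lem.B2}.

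More precisely, by Theorem~\ref{thm.B1}, an object of $\QQCoh_\perf(\Xfr_S)$ is a tuple $(C^\bullet, D_\bullet, \phi, \varphi)$ with $C^\bullet \in \Fun(\ZZ^{\op}, \Dcal_{\textup{qc}}(S))$ and $D_\bullet \in \Fun(\ZZ, \Dcal_{\textup{qc}}(S))$ such that each individual graded piece $\gr^i C$, $\gr^i D$ as well as the total sums $\bigoplus_i \gr^i C$ and $\bigoplus_i \gr^i D$ are perfect, together with $\phi\colon \colim_{\ZZ^{\op}} C^\bullet \xrightarrow{\lowsim} \colim_{\ZZ} D_\bullet$ and $\varphi\colon \bigoplus_i (\gr^i C)^{(1)} \xrightarrow{\lowsim} \bigoplus_i \gr^i D$. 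The first observation is that by Lemma~\ref{lem.B2}, the hypotheses on $C^\bullet$ and $D_\bullet$ already force them to be locally bounded and perfect in the sense of Section~\ref{sec global filtration}; so the filtration part of the data matches precisely what appears in the definition of a derived $F$-zip over $S$.

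The remaining point is to identify $\varphi$ with a family $(\varphi_i)_{i\in\ZZ}$ of equivalences $\varphi_i\colon (\gr^i C)^{(1)} \xrightarrow{\lowsim} \gr^i D$. Here one uses that $\varphi$ lives in $\QQCoh_\perf([\lbrace\infty\rbrace/\Gm_{,\FF_p}])$ (respectively $\QQCoh_\perf([\lbrace 0\rbrace/\Gm_{,\FF_p}])$), which by the discussion at the start of Section~\ref{app perf on pinched} is equivalent to the $\infty$-category of $\ZZ$-graded objects in $\Dcal_{\textup{qc}}(S)$. In such a category, morphisms preserve the grading, so any equivalence between direct sums of homogeneous pieces splits canonically into equivalences of the individual homogeneous components. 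Combining this with the description, via Proposition~\ref{prop.B3}, of the restrictions of $M$ to $\lbrace 0\rbrace$ and $\lbrace \infty\rbrace$ as $\bigoplus_i \gr^i C$ and $\bigoplus_i \gr^i D$ (the Frobenius twist on one side being forced by the pinching defining $\Xfr$, as computed in the appendix), the datum of $\varphi$ is the same as the datum of the family $(\varphi_i)_{i\in\ZZ}$.

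Assembling these identifications gives a fully faithful functor $\QQCoh_\perf(\Xfr_S) \to \FZip_{\infty,R}(S)$ and, conversely, every derived $F$-zip satisfies the perfectness hypotheses of Theorem~\ref{thm.B1} (trivially, since local boundedness and perfectness imply that both the individual $\gr^i$ and their finite sum $\bigoplus_i \gr^i$ are perfect), producing the inverse functor. I expect the only subtle point to be the bookkeeping for the Frobenius twist and the direction (ascending vs.\ descending) of the two filtrations under the two charts of $\PP^1_S$, which is controlled by the sign convention for the $\Gm_{,\FF_p}$-action fixed in Proposition~\ref{prop.B3}; once these conventions are matched with those of the appendix, the equivalence is immediate.
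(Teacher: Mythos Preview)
Your proposal is correct and follows essentially the same route as the paper: apply Theorem~\ref{thm.B1} for the description of $\QQCoh_\perf(\Xfr_S)$, use Lemma~\ref{lem.B2} to upgrade the perfectness conditions on the graded pieces to local boundedness and perfectness of the filtrations, and then split the total equivalence $\varphi$ into a family $(\varphi_i)_i$. The only cosmetic difference is in the justification of this last step: the paper invokes that finite direct sums coincide with finite products in the stable $\infty$-category $\Dcal_{\textup{qc}}(S)$ (once Lemma~\ref{lem.B2} guarantees finiteness), whereas you argue directly via the $\ZZ$-grading on $\QQCoh_\perf([\lbrace\infty\rbrace/\Gm_{,\FF_p}])$; both amount to the same observation.
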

\begin{proof}
  This follows immediately by combining Theorem~\ref{thm.B1}, Lemma~\ref{lem.B2} and that finite direct sums in $\Dcal_{\textup{qc}}(S)$ are the same as finite products as $\Dcal_{\textup{qc}}(S)$ is  stable.\footnote{The equivalence between finite direct sums and products shows that for an element $(C^{\bullet},D_{\bullet},\phi,\varphi)\in\QQCoh_{\perf}(\Xfr_{S})$, the equivalence $\varphi\colon \bigoplus_{i\in\ZZ}\gr^{i}C^{(1)}\xrightarrow{\lowsim}\bigoplus_{i\in\ZZ}\gr^{i}D$ is equivalently given by equivalences $\varphi_{i}\colon\gr^{i}C^{(1)}\xrightarrow{\lowsim}\gr^{i}D$.}
\end{proof}

\section{Connection to classical theory}
\label{classical theory}
Again, in the following $R$ will be an $\FF_{p}$-algebra.

\subsection{Derived $\boldsymbol{F}$-zips with degenerating spectral sequences}
In Lemma~\ref{F-zip in classical}, we showed that classical $F$-zips can be included in the theory of derived $F$-zips. But what if the homotopy groups associated to the graded pieces of a derived $F$-zip are finite locally free and the associated spectral sequences degenerate (see Definition~\ref{defi.fzip.degen})?  Then we would expect that we have a functor $\pi_n\colon \Xcal\rightarrow \cl\FZip$, where $\Xcal$ is a suitable substack of $t_0\FZip$, given by sending the underlying module to its $\supth{n}$ homotopy group and looking at the associated filtrations.
 
In fact, we will show that there is even more. For a smooth proper scheme morphism with degenerating Hodge--de Rham spectral sequence and finite locally free cohomologies, we get an $F$-zip. Important here is that the graded pieces and de Rham cohomology are finite locally free. For filtrations in our sense, we also get spectral sequences; \textit{i.e.}\ for some $R$-algebra $A$ and bounded perfect filtrations $C^{\bullet}\in\Fun(\ZZ^{\op},\Dcal(A))$ and $D_{\bullet}\in\Fun(\ZZ,\Dcal(A))$, we have  spectral sequences  
$$
E_{1}^{p,q}=\pi_{p+q}(\gr^{p}C)\Longrightarrow \pi_{p+q}\colim_{\ZZ^{\op}} C^{\bullet} \quad\textup{and}\quad E_{1}^{p,q}=\pi_{p+q}(\gr^{p}D)\Longrightarrow \pi_{p+q}\colim_{\ZZ} D_{\bullet}
$$
(see \cite[Proposition 1.2.2.14]{HA}). If we assume that the $\pi_{p+q}(\gr^{p}C)$ are finite projective and that the above spectral sequences are degenerate, we can associate, for any $n\in\ZZ$, a classical $F$-zip to a derived $F$-zip $\Fline\coloneqq (C^{\bullet},D_{\bullet},\phi,\varphi_{\bullet})$ via
$$
\Fline\mapsto \pi_{n}\Fline\coloneqq\left(\pi_{n}\left(\colim_{\ZZ^{\op}}C^{\bullet}\right),\widetilde{C}^{\bullet},\widetilde{D}_{\bullet},\pi_{n}\varphi_{\bullet}\right),
$$ 
where $\widetilde{C}^{\bullet}$, resp.\ $\widetilde{D}_{\bullet}$, is the filtration associated to the spectral sequences induced by $C^{\bullet}$, resp.\ $D_{\bullet}$. Let us verify that $(\pi_{n}(\colim_{\ZZ^{\op}}C^{\bullet}),\widetilde{C}^{\bullet},\widetilde{D}_{\bullet},\pi_{n}\varphi_{\bullet})$ is a classical $F$-zip.

For convenience, let us set $M\coloneqq \pi_{n}(\colim_{\ZZ^{\op}}C^{\bullet})$. By definition, we also have $\pi_{n}\colim_{\ZZ}{D}\cong M$. First of all, note that both $\widetilde{C}^{\bullet}$ and $\widetilde{D}_{\bullet}$ are finite and that  by the degeneracy of the spectral sequences,  their graded pieces are equivalent to
$$
\gr^{i}_{\widetilde{C}} M = \pi_{n}(\gr^{i}C),\quad \gr^{i}_{\widetilde{D}}M = \pi_{n}(\gr^{i}D).
$$
By homotopy finite projectiveness,  all graded pieces of $\widetilde{C}^{\bullet}$ and $\widetilde{D}_{\bullet}$ are finite projective.\footnote{Note that the homotopy finite projectiveness of $\pi_{n}\gr^{i}C$ implies the compatibility with base change (along Frobenius), by Lemma~\ref{betti numbers locally closed}\eqref{bnlc-3}, and so $\pi_{n}\gr^{i}D\cong \pi_{n}(\gr^{i}C^{(1)})\cong \pi_{n}(\gr^{i}C)^{(1)}$ is finite projective.} Since the filtrations are bounded, we see that the pieces of the filtrations are also finite projective, and thus  $M$ also is. The only thing left to see is that $\pi_{n}\varphi_{i}$ induce isomorphisms $(\gr^{i}_{\widetilde{C}} M)^{(1)}\xrightarrow{\lowsim} \gr^{i}_{\widetilde{D}} M$. But this again follows from the degeneracy of the spectral sequences (resp.\ the description above induced by the degeneracy) and the fact that homotopy finite projectiveness implies compatibility with base change (along Frobenius) by Lemma~\ref{betti numbers locally closed}\eqref{bnlc-3}.
 
Further, if the derived $F$-zip $\Fline$ is homotopy finite projective of some type $\tau$, then $\pi_{n}\Fline$ has type $\tau_{n}\colon k\mapsto \tau(k)_{n}$.

Moreover, using the arguments in the proof of Theorem~\ref{de Rham strong degen}, we see that a derived $F$-zip homotopy finite projective of some type with degenerating spectral sequences associated to the filtrations (as above) is automatically strong. Let us make everything we said above more precise.

\begin{defi}
\label{defi.fzip.degen}
	Let $A$ be an $R$-algebra. A derived $F$-zip $(C^{\bullet},D_{\bullet},\phi,\varphi_{\bullet})$ is called \textit{degenerate} if the spectral sequences
	$$
		E_{1}^{p,q}=\pi_{p+q}(\gr^{p}C)\Longrightarrow \pi_{p+q}\colim_{\ZZ^{\op}} C^{\bullet}\quad\textup{and}\quad E_{1}	^{p,q}=\pi_{p+q}(\gr^{p}D)\Longrightarrow \pi_{p+q}\colim_{\ZZ} D_{\bullet}
	$$
associated to the filtrations (see \cite[Proposition 1.2.2.14]{HA}) degenerate.
\end{defi}

\begin{lem}
\label{degen implies strong}
	Let $A$ be an $R$-algebra, and let $\tau\colon\ZZ\rightarrow\NN_{0}^{\ZZ}$ be a function with finite support. If a derived $F$-zip $\Fline$ over $A$ is homotopy finite projective of type $\tau$ and degenerate, then $\Fline$ is strong.
\end{lem}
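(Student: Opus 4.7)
My plan is to deduce this directly from Proposition~\ref{lem.Hodge.degen}, which already covers the single-filtration statement. Concretely, since $\Fline = (C^\bullet, D_\bullet, \phi, \varphi_\bullet)$ consists of a bounded perfect descending filtration $C^\bullet$ and a bounded perfect ascending filtration $D_\bullet$, being strong is a property of each filtration separately, so it suffices to verify the hypotheses of Proposition~\ref{lem.Hodge.degen} for each of $C^\bullet$ and $D_\bullet$.

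For $C^\bullet$: the hypothesis that $\pi_i \gr^k C$ is finite projective for all $i,k \in \ZZ$ is immediate from the assumption that $\Fline$ is homotopy finite projective of type $\tau$ (indeed, applying the definition to the identity $\Spec(A) \to \Spec(\pi_0 A)_{\cl} = \Spec(A)_{\cl}$ shows that $\pi_i \gr^k C$ is finite locally free of rank $\tau(k)_i$). The degeneracy of the spectral sequence $E_1^{p,q} = \pi_{p+q}\gr^p C \Rightarrow \pi_{p+q}\colim_{\ZZ^{\op}} C^\bullet$ is precisely one half of the degeneracy hypothesis on $\Fline$. Hence Proposition~\ref{lem.Hodge.degen} yields that $C^\bullet$ is strong.

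For $D_\bullet$: I would first note that via the equivalences $\varphi_i \colon (\gr^i C)^{(1)} \xrightarrow{\lowsim} \gr^i D$, each $\pi_j \gr^i D$ is isomorphic to $\pi_j((\gr^i C)^{(1)}) \cong \pi_j(\gr^i C)^{(1)}$; here I use that the finite projectivity of $\pi_j \gr^i C$ (combined with Lemma~\ref{betti numbers locally closed}\eqref{bnlc-3}) ensures that forming these homotopy groups commutes with base change along the Frobenius $\Frob\colon A \to A$. Since the Frobenius twist of a finite projective module is finite projective, we obtain that $\pi_j \gr^i D$ is finite projective for all $i,j \in \ZZ$. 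The degeneracy of the spectral sequence $E_1^{p,q} = \pi_{p+q}\gr^p D \Rightarrow \pi_{p+q}\colim_{\ZZ} D_\bullet$ is the other half of the degeneracy assumption on $\Fline$. Applying the ascending-filtration version of Proposition~\ref{lem.Hodge.degen} then gives that $D_\bullet$ is strong, completing the proof.

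The main (and only) step requiring a little care is the verification for $D_\bullet$, where one must transport the finite projectivity from $\gr^i C$ to $\gr^i D$ via the Cartier-type equivalence $\varphi_i$; everything else is a formal restatement of the hypotheses in the language required by Proposition~\ref{lem.Hodge.degen}.
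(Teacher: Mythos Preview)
Your proposal is correct and follows the same approach as the paper: both invoke Proposition~\ref{lem.Hodge.degen} for each filtration. You spell out in more detail why the hypotheses hold for $D_\bullet$ (transporting finite projectivity through $\varphi_i$ via Lemma~\ref{betti numbers locally closed}\eqref{bnlc-3}), which the paper handles in the discussion preceding the lemma rather than in the proof itself.
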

\begin{proof}
	This follows from Proposition~\ref{lem.Hodge.degen}.
\end{proof}

\begin{prop}
	Let $A$ be an $R$-algebra, and let $\tau\colon\ZZ\rightarrow\NN_{0}^{\ZZ}$ be a function with finite support. Further, let $\Xcal^{\tau}_{\infty,R}(A)\subseteq \FZip_{\infty,R}^{\tau}(A)$ denote the full subcategory of those derived $F$-zips that are homotopy finite projective of type $\tau$ and degenerate. Then $A\mapsto \Xcal^{\tau}_{\infty,R}(A)$ defines a hypercomplete sheaf for the fpqc topology.
 
	Moreover, let $\Xcal_{R}^{\tau}$ denote the associated derived stack. Then the inclusion
        $$
        i\colon\Xcal_{R}^{\tau}\xhookrightarrow{\hphantom{aaa}}  t_0\FZip_{R}^{\tau}$$
        is a closed immersion.
\end{prop}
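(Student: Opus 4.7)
The plan is to split the statement into its two claims: the hypercomplete fpqc descent, and the closed immersion. The second claim will be obtained via an identification $\Xcal_{R}^{\tau} = t_{0}\sFZip_{R}^{\tau}$, after which Proposition~\ref{strong open} does most of the work.

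For the descent, we already know from Proposition~\ref{open and closed substacks of Fzip} that $\FZip_{\infty,R}^{\tau}$ is a hypercomplete fpqc sheaf, so it suffices to check that degeneracy of the two spectral sequences can be tested on an fpqc hypercover $A \to A^{\bullet}$. The key point is that under the homotopy finite projective of type $\tau$ assumption, the $E_{1}$-pages of both spectral sequences consist of finite projective $A$-modules. Inductively, if $d_{1}, \dots, d_{r-1}$ vanish then the $E_{r}$-page agrees with $E_{1}$ and is again finite projective, and $d_{r}$ is a morphism between finite projective $A$-modules. Such a morphism vanishes if and only if it vanishes after faithfully flat base change, and by boundedness of the filtrations only finitely many pages can carry nonzero differentials, so the descent follows by induction.

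For the closed immersion, Lemma~\ref{degen implies strong} yields the inclusion $\Xcal_{R}^{\tau} \subseteq t_{0}\sFZip_{R}^{\tau}$. The main obstacle is to establish the reverse inclusion, i.e. that \emph{strong plus homotopy finite projective of type $\tau$ implies degenerate}. My approach is direct: for $\Fline = (C^{\bullet},D_{\bullet},\phi,\varphi_{\bullet})$ strong over a classical $R$-algebra $A$, set $H \coloneqq \colim_{\ZZ^{\op}}C^{\bullet}$ and $F^{p}\pi_{n}H \coloneqq \im(\pi_{n}C^{p}\to\pi_{n}H)$. Strongness of $C^{\bullet}$ turns each cofiber sequence $C^{p+1}\to C^{p}\to\gr^{p}C$ into a short exact sequence on $\pi_{n}$, and the injectivity of $\pi_{n}C^{p+1}\hookrightarrow\pi_{n}C^{p}$ plus boundedness force $\pi_{n}C^{p}\hookrightarrow\pi_{n}H$ to be injective. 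Consequently $F^{p}\pi_{n}H \cong \pi_{n}C^{p}$ and $\gr^{p}_{F}\pi_{n}H \cong \pi_{n}\gr^{p}C$, which is precisely the assertion $E_{\infty}^{p,n-p} = E_{1}^{p,n-p}$, i.e.\ degeneracy. The same argument applies to $D_{\bullet}$.

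With the identification $\Xcal_{R}^{\tau} = t_{0}\sFZip_{R}^{\tau}$ in hand, Proposition~\ref{strong open} tells us that the inclusion into $t_{0}\FZip_{R}^{\tau}$ is open and closed. Pulling back along an arbitrary $\Spec(B)\to t_{0}\FZip_{R}^{\tau}$, an open and closed subscheme of $\Spec(B)$ corresponds to an idempotent $e\in B$, hence is affine of the form $\Spec(B/(1-e))$ and the ring homomorphism $B\to B/(1-e)$ is surjective. Thus $\Xcal_{R}^{\tau}\hookrightarrow t_{0}\FZip_{R}^{\tau}$ satisfies the definition of a closed immersion given in Section~\ref{sec:geometric-stacks}, completing the proof.
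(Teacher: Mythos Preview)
Your descent argument is essentially the paper's, only spelled out more carefully; the paper just says that the differentials are ``maps between discrete modules'' and hence their vanishing is fpqc local. No issue there.

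For the closed immersion, your route is genuinely different. The paper argues directly: given $\Spec(A)\to t_0\FZip_R^{\tau}$, homotopy finite projectivity makes the formation of $\pi_*\gr^kC$ commute with base change (Lemma~\ref{betti numbers locally closed}\eqref{bnlc-3}), so the differentials of the spectral sequence are maps of finite projective $A$-modules, and the locus where a map of finite projective modules vanishes is closed by \cite[Proposition 8.4]{WED}. This is shorter and avoids any comparison with $\sFZip$. Your approach instead proves the identification $\Xcal_R^{\tau}=t_0\sFZip_R^{\tau}$, which is new content not stated elsewhere in the paper (Remark~\ref{rem classical} only records the inclusion $\Xcal_R^{\tau}\subseteq t_0\sFZip_R^{\tau}$), and then feeds this into Proposition~\ref{strong open}. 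Your route therefore yields more: the inclusion is open \emph{and} closed, and you get a structural description of $\Xcal_R^{\tau}$.

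One small gap: the sentence ``$\gr_F^{p}\pi_nH\cong\pi_n\gr^pC$, which is precisely the assertion $E_\infty^{p,n-p}=E_1^{p,n-p}$, i.e.\ degeneracy'' is too quick. An abstract isomorphism $E_\infty^{p,q}\cong E_1^{p,q}$ of modules does not by itself force $d_r=0$. You need one more line: since the $\pi_i\gr^kC$ are finite projective, Remark~\ref{strong finite locally free injective} upgrades each monomorphism $C^{p+1}\to C^p$ to a \emph{split} monomorphism, and an induction on the length of the filtration then identifies $C^p\simeq\bigoplus_{k\geq p}\gr^kC$ with the transition maps being summand inclusions; for such a split filtration all boundary maps in the relevant cofiber sequences vanish, so every $d_r=0$. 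Alternatively, pass to residue fields (everything base changes by finite projectivity) and use the dimension count there. Either fix is short; with it, your argument is complete.
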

\begin{proof}
	Analogously to the proof of Proposition~\ref{strong fzip infty sheaf}, it suffices to check that the transition maps of the spectral sequences are zero if and only if they are zero fpqc locally, but as these are maps between discrete modules, we see that this is certainly an fpqc local property.

	Let $\Spec(A)\rightarrow t_0\FZip_{R}^{\tau}$ be given by a derived $F$-Zip $\Fline$ that is homotopy finite projective of type $\tau$. This in particular implies that the formation of the homologies of the graded pieces commutes with arbitrary base change, by Lemma~\ref{betti numbers locally closed}\eqref{bnlc-3}. Now a morphism $f\colon \Spec(T)\rightarrow \Spec(A)$ factors through $\Xcal_{R}^{\tau}\times_{t_0\FZip_{R}^{\tau}}\Spec(A)$ if and only if the spectral sequences associated to the filtrations of $f^*\Fline$ degenerate. Again, this is equivalent to the differentials of the spectral sequences being zero. By the commutativity of the homologies with base change and the fact that being zero for a morphism of finite projective modules is a closed property (see \cite[Proposition 8.4(2)]{WED}), we see that $i$ is in fact a closed immersion.
\end{proof}

\begin{rem}
\label{rem classical}
	Let $A$ be an $R$-algebra.
	Let $\tau\colon\ZZ\rightarrow\NN_0^{\ZZ}$ be a function with finite support, and let $n\in \ZZ$. As explained at the beginning of this section, we get a map of derived stacks 
	$$
	\pi_n\colon \Xcal_{R}^\tau\longrightarrow \cl\FZip_{R}^{\tau_{n}}
	$$
	that is induced by $\Fline\mapsto \pi_{n}\Fline$, where $\tau_{n}\colon \ZZ\rightarrow \NN_{0}$ is given by the function $k\mapsto \tau(k)_{n}$. Also, by Lemma~\ref{degen implies strong}, we see that the inclusion $\Xcal_{R}^\tau\hookrightarrow t_{0}\FZip_{R}^{\tau}$ factors through the open derived substack $t_{0}\sFZip_{R}^{\tau}$.
 
	We can also include $\cl\FZip_{R}^{\tau_{n}}$ in $\Xcal_{R}^{\tau}$ by considering the  functor 
	$$
	(-)[n]\colon\Mline\coloneqq (M,C^{\bullet},D_{\bullet},\varphi_{\bullet})\longmapsto \Mline[n]\coloneqq(C_{n}^{\bullet},D^{n}_{\bullet},\id^{n}_{M},\varphi^{n}_{\bullet}),
	$$
	where $C_{n}^{k} \coloneqq C^{k}[n]$, $D^{n}_{k} \coloneqq D_{k}[n]$, $\id_{M}^{n}\coloneqq\id_{M}[n]$ and $\varphi^{n}_{k}\coloneqq \varphi_{k}[n]$ (this is just the $n$-shift of $\Mline[0]$ in $\FZip_{R}(A)$). Thus, $\pi_{n}$ defines a section of $(-)[n]$.
 
	We see that the morphism given by
	$$
	\prod_{n\in\ZZ}\cl\FZip_{R}^{\tau_{n}}\xhookrightarrow{\hphantom{aaa}} \Xcal_{R}^{\tau},\quad (\Mline_{n})_{n\in\ZZ}\longmapsto \bigoplus_{n\in\ZZ}\Mline_{n}[n]
	$$
	is a monomorphism, as it is a product of monomorphisms (note that $\tau$ has finite support, and so the morphism above is induced by the termwise inclusions).
\end{rem}

\begin{lem}
\label{lem fzip classical}
	Let $\tau\colon\ZZ\rightarrow \NN_{0}^{\ZZ}$ be a function with finite support and finitely many values. Then the monomorphism $\prod_{n\in\ZZ}\cl\FZip_{R}^{\tau_{n}}\hookrightarrow \Xcal_{R}^{\tau}$ defined in Remark~\ref{rem classical} is an equivalence of derived stacks.
\end{lem}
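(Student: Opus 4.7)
Since we have already observed in Remark~\ref{rem classical} that the map $\iota\colon\prod_{n\in\ZZ}\cl\FZip_{R}^{\tau_{n}}\hookrightarrow \Xcal_{R}^{\tau}$ is a monomorphism, to conclude that it is an equivalence of derived stacks it suffices to check that it is an effective epimorphism, which reduces to checking essential surjectivity on $A$-points for every classical $R$-algebra $A$. So let $\Fline=(C^{\bullet},D_{\bullet},\phi,\varphi_{\bullet})\in\Xcal^{\tau}_{R}(A)$ be arbitrary; we must produce a family $(\Mline_{n})_{n\in\ZZ}$ of classical $F$-zips with $\iota((\Mline_{n})_{n})\simeq \Fline$. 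The natural candidates are the classical $F$-zips $\Mline_{n}\coloneqq \pi_{n}\Fline\in\cl\FZip_{R}^{\tau_{n}}(A)$ described in Remark~\ref{rem classical}, so the task is to exhibit an equivalence $\Fline\simeq \bigoplus_{n\in\ZZ}(\pi_{n}\Fline)[n]$.

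The decomposition will be built out of Lemma~\ref{locally acyclic} applied termwise to the filtrations. By Lemma~\ref{degen implies strong}, both $C^{\bullet}$ and $D_{\bullet}$ are strong. By the degeneracy of the two spectral sequences and the boundedness of the filtrations, each $\pi_{i}C^{k}$ (resp.\ $\pi_{i}D_{k}$) admits a finite filtration whose graded pieces are among the finite projective $A$-modules $\pi_{i}\gr^{j}C$ (resp.\ $\pi_{i}\gr^{j}D$), so each $\pi_{i}C^{k}$ (resp.\ $\pi_{i}D_{k}$) is itself finite projective. Applying Lemma~\ref{locally acyclic} then yields equivalences $C^{k}\simeq \bigoplus_{n\in\ZZ}\pi_{n}(C^{k})[n]$ and $D_{k}\simeq \bigoplus_{n\in\ZZ}\pi_{n}(D_{k})[n]$, and analogous decompositions for $\colim C^{\bullet}$, $\colim D_{\bullet}$ and $\gr^{k}C,\gr^{k}D$.

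The main technical point is then to verify that these termwise decompositions can be chosen in a homotopy coherent way so as to decompose the entire derived $F$-zip, not just its underlying complexes. I plan to argue this by observing that the full subcategory $\Ccal\subseteq \MMod_{A}$ spanned by the perfect complexes with finite projective homotopy groups is equivalent, via the functor $M\mapsto (\pi_{n}M)_{n\in\ZZ}$, to the $1$-category of $\ZZ$-graded finite projective $A$-modules with only finitely many non-zero components (this follows from Lemma~\ref{locally acyclic} together with the vanishing of $\Ext^{i}_{A}(P,Q)$ in positive degrees for finite projective $A$-modules $P,Q$, which makes every mapping space in $\Ccal$ discrete). All the data defining $\Fline$ live in $\Ccal$: the terms $C^{k}$ and $D_{k}$ lie in $\Ccal$, the transition maps are morphisms in $\Ccal$ (as their cofibers, the graded pieces, also lie in $\Ccal$), and the equivalences $\phi$ and $\varphi_{k}$ are equivalences in $\Ccal$. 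Transporting $\Fline$ through the equivalence $\Ccal\simeq\prod_{n\in\ZZ}\Proj(A)^{[n]}$ then rigidifies every piece of structure simultaneously into a tuple of classical filtered modules with Frobenius-linear comparison isomorphisms, which is precisely the datum of $(\pi_{n}\Fline)_{n\in\ZZ}$, and shows that $\Fline\simeq \bigoplus_{n}(\pi_{n}\Fline)[n]$.

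The main obstacle I expect is this last rigidification step: ensuring that the decomposition of every term $C^{k}$ can be assembled into a natural transformation of diagrams $\ZZ^{\op}\to \MMod_{A}$, compatible with the equivalence between the two colimits, and with the Frobenius-comparison isomorphisms, all on the nose. Conceptually the argument above via the equivalence $\Ccal\simeq\prod_{n}\Proj(A)^{[n]}$ handles this, but executing it carefully requires checking that the full subcategories of filtered and perfect objects in play are contained in $\Ccal$ and that the equivalence is symmetric monoidal enough to preserve all the relevant constructions (cofibers, colimits, Frobenius pullback).
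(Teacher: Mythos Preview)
Your approach is essentially the same as the paper's: reduce to effective epimorphism, take $\Mline_{n}=\pi_{n}\Fline$, use Lemma~\ref{degen implies strong} to get strongness and Lemma~\ref{locally acyclic} to split each $C^{k}$ and $D_{k}$ as $\bigoplus_{n}\pi_{n}(-)[n]$, and conclude $\Fline\simeq\bigoplus_{n}\pi_{n}\Fline[n]$. The paper's proof is in fact terser than yours and simply asserts the final equivalence once the termwise splittings are in place, without discussing coherence.

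The ``main obstacle'' you flag is therefore something the paper does not explicitly address, but your proposed resolution via the equivalence $\Ccal\simeq\prod_{n}\Proj(A)$ is correct and is the right way to make the argument rigorous: since $\Ext^{i}_{A}(P,Q)=0$ for $i>0$ when $P$ is finite projective, all mapping spaces in $\Ccal$ are discrete, so the entire diagram underlying $\Fline$ (finite chains, the equivalence $\phi$, and the $\varphi_{k}$) lives in a $1$-category, and the termwise splittings assemble automatically without higher coherence obstructions. Cofibers and Frobenius pullback preserve $\Ccal$ (the former by strongness and homotopy finite projectiveness of the graded pieces, the latter by Lemma~\ref{betti numbers locally closed}\eqref{bnlc-3}), so nothing escapes this subcategory.
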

\begin{proof}
We have to show that the map $\prod_{n\in\ZZ}\cl\FZip_{R}^{\tau_{n}}\hookrightarrow \Xcal_{R}^{\tau}$ is an effective epimorphism. 
 
	It is enough to show that for an $R$-algebra $A$, every $\Fline\in\Xcal^{\tau}_{R}(A)$ is equivalent to $\bigoplus_{n\in\ZZ}\Mline_{n}[n]$ for some $F$-zips $\Mline_{n}\in\cl\FZip^{\tau_{n}}_{R}(A)$.
 
	Let us set $\Fline\simeq (C^{\bullet},D_{\bullet},\varphi_{\bullet})$ (see Remark~\ref{F-zips.easy}). We can assume that for every $k\in\ZZ$, we have $C^{k}\simeq \bigoplus_{n\in\ZZ} C^{k}_{n}[n]$ and $D_{k}\simeq \bigoplus_{n\in\ZZ}D_{k}^{n}[n]$, by Lemma~\ref{locally acyclic}. Let $\pi_{n}\Fline = (M,\widetilde{C}^{\bullet},\widetilde{D}_{\bullet},\pi_{n}\varphi_{\bullet})$, as in the beginning of this section. By the strongness of $\Fline$ and the constructions of $\widetilde{C}^{\bullet}$ and $\widetilde{D}_{\bullet}$, we see that $\widetilde{C}^{k} = C^{k}_{n}$ and $\widetilde{D}_{k}=D_{k}^{n}$. Thus, we immediately see that $\Fline\simeq \bigoplus_{n\in\ZZ}\pi_{n}\Fline[n]$.
\end{proof}

\begin{lem}
Let us fix some $n\in\ZZ$. Further, let $\sigma\colon\ZZ\rightarrow \NN_{0}$ be a function with finite support and $\tau^{\sigma}_{n}\colon\ZZ\rightarrow \NN_{0}^{\ZZ}$ be given by $k\mapsto \tau^{\sigma}_{n}(k)_{n}=\sigma(k)$ and $k\mapsto\tau^{\sigma}_{n}(k)_{m} =0$ for $m\neq n$. Then the inclusion $\Xcal_{R}^{\sigma}\hookrightarrow t_{0}\FZip_{R}^{\sigma}$ is quasi-compact open.
\end{lem}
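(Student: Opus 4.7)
The plan is to mirror the argument of Lemma~\ref{F-zip in classical} with degree $0$ replaced by degree $n$, and then observe that the highly concentrated type $\tau^{\sigma}_{n}$ forces the spectral sequence degeneracy condition to be automatic. (I read the statement as saying that the composition $\Xcal_{R}^{\tau^{\sigma}_{n}}\hookrightarrow t_{0}\FZip_{R}^{\tau^{\sigma}_{n}}\hookrightarrow t_{0}\FZip_{R}$ is a quasi-compact open immersion, generalizing the case $n=0$ treated in Lemma~\ref{F-zip in classical}.)

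First I would invoke Proposition~\ref{prop open and closed of F-zip} to see that $t_{0}\FZip_{R}^{\leq\tau^{\sigma}_{n}}$ is a quasi-compact open substack of $t_{0}\FZip_{R}$. For any $\Spec(A)\to t_{0}\FZip_{R}^{\leq\tau^{\sigma}_{n}}$ classified by $\Fline=(C^{\bullet},D_{\bullet},\varphi_{\bullet})$, the inequality $\beta_{\gr^{k}C}(s)_{i}\leq \tau^{\sigma}_{n}(k)_{i}=0$ for $i\neq n$ forces, by Remark~\ref{tor and type}, the graded piece $\gr^{k}C$ to have Tor-amplitude in $[n,n]$; then Lemma~\ref{general props of Tor}(6) yields $\gr^{k}C\simeq M_{k}[n]$ for a finite projective $A$-module $M_{k}$, and the Frobenius-twist equivalences $\varphi_{k}$ give $\gr^{k}D\simeq N_{k}[n]$ with $N_{k}$ finite projective as well.

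Next I would exploit the local constancy of the rank of a finite projective module: the function $\beta_{\Fline}$ is locally constant on $\Spec(\pi_{0}A)_{\cl}$, so inside $t_{0}\FZip_{R}^{\leq\tau^{\sigma}_{n}}$ the condition that $\Fline$ have type \emph{exactly} $\tau^{\sigma}_{n}$ is cut out by a locally constant function. Consequently $t_{0}\FZip_{R}^{\tau^{\sigma}_{n}}\hookrightarrow t_{0}\FZip_{R}^{\leq\tau^{\sigma}_{n}}$ is simultaneously open and closed (a union of connected components), so $t_{0}\FZip_{R}^{\tau^{\sigma}_{n}}$ is quasi-compact open in $t_{0}\FZip_{R}$.

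Finally I would verify that degeneracy is automatic under this type constraint. With $\gr^{p}C\simeq M_{p}[n]$ finite projective in a single degree, the $E_{1}$-page
$$
E_{1}^{p,q}=\pi_{p+q}(\gr^{p}C)
$$
vanishes except along the anti-diagonal $p+q=n$; since every differential $d_{r}\colon E_{r}^{p,q}\to E_{r}^{p+r,q-r+1}$ changes the total degree by $1$, all differentials are forced to vanish. The same reasoning applies to the ascending filtration $D_{\bullet}$. Therefore every object of $t_{0}\FZip_{R}^{\tau^{\sigma}_{n}}$ is degenerate, giving an equality $\Xcal_{R}^{\tau^{\sigma}_{n}}=t_{0}\FZip_{R}^{\tau^{\sigma}_{n}}$ of derived substacks, and the lemma follows. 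There is no genuine obstacle; the only conceptual point is that concentrating the type in a single coordinate simultaneously turns the \textit{a priori} locally closed locus of fixed type into an open-and-closed one and trivializes the degeneracy condition.
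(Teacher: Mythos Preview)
Your proof is correct and follows essentially the same route as the paper's, just with the references unpacked. The paper invokes Lemma~\ref{lem fzip classical} to obtain an equivalence $\Xcal_{R}^{\tau^{\sigma}_{n}}\simeq \cl\FZip_{R}^{\sigma}$ and then cites (the degree-$n$ version of) Lemma~\ref{F-zip in classical} for the quasi-compact openness; your steps 1--3 are precisely the proof of that generalized Lemma~\ref{F-zip in classical}, and your direct spectral-sequence argument in step 4 (concentration on a single anti-diagonal forces all differentials to vanish) is the special-case content of Lemma~\ref{lem fzip classical} needed here, yielding the identity $\Xcal_{R}^{\tau^{\sigma}_{n}}=t_{0}\FZip_{R}^{\tau^{\sigma}_{n}}$ without the detour through the product decomposition into classical $F$-zips. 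The only difference is packaging: the paper cites two earlier lemmas, while you reprove their relevant content in place.
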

\begin{proof}
	By Lemma~\ref{lem fzip classical}, we have an equivalence $\Xcal_{R}^{\sigma}\simeq \cl\FZip_{R}^{\tau_{n}^{\sigma}}$ which is quasi-compact open and closed in $t_{0}\FZip^{\sigma}_{R}$ by Lemma~\ref{F-zip in classical} (note that Lemma~\ref{F-zip in classical} assumes $n=0$, but the proof for arbitrary $n$ works similarly).
\end{proof}

The results in this section show us that for morphisms with degenerating Hodge--de Rham spectral sequence, there is no new information coming from the theory of derived $F$-zips. This, for example, will also show that in the case of abelian schemes, where the $F$-zips associated to its de Rham cohomology are already determined by its $H^{1}_{\dR}$, the derived $F$-zip is also determined by $H^{1}_{\dR}$ (see Section~\ref{abelian schemes}).
 
 In the next section, we want to discuss some classical examples, like curves and K3-surfaces. Analogously to the abelian scheme case, we can use Lemma~\ref{lem fzip classical} to determine the associated derived $F$-zips by their classical counterparts. We will not do this, as this is completely analogous, but will focus on derived $F$-zips with type given by the types associated to proper smooth curves and K3-surfaces. As the type $\underline{Rf_{*}\Omega_{X/S}}$ in these cases will have a certain form, we will see that any strong derived $F$-zip with the same type is equivalent to a derived $F$-zip coming from a classical one.

\subsection{Classical examples}

We want to look at derived $F$-zips associated to abelian schemes, proper smooth curves and K3-surfaces, and explicitly show that we do not get anything new from the theory of derived $F$-zips.

\subsubsection{Abelian schemes}

\label{abelian schemes}

Let $X\rightarrow S$ be an abelian scheme of relative dimension $n$. A classical result is that $$H^i_{\dR}(X/S)=\wedge^iH^1_{\dR}(X/S),$$ $H^1_{\dR}(X/S)$ is locally free of rank $2n$ (and thus  the $H^i_{\dR}(X/S)$ are also finite locally free), the $R^jf_*\Omega_{X/S}^j$ are finite locally free and the Hodge--de Rham spectral sequence degenerates (see \cite[Proposition 2.5.2]{BBM}).

In this way, we can associate to any abelian scheme $X\rightarrow S$ of relative dimension $n$ and any $i\in\NN$ an $F$-zip $\Hline^i_{\dR}(X/S)$.
We can even go further and say that $\Hline^i_{\dR}(X/S)$ is characterized by $H^1_{\dR}(X/S)$, \textit{i.e.}\
$$
\Hline^i_{\dR}(X/S)=\wedge^i\Hline^1_{\dR}(X/S)
$$
(see \cite[Example 9.9]{PWZ}).

Therefore, Lemma~\ref{lem fzip classical} (or rather its proof) implies the following.

\begin{prop}
	Let $f\colon X\rightarrow S$ be an abelian scheme of relative dimension $n$. The derived $F$-zip $\underline{R\Gamma_{\dR}(X/S)}$ is equivalent to the derived $F$-zip $\bigoplus_{k=0}^{2n}\wedge^{k}\Hline^1_{\dR}(X/S)[k]$ $($see Remark~\ref{rem classical} for the notation$)$.
\end{prop}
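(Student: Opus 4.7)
The plan is to reduce the statement to the local setting of affine schemes and then invoke Lemma~\ref{lem fzip classical}. First I would work Zariski locally on $S$ and assume $S=\Spec(A)$ is affine; since $\FZip_R$ satisfies fpqc descent (in particular Zariski descent), it suffices to construct the equivalence locally in a way compatible with restriction.

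Next I would verify that over such an affine base, the derived $F$-zip $\underline{R\Gamma_{\dR}(X/S)}$ lies in the substack $\Xcal^{\tau}_R$ for some function $\tau\colon\ZZ\rightarrow\NN_0^{\ZZ}$ with finite support and finitely many values. This uses the classical facts about abelian schemes recalled at the beginning of the subsection: the Hodge--de Rham spectral sequence degenerates, all $R^jf_*\Omega^i_{X/S}$ are finite locally free, and all $H^i_{\dR}(X/S)$ are finite locally free (see \cite[Proposition 2.5.2]{BBM}). By Theorem~\ref{de Rham strong degen} the filtrations $\HDG$ and $\conj$ are strong; combined with the degeneracy of the conjugate spectral sequence (which follows from the degeneracy of the Hodge--de Rham spectral sequence, \cite[Proposition~(2.3.2)]{Katz}), this shows that $\underline{R\Gamma_{\dR}(X/S)}$ is both homotopy finite locally free of type $\tau$ and degenerate in the sense of Definition~\ref{defi.fzip.degen}. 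Here $\tau$ is determined by the Hodge numbers of $X/S$, which are bounded (only finitely many $i,j$ give non-zero contributions because $X\rightarrow S$ has relative dimension $n$), so $\tau$ has finite support and finitely many values.

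Now I would apply Lemma~\ref{lem fzip classical}, which gives an equivalence
\[
\underline{R\Gamma_{\dR}(X/S)}\;\simeq\;\bigoplus_{k\in\ZZ}\pi_k\underline{R\Gamma_{\dR}(X/S)}[k]
\]
in $\Xcal^{\tau}_R(S)$, where $\pi_k\underline{R\Gamma_{\dR}(X/S)}$ denotes the classical $F$-zip obtained by applying $\pi_k$ to the filtrations and the Cartier isomorphisms. By construction of the degeneration functor and the degeneracy of the Hodge--de Rham spectral sequence, $\pi_k\underline{R\Gamma_{\dR}(X/S)}$ is precisely the classical $F$-zip $\Hline^k_{\dR}(X/S)$ on $H^k_{\dR}(X/S)$ defined via the Hodge and conjugate filtrations.

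Finally, I would invoke the classical exterior-power identity $\Hline^k_{\dR}(X/S)\simeq \wedge^k\Hline^1_{\dR}(X/S)$ (see \cite[Example 9.9]{PWZ}), together with the observation that $\Hline^k_{\dR}(X/S)=0$ for $k>2n$, to rewrite the sum as $\bigoplus_{k=0}^{2n}\wedge^k\Hline^1_{\dR}(X/S)[k]$. The only mildly subtle step is ensuring that the local equivalences glue: since both sides are derived $F$-zips on $S$ and the assignment is functorial in $A$, and since $\FZip_R$ is an fpqc sheaf, the equivalence produced on affine opens glues uniquely to a global equivalence, so the main obstacle (really the only non-formal input) is the verification that $\underline{R\Gamma_{\dR}(X/S)}$ falls into the hypothesis of Lemma~\ref{lem fzip classical}, which is handled by combining Theorem~\ref{de Rham strong degen} with the classical degeneration results for abelian schemes.
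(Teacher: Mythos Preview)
Your proposal is correct and follows essentially the same approach as the paper: the paper's proof is simply ``See the discussion above'', where that discussion combines the classical degeneration and finite-local-freeness results for abelian schemes with Lemma~\ref{lem fzip classical} (or rather its proof) and the identity $\Hline^k_{\dR}(X/S)=\wedge^k\Hline^1_{\dR}(X/S)$, which is exactly what you spell out. Your detour through Theorem~\ref{de Rham strong degen} is unnecessary---membership in $\Xcal^\tau$ follows directly from the finite-local-freeness of the $R^jf_*\Omega^i_{X/S}$ and the degeneration of both spectral sequences---but it does no harm.
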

\begin{proof}
	See the discussion above.
\end{proof}

\subsubsection{Proper smooth curves}
Let $C$ be a proper smooth connected curve of genus $g$ over an algebraically closed field $k$ of characteristic $p>0$. The de Rham complex consists of two terms. By the degeneracy of the spectral sequence, we know that
$$
	H^n_{\dR}(C/k)\cong\begin{cases}
					\Gamma(C,\Ocal_C) = k &\text{ if } n=0,\\
					H^1(C,\Ocal_C)\oplus \Gamma(C,\Omega^1_{C/k})  &\text{ if }n=1,\\
					H^1(C,\Omega^{1}_{C/k}) =k &\text{ if }n=2,\\
					0&\text{ else}.
					\end{cases}
$$
Further, we know that $g=\dim_kH^1(C,\Ocal_C)=\dim\Gamma(C,\Omega_{C/k}^{1})$. Therefore, the de Rham hypercohomology of $C$ is a perfect complex of Tor-amplitude in $[-2,0]$, and the filtrations are of the form 
\begin{align*}
	\HDG^{\bullet}\colon\cdots \longrightarrow 0\longrightarrow \HDG^{1}\longrightarrow R\Gamma_{\dR}(C/k)\longrightarrow R\Gamma_{\dR}(C/k)\longrightarrow \cdots \\
	\conj_{\bullet}\colon \cdots \longrightarrow 0\longrightarrow \conj_0\longrightarrow R\Gamma_{\dR}(C/k)\longrightarrow R\Gamma_{\dR}(C/k)\longrightarrow \cdots. 
\end{align*}
The graded pieces are given by
$$
\gr^0\HDG\simeq R\Gamma(C,\Ocal_C)\quad\textup{and}\quad\gr^{1}\HDG\simeq R\Gamma(C,\Omega^1_{C/k})[-1], 
$$
which are perfect complexes of Tor-amplitude in $[-1,0]$ and $[-2,-1]$ (in homological notation).
 
Let $\Mcal$ denote the moduli stack of smooth proper curves $X\rightarrow S$ (see \cite[0DMJ]{stacks-project}). The map
$$
R\Gamma_{\dR}\colon X/S\longmapsto \underline{R\Gamma_{\dR}(X/S)}
$$
from $\Mcal$ to $t_0\FZip^{[-2,0],\lbrace -1,0\rbrace}$ gives a decomposition $\Mcal=\coprod_g \Mcal_g$ into open and closed substacks classifying smooth proper curves of genus $g$. This follows from the fact that the Euler characteristic of the graded pieces of the de Rham hypercohomology are determined by the genus and from Proposition~\ref{f-zip decomp}.\newline

Consider the function $\sigma\colon \ZZ\rightarrow\NN_0^\ZZ$ defined as $\sigma(0)_0=\sigma(1)_{-2}=1$ and $\sigma(0)_{-1}=\sigma(1)_{-1}=g$ for some $g\in\NN$ and with value zero otherwise. An example of a strong derived $F$-zip homotopy finite projective of type $\sigma$ is $\underline{R\Gamma_{\dR}(C/k)}$ by the above (the strongness follows from Theorem~\ref{de Rham strong degen}). As the de Rham cohomologies are finite projective and the Hodge spectral sequence degenerates, we see that locally  $\underline{R\Gamma_{\dR}(C/k)}$ is determined by the graded pieces of the Hodge and conjugate filtrations (see Lemma~\ref{locally acyclic}). This allows us to construct $\underline{R\Gamma_{\dR}(C/k)}$ from the classical $F$-zip $\Hline^{1}_{\dR}(C/k)$. But not only that, since $\sigma$ is not too complicated (there is only one non-trivial homotopy with non-trivial filtration), it seems reasonable that any derived $F$-zip of type $\sigma$ is equivalent to one that is induced by a classical $F$-zip (see below for more details).
 
First let us show how to extend a classical $F$-zip of type $\tau\colon\ZZ\rightarrow \NN_{0},\ k\mapsto \sigma(k)_{-1}$ to a derived $F$-zip of type $\sigma$.

\begin{construction}
	\label{constr curve}
	Let $A$ be an $\FF_{p}$-algebra. Recall that the natural morphism $A\rightarrow A^{(1)}$ is an isomorphism of rings. Further, let $\tau$ be as above.

Let $\Mline=(M,C^\bullet,D_\bullet,\varphi_\bullet)$ be a classical $F$-zip over $A$ of type $\tau$. We define $M^+\coloneqq A[0]\oplus M[-1]\oplus  A[-2]$, $C_+\coloneqq C^{1}[-1]\oplus A[-2]$ and $D^+\coloneqq D_0[-1]\oplus A[0]$ as complexes in $\Dcal(A)$. This defines a descending filtration $C^{\bullet}_+\colon C_+\rightarrow M^+$, where $M^{+}$ is in degree $0$, and an ascending filtration $D^+_\bullet\colon D^+\rightarrow M^+$, where $M^{+}$ is in degree $1$, of $A$-modules. We also get natural equivalences between the graded pieces of the filtrations up to Frobenius twist induced by $A^{(1)}\xrightarrow{\lowsim} A$ and $\varphi_\bullet$, denoted by $\varphi_\bullet^+$. We define a new derived $F$-zip over $A$ via $$\Mline^+\coloneqq (C_+^\bullet,D^+_\bullet,\varphi_\bullet^+ ).$$
\end{construction}

The idea of the above construction is to take a classical $F$-zip and extend it by a trivial $F$-zip in the homotopical direction. So in the above construction,  $\Mline^{+}$ is a classical $F$-zip shifted to (homological) degree $-1$, and then we add a trivial $F$-zip via the direct sum to the homotopical degree $0$ and $-2$. All the information of $\Mline^{+}$ as a derived $F$-zip lies in homotopical degree $-1$. In particular, we can recover derived $F$-zips with type like $\Mline^{+}$ from classical $F$-zips. 

\begin{prop}
	\label{classical curve}
	Let $\sigma$ and $\tau$ be defined as above. Then for an $\FF_p$-algebra $A$, the map
	\begin{align*}
		\alpha\colon\cl\FZip_{R}^{\tau}(A)&\longrightarrow \sFZip_{R}^{\sigma}(A)\\
		\Mline&\longmapsto\Mline^+
	\end{align*}
	induces an effective epimorphism $\cl\FZip_{R}^{\tau}\rightarrow \sFZip_{R}^{\sigma}$ of derived stacks.
\end{prop}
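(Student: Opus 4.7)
The plan is to prove essential surjectivity of $\alpha$ on $A$-points for every $\FF_p$-algebra $A$; since both $\cl\FZip_R^\tau$ and $\sFZip_R^\sigma$ are fpqc sheaves, this is enough to ensure that $\alpha$ induces an effective epimorphism of derived stacks. So fix a strong derived $F$-zip $\Nline = (C^\bullet, D_\bullet, \phi, \varphi_\bullet) \in \sFZip_R^\sigma(A)$, set $M^+ := \colim_{\ZZ^{\op}} C^\bullet \simeq \colim_{\ZZ} D_\bullet$, and write $K_C := \pi_{-1}(\gr^0 C)$, $K'_C := \pi_{-1}(\gr^1 C)$, and analogously $K_D, K'_D$; these are all finite projective of rank $g$ by the type assumption, and $\pi_0(\gr^0 C)$, $\pi_{-2}(\gr^1 C)$, and the corresponding $D$-quantities are identified with $A$.

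First, I would use the strongness of $\Nline$ together with the cofiber sequences $C^1 \to M^+ \to \gr^0 C$ and $D_0 \to M^+ \to \gr^1 D$ to deduce, via the associated short exact sequences on homotopy groups, that $\pi_i C^1$, $\pi_i D_0$, and $\pi_i M^+$ are all finite projective; then Lemma~\ref{locally acyclic} yields direct-sum decompositions
\begin{equation*}
C^1 \simeq K'_C[-1] \oplus A[-2], \quad D_0 \simeq A \oplus K_D[-1], \quad M^+ \simeq A \oplus M[-1] \oplus A[-2],
\end{equation*}
where $M := \pi_{-1} M^+$ is finite projective of rank $2g$. Next, I would analyze the inclusions $C^1 \hookrightarrow M^+$ and $D_0 \hookrightarrow M^+$ by decomposing them componentwise with respect to these direct sums. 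The crucial observation is that for finite projective modules $P, Q$ we have $\Ext^i(P[a], Q[b]) = 0$ whenever $a \neq b$ in the relevant direction of the long exact sequence, so every off-diagonal component vanishes. The surviving entries provide a finite projective monomorphism $K'_C \hookrightarrow M$ with finite projective cokernel (equal to $K_C$), hence a classical descending filtration $0 \subset C^1_{\cl} \subset M$ with $C^1_{\cl} := K'_C$; symmetrically an ascending filtration $0 \subset D^{\cl}_0 \subset M$.

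The same Ext-vanishing argument applied to the equivalences $\varphi_i \colon F_S^\ast \gr^i C \xrightarrow{\lowsim} \gr^i D$ (each an equivalence between two-term direct sums of shifted finite projectives) splits them into an equivalence $K'_C{}^{(1)} \xrightarrow{\lowsim} K_D$ in degree $-1$ of one graded piece and the canonical identification $A^{(1)} \cong A$ in degree $0$ (respectively degree $-2$) for the other component, giving classical Frobenius-linear isomorphisms $\varphi^{\cl}_\bullet \colon (\gr^\bullet_{C_{\cl}} M)^{(1)} \xrightarrow{\lowsim} \gr^\bullet_{D_{\cl}} M$. Packaging this data yields $\Mline := (M, C^\bullet_{\cl}, D^{\cl}_\bullet, \varphi^{\cl}_\bullet) \in \cl\FZip_R^\tau(A)$, and a direct comparison with Construction~\ref{constr curve} gives a canonical equivalence $\alpha(\Mline) \simeq \Nline$.

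The main obstacle is the \emph{coherence} of all the splittings: Lemma~\ref{locally acyclic} provides non-canonical decompositions and we must ensure that the decomposition of $M^+$ can be chosen compatibly with the decompositions of $C^1$ and $D_0$, and that the Frobenius-twist equivalences decompose consistently with this choice. This is handled by observing that all obstructions live in Ext-groups of the form $\Ext^i(P, Q)$ for finite projective $P, Q$ with $i \geq 1$, and such groups vanish; more concretely, once the splitting of the quasi-isomorphism $M^+ \simeq A \oplus M[-1] \oplus A[-2]$ is fixed, the inclusions $C^1 \hookrightarrow M^+$ and $D_0 \hookrightarrow M^+$ are forced to respect this splitting componentwise, and then the equivalences $\varphi_i$ are similarly forced. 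Putting it together shows that $\Nline$ lies in the essential image of $\alpha$, proving the proposition.
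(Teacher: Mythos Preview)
Your proposal is correct and follows essentially the same approach as the paper: both arguments use strongness to turn the long exact homotopy sequences associated to the cofiber sequences into short exact sequences, apply Lemma~\ref{locally acyclic} to split the relevant perfect complexes into shifts of finite projectives, and then read off the classical $F$-zip $\Mline$ from the $\pi_{-1}$ data. The paper is somewhat terser---it writes the long exact sequences explicitly, observes that strongness forces the boundary maps $\partial,\partial'$ to vanish, and concludes directly---whereas you phrase the same content as an Ext-vanishing argument to handle coherence of the splittings; but the underlying mechanism is identical.
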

\begin{proof}
	Let $A$ be an $\FF_p$-algebra. Consider a derived $F$-zip  $\Fline=(C^\bullet_F,D_\bullet^F,\varphi_\bullet^F)$ over $A$ that is homotopy finite projective of type $\sigma$ over $A$. We claim that there is a classical $M$-zip $\Mline=(M,C^\bullet,D_\bullet,\varphi)$ of type $\tau$ that induces an equivalence $\Mline^+\xrightarrow{\lowsim}\Fline$.

	We can apply Lemma~\ref{locally acyclic} to the filtrations and graded pieces of $\Fline$ and may only work with perfect complexes over $A$ that have vanishing differentials, \textit{i.e.}\ direct sums of shifts of finite free modules (note that we use that the filtrations are finite).
 
	Using the explicit type of $\Fline$, we get a long exact homotopy sequence
	$$
		0\longrightarrow (C^{0}_{F})_0\longrightarrow A\xrightarrow{\;\del\;} (C^1_F)_{-1}\longrightarrow (C^{0}_{F})_{-1}\longrightarrow A^g\xrightarrow{\;\del'\;}(C^1_F)_{-2}\longrightarrow (C^{0}_{F})_{-2}\longrightarrow 0,
	$$
	and using the ascending filtration, we get the long exact homotopy sequence
	$$
	0\longrightarrow(D^F_{0})_{0}\longrightarrow (D^{F}_{1})_0\longrightarrow 0\longrightarrow (D^F_{0})_{-1}\longrightarrow (D^{F}_{1})_{-1}\longrightarrow (A^g)^{(1)}\longrightarrow 0\longrightarrow (D^{F}_{1})_{-2}\longrightarrow A^{(1)}\longrightarrow 0.
	$$
	The strongness of our filtrations show that $\del$ and $\del'$ are zero, and we see that we can set
	\begin{align*}
		&M=F_{-1},\\
		&C^\bullet\colon 0=C^2\subseteq (C^1_F)_{-1}\subseteq F_{-1}=C^0,\\
		&D_\bullet\colon 0=D_{-1}\subseteq (D_0^F)_{-1}\subseteq F_{-1}=D_1\textup{ and}\\
		&\varphi_0=\pi_{-1}\varphi^F_{0},\ \varphi_{1}=\pi_{-1}\varphi^F_{1}.
	\end{align*}
	The acyclicity of the complexes involved give us an equivalence $\Mline^+\xrightarrow{\lowsim}\Fline$.
\end{proof}

\subsubsection{K3-surfaces}
Let $X$ be a K3-surface over a field $k$. It is well known that the Hodge--de Rham spectral sequence of $X/k$ degenerates and that the Hodge numbers are given by $h^{0,0}=h^{0,2}=h^{2,0}=h^{2,2}=1$, $h^{1,1}=20$ and are otherwise zero. This in particular gives us the type of the derived $F$-zip associated to a K3-surface over an arbitrary scheme in positive characteristic (which is written out in the following remark).

Using this, we will show as in the case of proper smooth curves that every derived $F$-zip of the same type as in the K3-surface case is equivalent to one which comes from the classical $F$-zip of type associated to $H^2_{\dR}$ of a K3-surface.

\begin{rem}
	\label{strong K3}
	Let $\sigma\colon\ZZ\rightarrow\NN_0^\ZZ$ be the function given by $\sigma(0)_0=\sigma(0)_{-2}=\sigma(2)_{-2}=\sigma(2)_{-4}=1$, $\sigma(1)_{-2}=20$ and otherwise zero.
 
	Let $\Fline\coloneqq(C^{\bullet},D_{\bullet},\varphi_{\bullet})\in t_0\FZip^{\sigma}(A)$. We may assume by Lemma~\ref{locally acyclic} that every perfect complex associated to $\Fline$ has vanishing differentials; \textit{i.e.}\ the filtrations and graded pieces are perfect complexes of $A$-modules with vanishing differentials. So, we can write
        $$
        C^{\bullet}\simeq \bigoplus_{n\in\ZZ}(C^{\bullet})_{n}[n]\quad\textup{and}\quad D_{\bullet}\simeq \bigoplus_{n\in\ZZ}(D_{\bullet})_{n}[n].
        $$
        By this $C^2\simeq \pi_{-2}\gr_C^2[-2]\oplus\pi_{-4}\gr_C^2[-4]$, and we have long exact homotopy sequences
	\begin{align*}
		0\longrightarrow (C^2)_{-2}\longrightarrow &(C^1)_{-2}\longrightarrow \pi_{-2}\gr^1_C\longrightarrow 0\longrightarrow (C^2)_{-3}\\ &\longrightarrow 0\longrightarrow 0\longrightarrow (C^2)_{-4}\longrightarrow (C^1)_{-4}\longrightarrow 0,
	\end{align*}
	\begin{align*}
		0\longrightarrow (C^{0})_0\longrightarrow &\pi_{0}\gr^0_C\longrightarrow 0\longrightarrow (C^{0})_{-1}\longrightarrow 0\longrightarrow (C^1)_{-2}\longrightarrow (C^{0})_{-2}\longrightarrow  \pi_{-2}\gr^0_C\\ &\longrightarrow 0\longrightarrow  (C^{0})_{-3}\longrightarrow 0\longrightarrow (C^1)_{-4}\longrightarrow (C^{0})_{-4}\longrightarrow 0. 
	\end{align*}
	Further, we have $D_0\simeq  \pi_0\gr^0_D[0]\oplus \pi_{-2}\gr^0_D[-2]$ and long exact homotopy sequences
	\begin{align*}
		0\longrightarrow (D_0)_0\longrightarrow (D_1)_0\longrightarrow 0 \longrightarrow 0\longrightarrow (D_1)_{-1}\longrightarrow 0 \longrightarrow (D_0)_{-2}\longrightarrow (D_1)_{-2}\longrightarrow \pi_{-2}\gr^1_D\longrightarrow 0,
	\end{align*}
	\begin{align*}
	  0\longrightarrow (D_1)_0\longrightarrow &(D_{2})_0\longrightarrow 0 \longrightarrow 0\longrightarrow (D_{2})_{-1}\longrightarrow 0 \longrightarrow (D_1)_{-2}\longrightarrow (D_{2})_{-2}\\ &\longrightarrow \pi_{-2}\gr^2_D\longrightarrow 0\longrightarrow (D_{2})_{-3}\longrightarrow 0 \longrightarrow 0\longrightarrow (D_{2})_{-4}\longrightarrow \pi_{-4}\gr^2_D\longrightarrow 0.
	\end{align*}
	In particular, we have that $\Fline$ is a strong derived $F$-zip as the homotopies are finite locally free, which allows us to construct sections.
\end{rem}

\begin{cor}
	Let $X/S$ be a K3-surface. Then the Hodge--de Rham spectral sequence associated to $X/S$ degenerates.
\end{cor}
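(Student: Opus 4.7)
The plan is to combine Remark~\ref{strong K3} with Theorem~\ref{de Rham strong degen}. Since both degeneracy of the Hodge--de Rham spectral sequence and finite local freeness of the $R^{j}f_{*}\Omega_{X/S}^{i}$ are Zariski-local on $S$, I first reduce to the case $S=\Spec(A)$ affine. The derived $F$-zip $\underline{Rf_{*}\Omega^{\bullet}_{X/S}}$ from Example~\ref{ex F-zip proper smooth} is available, and the corollary will follow once I show that it lies in $t_{0}\FZip^{\sigma}(A)$, i.e.\ is homotopy finite locally free of type $\sigma$, where $\sigma$ is the K3-type fixed in Remark~\ref{strong K3}.

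The key classical input is the Hodge diamond of a K3-surface $Y$ over \emph{any} field $k$: one has $h^{0,0}=h^{2,0}=h^{0,2}=h^{2,2}=1$, $h^{1,1}=20$, and all other Hodge numbers vanish (this holds in arbitrary characteristic; e.g.\ using the fact that K3 surfaces lift to $W_{2}$ together with the Deligne--Illusie degeneration, or directly from Serre duality and the K3 definition). Applying this fiberwise to the proper smooth morphism $f\colon X\to S$, each fiber $X_{s}$ has $\dim_{\kappa(s)}H^{j}(X_{s},\Omega^{i}_{X_{s}/\kappa(s)})=\sigma(i)_{-j}$, a constant. By Lemma~\ref{betti numbers locally closed} (a refined form of cohomology and base change), the constancy of these fiberwise Betti numbers of the perfect complexes $Rf_{*}\Omega^{i}_{X/S}$ implies that each $R^{j}f_{*}\Omega^{i}_{X/S}$ is a finite locally free $\Ocal_{S}$-module whose formation commutes with arbitrary base change. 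Translated into the language of Section~\ref{sec substacks}, this exactly says $\underline{Rf_{*}\Omega^{\bullet}_{X/S}}\in t_{0}\FZip^{\sigma}(A)$.

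Once this is in place, Remark~\ref{strong K3} applies directly and yields that the Hodge and conjugate filtrations of $\underline{Rf_{*}\Omega^{\bullet}_{X/S}}$ are both strong (the remark's construction produces the requisite splittings using only the type-$\sigma$ shape of the long exact homotopy sequences). Theorem~\ref{de Rham strong degen} then converts strongness into the desired conclusion: the Hodge--de Rham spectral sequence degenerates (and the $R^{j}f_{*}\Omega^{i}_{X/S}$ are finite locally free, which we already knew). The only genuine obstacle is importing the classical fact about the Hodge numbers of a K3-surface in positive characteristic; every other step is a direct invocation of the machinery developed earlier in the paper.
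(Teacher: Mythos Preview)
Your proposal is correct and follows essentially the same approach as the paper: the paper's proof is the one-line ``Combine Remark~\ref{strong K3} and Theorem~\ref{de Rham strong degen}'', and you have simply unpacked the implicit step (already asserted in the paragraph preceding Remark~\ref{strong K3}) that constancy of the fiberwise Hodge numbers forces $\underline{Rf_{*}\Omega^{\bullet}_{X/S}}\in t_{0}\FZip^{\sigma}(A)$ via cohomology and base change.
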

\begin{proof}
	Combine Remark~\ref{strong K3} and Theorem~\ref{de Rham strong degen}.
\end{proof}

\begin{lem}
	\label{lem strong K3}
	Let $\sigma\colon\ZZ\rightarrow\NN_0^\ZZ$ be the function given by some $$
        \sigma(0)_0,\sigma(0)_{-2},\sigma(1)_{-2},\sigma(2)_{-2},\sigma(2)_{-4}\in\NN_0
        $$
        and otherwise zero. Then the inclusion
	$$
	\sFZip^{\sigma}\xhookrightarrow{\hphantom{aaa}}\FZip^{\sigma}
	$$
	is an equivalence. 
\end{lem}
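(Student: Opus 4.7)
The plan is to reduce the problem to showing essential surjectivity on $A$-points for every $\FF_p$-algebra $A$. By Proposition~\ref{strong open}, the inclusion $\sFZip^{\sigma}\hookrightarrow t_0\FZip^{\sigma}$ is an open and closed immersion of derived stacks over $\Alg{\FF_p}$. It therefore suffices to show that for each $A\in\Alg{R}$, every derived $F$-zip $\Fline=(C^{\bullet},D_{\bullet},\phi,\varphi_{\bullet})\in\FZip^{\sigma}(A)$ is strong, \textit{i.e.}\ that all transition maps of $C^{\bullet}$ and $D_{\bullet}$ are monomorphisms.

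Fix such an $\Fline$. By the homotopy finite projectivity of type $\sigma$, every $\pi_{i}\gr^{j}C$ and $\pi_{i}\gr^{j}D$ is finite projective over $A$ with vanishing outside the prescribed support. An induction on the bounded filtration, together with Remark~\ref{strong finite locally free injective}, then shows that each $\pi_{i}C^{j}$ and $\pi_{i}D_{j}$ is finite projective. By Lemma~\ref{locally acyclic}, we may therefore replace $C^{j}$ by $\bigoplus_{n}\pi_{n}(C^{j})[n]$ and similarly for $D_{j}$, and reduce the question to checking, in each homotopical degree separately, that the maps $\pi_{n}C^{i+1}\rightarrow \pi_{n}C^{i}$ and $\pi_{n}D_{i-1}\rightarrow\pi_{n}D_{i}$ are injective (again by Remark~\ref{strong finite locally free injective}).

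The key input is the specific support of $\sigma$: the graded pieces of $C^{\bullet}$ have nonzero homotopy only at $(\gr^{0}C)_{0},(\gr^{0}C)_{-2},(\gr^{1}C)_{-2},(\gr^{2}C)_{-2},(\gr^{2}C)_{-4}$, and the graded pieces of $D_{\bullet}$ have the same homotopy profile (via the Frobenius-twist isomorphisms $\varphi_{\bullet}$ and the fact that Frobenius is flat on fields, together with Lemma~\ref{betti numbers locally closed}\eqref{bnlc-3} to ensure compatibility with base change). Running the long exact homotopy sequences associated to the cofiber sequences $C^{i+1}\rightarrow C^{i}\rightarrow \gr^{i}C$ (and their ascending analogues) in each homotopical degree, one sees that every potential boundary map either starts from or lands in a zero group because of the gaps between the occupied degrees $\{0,-2,-4\}$; this is exactly the calculation carried out in Remark~\ref{strong K3}, which  works verbatim regardless of the precise positive values of $\sigma$ at the five fixed indices. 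Hence each long exact sequence degenerates into short exact sequences, and both filtrations are strong.

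The main (though mild) obstacle is the bookkeeping of the long exact sequences for the ascending filtration $D_{\bullet}$, since the indices appearing in $\varphi_{\bullet}$ are shifted through Frobenius; but once one has identified the homotopy support of $D_{\bullet}$ with that of $C^{\bullet}$, the same degree-gap argument closes the case. Combining essential surjectivity with the open-and-closed immersion of Proposition~\ref{strong open} yields the claimed equivalence $\sFZip^{\sigma}\xrightarrow{\lowsim}\FZip^{\sigma}$.
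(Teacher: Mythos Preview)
Your proposal is correct and follows essentially the same approach as the paper: reduce to essential surjectivity on $A$-points via Proposition~\ref{strong open}, and then use the long exact homotopy sequences together with the degree gaps in $\sigma$ (exactly as written out in Remark~\ref{strong K3}) to see that every derived $F$-zip of type $\sigma$ is automatically strong. Two small remarks: your invocation of Remark~\ref{strong finite locally free injective} in the induction step is not quite the right citation (that remark characterizes split monomorphisms, whereas what you need there is just that extensions of finite projectives by finite projectives split), and the aside about Frobenius being flat on fields is unnecessary since the equivalences $\varphi_\bullet$ together with Lemma~\ref{betti numbers locally closed}\eqref{bnlc-3} already give the homotopy profile of $D_\bullet$.
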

\begin{proof}
	We only have to check that it is an effective epimorphism, which can be checked locally. Now the argumentation as in Remark~\ref{strong K3} concludes the proof.
\end{proof}

Again, as in the proper smooth curve case, we want to construct a derived $F$-zip of type $\sigma$ out of a classical $F$-zip and show that all derived $F$-zips of type $\sigma$ are given by those.

In particular, the derived $F$-zip associated to a K3-surface will carry no additional information besides the classical $F$-zip attached to its $H^{2}_{\dR}$.

\begin{construction}
	\label{constr K3}
	Let $\Mline=(M,C^\bullet,D_\bullet,\varphi_\bullet)$ be a classical $F$-zip over $A$ of type $\tau$, where $\tau(2)=\tau(0)=1$ and $\tau(1)=20$ and $\tau$ is otherwise zero. We set $M^+\coloneqq A[0]\oplus M[-2]\oplus  A[-4]$, $C^2_+\coloneqq C^{2}[-2]\oplus A[-4]$, $C_+^1=C^1_+[-2]\oplus A[-4]$, $D^+_0\coloneqq  D_0[-2]\oplus A[-4]$ and $D^+_1=A[0]\oplus D_1[-2]$. This defines a descending filtration $C^{\bullet}_+\colon C_+^2\rightarrow C_+^1\rightarrow M^+$ and an ascending filtration $D^+_\bullet\colon D^+_0\rightarrow D^+_1\rightarrow M^+$. We also get natural equivalences between the graded pieces of the filtrations up to Frobenius twist induced by $A^{(1)}\xrightarrow{\lowsim} A$ and $\varphi_\bullet$, denoted by $\varphi_\bullet^+$. We define a new derived $F$-zip over $A$ via $$\Mline^+\coloneqq (C_+^\bullet,D^+_\bullet,\varphi_\bullet^+ ).$$
\end{construction}

\begin{prop}
	let $\tau$ be as in Construction~\ref{constr K3}, and let $\sigma$ be as in Remark~\ref{strong K3}. Then for an $\FF_p$-algebra $A$, the map
	\begin{align*}
		\alpha\colon\cl\FZip_{R}^{\tau}(A)&\longrightarrow \FZip_{R}^{\sigma}(A)\\
		\Mline&\longmapsto\Mline^+
	\end{align*}
	induces an effective epimorphism $\cl\FZip^{\tau}_{R}\rightarrow t_{0}\FZip_{R}^{\sigma}$ of derived stacks. 
\end{prop}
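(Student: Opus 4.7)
The strategy is parallel to that of Proposition~\ref{classical curve}. First, I would verify that $\alpha$ takes values in $t_{0}\FZip^{\sigma}_{R}$: by Construction~\ref{constr K3} the filtered pieces of $\Mline^{+}$ are direct sums of shifts of finite locally free $A$-modules with the prescribed ranks, so $\Mline^{+}$ is homotopy finite locally free of type $\sigma$, and direct inspection shows that all structure maps in both filtrations split, hence are monomorphisms. Thus $\Mline^{+}\in\sFZip^{\sigma}_{R}(A)$, and by Lemma~\ref{lem strong K3} this substack coincides with $t_{0}\FZip^{\sigma}_{R}$. Since the target is a (hypercomplete) fpqc sheaf, to check that $\alpha$ is an effective epimorphism it suffices to show: for every $\FF_{p}$-algebra $A$ and every $\Fline\in t_{0}\FZip^{\sigma}_{R}(A)$, there is a Zariski cover $(\Spec(A_{i})\to\Spec(A))$ such that each $\Fline_{|A_{i}}$ is equivalent to some $\Mline^{+}$ with $\Mline\in\cl\FZip^{\tau}_{R}(A_{i})$.

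Fix such an $\Fline=(C^{\bullet},D_{\bullet},\varphi_{\bullet})$. By Lemma~\ref{locally acyclic} applied termwise (using that all the relevant perfect complexes have finite projective homotopy groups, by homotopy finite projectivity of the graded pieces together with the strongness afforded by Lemma~\ref{lem strong K3}), we may Zariski localize on $\Spec(A)$ so that every filtered piece and every graded piece splits as a direct sum of its (shifted) homotopy groups. The long exact homotopy sequences of Remark~\ref{strong K3} then pin down the exact block shape of $C^{\bullet}$ and $D_{\bullet}$; in particular each non-trivial piece of the filtration decomposes into a component in homological degree $-2$ (carrying the full filtration information) plus trivial summands $A[0]$ and $A[-4]$ coming from the layers of type $\sigma(0)_{0}=\sigma(2)_{-4}=1$. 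Setting
\[
M\coloneqq\pi_{-2}\!\!\left(\colim_{\ZZ^{\op}}C^{\bullet}\right),\quad C^{k}\coloneqq\pi_{-2}C^{k},\quad D_{k}\coloneqq\pi_{-2}D_{k},\quad \varphi_{k}\coloneqq\pi_{-2}\varphi_{k},
\]
and using that $\pi_{-2}$ commutes with Frobenius twist for finite projective modules (since homotopy finite projectivity implies base change compatibility by Lem\-ma~\ref{betti numbers locally closed}\eqref{bnlc-3}), one obtains a classical $F$-zip $\Mline$ of type $\tau$.

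Assembling the chosen splittings into a morphism $\Mline^{+}\rightarrow \Fline$ then yields a componentwise equivalence by another application of Remark~\ref{strong K3}. The main technical point — and the only place where locality is really used — is producing a coherent system of splittings simultaneously for the three filtered pieces, the three graded pieces, and the Frobenius-twist equivalences $\varphi_{\bullet}$, in such a way that the block decompositions match. Since all involved homotopy groups are finite locally free of the ranks prescribed by $\sigma$, the requisite sections and retractions exist after further Zariski localization (splittings of surjections of finite locally free modules always exist after passing to principal opens that trivialize them), which is enough to conclude that $\alpha$ is an effective epimorphism of derived stacks.
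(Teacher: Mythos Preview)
Your proposal is correct and follows essentially the same route as the paper, which simply says the argument is analogous to Proposition~\ref{classical curve} using Lemma~\ref{lem strong K3} and Construction~\ref{constr K3}. One small remark: the Zariski localization you invoke is not actually needed, since Lemma~\ref{locally acyclic} applies globally over $A$ once the homotopy groups are finite projective (which Remark~\ref{strong K3} already establishes for all filtered and graded pieces), and for the same reason the ``coherent splittings'' concern dissolves---morphisms between direct sums of shifts of finite projective modules are determined by their effect on homotopy, as the relevant higher Ext groups vanish.
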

\begin{proof}
	Using that a derived $F$-zip of type $\sigma$ is automatically strong (see Lemma~\ref{lem strong K3}), we see that the proof is analogous to the proof of Proposition~\ref{classical curve} with Construction~\ref{constr K3}.
\end{proof}

\section{Application to Enriques surfaces}
\label{sec:Enriques}

One of the main reasons behind the theory of derived $F$-zips is to extend the theory of $F$-zips so that we can use it on geometric objects that have non-degenerate Hodge--de Rham spectral sequence. One example of such geometric objects consists of  Enriques surfaces in characteristic $2$.  Here, we have three types of Enriques surfaces: $\ZZ/2\ZZ$, $\mu_{2}$ and $\alpha_{2}$. The Enriques surfaces of type $\alpha_{2}$ are of particular interest for us since they have non-degenerate Hodge--de Rham spectral sequences (for the other types, the spectral sequences degenerate).  One can show (see \cite{Lied}) that the moduli stack $\Mcal$ of Enriques surfaces has three substacks $\Mcal_{\alpha_{2}}$, $\Mcal_{\ZZ/2\ZZ}$ and $\Mcal_{\mu_{2}}$ that classify precisely these three types. Further, $\Mcal_{\alpha_{2}}$ and $\Mcal_{\ZZ/2\ZZ}$ are open and $\Mcal_{\mu_{2}}$ is closed in $\Mcal$. We will come to the same result using the theory of derived $F$-zips (see Proposition~\ref{prop Enriques}) since the substacks corresponding to the types of Enriques surfaces can be classified by the corresponding type of the derived $F$-zip associated to the de Rham hypercohomology.

\subsection{Overview}

We will briefly recall the definition of Enriques surfaces and some properties. We use the upcoming book of Cossec, Dolgachev and Liedtke as a reference (see \cite{Enriques}). For this, fix an algebraically closed field $k$ of characteristic $p>0$.
 
\begin{defi}
	An Enriques surface is a proper smooth surface over $k$ with Kodaira dimension $0$ and $b_2(X)\coloneqq \dim_{\QQ_\ell}H^2_{\et}(X,\QQ_\ell) = 10$, where $\ell\not = p$ is a prime.
\end{defi}

\begin{prop}
	Let $S$ be an Enriques surface over $k$. If the characteristic is $p=2$, then the group scheme of divisor classes which are numerically equivalent to $0$, denoted by $\Pic^{\tau}_{S/k}$, is either $\ZZ/2\ZZ,\mu_2$ or $\alpha_2$. In characteristic greater than $2$, we have $\Pic^{\tau}_{S/k}\cong \ZZ/2\ZZ$.
\end{prop}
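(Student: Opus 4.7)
The plan is to split the proof into two parts: first establishing that $\Pic^{\tau}_{S/k}$ is a finite commutative $k$-group scheme of order $2$, and then invoking the classification of such group schemes over an algebraically closed field depending on the characteristic. Both steps are essentially classical and can be found in the upcoming book of Cossec--Dolgachev--Liedtke, so this is really a matter of assembling existing results rather than proving anything new.

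For the first step, I would use the characterization of Enriques surfaces via their numerical invariants. Since $S$ has Kodaira dimension $0$ and $b_{2}(S)=10$, the canonical class $K_{S}$ is numerically trivial but the canonical sheaf $\omega_{S}$ represents a non-trivial element of $\Pic(S)$ satisfying $2K_{S}\sim 0$. This gives a non-trivial $2$-torsion element in $\Pic^{\tau}_{S/k}(k)$, so $\Pic^{\tau}_{S/k}$ contains a subgroup scheme of order at least $2$. The bound $\#\Pic^{\tau}_{S/k}=2$ then follows from the computation of $\chi(\Ocal_{S})=1$ together with the numerical invariants (using Riemann--Roch and the fact that $\Pic^{0}_{S/k,\textup{red}}$ is trivial for an Enriques surface, as its tangent space $H^{1}(S,\Ocal_{S})$ either vanishes or -- in the pathological characteristic~$2$ cases -- is one-dimensional but carries no reduced structure). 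So $\Pic^{\tau}_{S/k}$ is a finite commutative $k$-group scheme of order~$2$.

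For the second step, I would invoke the classification of finite commutative group schemes of order $p$ over an algebraically closed field $k$: if $p=\chr(k)\neq 2$, any such group scheme is étale and therefore isomorphic to the constant group scheme $\ZZ/2\ZZ$; if $\chr(k)=2$, then by a direct analysis of the coordinate ring (a local $k$-algebra of dimension $2$) together with Cartier duality, the only possibilities are the étale group scheme $\ZZ/2\ZZ$, its Cartier dual $\mu_{2}$, or the self-dual infinitesimal group scheme $\alpha_{2}$. This classification is standard and can be extracted from, e.g., the general theory of finite flat group schemes.

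The hard part is not the proof itself but rather justifying the claim that $\#\Pic^{\tau}_{S/k}=2$ in the problematic characteristic~$2$ setting, where $H^{1}(S,\Ocal_{S})$ can be non-zero and the reduced Picard scheme can disagree with the full Picard scheme. Here one really has to appeal to the refined analysis of Bombieri--Mumford (and its treatment in \cite{Enriques}) to confirm that the full scheme-theoretic structure of $\Pic^{\tau}_{S/k}$ is still of order $2$, with the non-reduced structure precisely reflecting the $\mu_{2}$ or $\alpha_{2}$ possibilities. Since this step is entirely classical and outside the derived $F$-zip machinery of the paper, I would simply cite the relevant sections of \cite{Enriques} (and \cite{Lied}) for the proof.
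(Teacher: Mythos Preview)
The paper does not prove this proposition at all; it is stated without proof in the overview subsection on Enriques surfaces, where the author explicitly says he is recalling properties from the book of Cossec--Dolgachev--Liedtke \cite{Enriques}. Your final suggestion to simply cite \cite{Enriques} is therefore exactly what the paper does, and nothing more is needed.

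That said, your sketch of the classical argument has an inaccuracy worth flagging. You claim that $\omega_{S}$ is a non-trivial $2$-torsion element of $\Pic(S)$, giving a non-trivial point of $\Pic^{\tau}_{S/k}(k)$. This is only correct for classical Enriques surfaces (and in characteristic $p>2$). For the $\mu_{2}$ and $\alpha_{2}$ types in characteristic~$2$, one has $\omega_{S}\cong\Ocal_{S}$, and $\Pic^{\tau}_{S/k}(k)$ is trivial since $\mu_{2}(k)=\alpha_{2}(k)=\{e\}$ over an algebraically closed field of characteristic~$2$. The non-triviality of $\Pic^{\tau}_{S/k}$ in those cases comes entirely from its non-reduced scheme structure, detected via $H^{1}(S,\Ocal_{S})\neq 0$ as the tangent space of $\Pic^{0}_{S/k}$ at the origin. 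So the difficulty you identify in the last paragraph is not just about the upper bound on the order but equally about the lower bound; the Bombieri--Mumford analysis is needed for both directions, and your two-step decomposition does not cleanly separate them.
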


\begin{defi}
	Let $S$ be an Enriques surface over $k$, and assume $p=2$. Then we call $S$ \textit{classical} (resp.\ \textit{singular} or \textit{supersingular}) or of \textit{type $\ZZ/2\ZZ$ $($resp.\ $\mu_{2}$ or $\alpha_{2})$} if $\Pic^{\tau}_{S/k}$ is isomorphic to $\ZZ/2\ZZ$ (resp.\ $\mu_2$ or $\alpha_2$).
\end{defi}

\begin{prop}
	Let $S$ be an Enriques surface over $k$. The associated Hodge--de Rham spectral sequence degenerates if and only if $S$ is not supersingular.
\end{prop}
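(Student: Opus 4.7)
The plan is to treat the two implications separately and to split the analysis by the characteristic of $k$ and the type of $\Pic^{\tau}_{S/k}$. In characteristic $p\neq 2$, only type $\ZZ/2\ZZ$ occurs, so there is nothing to prove on one side, and on the other one appeals to the fact that every Enriques surface in residue characteristic $\neq 2$ lifts to $W(k)$. Combined with the theorem of Deligne--Illusie, this yields degeneration of the Hodge--de Rham spectral sequence. The same reasoning handles the characteristic $2$ cases of type $\ZZ/2\ZZ$ (classical) and $\mu_{2}$ (singular): both classes admit liftings to $W_{2}(k)$ (see \cite{Lied}), so Deligne--Illusie again forces the spectral sequence to degenerate at $E_{1}$.

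For the remaining direction, I would show that if $S$ is supersingular then the Hodge--de Rham spectral sequence cannot degenerate. The cleanest approach is to compare total dimensions. One first recalls, from the classification in \cite{Enriques}, the Hodge numbers of the three types. For classical and singular Enriques surfaces one has $h^{0,0}=h^{2,2}=h^{2,0}=0$ in the relevant off-diagonal positions, while $h^{1,1}=10$, and the total $\sum_{i,j} h^{i,j}$ equals $\sum_n \dim_k H^{n}_{\dR}(S/k)$; the latter can be read off either from crystalline cohomology (which agrees with de Rham up to torsion in these types) or from existing lifts. For a supersingular $S$, the non-trivial global $1$-form and the non-vanishing of $H^{0}(S,\Omega^{2}_{S/k})$ caused by the $\alpha_{2}$-torsor structure inflate $\sum_{i,j} h^{i,j}$ strictly above $\sum_{n}\dim_k H^{n}_{\dR}(S/k)$. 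The de Rham Betti numbers can be pinned down via Poincar\'e duality together with the Euler characteristic formula $\chi_{\dR}(S)=c_{2}(S)=12$ and the known crystalline computation of $\dim_k H^{2}_{\dR}(S/k)$. The resulting strict inequality
\[
\sum_{i+j=n}\dim_k H^{j}(S,\Omega^{i}_{S/k})\;>\;\dim_k H^{n}_{\dR}(S/k)
\]
for some $n$ rules out degeneration at $E_{1}$.

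The main obstacle will be a careful and correct computation of the Hodge numbers of a supersingular Enriques surface, together with the identification of $\dim_k H^{n}_{\dR}(S/k)$, since the standard crystalline--de Rham comparison is subtle when $\Pic^{\tau}_{S/k}$ has non-smooth component. For this step I would invoke the Bombieri--Mumford analysis (as recorded in \cite{Enriques2} and \cite{Enriques}) directly, together with Serre duality and the canonical trivialisation $\omega_{S}\cong \Ocal_{S}$ which holds exactly in the $\alpha_{2}$-case. An alternative, more in the spirit of the present paper, is to use Theorem~\ref{de Rham strong degen}: degeneration of the Hodge--de Rham spectral sequence (together with local freeness of the Hodge sheaves) is equivalent to the Hodge filtration of $\underline{Rf_*\Omega^{\bullet}_{X/S}}$ being strong, so one may test non-degeneration by exhibiting a non-split piece in the Hodge filtration attached to an $\alpha_{2}$-Enriques surface.
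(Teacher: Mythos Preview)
Your argument has a genuine gap in the ``if'' direction for characteristic $2$. The Deligne--Illusie theorem requires $\dim X < p$ for the Hodge--de Rham spectral sequence to degenerate from a lift to $W_{2}(k)$; for a surface in characteristic $2$ this hypothesis fails, so even granting liftability of classical and singular Enriques surfaces to $W_{2}(k)$, you cannot conclude degeneration from \cite{DI}. (It is also not clear that the liftability claim you cite covers all classical Enriques surfaces in characteristic $2$.) The characteristic $p\neq 2$ part of your argument is fine, since then $2<p$.

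The paper bypasses this entirely. It does not argue via lifting at all: it simply cites the tables of Hodge numbers $h^{i,j}$ and de Rham Betti numbers $h^{n}_{\dR}$ for each type, computed in \cite{Enriques}, and compares $\sum_{i+j=n} h^{i,j}$ with $h^{n}_{\dR}$ directly. Degeneration holds exactly when these agree for every $n$, and the tables show equality in the $\ZZ/2\ZZ$ and $\mu_{2}$ cases and a strict inequality in the $\alpha_{2}$ case. Your treatment of the supersingular (non-degeneration) direction is in the same spirit as the paper's, but note that your stated values are off (for instance $h^{0,0}=1$ always, and $h^{1,1}$ equals $10$ only for type $\mu_{2}$, while it equals $12$ for the other two types); the correct numbers are what make the comparison work.
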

\begin{proof}
	This is \cite[Corollary 1.4.15]{Enriques}, but let us recall the arguments (note that there is a typo in the reference, as the authors compute the crystalline cohomology and conclude the de Rham cohomology by the universal coefficient formula, which implies the numbers in Table (\ref{table de rham})).

	In \cite[Section 1.4, Tables 1.2 and~1.3]{Enriques}, the authors give the exact Hodge numbers and dimensions of the de Rham cohomology, which in particular implies the result about degeneracy. 
 
	Let us be a bit more precise and recall the important numbers. Let $h^{i,j}$ denote the $k$-dimension of $H^{j}(S,\Omega^{i}_{S/k})$ and $h^{i}_{\dR}$ the dimension of $H^{i}(S,\Omega_{S/k}^{\bullet})$. Then we have the following table linking the type of $S$ with the Hodge numbers: 	
	\begin{equation}
	\label{table hodge}
		\begin{tabular}{|| c || c || c | c || c | c | c ||} 
 			\hline
 		\rule{0pt}{3ex}	$\Pic^{\tau}_{S/k}$ & $h^{0,0}$  & $h^{1,0}$ & $h^{0,1}$ & $h^{0,2}$ & $h^{1,1}$ & $h^{2,0}$ \\ [0.5ex] 
 			\hline\hline
 			$\mu_{2}$ & 1 & 0 & 1 & 1 & 10 & 1  \\ 
 			\hline
			 $\ZZ/2\ZZ$ & 1 & 1 & 0 & 0 & 12 & 0  \\
			 \hline
			 $\alpha_{2}$ & 1 & 1 & 1 & 1 & 12 & 1  \\
			 \hline
		\end{tabular}
	\end{equation}
	The dimension of the de Rham cohomology is given as follows (this does not depend on the type of the Enriques surfaces): 
	\begin{equation}
	\label{table de rham}
		\begin{tabular}{|| c | c | c |  c | c ||} 
 			\hline
 			\rule{0pt}{3ex}	 $h^{0}_{\dR}$ &$h^{1}_{\dR}$ & $h^{2}_{\dR}$ & $h^{3}_{\dR}$ & $h^{4}_{\dR}$\\ [0.5ex] 
 			\hline\hline
 			 1 & 1 & 12 & 0 & 1 \\ 
 			\hline
		\end{tabular}
	\end{equation}
	By Serre duality, this table is enough to conclude (non-)degeneracy.
\end{proof}

We denote by $(\Pic^{\tau}_{S/k})^{D}$ the Cartier dual of $\Pic^{\tau}_{S/k}$. Note that $\alpha_2^D=\alpha_2$, $\ZZ/2\ZZ^D=\mu_2$ and $\mu_2^D=\ZZ/2\ZZ$.

\begin{prop}
	Let $S$ be an Enriques surface over $k$. There exists a	non-trivial $(\Pic^{\tau}_{S/k})^{D}$-torsor
	$$
	\pi\colon X\longrightarrow S. 
	$$
	In particular, $\pi$ is finite flat of degree $2$. Note that if $p\not = 2$ or $S$ is of type $\mu_2$, then $\pi$ is \'etale.
\end{prop}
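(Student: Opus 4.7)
The plan is to construct $\pi$ canonically from the inclusion $\iota \colon G^{D} = \Pic^{\tau}_{S/k} \hookrightarrow \Pic_{S/k}$, where we set $G \coloneqq (\Pic^{\tau}_{S/k})^{D}$. By hypothesis $G$ is itself a finite flat commutative $k$-group scheme of order $2$, and $\iota$ is a non-trivial homomorphism of $k$-group schemes. The key input I would use is the Cartier-dual classification of $G$-torsors on the smooth proper $k$-scheme $S$.

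Concretely, I would first show that $H^{i}_{\textup{fppf}}(\Spec k, G) = 0$ for every $i \geq 1$ and every finite commutative $k$-group scheme $G$ of order $2$. Since $k$ is algebraically closed, for $G = \ZZ/2\ZZ$ this follows from Galois-cohomological vanishing, for $G = \mu_{2}$ from Kummer theory combined with divisibility of $k^{\times}$, and for $G = \alpha_{2}$ from the short exact sequence $0 \to \alpha_{2} \to \mathbb{G}_{a} \xrightarrow{\;F\;} \mathbb{G}_{a} \to 0$ together with the acyclicity of $\mathbb{G}_{a}$ on $\Spec k$ and the surjectivity of Frobenius on $k$. Then, combining Cartier duality $G \simeq \underline{\Hom}(G^{D},\Gm)$ with the Leray spectral sequence for the structure map $S \to \Spec k$, I obtain a canonical isomorphism
$$
H^{1}_{\textup{fppf}}(S,G) \;\xrightarrow{\;\lowsim\;}\; \Hom_{k\textup{-gp}}(G^{D},\Pic_{S/k}).
$$
The preimage of $\iota$ under this isomorphism is the desired non-trivial $G$-torsor $\pi\colon X \to S$.

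The remaining properties then follow formally from the structure of $G$. Being a $G$-torsor, $\pi$ is fppf-locally isomorphic to the projection $G \times_{k} S \to S$, so it is finite locally free of rank $|G| = 2$; in particular finite, flat, and of degree $2$. \'Etaleness of $\pi$ is equivalent to \'etaleness of $G$: if $p \neq 2$, every finite group scheme of order $2$ over $k$ is \'etale, while if $S$ is of type $\mu_{2}$ then $G^{D} = \mu_{2}$ forces $G = \ZZ/2\ZZ$, which is \'etale; in both cases $\pi$ is therefore \'etale.

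The main obstacle will be the clean derivation of the Cartier-duality bijection $H^{1}_{\textup{fppf}}(S,G) \simeq \Hom_{k\textup{-gp}}(G^{D},\Pic_{S/k})$ in the inseparable case $G = \alpha_{2}$, where the standard \'etale-cohomological shortcuts are unavailable and one must be careful with the comparison via the $\mathbb{G}_{a}$-resolution and the vanishing of higher fppf cohomology of $\Pic_{S/k}$ on $\Spec k$. An alternative, more hands-on route would be to choose a non-trivial point of $\Pic^{\tau}_{S/k}$ (possibly only after a suitable flat cover, in the non-reduced cases), pull back the universal line bundle on $S \times_{k} \Pic_{S/k}$ along $\iota$, and realise $\pi$ as $\underline{\Spec}_{S}$ of the explicit $\Ocal_{S}$-algebra built from this data; this recovers the classical K3-like double cover when $p \neq 2$ and produces the expected flat double cover in each of the three characteristic-$2$ types.
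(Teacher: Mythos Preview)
The paper does not prove this statement at all; it simply cites \cite[Theorem~1.3.1]{Enriques}. Your outline is therefore strictly more than what the paper provides, and it is essentially the standard argument one finds in that reference: realise the non-trivial $G$-torsor as the image of the tautological inclusion $G^{D}=\Pic^{\tau}_{S/k}\hookrightarrow\Pic_{S/k}$ under the Cartier-duality identification of $H^1_{\textup{fppf}}(S,G)$ with homomorphisms into the Picard scheme, and then read off finiteness, flatness, degree, and \'etaleness from the structure of $G$.

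One small sharpening: the isomorphism $H^{1}_{\textup{fppf}}(S,G)\simeq\Hom_{k\textup{-gp}}(G^{D},\Pic_{S/k})$ does not follow from the Leray spectral sequence for $f\colon S\to\Spec k$ together with $H^{i}(\Spec k,G)=0$ alone. What one actually uses is the identification $H^{1}(S,G)\simeq\textup{Ext}^{1}_{S}(G^{D}_{S},\Gm)$ (via $G\simeq\underline{\Hom}(G^{D},\Gm)$ and the vanishing of $\underline{\textup{Ext}}^{>0}(G^{D},\Gm)$), followed by the adjunction/base-change spectral sequence $\textup{Ext}^{p}_{k}(G^{D},R^{q}f_{*}\Gm)\Rightarrow\textup{Ext}^{p+q}_{S}(G^{D}_{S},\Gm)$; the required input is then $\textup{Ext}^{i}_{k}(G^{D},\Gm)=0$ for $i=1,2$ over the algebraically closed field $k$, not merely $H^{i}(\Spec k,G)=0$. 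You already flag this step as the delicate one, and your proposed fallback via the explicit $\underline{\Spec}_{S}$ construction from a line bundle on $S\times_{k}\Pic^{\tau}_{S/k}$ is exactly how the cited reference proceeds in practice.
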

\begin{proof}
	See \cite[Theorem 1.3.1]{Enriques}.
\end{proof}

\begin{defi}
	A finite flat map $X\rightarrow S$ of degree $2$ is called a \textup{K3}\textit{-cover}.
\end{defi}

\begin{prop}
	Let $S$ be an Enriques surface over $k$. Let $\pi\colon X\rightarrow S$ be a \textup{K3}-cover. Then $X$ is integral Gorenstein, satisfying
	$$
	H^1(X,\Ocal_X)=0,\quad \omega_X\cong \Ocal_X.
	$$
	Further, we have the following: 
	\begin{enumerate}
		\item	If $p\not=2$ or $S$ is of type $\mu_2$, then $X$ is a smooth \textup{K3}-surface. 
		\item	If $p=2$ and $S$ is of type $\ZZ/2\ZZ$ or $\alpha_2$, then $X$ is not a smooth surface.
	\end{enumerate}
\end{prop}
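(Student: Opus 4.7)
The proof proceeds by analysing the algebra structure of $\pi_*\Ocal_X$ together with standard duality and cohomological arguments, and is essentially classical and contained in \cite{Enriques}; so the plan is to record the main steps and indicate where the real content lies.

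First, I would use the torsor structure to obtain a decomposition $\pi_*\Ocal_X \simeq \Ocal_S \oplus \Lcal$ of $\Ocal_S$-modules, with the $\Ocal_S$-algebra structure encoded by an isomorphism $\Lcal^{\otimes 2}\simeq \Ocal_S$. The essential input from Enriques-surface theory is the identification $\Lcal\simeq \omega_S^{-1}$: equivalently, the torsor class in $(\Pic^{\tau}_{S/k})^{D}$ corresponds under Cartier duality to $[\omega_S]\in \Pic^{\tau}_{S/k}$. Pinning this identification down is the main technical input and is what must be extracted from \cite{Enriques}.

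Second, I would establish the Gorenstein property and trivialise $\omega_X$. Since $\pi$ is finite flat and $S$ is smooth, $X$ is Cohen--Macaulay, and relative Grothendieck duality for the finite flat morphism $\pi$ yields
$$
\pi_*\omega_X \simeq \mathcal{H}om_{\Ocal_S}(\pi_*\Ocal_X,\omega_S).
$$
Combined with the decomposition $\pi_*\Ocal_X \simeq \Ocal_S\oplus \omega_S^{-1}$ and the self-duality of $\pi_*\Ocal_X$ coming from the trace pairing on the degree-two cover, this simplifies to $\omega_X\simeq \Ocal_X$; in particular the dualising sheaf is invertible, so $X$ is Gorenstein.

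Third, for the cohomology I would pass to $S$ using $\pi$-finiteness, obtaining
$$
H^i(X,\Ocal_X)\simeq H^i(S,\Ocal_S)\oplus H^i(S,\omega_S^{-1}).
$$
The Euler-characteristic identity $\chi(X,\Ocal_X) = 2\chi(S,\Ocal_S) = 2$, together with $h^0(X,\Ocal_X)=1$ (from integrality) and $h^2(X,\Ocal_X)=h^0(X,\omega_X)=1$ (from $\omega_X\simeq \Ocal_X$ and Serre duality), forces $h^1(X,\Ocal_X)=0$. Integrality is checked locally: étale-locally on $S$ the cover has the form $\Spec\Ocal_S[t]/(t^2-f)$, and non-triviality of the torsor prevents $f$ from being a square in $K(S)$, so $X$ is generically a field extension of $S$, hence reduced and irreducible.

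Fourth, for the smoothness dichotomy everything hinges on whether $G = (\Pic^{\tau}_{S/k})^{D}$ is étale. If $p\neq 2$, or $p=2$ and $\Pic^{\tau}_{S/k}\simeq \mu_2$ so that $G\simeq \ZZ/2\ZZ$, then $\pi$ is finite étale and $X$ inherits smoothness from $S$; paired with $h^1(\Ocal_X)=0$ and $\omega_X\simeq \Ocal_X$, this makes $X$ a smooth K3 surface. Otherwise $p=2$ and $G$ is infinitesimal ($\mu_2$ or $\alpha_2$), so $\pi$ is purely inseparable; on local charts of the form $\Ocal_S[t]/(t^2-f)$ the Jacobian criterion in characteristic two places singularities exactly where $df$ vanishes on $S$, and a short argument using that $f$ cannot be étale-coordinate-valued everywhere (by properness of $S$) shows this locus is non-empty, so $X$ is not smooth. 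The main obstacle throughout, as already indicated, is the identification $\Lcal\simeq \omega_S^{-1}$, on which both $\omega_X\simeq \Ocal_X$ and $H^1(X,\Ocal_X)=0$ hinge.
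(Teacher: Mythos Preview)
The paper does not give a proof of this proposition at all; it simply refers to \cite[Proposition 1.3.3]{Enriques}. Your sketch therefore goes well beyond what the paper does, and it is a reasonable outline of the classical argument one finds in that reference. You have correctly isolated the decisive input, namely the identification of the line bundle $\Lcal$ in $\pi_*\Ocal_X\simeq\Ocal_S\oplus\Lcal$ with the (anti)canonical bundle, on which both $\omega_X\simeq\Ocal_X$ and the Euler-characteristic count rest.

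One imprecision worth flagging: your claim that the algebra structure is ``encoded by an isomorphism $\Lcal^{\otimes 2}\simeq\Ocal_S$'' and the local model $\Ocal_S[t]/(t^2-f)$ are only correct for the $\mu_2$-torsor case. For $\ZZ/2\ZZ$-torsors in characteristic~$2$ the local model is Artin--Schreier, $\Ocal_S[t]/(t^2-t-f)$, and for $\alpha_2$-torsors the data involve a derivation rather than just a function; the multiplication map $\Lcal\otimes\Lcal\to\pi_*\Ocal_X$ need not land isomorphically in $\Ocal_S$. This does not affect your cohomological and duality computations, which only use the module splitting, but your integrality argument and your singular-locus analysis in step four are written for the $\mu_2$ local model and need to be adapted to the other two cases to actually cover the content of part~(2).
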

\begin{proof}
	See \cite[Proposition 1.3.3]{Enriques}.
\end{proof}

\begin{defi}
	Let $S$ be a  $\FF_{p}$-scheme. An \textit{Enriques surface $X$ over $S$} is a proper smooth morphism of algebraic spaces $f\colon X\rightarrow S$ such that the geometric fibers of $f$ are Enriques surfaces.
\end{defi}

\subsection{Derived $\boldsymbol{F}$-zips associated to Enriques surfaces}
In the following, every scheme will be in characteristic $2$.

We let $\Mcal$ denote the stack classifying Enriques surfaces with ``nice'' polarization, \textit{i.e.}\ the functor that sends an $\FF_{2}$-scheme $S$ to the groupoid of pairs $(X/S,\Lcal)$ consisting of Enriques surfaces $X\rightarrow S$ with ``nice'' line bundle $\Lcal$ on $X$~-- the term ``nice'' means a polarization such that $\Mcal$ defines an Artin stack. Examples of such classifying stacks are given in \cite[Theorem 5.11.6]{Enriques} and \cite[Section 5]{Lied} (we only need that $\Mcal$ is an Artin stack and are not interested in the polarization itself and as there are many different such polarizations such that $\Mcal$ is an Artin stack, we omit the explicit description). By our previous constructions, we get a morphism 
$$
p\colon \Mcal\longrightarrow t_0\FZip_{S},\quad (X/S,\Lcal)\longmapsto \underline{Rf_{\ast}\Omega_{X/S}^{\bullet}}.
$$
The Hodge numbers and dimension of the de Rham cohomology for Enriques surfaces over algebraically closed fields define types for the underlying $F$-zip (see Tables~\eqref{table hodge} and~\eqref{table de rham}).
We denote those by $\tau_{\ZZ/2\ZZ},\tau_{\mu_2},\tau_{\alpha_2}$\footnote{Recall that the types are given by $\tau_{\ast}(i)_{j}\coloneqq h^{i,-j-i}$ (the Hodge numbers of the corresponding types). Table (\ref{table hodge}) shows that $\tau_{\mu_{2}}\leq \tau_{\alpha_{2}}$ and $\tau_{\ZZ/2\ZZ}\leq \tau_{\alpha_{2}}$ and that there is  no relation between $\tau_{\mu_{2}}$ and $\tau_{\ZZ/2\ZZ}$.} for the types defined by the Hodge numbers of $\ZZ/2\ZZ$, $\mu_2$, $\alpha_2$ Enriques surfaces, respectively.
 
We denote the corresponding loci by $\Mcal_{\ZZ/2\ZZ}$ and $\Mcal_{\mu_2}$; \textit{i.e.}\ these denote the substacks classifying Enriques surfaces of types $\ZZ/2\ZZ$ and $\mu_{2}$, respectively. We denote the substack of $\alpha_{2}$ Enrique surfaces $f\colon X\rightarrow S$ such that $R^{i}f_{*}\Omega_{X/S}^{j}$ is finite locally free for all $i,j\in\ZZ$ by $\Mcal_{\alpha_{2}}$.\footnote{Note that if $f\colon X\rightarrow S$ is a locally Noetherian reduced Enriques surface, we can use \cite[Proposition (7.8.4)]{EGA3.2}
to deduce that $R^{i}f_{*}\Omega_{X/S}^{j}$ is finite locally free for all $i,j\in\ZZ$.} With these definitions, we see that
$$
p^{-1}(t_0\FZip^{\leq \tau_{\alpha_2}})\coloneqq \Mcal\times_{t_{0}\FZip_{S}}t_0\FZip^{\leq \tau_{\alpha_2}}\simeq \Mcal,
$$
and we will see in the following that the substacks $\Mcal_{\ZZ/2\ZZ}$ and $\Mcal_{\mu_2}$ are open in $\Mcal$ and $\Mcal_{\alpha_{2}}$ is closed in~$\Mcal$.

\begin{prop}
\label{prop Enriques}
	The substacks $\Mcal_{\ZZ/2\ZZ}$ and $\Mcal_{\mu_2}$ are open algebraic substacks, and $\Mcal_{\alpha_2}$ is a closed algebraic substack of $\Mcal$ locally of finite presentation.
\end{prop}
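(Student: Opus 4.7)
The plan is to identify each of the three substacks with the pullback along $p\colon\Mcal\to t_0\FZip_S$ of an appropriate type-specified substack of $t_0\FZip_S$, and then invoke Proposition~\ref{prop open and closed of F-zip} together with the fact that open (resp.\ closed) substacks of algebraic stacks are algebraic. Throughout, we use that for a proper smooth morphism $f\colon X\to S$, the complexes $Rf_*\Omega^i_{X/S}$ commute with arbitrary base change, so the graded pieces $\gr^i\HDG\simeq Rf_*\Omega^i_{X/S}[-i]$ of the associated derived $F$-zip have fiber homotopy groups computed by the Hodge cohomology of the geometric fibers.

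For $\Mcal_{\mu_2}$ and $\Mcal_{\ZZ/2\ZZ}$: by inspection of Table~\eqref{table hodge}, the three types $\tau_{\mu_2},\tau_{\ZZ/2\ZZ},\tau_{\alpha_2}$ are pairwise distinct and satisfy $\tau_{\mu_2},\tau_{\ZZ/2\ZZ}\leq \tau_{\alpha_2}$, while $\tau_{\mu_2}$ and $\tau_{\ZZ/2\ZZ}$ are mutually incomparable. Since for any $(X/S,\Lcal)\in\Mcal(S)$ the derived $F$-zip $\underline{Rf_*\Omega^\bullet_{X/S}}$ has type exactly $\tau_\star$ at a geometric point $\bar s$ when the fiber $X_{\bar s}$ is of type $\star\in\{\ZZ/2\ZZ,\mu_2,\alpha_2\}$, the conditions ``$\underline{Rf_*\Omega^\bullet_{X/S}}$ has type at most $\tau_{\mu_2}$ (resp.\ at most $\tau_{\ZZ/2\ZZ}$) at every geometric point'' cut out exactly $\Mcal_{\mu_2}$ (resp.\ $\Mcal_{\ZZ/2\ZZ}$). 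Hence $\Mcal_{\mu_2}\simeq p^{-1}(t_0\FZip^{\leq\tau_{\mu_2}}_S)$ and analogously for $\ZZ/2\ZZ$. By Proposition~\ref{prop open and closed of F-zip} these are base changes of quasi-compact open immersions into $t_0\FZip_S$, so $\Mcal_{\mu_2},\Mcal_{\ZZ/2\ZZ}\hookrightarrow \Mcal$ are quasi-compact open immersions.

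For $\Mcal_{\alpha_2}$: we have $\Mcal\simeq p^{-1}(t_0\FZip^{\leq\tau_{\alpha_2}}_S)$, and by Proposition~\ref{prop open and closed of F-zip} the inclusion $t_0\FZip^{\tau_{\alpha_2}}_S\hookrightarrow t_0\FZip^{\leq\tau_{\alpha_2}}_S$ is a closed immersion locally of finite presentation. It suffices to identify $p^{-1}(t_0\FZip^{\tau_{\alpha_2}}_S)$ with $\Mcal_{\alpha_2}$; then base change gives the result. An $(X/S,\Lcal)$ lies in $p^{-1}(t_0\FZip^{\tau_{\alpha_2}}_S)$ iff each $\gr^i\HDG\simeq Rf_*\Omega^i_{X/S}[-i]$ is homotopy finite projective of rank $\tau_{\alpha_2}(i)$. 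Using $\pi_j(Rf_*\Omega^i_{X/S}[-i])\cong R^{-j-i}f_*\Omega^i_{X/S}$, this translates to the $R^kf_*\Omega^i_{X/S}$ being finite locally free of the rank dictated by the $\alpha_2$ Hodge numbers; the latter rank condition at every geometric point forces all geometric fibers to be $\alpha_2$ Enriques surfaces, recovering exactly the defining condition of $\Mcal_{\alpha_2}$.

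The bulk of the argument is bookkeeping; the one conceptual step that needs attention is the translation of the homotopical conditions on the derived $F$-zip (type at most $\tau_\star$, respectively homotopy finite projectivity of type $\tau_{\alpha_2}$) into geometric conditions on the Enriques surface, which relies both on the tabulation of Hodge numbers in Table~\eqref{table hodge} and on base change for $Rf_*\Omega^i_{X/S}$. The latter is standard for proper smooth morphisms; the former is an inequality check among three concrete types. No genuinely hard step remains.
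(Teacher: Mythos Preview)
Your proposal is correct and follows essentially the same strategy as the paper: identify $\Mcal_{\ZZ/2\ZZ}\simeq p^{-1}(t_0\FZip^{\leq\tau_{\ZZ/2\ZZ}})$, $\Mcal_{\mu_2}\simeq p^{-1}(t_0\FZip^{\leq\tau_{\mu_2}})$, and $\Mcal_{\alpha_2}\simeq p^{-1}(t_0\FZip^{\tau_{\alpha_2}})$, then apply Proposition~\ref{prop open and closed of F-zip}.

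The one place where the paper does more work than you is the verification that the resulting substacks are \emph{algebraic stacks} in the classical sense, not merely geometric derived stacks. Since $\Mcal$ is an Artin stack and Proposition~\ref{prop open and closed of F-zip} is stated in the derived framework, the paper explicitly invokes the truncation functor (Remark~\ref{truncation}) to transport the open immersion property back to the classical setting, then checks that the resulting $1$-geometric $1$-truncated stack has representable diagonal and that its $0$-atlas is smooth in the classical sense (via an auxiliary smooth cover argument for algebraic spaces). You compress this into ``open (resp.\ closed) substacks of algebraic stacks are algebraic'', which is of course true, but the paper is being careful precisely because the ambient formalism is derived and the translation is not entirely tautological.
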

\begin{proof}
	Let us look at $\Mcal_{\ZZ/2\ZZ}\simeq p^{-1}(t_0\FZip^{\leq \tau_{\ZZ/2\ZZ}})$. We claim that this is an open substack of $\Mcal$. Since $\Mcal$ is an Artin stack, we know that it is a $1$-geometric $1$-truncated derived stack in our sense. In particular, since the base changes of open immersions are open immersions (\textit{i.e.}\ flat, locally finitely presented monomorphisms), we know by Remark~\ref{truncation} and Proposition~\ref{prop open and closed of F-zip} that $\Mcal_{\ZZ/2\ZZ}\hookrightarrow\Mcal$ is a flat, locally finitely presented monomorphism and in particular $\Mcal_{\ZZ/2\ZZ}$ is $1$-geometric. Since $\Mcal_{\ZZ/2\ZZ}\hookrightarrow \Mcal$ is a monomorphism (\textit{i.e.}\ $(-1)$-truncated), we see that $\Mcal_{\ZZ/2\ZZ}$ is $1$-truncated (see \cite[Lemma 5.5.6.14]{HTT}). In fact, we claim that this shows that $\Mcal_{\ZZ/2\ZZ}$ is an algebraic stack.
 
	To see this, note that since $\Mcal_{\ZZ/2\ZZ}\hookrightarrow \Mcal$ is a monomorphism, the diagonal of $\Mcal_{\ZZ/2\ZZ}$ is representable by an algebraic space. Further, we claim that the $1$-geometricity of $\Mcal_{\ZZ/2\ZZ}$ implies that we have a smooth atlas by a coproduct of affine schemes, so a scheme.
 
	To see the last part, let us look at a smooth $0$-atlas $q\colon\coprod\Spec(A_i)\twoheadrightarrow \Mcal_{\ZZ/2\ZZ}$. We have to check that this is smooth in the classical sense. For that consider the base change with an affine scheme $\Spec(B)\rightarrow \Mcal_{\ZZ/2\ZZ}$, denoted by $X$. This is an algebraic space and by geometricity has a smooth cover $\coprod \Spec(B_i)$ by some smooth $B$-algebras $B_i$ such that each $g\colon\Spec(B_{i})\rightarrow X$ is affine. So, we have a diagram of the following form with Cartesian square: 
	$$
	\begin{tikzcd}
		\coprod \Spec(B_{i})\arrow[r,"g"]&X\arrow[r,"f"]\arrow[d,""]& \Spec(B)\arrow[d,""]\\
		&\coprod\Spec(A_i)\arrow[r,"q"]&\Mcal_{\ZZ/2\ZZ}\rlap{.}
	\end{tikzcd}
	$$
As $g$ is smooth and surjective and $f\circ g$ is smooth, we know by descent that $f$ is smooth (as the property ``smooth'' is smooth local on the source for algebraic spaces, see \cite[06F2]{stacks-project}). Certainly $f$ is also surjective, as it is the base change of an effective epimorphism. Therefore, by definition we see that $p$ is smooth and surjective.

	The same argumentation works if we replace $\Mcal_{\ZZ/2\ZZ}$ with $\Mcal_{\mu_2}$.
 
	For the supersingular locus, we note that the inclusion
        $$
        t_0\FZip^{\tau_{\alpha_2}}\xhookrightarrow{\hphantom{aaa}} t_0\FZip^{\tau_{\leq\alpha_2}}
        $$
        is a closed immersion locally of finite presentation (again by Proposition~\ref{prop open and closed of F-zip}). So, analogously to the above, we see that $\Mcal_{\alpha_2}$ is an algebraic substack $\Mcal$ such that $\Mcal_{\alpha_{2}}\hookrightarrow \Mcal$ is a closed immersion of algebraic stacks locally of finite presentation. 
\end{proof}

\section{Derived \texorpdfstring{$\boldsymbol{F}$}{F}-zips with cup product}
\label{sec:general}

Here we discuss two possible generalizations of derived $F$-zips. Firstly, we could try to extend the theory of derived $F$-zips in such a way  that we can attach a derived $F$-zip to an lci morphism. Secondly, we could extend the theory of derived $F$-zips to the theory of derived $G$-zips as in \cite{PWZ}, for a reductive group $G$, and hope that the extra structure on the de Rham hypercohomology given by the cup product (see Section~\ref{sec extra structure}) endows it with a $G$-zip structure.
 
We will discuss both cases and show that the naive way of extending a derived $F$-zip does not work in either case. But for completeness, we will look at derived $F$-zips with some extra structure that is given by a perfect pairing. Again, we cannot connect this to the theory of $G$-zips, but this is just a very naive approach we want to discuss.

\subsection{Problems}

Now let us discuss the problems that occurred when trying to generalize the theory.

\subsubsection{Derived $\boldsymbol{F}$-zips for lci morphisms}
\label{de rham lci}
We could have defined derived $F$-zips not over animated rings but over usual commutative rings in positive characteristic. One benefit of the animation process is that simplicially every commutative ring can be approximated by smooth rings. One often uses this to generalize theories that work in the smooth case to the non-smooth case. To define derived $F$-zips, we looked at the de Rham hypercohomology of a smooth proper scheme. So to define a theory of derived $F$-zips that works for non-smooth schemes, we would need a non-smooth analogue of the de Rham hypercohomology. The most natural generalization comes from looking at the de Rham complex as a functor from smooth $\FF_p$-algebras and looking at its left Kan extension to animated $\FF_p$-algebras. This is done for example in \cite{Bhatt1,Bhatt2,Ill1} and is called the derived de Rham complex, denoted by $\dR_{X/R}$\footnote{We define $\dR_{-/R}$ as the left Kan extension of the functor $P\mapsto \Omega^{\bullet}_{P/R}$ along the inclusion $\Poly_{R}\hookrightarrow \AniAlg{R}$. Then, we denote by $\RR\dR_{-/R}$ the right Kan extension of $\dR_{-/R}$ along the Yoneda embedding $\AniAlg{R}\hookrightarrow\Pcal(\AniAlg{R}^{\op})^{\op}$ and set $\dR_{X/R}\coloneqq\RR\dR_{-/R}(X)$.} for a scheme $X$ over some ring $R$ of positive characteristic. Let us state some facts about the derived de Rham complex that can also be found in the mentioned articles by Bhatt or in the book by Illusie.
 
In the smooth case, this gives the usual de Rham hypercohomology $R\Gamma_{\dR}(X/R)$. The derived de Rham complex  naturally comes with two filtrations;  one is the conjugate filtration, and one is the Hodge filtration. These come from, respectively, the conjugate and Hodge filtration on the de Rham complex by extending via left Kan extension. The graded pieces are given by
$$
\gr^i_{\textup{conj}}\dR_{X/R}\simeq \wedge^iL_{X^{(1)}/R},\quad \gr^i_{\textup{HDG}}\dR_{X/R}\simeq \wedge^iL_{X/R}; 
$$ 
in particular, they are isomorphic up to Frobenius twist. 
 
Even though it seems natural, it is not clear that the Hodge filtration is complete, \textit{i.e.}\ $\limproj_{i}\textup{HDG}(i)\simeq 0$. Further, the filtrations may not be finite in any way. This holds more or less for any variety with isolated lci singularities. One very generic example is $A\coloneqq k[\varepsilon]/(\varepsilon^{p})$ for some field $k$ of characteristic $p>0$. One can show that $\wedge^nL_{A/k}$ is not quasi-isomorphic to $0$ for any $n\in \NN_0$ (see \cite[Remark~2.2]{Bhatt3}). This obstruction comes from the fact that in the lci case, $L_{A/k}$ is a complex concentrated in two degrees, and after base change to $k$, one can see that it is given by the direct sum of exterior powers and shifts of $k$ (see the proof of \cite[Lemma 2.1]{Bhatt3}). Now the exterior power of the shift of a module can be computed by its free divided power, which will not vanish even for higher powers (see \cite[Proposition 25.2.4.2]{SAG}). To avoid problems, we could define derived $F$-zips using non-complete filtrations, but the problem here is actually the unboundedness of the filtrations. There is no need for an $n$-atlas if we allow infinite filtrations since we would need to cover finitely many data at once, which renders this approach \textit{a priori} useless.

\subsubsection{Derived $\boldsymbol{G}$-zips}
\label{derived G-zip}
The theory of $G$-zips, for a connected reductive group $G$ over a field of characteristic $p>0$, endows the theory of $F$-zips with extra structure related to the group. The motivation behind this is that we have a cup product on the de Rham cohomologies of smooth proper maps, where the relative Hodge--de Rham spectral sequence degenerates. In even degrees, this endows the $F$-zip associated to the de Rham cohomology with a twisted symmetric structure, and in odd degrees with a twisted symplectic structure. All of this can be found in \cite{PWZ}.
 
There are three equivalent approaches to the theory of $G$-zips. The first one is to first identify the stack of $F$-zips over a scheme $S$ with the stack of vector bundles on some quotient stack $\Xfr$, where the quotient stack is defined via the following recipe. We take $\PP^{1}$ and pinch the point at $\infty$ and the point at $0$  together up to Frobenius twist. Now we let $\GG_m$ act on the affine line around $0$ in degree $1$ and on the affine line around $\infty$ in degree $-1$. Let us make precise what happens here. Vector bundles on $[\AA^1/\GG_m]$ are finitely filtered vector bundles, where depending on the action of $\GG_m$, in our case multiplication with an element in $\GG_m$ resp.\ with the inverse, we get an increasing, resp.\ decreasing, filtration (see the appendix for further details). Thus, a vector bundle on $[\PP^{1}/\Gm_{}]$ gives a vector bundle with an ascending and a descending filtration. The pullback to  $0$, resp.\ $\infty$, gives us the graded pieces. Gluing $0$ and $\infty$ together along the Frobenius, we see that a vector bundle on $\Xfr$ gives us an $F$-zip. Now a $G$-zip is just a $G$-torsor over $\Xfr$ (see Theorem~\ref{comp G-zip}).
 
Secondly, one can realize $G$-zips as exact fiber functors from finite $G$-representations to $F$-zips. Using that $F$-zips are the same as finite-dimensional vector bundles over the above quotient stack $\Xfr$ and using Tannaka duality, one sees that this description and the first one agree.
 
Lastly, there is a description of $G$-zips as a quotient stack $[G/E]$. We spare the details for the reader and refer to \cite{PWZ}, where the equivalence with the second description can also be found.

\medskip
In our context, the first and second approaches seem to be the natural ones. The first approach seems to be a bit tricky since we would need to show that there is a quotient stack such that  perfect complexes over this stack give us the derived $F$-zips. Naturally, one could take $\Xfr$ as the desired stack, and as explained in Section~\ref{app perf on pinched}, the perfect complexes on $\Xfr$ recover derived $F$-zips. But we still lack a good notion of derived groups and torsors attaching extra structure to perfect complexes.
 
For the second approach, we would need a replacement for finite $G$-representations. Looking at the works of Iwanari and Bhatt on derived Tannaka duality, it seems natural to replace $\Rep(G_{\FF_p})$ with $\PPerf(\textup{B}G_{\FF_p})$. But even though natural, it will turn out not to be  the right approach. The problem here is $\textup{B}G$. It is the classifying stack associated to a classical group scheme. Thus looking at exact fiber functors and Tannaka duality (which we do not have), one could argue that it should give us the classical theory of $G$-zips embedded into the derived setting. This is not the same as a derived analogue. For example, $\GG_m$-zips are the same as $F$-zips of rank $1$. We would expect derived $\GG_m$-zips to be derived $F$-zips of Euler characteristic $\pm 1$. But we will see that this is not completely true for exact fiber functors from $\PPerf(\BGm_{,\FF_p})$ to $\FZip(A)$ for some $\FF_{p}$-algebra $A$. Instead of derived $F$-zips of Euler characteristic $\pm 1$, they give us derived $F$-zips where the cohomologies are finite locally free, meaning that they give us (more or less) the classical theory.
 
To make everything we wrote precise, let $F$ be an exact monoidal functor from $\PPerf(\BGm_{,\FF_p})$ to $\FZip(A)$ for an $\FF_{p}$-algebra $A$. Since over a field, any complex is quasi-isomorphic to a complex with zero differentials, we see that the descent condition (induced by the Barr resolution of $\BGm_{,\FF_p}$) for a complex in $\PPerf(\BGm_{,\FF_p})$ is a condition on the cohomologies of the complex. With this, we see that $E\in \PPerf(\BGm_{,\FF_p})$ is equivalent to a finite direct sum of $E_i[i]$, where $i\in\ZZ$ and the $E_i$ are finite projective graded modules. Since $F$ is exact, we see that $F$ is already determined, up to equivalence, by its image on vector bundles on $\BGm$, \textit{i.e.}\ finite $\Gm$-representations, seen as complexes concentrated in degree 0. But finite $\Gm$-representations are generated under the tensor product by the standard representation, hence $F$ is already determined, up to equivalence, by the image of $\FF_p$, seen as a graded vector space in degree 1. This is certainly an invertible element in $\PPerf(\BGm_{,\FF_p})$, and thus $F(\FF_p)$ must also be invertible. Since the monoidal structure on $\FZip(A)$ is given componentwise, we see that the underlying module of the $F$-zip has to be invertible. But invertible perfect complexes over $A$ are locally shifts of line bundles and are globally given by the direct sum of shifts of finite projective modules (see \cite[0FNT]{stacks-project}). This is too much and would give us the classical theory of $F$-zips after passing to the cohomology.

\subsection{Extra structure coming from geometry}
\label{sec extra structure}

In this section, we naively put extra structure on derived $F$-zips by looking at the extra structure on the de Rham hypercohomology coming from the cup product, namely a perfect pairing on the underlying module of a derived $F$-zip.

In the following, we fix a ring $R$ of characteristic $p>0$.

\begin{defi}
	Let $A$ be an animated ring. Let $M$ and $N$ be perfect $A$-modules. A \textit{perfect pairing of $M$ and $N$} is a morphism $M\otimes N\rightarrow A$ such that the induced morphism $M\rightarrow N^{\vee}$ is an equivalence.
\end{defi}

\begin{rem}
Let $A$ be an animated ring. Note that any equivalence between perfect $A$-modules $M$ and $N$ of the form $M\rightarrow N^{\vee}$ induces a perfect pairing $M\otimes N\rightarrow A$ by adjunction. So giving a perfect pairing $M\otimes N\rightarrow A$ is equivalent to giving an equivalence $M\rightarrow N^{\vee}$.
\end{rem}

\begin{defi}
	Let $A$ be an animated ring. We define the $\infty$-category of perfect pairings $\textup{PP}_{A}$ over $A$ as the full subcategory of $X$, where $X$ is given by the pullback diagram
	$$
	\begin{tikzcd}
		X\arrow[r,""]\arrow[d,""]& \MMod^{\perf}_{A}\times\MMod^{\perf}_{A}\arrow[d,"{(M,N)\mapsto (M,N^{\vee})}"]\\
		\Fun(\Delta^{1},\MMod^{\perf}_{A})\arrow[r,"r"]&\Fun(\del\Delta^{1},\MMod^{\perf}_{A})\rlap{,}
	\end{tikzcd}
	$$
	of those morphisms $M\rightarrow N^{\vee}$ that are equivalences. (Note that $\textup{r}$ is given by restriction and by \cite[01F3]{kerodon} is an isofibration of simplicial sets. Thus $X$ is equivalent in $\ICat$ to the usual pullback of simplicial sets,\footnote{To check that $X$ is equivalent to the usual pullback of simplicial sets, we may use the Yoneda lemma and check that for any $\infty$-category $\Ccal$, the $\infty$-groupoid $\Fun(\Ccal,X)^{\simeq}$ is given by the usual pullback of simplicial sets, but this follows from Remark~\ref{kan fib pull}} so indeed $X$ classifies morphisms of $A$-modules $M\rightarrow N^{\vee}$).
\end{defi}

\begin{defi}
	Let $A$ be an animated $\FF_p$-algebra, $a\leq b\in \ZZ$ and $S\subset \ZZ$ be a finite subset. Let $\dRZip^{[a,b],S}_{\infty}(A)$ denote the $\infty$-category 
	$$
		\FZip^{[a,b],S}_{\infty}(A)\times_{(\colim[b-a],(\colim)^{\vee}),\MMod^{\perf}_{A}\times\MMod^{\perf}_{A}}\textup{PP}_{A},
	$$
	 \textit{i.e.}\ the $\infty$-category consisting of tuples $(\Fline,\psi)$, where $\Fline\coloneqq (C^{\bullet},D_{\bullet},\phi,\varphi_{\bullet})$ is a derived $F$-zip with $M\coloneqq\colim_{\ZZ^{\op}}C$ and  $\psi\colon M\otimes M\rightarrow \onebb[a-b]$ is a perfect pairing.\footnote{Again, as explained in the definition of $\textup{PP}_{A}$, we have that the pullback diagram defining $\dRZip^{[a,b],S}_{\infty}(A)$ is equivalent in $\ICat$ to the ordinary pullback of simplicial sets, as the projection from $\textup{PP}_{A}$ to $\MMod_{A}\times\MMod_{A}$ is an isofibration (since isofibrations are stable under pullbacks of simplicial sets by \cite[01H4]{kerodon}).} We set $$\dRZip_{\infty}(A)\coloneqq\colim_{\substack{a\leq b,\\ S\subseteq \ZZ \textup{ finite}}} \dRZip_{\infty}^{[a,b],S}(A)$$ and call its elements \textit{\textup{dR}-zips over $A$}.
\end{defi}

Next we want to show that for any proper smooth morphism $f\colon X\rightarrow S$ of schemes, we can attach a $\dR$-zip structure to the derived $F$-zip $\underline{Rf_{*}\Omega_{X/S}^{\bullet}}$. This structure comes naturally from the cup product.

\begin{lem}
\label{Tor of de rham}
	Let $A$ be a ring and $X$ be a proper smooth scheme over $A$ of relative dimension $n$. The de Rham hypercohomology $R\Gamma(X,\Omega^{\bullet}_{X/A})$ has Tor-amplitude in $[-2n,0]$.
\end{lem}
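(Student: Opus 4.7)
The plan is to exploit the Hodge filtration on $Rf_{*}\Omega_{X/A}^{\bullet}$ and control the Tor-amplitudes of its graded pieces. Let $f\colon X\to\Spec(A)$ denote the structure morphism. By Example~\ref{ex fzip de rham local}, the Hodge filtration is a bounded descending filtration with $\HDG(i)=Rf_{*}\sigma_{\geq i}\Omega^{\bullet}_{X/A}$, it fits into fiber sequences
$$
\HDG(i+1)\longrightarrow \HDG(i)\longrightarrow Rf_{*}\Omega^{i}_{X/A}[-i],
$$
and satisfies $\HDG(0)\simeq R\Gamma(X,\Omega^{\bullet}_{X/A})$. Because $\Omega^{j}_{X/A}=0$ for $j>n$, we also have $\HDG(n+1)\simeq 0$. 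So it suffices to show that each $\HDG(i)$ has Tor-amplitude in $[-2n,0]$, which we do by descending induction on~$i$.

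The first step is to show that $Rf_{*}\Omega^{i}_{X/A}$ has Tor-amplitude in $[-n,0]$ for every $i$. By \cite[0FM0]{stacks-project}, this object is perfect and its formation commutes with arbitrary base change; hence for any discrete $A$-module $N$ one has $Rf_{*}\Omega^{i}_{X/A}\otimes_{A}^{L}N\simeq R(f_{N})_{*}\Omega^{i}_{X_{N}/N}$. The right-hand side is supported in cohomological degrees $[0,n]$ by Grothendieck vanishing applied to the $n$-dimensional fibers of $f_{N}$, which in the homological convention of the paper translates to $\pi_{k}=0$ for $k\notin[-n,0]$. Shifting by $-i$ (which shifts $\pi_{k}$ by $+i$) therefore places $Rf_{*}\Omega^{i}_{X/A}[-i]$ in Tor-amplitude $[-n-i,-i]\subseteq[-2n,0]$ for all $0\leq i\leq n$.

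The second step is the inductive extension. Assume $\HDG(i+1)$ has Tor-amplitude in $[-2n,0]$. Tensoring the above fiber sequence with an arbitrary discrete $A$-module $N$ preserves the fiber sequence (since $-\otimes_{A}^{L}N$ is exact between stable $\infty$-categories), and the resulting long exact sequence of homotopy groups shows that whenever the outer two terms have vanishing $\pi_{k}$ for $k\notin[-2n,0]$, so does the middle term. Hence $\HDG(i)\otimes_{A}^{L}N$ is concentrated in $[-2n,0]$, completing the induction. Taking $i=0$ gives the claim.

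The proof is essentially mechanical once the graded-piece bound is in hand; the only point requiring a little care is the translation between the cohomological convention in which $Rf_{*}\Omega^{i}_{X/A}$ is supported in degrees $[0,n]$ and the homological convention the paper uses for Tor-amplitude. The extension step itself could equivalently be packaged via iterated application of Lemma~\ref{general props of Tor}, but working directly with the long exact sequence is cleaner.
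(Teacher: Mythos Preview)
Your argument is correct in outline and close in spirit to the paper's, but organized differently. The paper first works fiberwise: it uses Grothendieck vanishing and the stupid-truncation triangles on each $X_{\kappa(a)}$ to see that $\pi_{i}(R\Gamma_{\dR}(X_{\kappa(a)}/\kappa(a)))=0$ for $i\notin[-2n,0]$, and then invokes \cite[0BCD]{stacks-project} to produce, Zariski-locally, an explicit free model concentrated in degrees $[-2n,0]$, from which the Tor-amplitude bound is read off. You instead run the Hodge filtration over $A$ itself, bound the Tor-amplitude of each graded piece $Rf_{*}\Omega^{i}_{X/A}[-i]$, and close up under extensions via the fiber sequences. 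Your route is a bit more streamlined (it avoids the local minimal-resolution step and packages everything as a two-out-of-three for Tor-amplitude, cf.\ Lemma~\ref{general props of Tor}); the paper's route is more concrete in that it actually exhibits a good model.

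One small point to clean up: the displayed equivalence $Rf_{*}\Omega^{i}_{X/A}\otimes_{A}^{L}N\simeq R(f_{N})_{*}\Omega^{i}_{X_{N}/N}$ does not make sense as written when $N$ is an arbitrary $A$-module (there is no scheme $X_{N}$), and \cite[0FM0]{stacks-project} only gives base change along ring maps. You have two easy fixes. Either use the projection formula to rewrite the left side as $Rf_{*}(\Omega^{i}_{X/A}\otimes_{\Ocal_{X}}f^{*}N)$ and then apply the relative form of Grothendieck vanishing for proper morphisms with fibers of dimension $\leq n$; or, more in line with the paper, observe that since $Rf_{*}\Omega^{i}_{X/A}$ is perfect it suffices to test Tor-amplitude on residue fields $N=\kappa(a)$ (Remark~\ref{tor and type}), where \cite[0FM0]{stacks-project} and Grothendieck vanishing apply directly. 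With either fix your induction goes through unchanged.
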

\begin{proof}
	Indeed, first of all, we claim that $\dim_{\kappa(a)} \pi_{i}(X_{\kappa(a)},\Omega^{\bullet}_{X_{a}/\kappa(a)})$ is zero for all $a\in\Spec(A)$ if $i\notin [-2n,0]$.
	This follows from Grothendieck vanishing (see \cite[Theorem 2.7]{HS}) in the following way.
	By Grothendieck vanishing, the perfect complex $R\Gamma(X_{\kappa(a)},\Omega^{k}_{X_{\kappa(a)}/\kappa(a)})[-k]$ has  non-zero homotopies in degrees $-n-k,\dots,-k$.
	Now, the distinguished triangle associated to the stupid truncation
	$$
		R\Gamma(X_{\kappa(a)},\sigma_{\geq k+1}\Omega^{\bullet}_{X_{\kappa(a)}/\kappa(a)})\longrightarrow R\Gamma(X_{\kappa(a)},\sigma_{\geq k}\Omega^{\bullet}_{X_{\kappa(a)}/\kappa(a)})\longrightarrow R\Gamma(X_{\kappa(a)},\Omega^{k}_{X_{\kappa(a)}/\kappa(a)})[-k]
	$$
	shows by induction that $\dim_{\kappa(a)}	\pi_{i}(X_{\kappa(a)},\Omega^{\bullet}_{X_{a}/\kappa(a)})$ is non-zero if and only if $i\in[-2n,0]$.

	Further, it suffices to check Zariski locally on $\Spec(A)$ that $R\Gamma(X,\Omega^{\bullet}_{X/A})$ has Tor-amplitude in $[-2n,0]$.  Any point $a\in\Spec(A)$ has an affine open neighbourhood $U=\Spec(A_{f})$, where $f\in A$, such that $R\Gamma(X,\Omega^{\bullet}_{X/A})_{|U}$ is, by \cite[0BCD]{stacks-project}, equivalent to a complex of the form 
	$$
	\dots \longrightarrow0\longrightarrow A_{f}^{d_{0}}\longrightarrow A_{f}^{d_{-1}}\longrightarrow \dots \longrightarrow A_{f}^{d_{-2n}}\longrightarrow 0\longrightarrow \cdots, 
	$$
	where $A_{f}^{d_{i}}$ sits in homological degree $-i$ and
        $$
        d_{i}\coloneqq \dim_{\kappa(a)}	\pi_{i}(R\Gamma(X,\Omega^{\bullet}_{X/A})\otimes_{A}^{L}\kappa(a))=\dim_{\kappa(a)}	\pi_{i}(X_{\kappa(a)},\Omega^{\bullet}_{X_{a}/\kappa(a)})
        $$
        (the last equality follows as the formation of the de Rham hypercohomology commutes with arbitrary base change; see \cite[0FM0]{stacks-project}).
\end{proof}

\begin{example}
  Let $A$ be a ring and $X$ be a proper smooth scheme over $A$ with non-empty fibers of equidimension~$n$. By Lemma~\ref{Tor of de rham}, $R\Gamma(X,\Omega_{X/A}^\bullet)$ has Tor-amplitude in $[-2n,0]$. Further, the de Rham hypercohomology admits a perfect pairing
  $$
  R\Gamma(X,\Omega_{X/A}^\bullet)\otimes_A^L R\Gamma(X,\Omega_{X/A}^\bullet)[2n]\longrightarrow A
  $$
  (see \cite[0G8K]{stacks-project}). Hence, this induces a dR-zip structure on $\underline{R\Gamma_{\dR}(X/A)}$.
\end{example}

\begin{prop}
	\label{dR-zips}
	The functor
	\begin{align*}
		\dRZip_{\infty,R}\colon\AniAlg{R}&\longrightarrow \ICat\\
		A&\longmapsto \dRZip_{\infty}(A)
	\end{align*}
	defines a hypercomplete sheaf for the fpqc topology. We denote the associated derived stack by $\dRZip_{R}$.
 Let $S\subseteq\ZZ$ be a finite subset, $a\leq b\in \ZZ$, and set $n\coloneqq b-a$. Then the induced morphism 
	$$
		p^{[a,b],S}\colon\dRZip^{[a,b],S}_{R}\longrightarrow\FZip^{[a,b],S}_{R}
	$$
	is $2n$-geometric and smooth. Further, $\dRZip^{[a,b],S}_{R}$ is $2n$-geometric if $n\geq 1$ and $1$-geometric if $n=0$ and is locally of finite presentation.
\end{prop}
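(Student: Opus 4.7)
The plan is to address the three claims separately: the fpqc descent, the geometricity and smoothness of the forgetful map $p^{[a,b],S}$, and the geometricity of $\dRZip^{[a,b],S}_R$ itself.

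For descent, I would copy the strategy of Proposition~\ref{fzip infty sheaf}. Since $\dRZip_\infty$ is defined as a fibre product of $\FZip_\infty$ with $\textup{PP}$ over $\MMod^{\perf}\times\MMod^{\perf}$, and $\FZip_\infty$ is already known to be a hypercomplete fpqc sheaf, it suffices to verify the same for $\textup{PP}$. Embedding $\textup{PP}_A$ inside the functor $A\mapsto\Fun(\Delta^1,\MMod^{\perf}_A)$, which is a hypercomplete fpqc sheaf by Lemma~\ref{Fun sheaf} together with Remark~\ref{descent qcoh}, reduces the question to the statement that the cutting conditions ``the target is the dual of the source'' and ``the morphism is an equivalence'' are both fpqc hyperlocal, both of which follow from the standard descent for perfect modules.

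For $p^{[a,b],S}$, I would pick a point $\Fline\in\FZip^{[a,b],S}_R(A)$ with underlying perfect module $M=\colim_{\ZZ^{\op}}C^\bullet$ of Tor-amplitude $[a,b]$. Unwinding the fibre product, the pullback along this point classifies the relevant perfect pairings on $M$, which form an open substack, by Lemma~\ref{equiv zariski open}, of a stack of morphisms between perfect $A$-modules built from $M$ and its dual. By the identification used in the proof of Theorem~\ref{F-Zips locally geometric}, this stack of morphisms has the form $F^A_P$ for a perfect $A$-module $P$ assembled from $M$, $M^\vee$ and a shift by $b-a$. Lemma~\ref{general props of Tor} then yields the Tor-amplitude of $P$, and feeding this into Lemma~\ref{spec sym geometric} produces the geometricity level, local finite presentation, and the smoothness criterion $b'\leq 0$ from which one reads off the claimed $2n$-geometricity and smoothness of $p^{[a,b],S}$.

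Finally, the geometricity of $\dRZip^{[a,b],S}_R$ will follow by composing the $2n$-geometric, locally finitely presented morphism $p^{[a,b],S}$ with the structure map of the $(b-a+1)$-geometric, locally finitely presented stack $\FZip^{[a,b],S}_R$ from Theorem~\ref{F-Zips locally geometric}, invoking Lemma~\ref{geometric comp} together with Proposition~\ref{diag geometric}. The $n=0$ case has to be treated separately: there $M$ sits in a single degree, the fibre of $p^{[a,b],S}$ reduces to a classical scheme parametrising self-duality structures on a finite projective module, and combining with the $1$-geometric base yields the stated $1$-geometricity. The main obstacle I expect is the bookkeeping of Tor-amplitudes and shifts needed to match the abstract application of Lemma~\ref{spec sym geometric} to the precise numerical bounds in the statement; identifying the fibre and verifying descent are essentially formal.
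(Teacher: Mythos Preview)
Your proposal is correct and follows essentially the same route as the paper: embed $\dRZip^{[a,b],S}_\infty$ into a pullback of hypercomplete fpqc sheaves to get descent, identify the fibre of $p^{[a,b],S}$ over $\Fline$ with $\Equiv(M_\Fline,M_\Fline^\vee[-n])$ and apply Lemmas~\ref{equiv zariski open} and~\ref{spec sym geometric} using that $M_\Fline\otimes_A M_\Fline^\vee[-n]$ has Tor-amplitude in $[-2n,0]$, then deduce the geometricity of $\dRZip^{[a,b],S}_R$ from Theorem~\ref{F-Zips locally geometric}. The paper does not separate out the $n=0$ case explicitly, but otherwise your outline matches the actual argument.
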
 
\begin{proof}
	Fix a finite subset $S\subseteq\ZZ$, $a\leq b\in \ZZ$, and set $n\coloneqq b-a$. For a derived $F$-zip $\Fline=(C^{\bullet},D_{\bullet},\phi,\varphi_{\bullet})$, let us set $M_{\Fline}\coloneqq\colim_{\ZZ^{\op}}C^{\bullet}$. Let us look at the following pullback square: 
	$$
	\begin{tikzcd}
		X\arrow[rrrr,""]\arrow[d,""]&&&&\Fun(\Delta^1,\PPerf)\arrow[d,""]\\
		\FZip^{[a,b],S}_{\infty,R}\arrow[rrrr,"{\Fline\mapsto (M_{\Fline},M_{\Fline}^{\vee}[-n])}"]&&&&\Fun(\del\Delta^1,\PPerf)\rlap{.}
	\end{tikzcd}
	$$
	Noting that a perfect pairing of $M_{\Fline}$ and $M_{\Fline}^{\vee}[-n]$ is the same as an equivalence $M_{\Fline}\xrightarrow{\lowsim} M_{\Fline}^\vee[-n]$, we see that $\dRZip^{[a,b],S}_\infty \simeq X$. In particular, $\dRZip^{[a,b],S}_{\infty,R}$ satisfies fpqc hyperdescent.

	Finally, let $p^{[a,b],S}\colon\dRZip^{[a,b],S}_{R}\rightarrow\FZip^{[a,b],S}_{R}$ denote the induced morphism of derived stacks. For a derived $F$-zip $\Fline$ over some animated $\FF_p$-algebra $A$, we have that $(p^{[a,b],S})^{-1}(\Fline)\simeq \Equiv(M_{\Fline},M_{\Fline}^{\vee}[-n])$, which is $2n$-geometric and smooth as $M_{\Fline}\otimes_{A} M_{\Fline}^{\vee}[-n]$ has Tor-amplitude in $[-2n,0]$ (see Lemmas~\ref{equiv zariski open} and~\ref{spec sym geometric}). Now the assertions on $p$ follow immediately by definition, and by Theorem~\ref{F-Zips locally geometric}, we get the results on $\dRZip^{[a,b],S}_{R}$.
\end{proof}

\begin{cor}
\label{last cor}
	The derived stack $\dRZip_{R}$ is locally geometric and locally of finite presentation.
\end{cor}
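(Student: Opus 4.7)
The plan is to mimic the strategy used for $\FZip_R$ (see Remark~\ref{F-zip as colim of open im} and Theorem~\ref{fzip local geometric plus}), namely to write $\dRZip_R$ as a filtered colimit of geometric open substacks. The point is that geometricity of each piece is already available from Proposition~\ref{dR-zips}, so the only remaining work is organizing the filtration and transferring openness from $\FZip_R$ along the forgetful morphism $p\colon \dRZip_R \to \FZip_R$.

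Concretely, for a function $\tau\colon \ZZ \to \NN_0^{\ZZ}$ with finite support, I would define
$$
\dRZip^{\leq\tau}_R \coloneqq \dRZip_R \times_{\FZip_R} \FZip^{\leq\tau}_R.
$$
Since $\FZip^{\leq\tau}_R \hookrightarrow \FZip_R$ is a quasi-compact open immersion (Proposition~\ref{prop open and closed of F-zip}), its base change $\dRZip^{\leq\tau}_R \hookrightarrow \dRZip_R$ is a quasi-compact open immersion as well (open immersions are stable under base change by their definition as flat monomorphisms locally of finite presentation). Moreover, any derived $F$-zip of type at most $\tau$ has graded pieces of bounded Tor-amplitude supported on a finite subset of~$\ZZ$, so by Remark~\ref{local rank} the inclusion $\FZip^{\leq\tau}_R \hookrightarrow \FZip^{[a,b],S}_R$ into some fixed $\FZip^{[a,b],S}_R$ is a quasi-compact open immersion; base-changing gives a factorization $\dRZip^{\leq\tau}_R \hookrightarrow \dRZip^{[a,b],S}_R$ that is again a quasi-compact open immersion.

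Now Proposition~\ref{dR-zips} says that $\dRZip^{[a,b],S}_R$ is geometric (either $1$- or $2n$-geometric with $n=b-a$) and locally of finite presentation. Since an open substack of a geometric derived stack locally of finite presentation is again so, each $\dRZip^{\leq\tau}_R$ is geometric and locally of finite presentation. Finally, by the very definition of $\dRZip_\infty$ as the filtered colimit of the $\dRZip^{[a,b],S}_\infty$ and by Remark~\ref{F-zip as colim of open im}, we obtain
$$
\dRZip_R \simeq \limind_{\tau} \dRZip^{\leq\tau}_R,
$$
where the colimit is filtered and the transition maps are open immersions. This exhibits $\dRZip_R$ as a locally geometric derived stack that is locally of finite presentation.

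The only mild subtlety, and essentially the one step that requires care rather than bookkeeping, is checking that the pullback $\dRZip^{\leq\tau}_R = \dRZip_R \times_{\FZip_R} \FZip^{\leq\tau}_R$ really sits quasi-compactly open inside some $\dRZip^{[a,b],S}_R$; but this follows at once by transporting along $p$ the corresponding factorization on the $\FZip$-side, together with the fact that pullbacks of quasi-compact open immersions remain quasi-compact open. Everything else is formal from Proposition~\ref{dR-zips} and Proposition~\ref{prop open and closed of F-zip}.
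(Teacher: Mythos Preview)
Your proposal is correct and follows essentially the same approach as the paper: define $\dRZip^{\leq\tau}_R$ as the pullback of $\FZip^{\leq\tau}_R$ along the forgetful map, use Proposition~\ref{prop open and closed of F-zip} and Remark~\ref{F-zip as colim of open im} to see it is quasi-compact open in $\dRZip_R$ and factors through some $\dRZip^{[a,b],S}_R$, invoke Proposition~\ref{dR-zips} for geometricity and local finite presentation of the latter, and conclude by writing $\dRZip_R$ as the filtered colimit $\colim_\tau \dRZip^{\leq\tau}_R$.
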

\begin{proof}
	Let $\tau\colon\ZZ\rightarrow \NN_{0}^{\ZZ}$ be a function with finite support. We know that 
	the inclusion $\FZip^{\leq \tau}_{R}\hookrightarrow \FZip_{R}$ is a quasi-compact open immersion and factors as a geometric morphism through $\FZip^{[a,b],S}_{R}$ for some finite subset $S\subseteq \ZZ$ and $a\leq b\in\ZZ$ (see Remark~\ref{F-zip as colim of open im}). In particular, we see that the pullback of $\FZip^{\leq \tau}_{R}$ along $p^{[a,b],S}$, denoted by $\dRZip^{\leq \tau}$, is again geometric by Proposition~\ref{dR-zips} and Theorem~\ref{F-Zips locally geometric} and  is quasi-compact open in $\dRZip_{R}$. Since $\FZip_{R}\simeq\colim_{\tau}\FZip_{R}^{\leq}$, we see that $\dRZip_{R}\simeq\colim_{\tau}\dRZip_{R}^{\leq \tau}$, and so  $\dRZip_{R}$ is locally geometric.
 That $\dRZip_{R}$ is locally of finite presentation follows analogously from the fact that $\FZip^{\leq\tau}_{R}$ is locally of finite presentation.
\end{proof}

\renewcommand\thesection{\Alph{section}}
\setcounter{section}{0}

\section*{Appendix. Comparison of $\boldsymbol{G}$-zips and $\boldsymbol{G}$-torsors}\label{app comp G-zip}

\addcontentsline{toc}{section}{Appendix. Comparison of \texorpdfstring{$\boldsymbol{G}$}{$G$}-zips and \texorpdfstring{$\boldsymbol{G}$}{$G$}-torsors}
\refstepcounter{section}

\setcounter{subsection}{1}
Throughout, we let $k$ be a field of characteristic $p>0$.

Let us first construct the quotient stack that will classify $F$-zips. Let $S$ be a $k$-scheme. 
Consider the two closed subschemes $\lbrace0\rbrace $ and $\lbrace\infty\rbrace$ inside $\PP^{1}$ that are naturally isomorphic. Let $\varphi$ denote the isomorphism of $\lbrace \infty\rbrace$ and $\lbrace 0\rbrace$ composed with the Frobenius. We know by \cite[Theorem 7.1]{Ferrand} that the following pushout exists in the category of $\FF_{p}$-schemes: 
$$
\begin{tikzcd}
\lbrace \infty\rbrace \amalg \lbrace0\rbrace \arrow[r,"{(\id,\varphi)}"]\arrow[d,"\iota",hook] & \lbrace \infty\rbrace\arrow[d]\\
\PP^{1}_{S}\arrow[r]&X_{S}
\end{tikzcd}
$$
(note that the Frobenius is integral, and thus the morphism from the coproduct is integral, and obviously the inclusion of the two points is a closed immersion). The new space $X_{S}$ is the $\PP^{1}_{S}$ where we pinch the points $0$ and $\infty$ together. We have a $\Gm_{,S}$-action on $\PP^{1}_{S}$, where we act on the affine line around $0$ via multiplication and on the affine line around $\infty$ via multiplication with the inverse.  
Since $\Gm_{,S}\times_{S}\PP^{1}_{S}\cong \Gm_{,\FF_{p}}\times_{\FF_{p}}\PP^{1}_{S}$, we see that the $\Gm_{}$-action on $\PP^{1}_{S}$ as an $\FF_{p}$-scheme is the same as the action as an $S$-scheme.
Therefore, we  get an induced $\Gm_{,\FF_{p}}$-action on $X$.

We have the following pushout diagram: 
\begin{equation}
\label{eq.pushout}
\begin{tikzcd}
\Gm_{,\FF_{p}}\times_{\FF_{p}}\lbrace \infty\rbrace \amalg \lbrace 0 \rbrace \arrow[rr,"\id\times{(\id,\varphi)}"]\arrow[d,"\id\times\iota",hook] &&\Gm_{,\FF_{p}}\times_{\FF_{p}} \lbrace \infty\rbrace\arrow[d]\\
\Gm_{,\FF_{p}}\times_{\FF_{p}}\PP^{1}_{S}\arrow[rr]&&\Gm_{,\FF_{p}}\times_{\FF_{p}}X_{S}\rlap{,}
\end{tikzcd}
\end{equation}
which is induced by the $\Gm_{}$-actions. The following diagram with commutative squares shows that, indeed, $\Gm_{,\FF_{p}}\times_{\FF_{p}}X_{S}$ fulfils the universal property of the pushout above: 
$$
	\begin{tikzcd}
	&\lbrace \infty\rbrace \amalg \lbrace0\rbrace\arrow[rr,"{(\id,\varphi)}"]\arrow[d,"s"] && \lbrace \infty\rbrace\arrow[d,"s"]\\
	\lbrace \infty\rbrace \amalg \lbrace0\rbrace\arrow[d,"\iota"]\arrow[r,"s"]&\Gm_{,\FF_{p}}\times_{\FF_{p}} \lbrace \infty\rbrace \amalg \lbrace0\rbrace\arrow[rr,"{\id\times (\id,\varphi)}"]\arrow[d,"{\id\times \iota}"]&& \Gm_{,\FF_{p}}\times_{\FF_{p}} \lbrace \infty\rbrace\\
	\PP^{1}_{S}\arrow[r,"s"]&\Gm_{,\FF_{p}}\times_{\FF_{p}} \PP^1_S\rlap{,} &&
	\end{tikzcd}
$$
where the morphism $s$ denotes the natural $0$-section.
 
Now let us define $\Xfr_{S}$ as the quotient stack $[X_{S}/\Gm_{,\FF_{p}}]$. We claim that $\Xfr_{S}$ is an Artin stack, which follows from the following lemma and \cite[04TK]{stacks-project}.
\begin{lem}
\label{lem.action.smooth}
	The group action of $\Gm_{,\FF_{p}}$ on $X_{S}$ is smooth.
\end{lem}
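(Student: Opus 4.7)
The plan is to reduce the smoothness of the action to the smoothness of $\Gm_{,\FF_p}$ itself, via the standard ``shear'' isomorphism trick that works for any group-scheme action.

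First, I would consider the shear morphism
\[
\alpha\colon \Gm_{,\FF_p}\times_{\FF_p} X_S \longrightarrow \Gm_{,\FF_p}\times_{\FF_p} X_S,\qquad (g,x)\longmapsto (g,a(g,x)),
\]
where $a$ denotes the action. This is an isomorphism of $\FF_p$-schemes; a two-sided inverse is given by $(g,y)\mapsto (g,a(g^{-1},y))$, and the verification that these compositions are the identity uses only the group axioms (associativity and the existence of inverses), which hold because we are dealing with an honest scheme-theoretic action. No hypothesis on $X_S$ beyond being a scheme with a $\Gm_{,\FF_p}$-action is needed here.

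Next, I would observe that the action map factors as
\[
a=p_2\circ \alpha,
\]
where $p_2\colon \Gm_{,\FF_p}\times_{\FF_p} X_S \to X_S$ is the second projection. The projection $p_2$ is obtained by base change along $X_S\to \Spec(\FF_p)$ of the structural morphism $\Gm_{,\FF_p}\to \Spec(\FF_p)$, which is smooth. Since smoothness is stable under base change, $p_2$ is smooth, and being the composition of an isomorphism and a smooth morphism, $a$ is smooth as well.

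The main ``obstacle'' here is really only conceptual: one might worry that the pinched scheme $X_S$ is too singular for the usual trick to apply, but the point is that the trick is purely formal, requiring nothing about $X_S$ beyond the existence of the action. In particular, no knowledge of the singularities of $X_S$ at the pinch point is required, which is why we may freely use this lemma in Lemma~\ref{lem restr} and Proposition~\ref{prop.B3} above. Smoothness of the action on $\PP^1_S$ descends automatically to smoothness of the induced action on $X_S$ via the pushout (\ref{eq.pushout}), but the argument above is cleaner and avoids reference to the pushout altogether.
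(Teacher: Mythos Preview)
Your proof is correct and takes a genuinely different route from the paper. The paper argues via the pushout: it invokes \cite[08KQ]{stacks-project} to reduce smoothness of the action on $X_S$ to smoothness of the actions on $\PP^1_S$ and on $\lbrace\infty\rbrace$, then further reduces to the $\Gm_{,S}$-action on $\AA^1_S$ by degree-$1$ multiplication and verifies this directly with the Jacobian criterion. Your argument bypasses all of this by using the shear isomorphism $(g,x)\mapsto(g,a(g,x))$ to factor the action map as an isomorphism followed by the second projection, which is smooth because $\Gm_{,\FF_p}$ is smooth over $\FF_p$. This is cleaner and more general: it uses nothing about $X_S$ beyond the existence of a scheme-theoretic action, whereas the paper's approach leans on the specific pushout description. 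The paper's computation does yield the relative dimension ($=1$) as a byproduct, but that information is not used elsewhere, so nothing is lost by your method.
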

\begin{proof}
	Since the action of $\Gm_{,\FF_{p}}$ on $X_{S}$ is induced by the pushout diagram~\ref{eq.pushout}, we see by\footnote{Locally, the pushout of schemes is given by the fiber product of the rings corresponding to affine opens, as one sees in the construction (see for example the proof of the existence given in \cite[0E25]{stacks-project}). So, in particular, we can apply the reference.} \cite[08KQ]{stacks-project} that the $\Gm_{,\FF_{p}}$-action is smooth if and only if the $\Gm_{,\FF_{p}}$-action on $\lbrace\infty\rbrace$ and $\PP^{1}_{S}$ is smooth. The smoothness of the former action is clear. For the latter, it is enough to see that the $\Gm_{,S}$-action on $\AA^{1}_{S}$ by multiplication of degree $1$ is smooth (the degree $-1$ case is completely similar).

	Indeed, the question is local, so we may assume that $S=\Spec(R)$ is affine. Then the ring map corresponding to the $\Gm_{,S}$-action is equivalently given by
        $$
        R[X]\longrightarrow R[X][Y_{1},Y_{2},Z]/(Y_{1}Y_{2}-1,ZY_{1}-X).
        $$
	The Jacobi matrix corresponding to this map is given by
	$$
	\begin{pmatrix}
	Y_{2}&Y_{1} & 0\\
	Z & 0 & Y_{1}
	\end{pmatrix}.
	$$
	For any point in $\pfr\in\Spec(R[X][Y_{1},Y_{2},Z]/(Y_{1}Y_{2}-1,ZY_{1}-X))$,  this matrix has full rank as $Y_{1}Y_{2}-1\in \pfr$. Therefore, we see that indeed the $\Gm_{,S}$-action $\AA^{1}_{S}$ via multiplication of degree $1$ is smooth of relative dimension $1$, concluding the proof.
\end{proof}

We will start by showing that vector bundles over $\Xfr$ are the same as $F$-zips. Then the comparison of $G$-zips and $G$-torsors on $\Xfr$ is a Tannaka duality-like statement.

\begin{thm}
\label{F-zips as VB}
Let $S$ be a $k$-scheme, and let $n\in\NN$. There is an equivalence of categories
$$
\textup{VB}_n(\Xfr_{S})\simeq \cl\FZip^n(S).
$$ 
\end{thm}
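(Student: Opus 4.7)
The plan is to argue by descent along the pushout diagram defining $X_S$ and then to use the standard description of $\Gm$-equivariant quasi-coherent sheaves as graded (respectively filtered) sheaves. Since the pushout diagram
\[
\begin{tikzcd}
\lbrace\infty\rbrace\amalg\lbrace 0\rbrace\arrow[r,"{(\id,\varphi)}"]\arrow[d,hook,"\iota"] & \lbrace\infty\rbrace\arrow[d] \\
\PP^{1}_{S}\arrow[r] & X_{S}
\end{tikzcd}
\]
is an equivariant pushout along a closed immersion (by Lemma~\ref{lem.action.smooth}), pushforward along $\iota$ is conservative and a direct (Milnor-type) gluing argument identifies $\textup{VB}(\Xfr_{S})$ with the equaliser
\[
\textup{VB}\bigl(\bigl[\PP^{1}_{S}/\Gm_{,\FF_{p}}\bigr]\bigr)\times_{\textup{VB}([\lbrace\infty\rbrace/\Gm_{,\FF_{p}}])\times\textup{VB}([\lbrace 0\rbrace/\Gm_{,\FF_{p}}])}\textup{VB}\bigl(\bigl[\lbrace\infty\rbrace/\Gm_{,\FF_{p}}\bigr]\bigr),
\]
where the right vertical functor is the pullback along $(\id,\varphi)$ and the bottom horizontal one is the pullback along $\iota$. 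An analogous argument will be invoked in the body of the paper for perfect complexes in Theorem~\ref{thm.B1}; here we only need its classical shadow on vector bundles.

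Next I would unwind each factor. Vector bundles on $[\lbrace\ast\rbrace/\Gm_{,\FF_{p}}]$ are $\ZZ$-graded locally free $\Ocal_{S}$-modules of finite rank, and the pullback functor from $[\PP^{1}_{S}/\Gm_{,\FF_{p}}]$ to these two quotients picks out the graded pieces of the naturally associated descending filtration (at $\lbrace\infty\rbrace$) and ascending filtration (at $\lbrace 0\rbrace$). This identification of $\Gm$-equivariant sheaves on $\PP^{1}$ with bifiltered sheaves via the standard cover by $[\AA^{1}/\Gm_{,\pm 1}]$ and the exact computation of pullbacks to the fixed points is the content of the discussion preceding Proposition~\ref{prop.B3} restricted to vector bundles: a $\Gm_{,\FF_{p}}$-equivariant vector bundle on $\PP^{1}_{S}$ is equivalently the datum of a locally free $\Ocal_{S}$-module $M$ together with an ascending exhaustive filtration $D_{\bullet}$ and a descending exhaustive filtration $C^{\bullet}$, both with locally free graded pieces, and pullback to the fixed points $\lbrace 0\rbrace$ and $\lbrace\infty\rbrace$ returns $\bigoplus_{i}\gr^{i}D$ and $\bigoplus_{i}\gr^{i}C$ respectively.

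Combining the two steps, the gluing isomorphism prescribed by $(\id,\varphi)$ along the pinching becomes an isomorphism of $\ZZ$-graded locally free $\Ocal_{S}$-modules
\[
\bigoplus_{i\in\ZZ}(\gr^{i}C)^{(1)}\xrightarrow{\;\lowsim\;}\bigoplus_{i\in\ZZ}\gr^{i}D,
\]
which, by graded-ness, breaks up into isomorphisms $\varphi_{i}\colon(\gr^{i}C)^{(1)}\xrightarrow{\lowsim}\gr^{i}D$ for every $i\in\ZZ$. Altogether this produces exactly the datum of a classical $F$-zip $(M,C^{\bullet},D_{\bullet},\varphi_{\bullet})$, and restricting to the rank $n$ locus (which on both sides amounts to requiring that $M$ be locally free of rank $n$) yields the desired equivalence on $\textup{VB}_{n}$.

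The main obstacle I expect is making the descent in the first step fully rigorous: the pushout defining $X_{S}$ is not flat and not obviously covered by an fpqc atlas, so one cannot just invoke fpqc descent for quasi-coherent sheaves. The clean way forward is to use Lemma~\ref{lem restr} (or rather its classical analogue) together with Ferrand's theorem identifying quasi-coherent sheaves on such pinched schemes with the explicit fibre product of categories; equivariance then upgrades this to the quotient stacks. A secondary technical point is to match the convention for the $\Gm$-weights at $0$ and $\infty$ with the direction of the filtrations (ascending versus descending), which is just bookkeeping but must be done consistently so that the Frobenius twist ends up on $\gr C$ rather than on $\gr D$.
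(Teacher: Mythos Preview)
Your proposal is correct and follows essentially the same route as the paper's proof: descend along the quotient by $\Gm$, use Ferrand's description of vector bundles on the pushout $X_{S}$ as bundles on $\PP^{1}_{S}$ together with a gluing datum at the pinched points, identify $\Gm$-equivariant bundles on the two affine charts with ascending and descending filtrations, and read off the Frobenius-twisted isomorphisms on graded pieces from the pinching map $(\id,\varphi)$. The only cosmetic difference is that you phrase the gluing step as a fibre product of vector-bundle categories on quotient stacks and point forward to the derived analogue (Theorem~\ref{thm.B1}, Proposition~\ref{prop.B3}), whereas the paper works more concretely with $\Gm$-equivariant sheaves on $X_{S}$ and the open cover of $\PP^{1}_{S}$ by two affine lines, citing \cite[Corollary~6.5]{pushout} directly for the Ferrand step.
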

\begin{proof}
	By descent, a vector bundle on $\Xfr_{S}$ is a $\Gm_{,\FF_{p}}$-equivariant vector bundle on $X_{S}$. Since $X_{S}$ is the coequalizer of $\lbrace 0\rbrace$, where one of the morphisms is given by the identity and the other sends $\lbrace 0\rbrace$ to $\lbrace \infty\rbrace$ and twists by Frobenius, we see that a vector bundle on $X$ is given by a finite locally free $\PP^1_S$-module with an isomorphism after pullback to $\lbrace \infty\rbrace$, respectively $\lbrace0\rbrace$, with Frobenius twist (see \cite[Corollary 6.5]{pushout}). A vector bundle of rank $n$ on $\PP^{1}_{S}$ is given by a pair $(V,W)$ of vector bundles of rank $n$ on $\AA^{1}_{S}$ such that $V_{|D(0)}\cong W_{|D(\infty)}$. Let $(V,W)$ be such a pair. Further assume that we have an isomorphism $W_{|\lbrace \infty\rbrace}^{(1)}\rightarrow V_{|\lbrace 0\rbrace}$; this defines a vector bundle on $X_{S}$. The $\Gm_{,\FF_{p}}$-equivariance induces  gradings on $V$ and $W$ (this is for example explained in \cite[03LE]{stacks-project}). Since $\Gm_{,\FF_{p}}$ acts on the affine line around $\lbrace 0\rbrace$ by multiplying in degree $1$ and on the affine line around $\lbrace \infty\rbrace$ by multiplying in degree $-1$, we see that by definition of the grading, the corresponding endomorphisms of $V$ and $W$ seen as $k$-vector spaces are morphisms of graded vector spaces in degree $1$ and $-1$, respectively. This construction gives an ascending chain 
	$$
	\cdots\longrightarrow V_{i-1}\longrightarrow V_{i}\longrightarrow V_{i+1}\longrightarrow \cdots
	$$ 
	and a descending chain
	$$
	\cdots\longrightarrow W^{i+1}\longrightarrow W^{i}\longrightarrow W^{i-1}\longrightarrow \cdots.
	$$
	The pullback to $\lbrace 0\rbrace$ gives the direct sum of graded pieces, and considering the Frobenius twist, we see that this isomorphism is the same as the datum of a family of isomorphisms $\varphi\colon (\gr^i_W)^{(1)}\rightarrow \gr^i_V$ for each $i$. Moreover, since the pullback to $\lbrace 0\rbrace$ also has to be a vector bundle, we see that each graded piece has to be finite locally free (since the direct sum of all has to be finite locally free of rank $n$). This in particular shows that the filtrations stabilize; \textit{i.e.}\ there are  only finitely many non-zero graded pieces. The pullbacks of these filtrations to $D(0)$ and $D(\infty)$, respectively, give the underlying modules of the filtrations, which have to be isomorphic;  \textit{i.e.}\ $V_\bullet$ and $W^\bullet$ define, respectively, an ascending and a descending filtration on the same module.
 
	Putting all these data together, we see that the category of vector bundles on $\Xfr_{S}$ of rank $n$ is equivalent to the category of $F$-zips over $S$ of rank $n$.
\end{proof}

\begin{rem}
  Theorem~\ref{F-zips as VB} shows. in particular, that the functor
  $$
  \textup{VB}(\Xfr)\colon S\longmapsto \textup{VB}(\Xfr_{S})
  $$
  is a sheaf for the fppf topology and even an Artin stack.
\end{rem}

\begin{cor}
\label{fzip is vb}
There is an equivalence of Artin stacks
$$
\textup{VB}(\Xfr)\simeq \cl\FZip_{k}.
$$
Further, for any scheme $S$, we have a monoidal equivalence of symmetric monoidal categories $\textup{VB}(\Xfr_{S})\simeq \cl\FZip_{k}(S)$. 
\end{cor}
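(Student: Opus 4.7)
The plan is to derive the corollary from Theorem~\ref{F-zips as VB} in two stages: first I would upgrade the pointwise equivalence of categories to an equivalence of Artin stacks, and then promote it to a symmetric monoidal equivalence. For the first stage, taking the disjoint union over $n\in\NN$ in Theorem~\ref{F-zips as VB} yields, for every $k$-scheme $S$, an equivalence of categories $\textup{VB}(\Xfr_S)\simeq \cl\FZip_k(S)$. Naturality in $S$ follows by inspecting the construction in the proof of that theorem: for a morphism $f\colon S'\rightarrow S$, the pullback of a $\Gm_{,\FF_p}$-equivariant vector bundle on $X_S$ corresponds, under the translation into two filtrations together with an isomorphism of graded pieces after Frobenius twist, to the termwise pullback of the filtrations and the base change of the gluing isomorphism, which is exactly the pullback of the associated $F$-zip. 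Both $\textup{VB}(\Xfr)$ and $\cl\FZip_k$ are fpqc stacks (the former by descent for quasi-coherent sheaves on the Artin stack $\Xfr$, whose algebraicity is provided by Lemma~\ref{lem.action.smooth} combined with \cite[04TK]{stacks-project}; the latter by \cite{moon-wed}), so the pointwise equivalences assemble into the desired equivalence of Artin stacks.

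For the monoidal upgrade, the symmetric monoidal structure on $\textup{VB}(\Xfr_S)$ is the usual tensor product of vector bundles over $\Ocal_{\Xfr_S}$, while $\cl\FZip_k(S)$ carries the symmetric monoidal structure which tensors the underlying modules, applies the Day-type convolution to both filtrations, and tensors the graded isomorphisms after Frobenius twist. The main obstacle will be to check that the equivalence of the first stage intertwines these two tensor structures. I would verify this by tracking the tensor product through the geometric pieces defining $\Xfr_S$ via the pushout~(\ref{eq.pushout}): on each copy of $[\AA^1_S/\Gm_{,\FF_p}]$, tensor product of $\Gm_{,\FF_p}$-equivariant vector bundles corresponds, after identification with $\ZZ$-indexed filtered finite locally free $\Ocal_S$-modules, precisely to the Day convolution of filtrations; on $[\lbrace 0\rbrace/\Gm_{,\FF_p}]$ and $[\lbrace\infty\rbrace/\Gm_{,\FF_p}]$, tensor product of finite $\Gm_{,\FF_p}$-representations corresponds to the graded tensor product on the direct sums of graded pieces (\textit{cf.}~Remark~\ref{graded of tensor}); and the gluing datum along the Frobenius pinch is compatible with tensor products since the Frobenius twist is itself symmetric monoidal. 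Finally, the structure sheaf $\Ocal_{\Xfr_S}$ corresponds to the trivial rank-one $F$-zip with its concentrated filtrations, and the symmetry constraints on both sides descend from the symmetry of the tensor product of $\Ocal_S$-modules, yielding the claimed symmetric monoidal equivalence.
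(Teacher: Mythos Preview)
Your proposal is correct and follows essentially the same approach as the paper. The paper's proof is a two-sentence pointer to Theorem~\ref{F-zips as VB}, noting that the equivalence and its compatibility with the symmetric monoidal structures are visible from that proof; you have simply unpacked what ``clear from the proof'' means by explicitly checking naturality in $S$, the stack property on both sides, and the compatibility of tensor products through the pieces of the pushout defining $\Xfr_S$.
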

\begin{proof}
	This is clear from Theorem~\ref{F-zips as VB}. Note that the proof of Theorem~\ref{F-zips as VB} shows that this equivalence respects the symmetric monoidal structures.
\end{proof}

\begin{cor}
	\label{comp G-zip}
	Let $G$ be a linear algebraic group over $k$ such that its identity component is reductive. 
	We have an isomorphism of Artin stacks
	$$
	G\textup{-Tors}(\Xfr)\cong G\textup{-zip}_k.
	$$	
\end{cor}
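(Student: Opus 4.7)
The plan is to prove Corollary~\ref{comp G-zip} by a Tannakian argument, transferring the symmetric monoidal equivalence of Corollary~\ref{fzip is vb} from vector bundles to $G$-torsors. First, I would recall that for a linear algebraic group $G$ over $k$ whose identity component is reductive, the Tannakian formalism (as developed, for example, by Broshi and applied in \cite{PWZ}) yields, for every $k$-algebraic stack $Y$ satisfying mild hypotheses, a natural equivalence of groupoids
\[
G\textup{-Tors}(Y) \;\simeq\; \Fun^{\otimes,\textup{ex}}(\Rep(G),\,\textup{VB}(Y)),
\]
where the right-hand side denotes exact symmetric monoidal functors. The stack $\Xfr_S$ is Artin by Lemma~\ref{lem.action.smooth}, is covered smoothly by the affine $X_S$, and has enough vector bundles, so the Tannakian hypotheses are satisfied for every $k$-scheme $S$.

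Next, I would invoke the Tannakian description of $G$-zips from \cite[Section~8]{PWZ}, which states that under the same hypotheses on $G$, for every $k$-scheme $S$, the groupoid of $G$-zips over $S$ is naturally equivalent to $\Fun^{\otimes,\textup{ex}}(\Rep(G),\cl\FZip_k(S))$. Composing with the symmetric monoidal equivalence of Corollary~\ref{fzip is vb}, namely $\textup{VB}(\Xfr_S)\simeq \cl\FZip_k(S)$, and observing that this equivalence is natural in $S$, I obtain
\[
G\textup{-Tors}(\Xfr_S) \;\simeq\; \Fun^{\otimes,\textup{ex}}(\Rep(G),\textup{VB}(\Xfr_S)) \;\simeq\; \Fun^{\otimes,\textup{ex}}(\Rep(G),\cl\FZip_k(S)) \;\simeq\; G\textup{-zip}_k(S).
\]

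Finally, I would verify that these equivalences assemble into a morphism of stacks. This is essentially automatic: pullback along a morphism $S'\to S$ is symmetric monoidal and exact, and all three equivalences above are constructed functorially from the universal property of $\Rep(G)$ (in the Tannakian case) or from the descent description of the pinched projective line $\Xfr_S$ and the componentwise tensor structure on $\cl\FZip_k(S)$. Since both sides are already known to be Artin stacks, the resulting pointwise equivalence of groupoids upgrades to an isomorphism of Artin stacks.

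The main obstacle is verifying that $\Xfr_S$ really falls within the scope of whichever Tannakian reconstruction theorem is invoked; the reductivity of the identity component of $G$ is used precisely to guarantee that $\Rep(G)$ generates the relevant tensor category and that exact symmetric monoidal functors on $\Rep(G)$ correspond to actual $G$-torsors rather than only fpqc gerbes. Once this Tannakian input is secured, the rest of the argument is a direct application of Corollary~\ref{fzip is vb} together with functoriality.
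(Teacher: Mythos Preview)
Your overall strategy coincides with the paper's: both argue via Tannaka duality and the symmetric monoidal equivalence of Corollary~\ref{fzip is vb}, together with the description of $G$-zips as fiber functors from \cite{PWZ}. The structure of the argument is the same.

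The difference lies in how the Tannakian step is justified. You assert an equivalence $G\textup{-Tors}(Y)\simeq\Fun^{\otimes,\textup{ex}}(\Rep(G),\textup{VB}(Y))$ for algebraic stacks $Y$ ``satisfying mild hypotheses'', and then flag as ``the main obstacle'' whether $\Xfr_S$ falls within scope. The paper does not leave this as an obstacle: it establishes the equivalence for $\Xfr_S$ directly by descent. Concretely, it writes $G\textup{-Tors}(\Xfr_S)=\Hom(\Xfr_S,\mathrm{B}G)$ as the limit over the \v{C}ech nerve $\Xfr_S^\bullet$ (whose terms are schemes), applies Tannaka duality for schemes \cite[Theorem~2.3]{Ziegler} termwise, and then uses that the prestack of fiber functors $\Hom^{\otimes}(\Rep(G),\textup{VB}(-))$ satisfies fpqc descent \cite{Rivano} to pass back through the limit. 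So what you identify as the remaining obstacle is exactly the content the paper supplies; your sketch would be complete once you either cite a Tannaka duality theorem whose hypotheses you verify for $\Xfr_S$, or reproduce the \v{C}ech-descent argument.
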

\begin{proof}
  Using Theorem~\ref{F-zips as VB}, we deduce from \cite[Theorem 7.13]{PWZ} that it is enough to show that $G\textup{-Tors}(\Xfr)$ is equivalent to the stack of fiber functors from the symmetric monoidal category of $G$ representations to the symmetric monoidal category $\textup{VB}(\Xfr)$, which we denote by $\Hom^{\otimes}(\Rep(G),\textup{VB}(\Xfr))$.\footnote{The associated pre-stack  is given by the assignment $S\mapsto \Hom(\Rep(G),\textup{VB}(\Xfr_{S}))$. That this is a stack follows from \cite[Section~III.3.2.1.2]{Rivano}.
  In general, for any symmetric monoidal categories $A$ and $B$, we denote by $\Hom^{\otimes}(A,B)$ the groupoid of fiber functors from $A$ to $B$.} 

  Let $S$ be a $k$-scheme and $\Xfr^{\bullet}_{S}$ denote the \v{C}ech nerve of $\Xfr_{S}$ (which by construction is termwise given by a scheme). Since $\BG$ is a sheaf for the fppf topology, we get by definition $G\textup{-Tors}(\Xfr_{S})=\Hom_{P(\Schrel{k})}(\Xfr_{S}, \textup{B}G)=\lim_{n\in\NN}\Hom_{P(\Schrel{k})}(\Xfr^{n}_{S},\textup{B}G)$. By Tannaka duality (see \cite[Theorem 2.3]{Ziegler}), we have
  $$
  \lim_{n\in\NN}\Hom_{P(\Schrel{k})}(\Xfr^{n}_{S},\textup{B}G) = \lim_{n\in\NN}\Hom^{\otimes}\left(\Rep(G),\textup{VB}\left(\Xfr_{S}^{n}\right)\right). 
  $$
  Here we embed the categories involved, which are $(2,1)$-categories, naturally into the world of $\infty$-categories (for example via the Duskin nerve, 
  which is explained in \cite[009P]{kerodon}); 
  then the limit above is the usual limit in the $\infty$-categorical sense. Since the pre-stack of fiber functors $\Hom^{\otimes}(\Rep(G),\textup{VB}(\Xfr))$ satisfies fpqc decent (see \cite[Section~III.3.2.1.2]{Rivano};
  in fact, one uses  this reference together with \cite[Theorem 2.1]{Ziegler} to see that the stack of fiber functors defines a gerbe, which is needed in \cite[Theorem 2.3]{Ziegler}), we have
  $$ \lim_{n\in\NN}\Hom^{\otimes}(\Rep(G),\textup{VB}(\Xfr_{S}^{n})) = \Hom^{\otimes}(\Rep(G),\textup{VB}(\Xfr_{S})),$$
  concluding the proof. 
\end{proof}


\end{document}